\documentclass[11pt,letterpaper,reqno]{amsart}

\usepackage{amsmath,amssymb,amsthm,mathtools,mathrsfs}
\usepackage{enumitem}
\usepackage{microtype}
\usepackage{xcolor}
\usepackage{hyperref}
\usepackage{aliascnt}
\usepackage[nameinlink,capitalise,noabbrev]{cleveref}
\hypersetup{
  colorlinks=true,
  linkcolor=blue!55!black,
  citecolor=blue!55!black,
  urlcolor=blue!55!black
}

\newtheorem{theorem}{Theorem}[section]
\newaliascnt{lemma}{theorem}
\newtheorem{lemma}[lemma]{Lemma}
\aliascntresetthe{lemma}
\newaliascnt{proposition}{theorem}
\newtheorem{proposition}[proposition]{Proposition}
\aliascntresetthe{proposition}
\newaliascnt{corollary}{theorem}
\newtheorem{corollary}[corollary]{Corollary}
\aliascntresetthe{corollary}
\newaliascnt{claim}{theorem}

\aliascntresetthe{claim}
\newaliascnt{problem}{theorem}

\aliascntresetthe{problem}
\theoremstyle{definition}
\newaliascnt{definition}{theorem}

\aliascntresetthe{definition}
\newaliascnt{remark}{theorem}
\newtheorem{remark}[remark]{Remark}
\aliascntresetthe{remark}

\crefname{theorem}{theorem}{theorems}
\Crefname{theorem}{Theorem}{Theorems}
\crefname{lemma}{lemma}{lemmas}
\Crefname{lemma}{Lemma}{Lemmas}
\crefname{proposition}{proposition}{propositions}
\Crefname{proposition}{Proposition}{Propositions}
\crefname{corollary}{corollary}{corollaries}
\Crefname{corollary}{Corollary}{Corollaries}
\crefname{claim}{claim}{claims}
\Crefname{claim}{Claim}{Claims}
\crefname{problem}{problem}{problems}
\Crefname{problem}{Problem}{Problems}
\crefname{definition}{definition}{definitions}
\Crefname{definition}{Definition}{Definitions}
\crefname{remark}{remark}{remarks}
\Crefname{remark}{Remark}{Remarks}

\newcommand{\norm}[1]{\left\lVert#1\right\rVert}

\newcommand{\R}{\mathbb R}
\newcommand{\NN}{\mathbb N}
\newcommand{\one}{\mathbf 1}
\newcommand{\supp}{\operatorname{supp}}
\newcommand{\car}{\operatorname{car}}
\newcommand{\Ran}{\operatorname{Ran}}

\newcommand{\id}{\operatorname{id}}

\newcommand{\Iso}{\operatorname{Iso}}
\newcommand{\Mp}{\mathbf M_p}
\newcommand{\cA}{\mathcal A}

\newcommand{\cD}{\mathcal D}

\newcommand{\cK}{\mathcal K}

\newcommand{\M}{\mathcal M}
\newcommand{\N}{\mathcal N}
\newcommand{\A}{\mathcal A}
\newcommand{\B}{\mathcal B}
\newcommand{\Z}{Z}
\newcommand{\sa}{\mathrm{sa}}
\newcommand{\LS}{LS}
\newcommand{\Pf}{\mathcal P_f}
\newcommand{\Ftau}{\mathcal F(\tau)}

\newcommand{\DeltaM}[1]{\Delta^{\M}_{#1}}
\newcommand{\DeltaN}[1]{\Delta^{\N}_{#1}}

\title[Characterization of surjective isometries: the real case]
{CHARACTERIZATION OF SURJECTIVE ISOMETRIES ON\\
SYMMETRIC FUNCTION SPACES AND ITS\\
NONCOMMUTATIVE COUNTERPART: THE REAL CASE}

\author[T. Guo]{Tianbao Guo}
\address{Institute for Advanced Study in Mathematics of HIT,
Harbin 150001, China}
\email{tianbaoguo.edu@gmail.com}

\author[J. Huang]{Jinghao Huang}
\thanks{Corresponding author: Jinghao Huang.}
\address{Institute for Advanced Study in Mathematics of HIT,
Harbin 150001, China}
\email{jinghao.huang@hit.edu.cn}

\subjclass[2020]{Primary 46B04, 46E30; Secondary 46L52, 47B38}
\keywords{Surjective real-linear isometry, rearrangement-invariant space,
localizable semifinite measure space, noncommutative symmetric space,
skew-Hermitian operator, Jordan $*$-isomorphism, Mityagin's question}
\date{}

\begin{document}
\begin{abstract}
Let $(\Omega,\mu)$ and $(\Lambda,\lambda)$ be complete atomless
localizable semifinite measure spaces. Suppose that
$E(\Omega,\mu)$ and $F(\Lambda,\lambda)$ are real
rearrangement-invariant Banach function spaces with order-continuous
norms, in the Banach-lattice sense, and that neither norm is
proportional to the $L_2$-norm. Every surjective real-linear isometry
\[
   U:E(\Omega,\mu)\longrightarrow F(\Lambda,\lambda)
\]
has the form $Uf=w\,\Phi(f)$, where $w$ has full support and $\Phi$ is
induced by a complete measure-class Boolean isomorphism. Both factors
are uniquely determined by $U$.

Let $(\M,\tau)$ and $(\N,\nu)$ be atomless semifinite von Neumann
algebras, and let $E(\M,\tau)$ and $F(\N,\nu)$ be symmetric operator
spaces satisfying the same assumptions on their norms. Every
surjective real-linear isometry
\[
   V:E(\M,\tau)_{\sa}\longrightarrow F(\N,\nu)_{\sa}
\]
has the form $V(x)=hJ(x)$, where $J:\M\to\N$ is a normal surjective
Jordan $*$-isomorphism and $h\in\LS(\Z(\N))_{\sa}$ is central with full
support; again, the two factors are unique. We also identify the
bounded skew-Hermitian operators on the real self-adjoint part and
derive commutative and noncommutative isometric forms of Mityagin's
question.
\end{abstract}

\maketitle

\section{Introduction}
\label{sec:introduction}

We consider surjective real-linear isometries in two settings:
rearrangement-invariant function spaces over semifinite measure spaces,
and the self-adjoint parts of noncommutative symmetric spaces. Banach
and Lamperti determined the isometries of $\ell_p$ and $L_p$
\cite{Banach,Lamperti}; for complex symmetric function spaces, see
Lumer and Zaidenberg \cite{LumerSIP,Zaidenberg97}. Kalton and
Randrianantoanina settled the real problem on a finite atomless interval
\cite{KRnote,KR}. Their result cannot simply be recovered from the
complex theory, since a real-linear isometry need not admit an
isometric complexification.

The finite-interval proof of Kalton and Randrianantoanina represents
an operator by a measurable family of signed measures and shows that
almost every such measure is supported at one point. Its recurrence step uses the finite
total measure in an essential way. On an infinite measure space, a
kernel term may leave the finite set in which it started, so that the
same iteration is unavailable. The first theorem removes this
restriction. Order continuity on a localizable space is understood
below in the Banach-lattice sense, for decreasing nets.

\begin{theorem}
\label{thm:between-main}
Let $(\Omega,\mu)$ and $(\Lambda,\lambda)$ be complete atomless
localizable semifinite measure spaces. Let $E(\Omega,\mu)$ and
$F(\Lambda,\lambda)$ be real rearrangement-invariant Banach function
spaces with order-continuous norms, neither norm being proportional to
the $L_2$-norm. If
\[
   U:E(\Omega,\mu)\longrightarrow F(\Lambda,\lambda)
\]
is a surjective real-linear isometry, then
\[
   Uf=w\,\Phi(f),\qquad f\in E(\Omega,\mu),
\]
where $w\in L_0(\Lambda,\lambda)$ has full support and $\Phi$ is the
normal lattice and algebra isomorphism induced by a complete
measure-class Boolean isomorphism. The pair $(w,\Phi)$ is unique. On
standard measure spaces the formula may be written as
\[
   (Uf)(s)=w(s)f(\sigma(s)),
\]
where $\sigma$ is invertible and nonsingular modulo null sets.
\end{theorem}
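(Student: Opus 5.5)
The plan is to reduce to the finite-measure theorem of Kalton and Randrianantoanina \cite{KR} by a localization argument, and to do the reduction through disjointness preservation rather than through the kernel recurrence. The first goal is to show that $U$ preserves disjointness: if $f\perp g$ in $E(\Omega,\mu)$ then $Uf\perp Ug$. Once this holds for $U$ and for $U^{-1}$, a surjective disjointness-preserving bijection between order-continuous Banach lattices is a weighted lattice isomorphism. By the standard Abramovich--Arendt type representation on localizable spaces, it has the form $w\,\Phi(\cdot)$, with $\Phi$ induced by a Boolean isomorphism of the measure algebras. Completeness and localizability turn this into a complete measure-class isomorphism, and $w=U(\one_A)$ glued over finite-measure pieces $A$ gives a function of full support.

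To get disjointness preservation, I would fix $f,g$ with finite-measure supports inside some $A\subset\Omega$, $\mu(A)<\infty$. The spaces $E(A)$ restricted to $A$ are r.i.\ spaces on a finite atomless space, hence isomorphic to spaces on $[0,\mu(A)]$. The difficulty is that $U$ maps $E(A)$ into $F$, not onto a band of the form $F(B)$. Here I would use the Kalton--Randrianantoanina machinery in its local form: the characterization of real isometries through skew-Hermitian (hermitian-type) operators. The skew-Hermitian operators on a real r.i.\ space not proportional to $L_2$ are precisely the zero operators. More usefully, the isometry group's structure is detected by the multiplication and projection operators: the band projections $P_A$ are exactly the operators $P$ that are $L$-summand-like or bicontractive, meaning $\|x\| $ controls $\|Px\|$ and $\|(I-P)x\|$ with $I-2P$ an isometry. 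So I would characterize band projections isometrically as those $P$ with $I-2P$ a surjective isometric involution commuting with enough structure. In an r.i.\ space not proportional to $L_2$, the isometric involutions of the form $I-2P$ with $P$ a projection should be exactly the band projections. This is the step where the non-$L_2$ hypothesis enters, and where I would invoke the finite-interval result on each finite-measure piece together with a support argument. Conjugation by $U$ then maps band projections to band projections, and the map $A\mapsto B$ with $UP_AU^{-1}=P_B$ is a Boolean isomorphism of the measure algebras preserving suprema, by order continuity.

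The main obstacle is exactly this characterization of band projections on an infinite localizable space. On $[0,1]$ it follows from \cite{KR}, but here a hypothetical involution $I-2P$ need not localize to any finite set. I would first restrict to $\sigma$-finite pieces: every element of $E$ has $\sigma$-finite support, by order continuity. Then I would show that $I-2P$ restricted to $E(\Omega_0)$ for $\sigma$-finite $\Omega_0$ maps into $E(\Omega_1)$ for a larger $\sigma$-finite $\Omega_1$, and iterate countably to get an invariant $\sigma$-finite set. On a $\sigma$-finite space I would pass to an equivalent finite measure $\mu'$ via a density. The catch is that the r.i.\ structure is lost, so instead I would cover by finite pieces and use that a bicontractive projection commutes with the band projections onto pieces whose images it leaves invariant. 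This is the part I expect to need the most care.

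Uniqueness is easy once the form is established. Applying $U$ to $\one_A$ for finite $A$ determines $w$ on $\Phi(A)$, and hence everywhere by full support and localizability. Then $\Phi(f)=Uf/w$ is forced. The standard-space formula follows from the von Neumann point-realization of Boolean isomorphisms of standard measure algebras.
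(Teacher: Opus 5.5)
There is a genuine gap. Your endgame matches the paper: disjointness preservation gives band conjugacy, then the form $w\,\Phi$, with $w$ glued over finite-measure pieces by localizability, and uniqueness. What is missing is the step that carries all the weight.

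\textbf{The characterization of band projections fails.} You claim that, when the norm is not proportional to the $L_2$-norm, the isometric involutions $I-2P$ are exactly the sign changes $I-2P_A$. This is false. Let $\sigma$ be a measure-preserving involution exchanging two disjoint sets of equal finite measure and fixing everything else. Then $f\mapsto f\circ\sigma$ is an isometric involution of the form $I-2P$ with $P=\frac12(I-V_\sigma)$, and this $P$ is not a band projection. Whatever ``enough structure'' is meant to single out the sign changes, you would have to show it is preserved under conjugation by an arbitrary surjective isometry. That amounts to disjointness preservation, which is what you are trying to prove.

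The paper does single out the sign changes group-theoretically (Proposition~\ref{prop:sign-characteristic}, Lemma~\ref{lem:no-normal-involutions}). It shows that $U\mathfrak D_EU^{-1}$ is a normal abelian group of involutions, and that no nontrivial such group of measure-algebra automorphisms is normalized by all measure-preserving automorphisms. But that argument takes as input that every self-isometry of $E$ and of $F$ is elementary. On an infinite measure space this input is Theorem~\ref{thm:main}. Your remark that bounded skew-Hermitian operators vanish is true, but in the real case it carries no information. That is exactly why Kalton and Randrianantoanina had to work with kernels.

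\textbf{The reduction to finite measure does not close the gap.} Making a $\sigma$-finite band invariant, as the paper also does in Proposition~\ref{prop:invariant-separable-subspaces}, leaves you with a surjective isometry of a non-Hilbertian r.i.\ space over an infinite $\sigma$-finite standard space. That is precisely the half-line case. Passing to an equivalent finite measure destroys rearrangement invariance, as you note. A self-isometry of $E(0,\infty)$ need not leave any finite-measure band invariant, so there is no piece to which the finite-interval theorem applies. Your suggestion that a bicontractive projection commutes with band projections of pieces whose images it leaves invariant presupposes that such invariant pieces exist.

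The paper's replacement for the finite-measure recurrence is its main new ingredient:
\begin{enumerate}
\item the Kalton--Weis random-measure representation;
\item $p$-variation bounds localized to sets of finite measure and uniform over all isometries;
\item randomized composition of kernels (Theorem~\ref{thm:kernel-composition});
\item a Perron--Frobenius iteration (Theorem~\ref{thm:finite-matrix-construction} and the proof of Theorem~\ref{thm:at-most-one-atom}) that violates the uniform bound if two atoms persist.
\end{enumerate}
Without this, or some other genuine proof that self-isometries on infinite-measure r.i.\ spaces are elementary, your argument does not reach disjointness preservation, and the theorem is not proved.
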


The proof starts from the random-measure representation of Kalton and
Weis \cite{KaltonRep,Weis}. The $p$-variation estimate from
\cite[Section~6]{KR} is localized to sets of finite measure; this
forces the local kernels to be atomic and gives a bound independent of
the particular isometry. If two nonzero atoms persist on a set of
positive measure, finitely many terms from the kernels of an isometry
and its inverse produce nonnegative matrices with an expanding product.
Randomizing the intermediate variable controls the kernel of a
composition. Perron--Frobenius theory then turns the expansion into an
iteration that violates the uniform local $p$-variation bound. The
kernel is therefore supported at one point almost everywhere.
Conjugated sign changes and band projections handle isometries between
different spaces, and a separable reduction passes from the standard
$\sigma$-finite case to localizable measure spaces.

The second setting is noncommutative. Surjective isometries of
noncommutative $L_p$-spaces were studied by Yeadon and Sherman
\cite{Yeadon,Sherman05,Sherman06}; related results for symmetric
operator ideals and semifinite symmetric spaces appear in
\cite{Sourour,deJagerConradie,HS}. The full-space theorems in
\cite{HS,FGHS} are complex-linear and do not classify real-linear
isometries between self-adjoint parts.

The required real geometry is encoded by skew-Hermitian operators. A
bounded real-linear operator $H$ on a real Banach space $X$ is called
skew-Hermitian when $\exp(tH)$ is a surjective isometry for every
$t\in\mathbb R$.

\begin{theorem}
\label{thm:SH}
Let $(\M,\tau)$ be an atomless semifinite von Neumann algebra and let
$X=E(\M,\tau)_{\sa}$ have order-continuous, non-Hilbertian norm. If
$H:X\to X$ is bounded and skew-Hermitian, then there exists
$a=a^*\in\M$ such that
\[
   H(x)=i(xa-ax),\qquad x\in X.
\]
The implementing element is unique modulo $\Z(\M)_{\sa}$.
\end{theorem}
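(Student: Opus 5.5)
The strategy is to reduce the skew-Hermitian generator $H$ to a one-parameter group of surjective real-linear isometries and then identify that group. Put $U_t=\exp(tH)$. By hypothesis each $U_t$ is a surjective real-linear isometry of $X=E(\M,\tau)_{\sa}$. The noncommutative classification announced in the abstract (which we take to be available, or which we prove in parallel for the case $\N=\M$, $F=E$) gives $U_t(x)=h_tJ_t(x)$. Here $J_t$ is a normal Jordan $*$-automorphism of $\M$ and $h_t\in\LS(\Z(\M))_{\sa}$ is central with full support; both factors are unique. Uniqueness yields the cocycle identities $J_{s+t}=J_sJ_t$ and $h_{s+t}=h_s\,J_s(h_t)$.

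\textbf{Continuity and the central factor.} Since $\|U_t-\id\|\to0$, we want to show $h_t\to1$ and $J_t\to\id$ in a suitable sense. Take a projection $p$ of finite trace. Then $U_t(p)=h_tJ_t(p)$, and $J_t(p)$ is a projection with $\tau(J_t(p))$ controlled through the norm. Comparing $U_t(p)$ with $p$, we get $|h_t|\,J_t(p)$ close to $p$ in $E$-norm. For small $t$ this forces $h_t$ to be close to $1$ on the central support of $p$; in particular $h_t\ge0$ there. The continuous one-parameter cocycle then has $h_t=e^{tk}$ for some central $k$ affiliated with $\Z(\M)$. Because $\|U_t\|=\|U_{-t}\|=1$ and multiplication by a central positive $e^{tk}$ composed with a trace-preserving-up-to-central-scaling Jordan map must have norm $1$, we expect $k=0$. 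Here non-proportionality to $L_2$ enters: in a non-Hilbertian r.i.\ norm, the only central weights compatible with isometry come paired with the trace scaling of $J_t$. An alternative is to show directly that $J_t$ is $\tau$-preserving for small $t$ and conclude $h_t=1$. This step needs care, and we treat it as the first obstacle.

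\textbf{The derivation.} With $h_t=1$, $t\mapsto J_t$ is a norm-continuous one-parameter group of Jordan automorphisms, norm-continuous as operators on $X$. Normal Jordan automorphisms decompose along a central projection into a $*$-automorphism part and a $*$-antiautomorphism part, and this central projection varies continuously in $t$. By connectedness and $J_0=\id$, each $J_t$ is therefore a $*$-automorphism. Differentiating, $\delta=H$ is a derivation on $X\cap\M$ (a Jordan derivation, hence a derivation on the dense part). Since $\delta$ is bounded on $X$ and commutes with the trace-scaling structure, it extends to a bounded $*$-derivation of $\M$; this uses boundedness on $\M$, obtained via order continuity and the estimate on finite projections. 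By Sakai--Kadison, every derivation of a von Neumann algebra is inner, so $\delta(x)=[b,x]$ with $b\in\M$. Self-adjointness of $\delta(x)$ for $x=x^*$ gives $b^*=-b$ modulo center, and we take $b=-ia$ with $a=a^*$. This gives $H(x)=i(xa-ax)$.

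\textbf{Uniqueness and the main obstacle.} If $i[x,a]=0$ for all $x\in X$, then $a$ commutes with all finite projections, which generate $\M$ since $\tau$ is semifinite; hence $a\in\Z(\M)_{\sa}$. The main obstacle is the identification step that eliminates the central weight $h_t$ and extends the derivation from $X$ to all of $\M$ with a bound. If the global isometry theorem is not yet available at this point, we instead argue directly at the infinitesimal level: we show that $H$ preserves disjointness infinitesimally. Concretely, for orthogonal finite projections $p,q$, we use the non-Hilbertian norm through a Lamperti-type differentiation of $\|p+e^{tH}\|$ and $\|p-e^{tH}q\|$. This would yield $pH(q)p=0$, from which the derivation form follows by standard linear-preserver arguments.
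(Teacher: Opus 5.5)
Your main route is circular. In this paper \Cref{thm:nc-main} is derived \emph{from} \Cref{thm:SH}. \Cref{thm:center} invokes \Cref{thm:SH} to write $V^{-1}\DeltaN{b}V=\DeltaM{a}$. The Lie isomorphism $\Theta_V$ is defined through the uniqueness clause of \Cref{thm:SH}. \Cref{prop:masa-image}, which is what lets $V$ be restricted to maximal abelian subalgebras so that the commutative theorem applies, is built on $\Theta_V$. So the factorization $\exp(tH)=h_tJ_t$ is not available before the statement you are proving, and ``proving it in parallel for $\N=\M$'' meets the same dependence.

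Even granting the factorization, the two steps you flag as obstacles carry the real content. Norm continuity does not by itself force $h_t=1$. For a type $\mathrm{II}_\infty$ factor with a trace-scaling automorphism $\alpha_\lambda$, $\tau\circ\alpha_\lambda=\lambda\tau$, the map $\lambda^{-1/p}\alpha_\lambda$ is an isometry of $L_p(\M,\tau)$ for every such $\lambda$. The isometry condition therefore only ties $|h_t|$ to the trace scaling of $J_t$, and excluding such one-parameter families is essentially the theorem itself. The second obstacle is no easier. Boundedness of $H$ in the $E$-norm gives neither $H(p)\in\M$ nor any operator-norm bound on $H(p)$. Kadison--Sakai needs a derivation of a $C^*$-algebra such as $C_0(\M,\tau)$, whereas $\Ftau$ is only a dense ideal, on which derivations need not be bounded.

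The fallback sketch leaves out the key idea, namely where the non-Hilbertian hypothesis enters. A general rearrangement-invariant norm has no Clarkson-type equality characterizing disjointness, so a Lamperti-type differentiation has nothing to work with. The condition $\phi(Hx)=0$ becomes informative only once you exhibit norming pairs that separate the entries $t_{kl}=\tau(H(e_k)e_l)$. The paper does this in four steps:
\begin{enumerate}
\item The norming functional $x\mapsto\frac{\norm{p}_E}{\tau(p)}\tau(xp)$ gives $\tau(H(p)p)=0$, so $(t_{kl})$ is antisymmetric on any orthogonal block.
\item On a block of equal-trace projections whose symmetric coordinate norm is non-Euclidean, Arazy's lemma supplies a norming pair $\sum_k\xi_ke_k$, $\sum_k\eta_ke_k$ with $\xi,\eta$ linearly independent.
\item Averaging the skew-Hermitian identity over coordinate sign changes and permuting coordinates forces $t_{kl}=0$.
\item Dyadic refinement and approximation then give $\tau(H(p)q)=0$ for all orthogonal finite $p,q$.
\end{enumerate}
From this come both corner identities $pH(p)p=0=(1-p)H(p)(1-p)$, and hence $\norm{H(p)}_\infty\le\norm{H}$ with $H(p)\in\Ftau$. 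Then follow the identity $H(x^2)=H(x)x+xH(x)$ on $\Ftau_{\sa}$, extension to $C_0(\M,\tau)$, Johnson's theorem, and inner implementation by an element of $\M$. Your target $pH(q)p=0$ covers only the off-diagonal corner, and the remaining steps are not ``standard linear-preserver arguments''. Your uniqueness argument modulo $\Z(\M)_{\sa}$ is fine.
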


For complex symmetric operator spaces, the corresponding Hermitian
operators have the form $x\mapsto ax+xb$
\cite[Theorem~3.10]{HS}. Here the argument is carried out on the real
self-adjoint part itself. A trace identity for orthogonal finite
projections yields the two diagonal corner identities for $H(p)$ and
an operator-norm bound on the finite-support algebra. After
complexification one obtains a bounded Jordan $*$-derivation on
$C_0(\M,\tau)$, which is inner.

This description leads to the noncommutative representation theorem.

\begin{theorem}
\label{thm:nc-main}
Let $(\M,\tau)$ and $(\N,\nu)$ be atomless semifinite von Neumann
algebras. Let $E(\M,\tau)$ and $F(\N,\nu)$ be symmetric operator spaces
with order-continuous norms, neither norm being proportional to the
$L_2$-norm. If
\[
   V:E(\M,\tau)_{\sa}\longrightarrow F(\N,\nu)_{\sa}
\]
is a surjective real-linear isometry, then there exist a normal
surjective Jordan $*$-isomorphism $J:\M\to\N$ and an operator
$h\in\LS(\Z(\N))_{\sa}$ such that
\[
   s(h)=1,\qquad
   V(x)=hJ(x),\quad x\in E(\M,\tau)_{\sa}.
\]
The Jordan $*$-isomorphism $J$ and the central multiplier $h$ are
uniquely determined by $V$.
\end{theorem}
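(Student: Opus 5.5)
We plan to derive \cref{thm:nc-main} from \cref{thm:SH} by transporting the Lie algebra of skew-Hermitian operators through $V$, in the spirit of the complex argument of \cite{HS}. The map $H\mapsto VHV^{-1}$ is a Lie algebra isomorphism from the skew-Hermitian operators on $X=E(\M,\tau)_{\sa}$ onto those on $Y=F(\N,\nu)_{\sa}$, because $\exp(tVHV^{-1})=V\exp(tH)V^{-1}$. By \cref{thm:SH} the skew-Hermitian operators on $X$ are exactly the maps $\delta_a(x)=i(xa-ax)$ with $a\in\M_{\sa}$. They form a Lie algebra isomorphic to $\M_{\sa}/\Z(\M)_{\sa}$ under the bracket $[a,b]_i=i(ab-ba)$, and the same holds for $\N$. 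Thus $V$ induces a real-linear Lie isomorphism
\[
  \theta:\M_{\sa}/\Z(\M)_{\sa}\to\N_{\sa}/\Z(\N)_{\sa},\qquad V\delta_aV^{-1}=\delta_{\theta(a)}.
\]
We also expect $\theta$ to be bounded. Indeed, $\|\delta_a\|_{B(X)}$ should be comparable to the quotient norm $\operatorname{dist}(a,\Z(\M)_{\sa})$; to check this we will test $\delta_a$ on finite projections, using $\|\delta_a(p)\|$ computed in $C_0(\M,\tau)$ together with the operator-norm bound from the proof of \cref{thm:SH}.

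The next step is to turn $\theta$ into a Jordan map. We will use the Lie isomorphism theorems of Miers, and of Brešar for the quotient modulo centre, in the form valid for von Neumann algebras without type $I_1$ and $I_2$ summands. These give $\theta(a)=\pm J_0(a)+$ central, where $J_0$ is a Jordan $*$-isomorphism. The atomless hypothesis matters here: it excludes abelian and type $I_2$ pieces only partially, and the abelian summand has trivial quotient. For that reason we treat the central decomposition separately. On the abelian part of $\M$, $V$ restricts to an isometry between commutative symmetric spaces over localizable atomless measure spaces, and \cref{thm:between-main} applies directly and gives $w\Phi$ with $\Phi$ a normal $*$-isomorphism. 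Before either step we will verify that $V$ maps the abelian summand onto the abelian summand. This should follow because the central projections $z$ are characterized by $\delta_a(zXz)\subseteq zXz$ for every $a$, and equivalently by the commutation of the band projections with all skew-Hermitian operators.

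Next we identify $V$ itself. Let $P$ be the set of projections $p\in\M$ with $\tau(p)<\infty$. For $p\in P$, the element $V(p)$ is determined up to scaling through the fact that $p$ is a fixed point of the one-parameter groups $\exp(t\delta_a)$ for all $a$ commuting with $p$. The equation $V\delta_a=\delta_{\theta(a)}V$ then shows that $V(p)$ commutes with $J(\{p\}')$. Using the double commutant, we get $V(p)=h_pJ(p)$ with $h_p$ central. Disjointness and additivity over orthogonal finite projections force $h_p=h$ to be independent of $p$, as an element of $\LS(\Z(\N))_{\sa}$. Linearity and norm-density of $\operatorname{span}P$ in $X$ then give $V(x)=hJ(x)$. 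This density uses order continuity. The condition $s(h)=1$ follows from surjectivity, and we extend $J$ normally to all of $\M$. Uniqueness follows by evaluating on projections. If $h_1J_1=h_2J_2$, then comparing the supports of $J_i(p)$ forces $J_1(p)=J_2(p)$ for every $p\in P$, and hence $h_1=h_2$.

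The main obstacle is the middle step. The Lie-to-Jordan theorem requires the absence of type $I_2$ summands (and of $I_1$, handled separately), and the atomless hypothesis does not exclude type $I_2$ with diffuse centre. For an $M_2(L_\infty)$ summand we will first try to argue directly. There the projections $J(p)$ of rank one over the centre can be recovered from the structure of maximal abelian Lie subalgebras. Failing that, we will reduce via the abelian diagonal and \cref{thm:between-main}, applied fibrewise.
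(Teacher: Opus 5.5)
Your first step, conjugating skew-Hermitian operators to obtain a Lie isomorphism $\M_{\sa}/\Z(\M)_{\sa}\to\N_{\sa}/\Z(\N)_{\sa}$, matches the paper's $\Theta_V$. The route you take from there has a genuine gap, located exactly in the middle step you flag.

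\textbf{The Lie-to-Jordan step fails in general.} Atomlessness only excludes type I \emph{factor} summands. So $L_\infty(0,1)$, $M_2(L_\infty(0,1))$, and their direct sums with a II$_1$ factor are all covered by the theorem. The Miers/Bre\v{s}ar Lie-isomorphism theorems are not available on type $I_1$ and $I_2$ summands. For type $I_2$ you give no argument. Your separate treatment of the abelian summand rests on the unproved claim that $V$ carries it onto the abelian summand of $\N$. Invariance under all $\delta_a$ may characterize central projections $z$, but it does not tell you that $V(zX)$ has the form $z'Y$. Indeed, every subspace of $Y\cap\LS(\Z(\N))_{\sa}$ is annihilated by all $\DeltaN{b}$, hence invariant. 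Moreover, the common kernel of the skew-Hermitian operators is $X\cap\LS(\Z(\M))_{\sa}$. This also contains central elements of the non-abelian summands (e.g.\ $1$ when $\tau(1)<\infty$), so it does not isolate the abelian part.

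\textbf{The support step fails even with a Jordan map in hand.} Suppose you had $J$ with $\theta=\pm J$ modulo the centre. The relative double commutant of $J(p)$ in $\N$ is $\Z(\N)J(p)+\Z(\N)(1-J(p))$. So commutation only gives $V(p)=h_1J(p)+h_2(1-J(p))$ with $h_1,h_2$ central. Take $\M=\N$ with finite trace and $J$ a $*$-automorphism. Then $W(x)=cJ(x)+\beta\tau(x)1$ satisfies $W\DeltaM{a}=\DeltaN{J(a)}W$ for every $a$, because $\tau(\delta_a(x))=0$. Hence no argument using only the Lie structure can force $h_2=0$. You need a support or disjointness statement coming from the non-Hilbertian geometry.

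\textbf{The missing idea.} Use $\Theta_V$ only to transport maximal abelian subalgebras. Since central commutators vanish, $\B_{\A}=\{\widehat\Theta(a):a\in\A_{\sa}\}'\cap\N$ is maximal abelian and $V(E(\A,\tau|_{\A})_{\sa})=Y\cap S(\B_{\A})_{\sa}$. Every finite family of commuting finite projections lies in such an $\A$ with semifinite restricted trace. Applying \cref{thm:between-main} on these algebras shows that $p\mapsto s(V(p))$ is orthogonally additive on $\Pf(\M)$. The de Jager--Conradie extension theorem, applied to the linear map $V$ itself, then produces a normal Jordan $*$-isomorphism $J$ with $J(p)=s(V(p))$ and $V(x)=V(p)J(x)$ whenever $s(x)\le p$. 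The support problem disappears by construction, and the central multiplier comes from this factorization. No type decomposition and no Lie-isomorphism theorem is ever needed.
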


No positivity or disjointness-preserving hypothesis is placed on $V$.
Conjugating the commutator operators from
Theorem~\ref{thm:SH} identifies, through their commutants, the relevant
maximal abelian von Neumann subalgebras. On each of them,
Theorem~\ref{thm:between-main} shows that
\[
   p\longmapsto s(V(p)),\qquad p\in\Pf(\M),
\]
is orthogonally additive. The extension theorem of de Jager and
Conradie \cite{DJC} supplies the normal Jordan $*$-isomorphism, while
compatibility of the coefficients on finite projections produces the
central multiplier.

Using the restriction argument from \cite{FGHS}, the representation
theorems also settle the isometric part of Mityagin's question
\cite{Mityagin} in the present real settings. Apart
from the $L_p$ case, no real symmetric space on a finite interval is
surjectively isometric to one on the half-line or the real line; the
same obstruction holds for the self-adjoint noncommutative spaces.

Part~\ref{part:commutative} proves
Theorem~\ref{thm:between-main}. Part~\ref{part:noncommutative}
contains the proofs of Theorems~\ref{thm:SH} and~\ref{thm:nc-main}.
The Mityagin applications follow in the last section. The two appendices contain the local kernel estimates and the
recurrence argument that excludes more than one atom.

\part{Commutative rearrangement-invariant spaces}
\label{part:commutative}

\section{Preliminaries}
\label{sec:comm-prelim}

Throughout Part~\ref{part:commutative}, $I=(0,\alpha)$ with
$0<\alpha\le\infty$, equipped with Lebesgue measure $m$. All functions
are real valued and are identified modulo equality almost everywhere.
The notation and basic facts concerning rearrangement-invariant spaces
are taken from \cite{BS,LT,KR,FlemingJamison}.

For $f\in L^0(I)$, let
\[
   d_f(\lambda)=m\{t\in I:|f(t)|>\lambda\},
   \qquad \lambda\ge0,
\]
and
\[
   f^*(s)=\inf\{\lambda\ge0:d_f(\lambda)\le s\},
   \qquad s>0.
\]
Thus $f$ and $g$ are equimeasurable precisely when
$d_f=d_g$, or equivalently $f^*=g^*$.  We write
$f\prec\!\prec g$ when
\[
   \int_0^t f^*(s)\,ds
   \le
   \int_0^t g^*(s)\,ds,
   \qquad t>0.
\]

A Banach function space $X$ on $I$ is called
\emph{rearrangement-invariant} if equimeasurable functions have the
same norm. Banach function spaces are understood in the sense of
\cite[Chapter~1]{BS}; in particular, their norms have the Fatou
property.  The fundamental function of $X$ is
\[
   \varphi_X(t)=\|\chi_{(0,t)}\|_X,
   \qquad 0<t<\alpha,
\]
so that $\|\chi_A\|_X=\varphi_X(m(A))$ whenever $m(A)<\infty$.
For a measurable set $A\subset I$, set
\[
   X(A)=\{f\in X:\supp f\subseteq A\},
   \qquad
   P_Af=\chi_Af.
\]
In particular,
\[
   X_R=X(0,R),
   \qquad 0<R<\alpha.
\]
The norm is order continuous if $0\le f_n\le |f|$ and
$f_n\to0$ almost everywhere imply $\|f_n\|_X\to0$.  Under this
assumption, bounded simple functions with support of finite measure
are norm dense in $X$.

The K\"othe dual is
\[
   X^\times
   =
   \left\{
      g\in L^0(I):
      \int_I |fg|\,dm<\infty
      \text{ for every } f\in X
   \right\},
\]
with norm
\[
   \|g\|_{X^\times}
   =
   \sup_{\|f\|_X\le1}\int_I|fg|\,dm.
\]
If $X$ has order-continuous norm, then $X^*=X^\times$ isometrically.
For every $0<R<\alpha$, restriction to $(0,R)$ gives
\begin{equation}
   (X_R)^\times=X^\times(0,R).
   \label{eq:local-dual}
\end{equation}

\begin{lemma}
\label{lem:ri-hilbert}
Let $(\Omega,\mu)$ be an atomless semifinite measure space and let
$Z(\Omega,\mu)$ be a real rearrangement-invariant Banach function
space with order-continuous norm.  Then $Z$ is a Hilbert space if and
only if
\[
   Z=L_2(\Omega,\mu)
\]
as sets and there is a constant $c>0$ such that
\[
   \|f\|_Z=c\|f\|_2,
   \qquad f\in Z.
\]
\end{lemma}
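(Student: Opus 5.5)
The direction ``if'' is immediate, since $c\|\cdot\|_2$ comes from the inner product $c^2\langle f,g\rangle=c^2\int fg\,d\mu$. For ``only if'', suppose $\|\cdot\|_Z$ is induced by an inner product $\langle\cdot,\cdot\rangle_Z$. The plan is to show that the fundamental function is a square root of a linear function, and then use order continuity to identify the norm with a multiple of the $L_2$-norm.

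\emph{Step 1: disjoint functions are orthogonal.} Let $f,g\in Z$ be disjointly supported. Then $f+g$ and $f-g$ are equimeasurable, because $|f+g|=|f-g|$ pointwise. Rearrangement invariance gives $\|f+g\|_Z=\|f-g\|_Z$, and the polarization identity yields $\langle f,g\rangle_Z=0$. Hence
\[
\Bigl\|\sum_k f_k\Bigr\|_Z^2=\sum_k\|f_k\|_Z^2
\]
for finitely many pairwise disjoint $f_k\in Z$.

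\emph{Step 2: fundamental function.} Because $\mu$ is atomless and semifinite, there are sets of every finite measure below $\mu(\Omega)$ (or arbitrarily large), and they can be split into disjoint pieces of any prescribed measures. Set $\psi(t)=\varphi_Z(t)^2$ for $t$ in the range of finite measures. By Step 1, applied to disjoint sets of measures $s$ and $t$, $\psi$ is additive: $\psi(s+t)=\psi(s)+\psi(t)$. It is also monotone, since $\chi_A\le\chi_B$ implies $\|\chi_A\|_Z\le\|\chi_B\|_Z$ by the lattice property. An additive monotone function on an interval is linear, so $\psi(t)=c^2t$ with $c>0$ ($c\neq0$ because $Z$ is nontrivial and the norm is positive on $\chi_A$ with $0<\mu(A)<\infty$). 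Combining this with Step 1, for a simple function $f=\sum a_k\chi_{A_k}$ with disjoint $A_k$ of finite measure we get
\[
\|f\|_Z^2=\sum a_k^2\|\chi_{A_k}\|_Z^2=c^2\sum a_k^2\mu(A_k)=c^2\|f\|_2^2 .
\]
Here each characteristic function of a finite-measure set lies in $Z$. For a general Banach function space on a semifinite space this is the usual axiom, or it follows from rearrangement invariance together with $Z\neq\{0\}$ and atomlessness.

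\emph{Step 3: extension.} This is the main obstacle, because it requires comparing two completions. For $0\le f\in L_0$, take simple functions $0\le f_n\uparrow f$ with finite-measure supports; this is possible for $f\in Z$ or $f\in L_2$ because both are order continuous, so supports are $\sigma$-finite. The Fatou property of $Z$ (assumed in the paper's convention) gives $\|f\|_Z=\lim\|f_n\|_Z$, with the value $\infty$ when $f\notin Z$. Monotone convergence gives $\|f\|_2=\lim\|f_n\|_2$. Hence $\|f\|_Z=c\|f\|_2$ in $[0,\infty]$ for every such $f$, and the identity extends to signed $f$ through $|f|$. It follows that $Z=L_2$ as sets, with $\|\cdot\|_Z=c\|\cdot\|_2$.

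If $\sigma$-finiteness of supports is a concern, the same conclusion follows from order continuity: a function in $Z$ is the norm limit of simple functions with finite-measure support, and these are Cauchy in $L_2$ by the identity. Conversely, functions in $L_2$ have $\sigma$-finite support automatically, so the Fatou argument applies to them.
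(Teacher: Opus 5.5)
Your proof is correct and follows essentially the same route as the paper. Both arguments obtain orthogonality of disjoint elements from a sign symmetry: your identity $\|f+g\|_Z=\|f-g\|_Z$ combined with polarization is the paper's sign-change isometry $\mathbf 1-2\chi_A$ in another form. Both then get linearity of $\varphi_Z^2$ from additivity and monotonicity, and extend from finite-support simple functions.

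The only difference is the last step. You use the Fatou property, with a Cauchy-sequence argument as a backup, where the paper simply notes that both spaces are completions of the same dense sublattice under proportional norms. Your version makes explicit that the two completions coincide as subsets of $L_0$.
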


\begin{proof}
Only the necessity requires proof.  Suppose that the norm of $Z$ is
induced by an inner product $(\cdot,\cdot)_Z$.  If $A$ and $B$ are
disjoint sets of finite measure, multiplication by
$\one-2\chi_A$ is a surjective isometry of $Z$.  It sends $\chi_A$ to
$-\chi_A$ and fixes $\chi_B$; hence
\[
   (\chi_A,\chi_B)_Z
   =(-\chi_A,\chi_B)_Z=0.
\]
Consequently, with $\varphi_Z(t)=\|\chi_A\|_Z$ for $\mu(A)=t$,
\[
   \varphi_Z(s+t)^2=\varphi_Z(s)^2+\varphi_Z(t)^2
\]
whenever $s,t<\infty$ and $s+t\le\mu(\Omega)$.  The function
$t\mapsto\varphi_Z(t)^2$ is increasing. The additive identity above
therefore forces
\[
   \varphi_Z(t)^2=c^2t
\]
for some $c>0$.  The characteristic functions of pairwise disjoint
sets are mutually orthogonal in $(\cdot,\cdot)_Z$.  Thus every simple
function $f=\sum_{k=1}^n\alpha_k\chi_{A_k}$ with support of finite
measure satisfies
\[
   \|f\|_Z^2
   =\sum_{k=1}^n|\alpha_k|^2\varphi_Z(\mu(A_k))^2
   =c^2\|f\|_2^2.
\]
Finite-support simple functions are dense both in $Z$ and in
$L_2(\Omega,\mu)$.  The two spaces are therefore the completions of
the same vector lattice under proportional norms, which proves the
assertion.
\end{proof}

\begin{lemma}
\label{lem:nonhilbert-scale}
Let $X$ have order-continuous norm. If the norm of $X$ is not
proportional to the $L_2$-norm, then $X_R$ is non-Hilbertian for some
$R>0$.
\end{lemma}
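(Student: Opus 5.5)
We argue by contraposition: assuming every $X_R$ ($0<R<\alpha$) is a Hilbert space, we show that the norm of $X$ is proportional to the $L_2$-norm. The space $X_R$ is itself a rearrangement-invariant Banach function space on the atomless finite measure space $(0,R)$. Its norm is order continuous, since this property is inherited by the ideal $X(0,R)$. So Lemma~\ref{lem:ri-hilbert} applies to $X_R$. It gives a constant $c_R>0$ such that
\[
   \|f\|_X=c_R\|f\|_2,\qquad f\in X_R,
\]
and $X_R=L_2(0,R)$ as sets.

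Next we show that $c_R$ does not depend on $R$. For $R<R'$ we have $X_R\subseteq X_{R'}$, and $\chi_{(0,R)}$ lies in both. Evaluating its norm in the two ways gives
\[
   c_R\sqrt R=\varphi_X(R)=c_{R'}\sqrt R,
\]
so $c_R=c_{R'}=:c$. Alternatively, the explicit formula $\varphi_X(t)^2=c_R^2t$ from the proof of Lemma~\ref{lem:ri-hilbert} shows directly that $\varphi_X(t)=c\sqrt t$ for all $t<\alpha$. Hence $\|f\|_X=c\|f\|_2$ for every $f$ supported in some $(0,R)$ with $R<\alpha$, and in particular for every bounded simple function with support of finite measure. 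By rearrangement invariance, a simple function with support of finite measure can be moved into some $(0,R)$ without changing either norm. When $\alpha<\infty$ we may also take $R$ close to $\alpha$, and the case $R=\alpha$ is covered by order continuity.

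The final step is the passage from simple functions to all of $X$, and it is the only point needing care. Such simple functions are norm dense in $X$ because the norm is order continuous. They are also dense in $L_2(I)$. So $X$ and $L_2(I)$ are completions of the same vector lattice under proportional norms. Since both are Banach function spaces and so embed in $L^0$, the two completions agree with $L_2(I)$ as subsets of $L^0$, with $\|\cdot\|_X=c\|\cdot\|_2$. More concretely, take $f\in X$ and $0\le f_n\uparrow|f|$ simple with finite support. The Fatou property gives
\[
   \|f\|_X=\lim_n\|f_n\|_X=c\lim_n\|f_n\|_2=c\|f\|_2
\]
by monotone convergence, so $X\subseteq L_2(I)$. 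Conversely, every $f\in L_2(I)$ is a norm limit in $\|\cdot\|_X$ of such simple functions, so $f\in X$ by completeness together with the Fatou property. Thus the norm of $X$ is proportional to the $L_2$-norm, contradicting the hypothesis. Therefore some $X_R$ is non-Hilbertian.
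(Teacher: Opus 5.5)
Your proof is correct and follows the paper's argument. You apply \Cref{lem:ri-hilbert} to each $X_R$, show that $c_R$ does not depend on $R$ by testing on a common function, and pass to all of $X$ through the dense class of finite-support simple functions; your rearrangement of such functions into some $(0,R)$ and the Fatou-property step just spell out what the paper's one-line density argument leaves implicit.
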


\begin{proof}
Assume that every $X_R$ is Hilbertian.  By
\Cref{lem:ri-hilbert}, for each $R>0$ there is a constant $c_R>0$ such
that
\[
   \|f\|_X=c_R\|f\|_2,
   \qquad f\in X_R.
\]
If $0<R<S$, testing this identity on a nonzero function supported in
$(0,R)$ gives $c_R=c_S$.  Hence $c_R=c$ is independent of $R$.
Bounded simple functions with support of finite measure are dense in
both spaces. Therefore,
\[
   X=L_2(0,\alpha),
   \qquad \|f\|_X=c\|f\|_2,
\]
contrary to the hypothesis.
\end{proof}

Two functions are disjoint, written $f\perp g$, when
$|f|\wedge|g|=0$ almost everywhere.  An operator is disjointness
preserving if it sends disjoint pairs to disjoint pairs.  Following
\cite{KR}, an operator $T:X\to Y$ is called \emph{elementary} if
\[
   Tf(s)=a(s)f(\sigma(s))
   \quad\text{a.e.}
\]
for a measurable weight $a$ and a nonsingular measurable map
$\sigma$.  If $T$ is invertible, $a$ may be taken nonzero almost
everywhere and $\sigma$ invertible modulo null sets.  Such operators
are disjointness preserving.

For the real Banach-space terminology we follow
\cite[Section~3]{KR}. Given a real Banach space $X$, put
\[
   \Pi(X)
   =
   \{(x,x^*)\in X\times X^*:
      \|x\|=\|x^*\|=1,\ x^*(x)=1\}.
\]
A bounded operator $A:X\to X$ is numerically positive if
\[
   x^*(Ax)\ge0
   \qquad ((x,x^*)\in\Pi(X)).
\]
For a projection $P$, this is equivalent to $\|I-P\|=1$.
A nonzero $u\in X$ is a \emph{Flinn element} if there is
$f\in X^*$ with $f(u)=1$ such that the rank-one projection
$f\otimes u$ is numerically positive.  Equivalently,
\[
   f(u)=1,
   \qquad
   f(x)x^*(u)\ge0
   \quad ((x,x^*)\in\Pi(X)).
\]
If $(u,f)$ is a Flinn pair and $U:X\to Y$ is a surjective real-linear
isometry, then
\[
   \bigl(Uu,(U^*)^{-1}f\bigr)
\]
is a Flinn pair in $Y$ \cite[Proposition~3.2]{KR}.  This invariance is
the real-geometric input used in the kernel argument below.

\section{Representing kernels}
\label{sec:representing-kernels}

The random-measure argument uses two finite-window properties. If a
finite interval $I$ is decomposed as
$I=I_1\sqcup I_2$ with $m(I_1)=m(I_2)=m(I)/2$, a
rearrangement-invariant space $Z(I)$ has \emph{property $(P)$} when
\[
   \|\chi_{I_1}\|_Z
   <\|\chi_{I_1}+t\chi_{I_2}\|_Z
   \qquad(t>0).
\]
This does not depend on the chosen equal-measure decomposition.  The
space $Z(I)$ has \emph{property $(P')$} when $Z(I)^\times$ has property
$(P)$.  By \cite[Lemma~5.2]{KR}, every rearrangement-invariant space on
a finite interval has at least one of these properties.

Let $Y=X$ in the norm-continuous case and let $Y=X^\times$, equipped
with $\sigma(X^\times,X)$, in the dual case.  Assume that for some
finite interval $I_0$ the localized space $Y(I_0)$ is non-Hilbertian
and has property $(P)$.  In the dual case all isometries below are
assumed weak-star continuous.

\begin{theorem}
\label{thm:at-most-one-atom}
For every surjective isometry $T:Y\to Y$, the locally finite signed
kernel constructed in \Cref{app:local-kernels} admits a measurable
atomic representation
\[
   \nu_s^T=\sum_{j\ge1}a_j^T(s)\delta_{\sigma_j^T(s)}
\]
for which
\[
   m\left\{s:
     \left|\{j\ge1:a_j^T(s)\ne0\}\right|\ge2
   \right\}=0.
\]
Thus almost every representing measure has at most one nonzero atom.
\end{theorem}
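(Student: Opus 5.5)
We follow the strategy sketched in the introduction, which adapts \cite[Sections~5--7]{KR} to a locally finite setting.

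\emph{Step 1: atomic kernels.} By the random-measure representation of Kalton and Weis \cite{KaltonRep,Weis}, each surjective isometry $T$ (weak-star continuous in the dual case) has a locally finite signed kernel $s\mapsto\nu_s^T$ with $Tf(s)=\int f\,d\nu_s^T$ for $f$ supported in finite intervals. We first show that $\nu_s^T$ is purely atomic for almost every $s$. For this we localize the $p$-variation estimate of \cite[Section~6]{KR} to $Y(I_0)$ and its dilates; this is where non-Hilbertianity and property $(P)$ of $Y(I_0)$ enter. Property $(P)$ produces Flinn elements $\chi_A$. By \cite[Proposition~3.2]{KR}, the pairs $(T\chi_A,(T^*)^{-1}f)$ are again Flinn pairs. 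The Flinn inequality $f(x)x^*(u)\ge0$ then controls sums $\sum_k|\nu_s^T(A_k)|^p$ over partitions of finite windows. It gives a bound $\sup\sum_k|\nu_s^T(A_k)|^p\le C$, with $p\neq2$ and $C$ depending only on $Y$ and the window, not on $T$. A continuous part would make this variation blow up (or vanish incompatibly) under refinement. So we may write $\nu_s^T=\sum_j a_j^T(s)\delta_{\sigma_j^T(s)}$ with measurable selectors $\sigma_j^T$, ordered so that $|a_j^T(s)|$ is decreasing. The point to stress is that the constant $C$ is uniform over all isometries.

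\emph{Step 2: contradiction setup.} Suppose the set $S$ where at least two atoms are nonzero has positive measure. Shrinking $S$, we may assume it lies in a finite window and that $|a_1^T|,|a_2^T|\ge\eta>0$ on it. We may also assume $\sigma_1^T,\sigma_2^T$ take values in fixed disjoint finite windows. Apply the same analysis to $T^{-1}$. Then compose finitely many terms of the kernels of $T$ and $T^{-1}$. To compute the kernel of a composition, randomize over the intermediate variable: average over the sign-change isometries $f\mapsto\varepsilon f$. This kills cross terms in expectation and shows that the kernel of $T^{-1}\circ M_\varepsilon\circ T$ has atom weights dominated, entrywise in absolute value, by a product of nonnegative matrices $A_{ij}=|a|$-entries.

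\emph{Step 3: expansion and iteration.} Since $T^{-1}T=\id$ has kernel $\delta_s$, a conditional-expectation/mass argument shows that the $2\times2$ (or finite) nonnegative matrix obtained on $S$ has Perron--Frobenius eigenvalue strictly greater than $1$ whenever two atoms persist. Two genuine atoms in $T$ must be compensated in $T^{-1}$, and in absolute values this yields an expanding product. Iterating the composition $n$ times gives surjective isometries $T_n$ whose atom weights on a positive-measure subset grow like $\rho^n$, with $\rho>1$. This violates the uniform local $p$-variation bound of Step 1.

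\emph{Main obstacle.} The hardest part is Step 3 on an infinite measure space. Under iteration, the points $\sigma_j(s)$ can leave the finite window where the bound holds, and the total-measure recurrence of \cite{KR} is unavailable. I would first try to use local finiteness together with a pigeonhole/Poincar\'e-type argument on a fixed finite window $W$. Since $T$ is nonsingular and iterates preserve measure classes, a positive proportion of the mass must return to $W$ infinitely often. We then run the Perron--Frobenius estimate only along return times, and the expansion factor still accumulates there.
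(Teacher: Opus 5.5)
There are genuine gaps in Steps~2 and~3. Step~1 matches the outline of Appendix~A: a uniform local $p$-variation bound via Flinn elements, hence atomic kernels. The bound must, however, read $\|\nu_s^T|_B\|_p\le H(s)$ with $\|H\|\le C_{p,B}$ uniformly in $T$, for $0<p<1$, not a pointwise constant with $p\ne2$.

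\emph{Step~2: wrong direction and wrong randomization.} To contradict the ceiling \eqref{eq:uniform-ceiling} you need a \emph{lower} bound on the $p$-mass of a composed kernel, namely $\Mp(QV_\tau R)_{i\ell}\ge[\Mp(Q)\Mp(R)]_{i\ell}-\varepsilon$ as in \Cref{thm:kernel-composition}. Domination from above is automatic from $|\sum_nc_n|^p\le\sum_n|c_n|^p$ and gives nothing. Sign changes cannot supply the lower bound, because $M_\varepsilon$ does not move the intermediate points. Paths that recombine at a common endpoint, as all paths from $s$ do in $T^{-1}T$ (whose kernel is $\delta_s$), still merge. With random signs one only gets $\mathbb E|\sum_n\varepsilon_nc_n|^p$ comparable to $(\sum_n|c_n|^2)^{p/2}$, not $\sum_n|c_n|^p$; killing cross terms is a second-moment statement.

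Here is a concrete obstruction. Let $T$ act fibrewise on $(0,1)\times\{1,2\}$ by a fixed orthogonal $2\times2$ matrix with nonzero entries. This is an $L_2$-isometry with two atoms everywhere. Every operator generated by $T$, $T^{-1}$ and sign changes is again fibrewise orthogonal, so its $p$-mass stays at most $2$. In your argument non-Hilbertianity enters only through the ceiling, so your growth step would have to hold here too, and it does not.

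The paper instead randomizes by measure-preserving automorphisms $V_\tau$ of the intermediate cells: rotations inside equal-measure cells composed with random permutations of the cells (\Cref{lem:compact-randomization}). For points in different cells, the law of $(\tau y,\tau y')$ is absolutely continuous. Hence distinct paths land at distinct points and cannot cancel. The sampling-without-replacement variance bound then makes the integrated $p$-mass concentrate at the matrix product.

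\emph{Step~3: the expansion mechanism is missing.} The growth is a counting effect at small $p$. As $p\downarrow0$, $|c|^p\to1$, so the normalized $p$-mass matrix counts nonzero terms. Every point of a selected chain carries at least one term (row sum one), while $B$ carries two (row sum $>1$). Perron--Frobenius gives $\rho>1$ only on a finite \emph{strongly connected} bipartite graph of carrier sets (\Cref{thm:finite-matrix-construction}). ``Compensation in $T^{-1}$'' does not produce such a matrix.

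\emph{Step~3: the recurrence argument fails.} Strong connectivity requires return to $B$, and your Poincar\'e-type argument is not available. There is no finite invariant measure, and nonsingular kernel maps on an infinite measure space can be dissipative: the translation isometry of the line has kernel $\delta_{s+1}$ and never returns. What must return are \emph{alternating} $T$/$T^{-1}$ chains started from a nonzero term, and this comes from the isometry, not from measure theory. In \Cref{lem:return}, suppose no chain from $H\subset B$ came back. Then the carriers $A,D$ of the propagated measures satisfy $P_DT=P_DTP_A$, $P_AT^{-1}=P_AT^{-1}P_D$ and $m(B\cap D)=0$. By \Cref{lem:triangular}, the isometry $T^{-1}(2P_D-I)T(2P_A-I)$ equals $I+N$ with $N^2=0$, hence $N=0$. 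This gives $P_{D^c}TP_A=0$, contradicting the term $a_j\delta_{\sigma_j}$ issuing from $H$ into $A$.

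Once this finite graph is fixed, the iteration $U_{n+1}=UV_{\tau_n}U_n$ is run on the same finite sets. So the window problem is solved once, and no analysis along return times is needed.
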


The proof is given in \Cref{app:local-kernels,sec:kernel-composition}.
The first appendix establishes the local $p$-variation estimate and the
atomic representation of the kernels. The second proves the estimate
for composed kernels and the recurrence argument used to exclude two
nonzero atoms.

\section{Isometries on rearrangement-invariant spaces}
\label{sec:commutative-isometries}

The kernel theorem covers the direct property-$(P)$ argument and,
after passage to the K\"othe dual, the property-$(P')$ case. Surjectivity rules out a zero kernel on a set of positive
measure.
\begin{theorem}
\label{thm:exactly-one-atom}
In both the norm-continuous property-$(P)$ case and the weak-star
continuous dual property-$(P)$ case, the representing kernel of every
surjective isometry $T$ has exactly one nonzero term almost everywhere.
Equivalently,
\[
 \left|\{j\ge1:a_j^T(s)\ne0\}\right|=1
 \qquad\text{for almost every }s.
\]
\end{theorem}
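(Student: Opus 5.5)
By \Cref{thm:at-most-one-atom}, almost every representing measure $\nu_s^T$ has at most one nonzero atom. It therefore suffices to show that the set
\[
   N=\{s:\nu_s^T=0\}
\]
is null. Suppose, for contradiction, that $m(N)>0$. Since the kernels are locally finite and the measure space is $\sigma$-finite, we may shrink $N$ to a subset $N_0\subseteq N$ with $0<m(N_0)<\infty$.

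We first recall how the kernel determines $T$. In the construction of \Cref{app:local-kernels}, for $f$ in a norm-dense class (bounded simple functions with support of finite measure), one has
\[
   Tf(s)=\int f\,d\nu_s^T
   \qquad\text{for a.e. } s.
\]
In the norm-continuous case this identity passes to all $f\in Y$ by order continuity. In the dual case it passes to all $f\in Y$ by weak-star continuity, tested against the finite-support functions in the predual. Consequently $Tf$ vanishes almost everywhere on $N_0$ for every $f\in Y$, that is,
\[
   \chi_{N_0}\,T(Y)=\{0\}.
\]

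Surjectivity now gives the contradiction. The function $\chi_{N_0}$ lies in $Y$: in the norm-continuous case this holds because $Y$ contains characteristic functions of sets of finite measure, and in the dual case because $X^\times$ does as well. Hence $\chi_{N_0}=Tf$ for some $f\in Y$. But $Tf=0$ almost everywhere on $N_0$, while $\chi_{N_0}=1$ there, which contradicts $m(N_0)>0$. Therefore $m(N)=0$, and combined with \Cref{thm:at-most-one-atom} this gives $|\{j:a_j^T(s)\neq0\}|=1$ almost everywhere.

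The main point requiring care is the pointwise identity $Tf(s)=\int f\,d\nu_s^T$ for all $f$, not just for a dense class. This requires the convergence used in the extension step to give almost-everywhere convergence along a subsequence on finite-measure windows. In the dual case one must check that weak-star convergence, paired with $L_1$-type test functions supported in $N_0$, suffices. For this one can avoid pointwise limits altogether: the bounded-weak-star continuity of $f\mapsto \int_{N_0}(Tf)g\,dm$, applied with every $g$ supported in $N_0$ and combined with density, already yields $\chi_{N_0}Tf=0$.
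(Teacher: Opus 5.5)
Your proposal is correct and follows the paper's proof: both invoke \Cref{thm:at-most-one-atom}, choose a set $N_0$ of finite positive measure on which the kernel vanishes, obtain $P_{N_0}T=0$ on bounded finite-support functions and extend by density and continuity of $P_{N_0}T$ (norm density in the order-continuous case, weak-star density in the dual case), and then test surjectivity against $\chi_{N_0}$. As your last paragraph notes, you never need the pointwise kernel identity for all $f$, only the vanishing of the continuous operator $P_{N_0}T$; that is exactly how the paper argues. (In the dual case the bounded finite-support functions are only weak-star dense, not norm dense, but you already handle that case by weak-star continuity.)
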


\begin{proof}
By \Cref{thm:at-most-one-atom},
\[
 \left|\{j\ge1:a_j^T(s)\ne0\}\right|\le1
 \qquad\text{for almost every }s.
\]
Surjectivity rules out vanishing on a set of positive measure.

Let
\[
 Z=\{s:\nu_s^T=0\}.
\]
Suppose that $m(Z)>0$. By semifiniteness, there exists a measurable
set $Z_0\subset Z$ such that
\[
 0<m(Z_0)<\infty.
\]
For every bounded function $f$ with support of finite measure, the kernel
representation gives
\[
 P_{Z_0}Tf=0.
\]
In the norm-continuous case, such functions are norm dense. In the
weak-star case, they are weak-star dense, and both $T$ and $P_{Z_0}$
are weak-star continuous. Hence in either case
\[
 P_{Z_0}T=0
\]
on the whole space.

Surjectivity then forces
\[
 P_{Z_0}g=0
 \qquad\text{for every }g\in Y.
\]
Taking $g=\chi_{Z_0}$ gives a contradiction, since
$m(Z_0)>0$. Therefore
\[
 \nu_s^T\ne0
 \qquad\text{for almost every }s.
\]
Together with the first inequality, this yields
\[
 \left|\{j\ge1:a_j^T(s)\ne0\}\right|=1
 \qquad\text{for almost every }s.
\]
\end{proof}

Choose the first nonzero term in the measurable atomic enumeration.
Then there are measurable $a$ and $\sigma$ such that
\begin{equation}
 \nu_s^T=a(s)\delta_{\sigma(s)},
 \qquad a(s)\ne0\quad\text{almost everywhere}.
 \label{eq:one-atom-kernel}
\end{equation}
The null-set property implies that $\sigma$ is nonsingular: if
$m(N)=0$, then
\[
 0=|\nu_s^T|(N)=|a(s)|\one_N(\sigma(s))
\]
for almost every $s$, so $m(\sigma^{-1}(N))=0$.

The kernel formula initially holds on bounded functions of finite
support.  In the norm-continuous case it extends by order
continuity.
\begin{lemma}
\label{lem:core-extension}
In the order-continuous case,
\begin{equation}
 Tf=a(f\circ\sigma)
 \label{eq:elementary-core}
\end{equation}
for every $f\in X$.
\end{lemma}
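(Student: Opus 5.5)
The plan is to pass from the dense class on which the kernel formula already holds to all of $X$. Two facts drive it: $T$ is norm continuous, and the right-hand side of \eqref{eq:elementary-core} defines a lattice-type operator controlled pointwise by $|a|(|f|\circ\sigma)$. Let $S_0$ denote the bounded functions with support of finite measure. By the kernel representation and \eqref{eq:one-atom-kernel}, we already have $Tf=a(f\circ\sigma)$ for every $f\in S_0$. For general $f\in X$ the right-hand side is a well-defined element of $L^0$, because $\sigma$ is nonsingular and composition therefore respects a.e.\ equality.

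First reduce to $f\ge0$ by splitting $f=f^+-f^-$; both sides of \eqref{eq:elementary-core} are linear in $f$. Using semifiniteness, choose an increasing sequence $A_n$ of sets of finite measure whose union contains $\supp f$; this is possible because $\supp f$ is $\sigma$-finite for $f\in X$ with order-continuous norm. Put $f_n=\min(f,n)\chi_{A_n}\in S_0$. Then $0\le f_n\uparrow f$ a.e., so $0\le f-f_n\le f$ and $f-f_n\to0$ a.e. Order continuity gives $\|f-f_n\|_X\to0$, and hence $Tf_n\to Tf$ in $Y$.

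Norm convergence in a Banach function space implies convergence in measure on sets of finite measure. We may therefore pass to a subsequence with $Tf_{n_k}\to Tf$ a.e. On the other side, $f_{n_k}\circ\sigma\to f\circ\sigma$ a.e. This uses nonsingularity of $\sigma$: the exceptional null set $N$ where $f_n\not\to f$ has $m(\sigma^{-1}(N))=0$. It follows that $a(f_{n_k}\circ\sigma)\to a(f\circ\sigma)$ a.e. Since $Tf_{n_k}=a(f_{n_k}\circ\sigma)$, the two a.e.\ limits coincide, which proves \eqref{eq:elementary-core}.

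The main point to check is the step from norm convergence to a.e.\ convergence of a subsequence in the target space. This holds because the target is a Banach function space over a semifinite measure space. Norm convergence gives convergence in measure on each finite-measure set, via $\|\chi_B\|$ bounds and the embedding of $Y(B)$ into $L^1(B)$ through the Köthe dual. A diagonal argument over an exhausting sequence of finite-measure sets for the $\sigma$-finite support of the countably many functions $Tf,Tf_n$ then yields a single subsequence converging a.e.
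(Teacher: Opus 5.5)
Your proof is correct and follows the paper's argument. You approximate $f$ in norm and almost everywhere by bounded finite-support functions, use nonsingularity of $\sigma$ to get $f_n\circ\sigma\to f\circ\sigma$ a.e., and pass to a subsequence along which $Tf_n\to Tf$ a.e.; unlike the paper, you build the approximants explicitly by truncation and spell out the norm-to-a.e.\ step.
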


\begin{proof}
The kernel formula gives \eqref{eq:elementary-core} for bounded
functions with support of finite measure. For $f\in X$, choose such
functions $f_n$ with
\[
 \norm{f_n-f}_X\to0,
 \qquad
 f_n\to f \quad\text{a.e.}
\]
Since inverse images of null sets under $\sigma$ are null,
\[
 f_n\circ\sigma\to f\circ\sigma
 \quad\text{a.e.}
\]
Also $Tf_n\to Tf$ in $X$, so, after passing to a subsequence,
$Tf_n\to Tf$ almost everywhere. Passing to the limit in
\eqref{eq:elementary-core} proves the result.
\end{proof}

Applying the one-atom formula to both $T$ and $T^{-1}$ shows
that the two measurable maps are inverse to one another modulo null sets.
\begin{theorem}
\label{thm:direct-elementary}
Suppose that a non-Hilbert finite interval of $X$ has property $(P)$.
Then every surjective real-linear isometry $T:X\to X$ has the form
\[
 Tf=a(f\circ\sigma),
 \qquad f\in X,
\]
where $a\ne0$ almost everywhere and $\sigma$ is invertible modulo
null sets. Moreover, both $\sigma$ and its inverse send inverse images
of null sets to null sets.
\end{theorem}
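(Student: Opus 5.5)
We combine the one-atom description of \Cref{thm:exactly-one-atom} with \Cref{lem:core-extension}, applied both to $T$ and to $T^{-1}$. Since $T^{-1}:X\to X$ is again a surjective real-linear isometry, the hypothesis that a non-Hilbert finite interval has property $(P)$ applies to it as well. We therefore obtain measurable $a,b$, nonzero almost everywhere, and nonsingular measurable maps $\sigma,\rho$ such that
\[
 Tf=a\,(f\circ\sigma),\qquad T^{-1}g=b\,(g\circ\rho),\qquad f,g\in X.
\]
Nonsingularity of $\sigma$ and $\rho$, meaning that preimages of null sets are null, was already derived after \eqref{eq:one-atom-kernel} from the null-set property of the kernel. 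This makes the compositions $f\circ\sigma$ and $g\circ\rho$ well defined on equivalence classes.

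Next we compose the two representations. For $f\in X$,
\[
 f=T^{-1}Tf=b\,\bigl(a\circ\rho\bigr)\,\bigl(f\circ\sigma\circ\rho\bigr)\quad\text{a.e.},
\]
and symmetrically $g=a\,(b\circ\sigma)\,(g\circ\rho\circ\sigma)$ for $g\in X$. We test the first identity with $f=\chi_A$ for sets $A$ of finite measure. This gives $\chi_A=c\,\chi_{(\sigma\circ\rho)^{-1}(A)}$ a.e., where $c=b\,(a\circ\rho)$. Comparing supports yields $(\sigma\circ\rho)^{-1}(A)=A$ modulo null sets, and then $c=1$ a.e.\ on $A$. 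The space $I$ is $\sigma$-finite, so we may let $A$ range over a countable family of finite-measure intervals with rational endpoints, which generates the Borel $\sigma$-algebra. From this we conclude $\sigma\circ\rho=\id$ almost everywhere. Here we use the standard fact that a measurable self-map of a standard Borel space that preserves a countable generating family modulo null sets is the identity a.e.; one checks this by intersecting the nested dyadic intervals containing $t$. In the same way, $\rho\circ\sigma=\id$ a.e. Hence $\sigma$ is invertible modulo null sets with inverse $\rho$, and both $\sigma$ and $\rho$ are nonsingular. This is the claimed statement that $\sigma$ and its inverse pull null sets back to null sets, which is equivalent to saying that each maps null sets to null sets.

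The main obstacle is the identification step. Any conull-set ambiguity in the definitions of $\sigma$ and $\rho$ must be kept under control, and the countable-generator argument must be carried out on the infinite interval. We handle this by working on $(0,R)$ for rational $R$ and then taking a countable union of the resulting exceptional null sets. We also remove the null sets where $a=0$ or $b=0$ before composing, which is legitimate because \Cref{thm:exactly-one-atom} gives $a\neq0$ and $b\neq0$ almost everywhere.
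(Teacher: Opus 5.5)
Your proposal is correct and follows essentially the paper's argument: obtain the one-atom representations $Tf=a(f\circ\sigma)$ and $T^{-1}g=b(g\circ\rho)$ from \Cref{thm:exactly-one-atom} and \Cref{lem:core-extension}, test $T^{-1}T\chi_I=\chi_I$ and $TT^{-1}\chi_I=\chi_I$ on the countably many intervals with rational endpoints outside one common null set, and use point separation to conclude that $\sigma\circ\rho=\id$ and $\rho\circ\sigma=\id$ almost everywhere. Your support comparison needs $b\,(a\circ\rho)\ne0$ almost everywhere, which you correctly obtain from $a,b\ne0$ and the nonsingularity of $\rho$; in fact only the inclusion $\sigma(\rho(t))\in I$ for every rational interval $I\ni t$ is needed.
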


\begin{proof}
By \Cref{thm:exactly-one-atom,lem:core-extension},
\[
 Tf=a(f\circ\sigma),
\]
where $a\ne0$ almost everywhere and inverse images of null sets under
$\sigma$ are null. The one-atom formula for $S=T^{-1}$ gives
\[
 Sg=b(g\circ\rho),
\]
with the analogous properties for $b$ and $\rho$.

For every finite interval $I$ with rational endpoints, the identities
$ST\chi_I=\chi_I$ and $TS\chi_I=\chi_I$ give
\[
 b(t)a(\rho(t))
 \chi_I(\sigma(\rho(t)))
 =\chi_I(t)
\]
and
\[
 a(s)b(\sigma(s))
 \chi_I(\rho(\sigma(s)))
 =\chi_I(s)
\]
almost everywhere. Since there are only countably many such intervals,
these identities hold simultaneously outside one null set. As these
intervals separate points, we obtain
\[
 \sigma(\rho(t))=t,
 \qquad
 \rho(\sigma(s))=s
\]
almost everywhere. Consequently,
\[
 b(t)a(\rho(t))=1,
 \qquad
 a(s)b(\sigma(s))=1
\]
almost everywhere. Thus $\rho$ and $\sigma$ are inverse transformations
modulo null sets.
\end{proof}

In the property-$(P')$ case, apply the same result to the K\"othe dual.
The required adjoint relation is

\begin{lemma}
\label{lem:preadjoint}
Let $A:X\to X$ be bounded and suppose that, for every bounded
finite-support $g\in X^\times$,
\[
 A^*g(t)=c(t)g(\tau(t)),
\]
where $c\ne0$ almost everywhere and $\tau$ is invertible modulo null
sets. Then
\[
 Af(y)=d(y)f(\tau^{-1}(y)),
 \qquad f\in X,
\]
where
\[
 d(y)=J_\tau(y)c(\tau^{-1}(y)),
 \qquad
 J_\tau=\frac{d(\tau_*m)}{dm}.
\]
Moreover, $d\ne0$ almost everywhere.
\end{lemma}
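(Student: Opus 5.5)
The plan is to test $Af$ against bounded finite-support $g\in X^\times$ via the duality $\langle Af,g\rangle=\langle f,A^*g\rangle$. Since $X$ is order continuous, $X^*=X^\times$ isometrically, so $A^*$ acts on $X^\times$ and the pairing is $\int fg\,dm$. First take $f$ bounded with support of finite measure. Then
\[
 \int_I (Af)\,g\,dm=\int_I f(t)\,c(t)\,g(\tau(t))\,dm(t).
\]

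To identify the right-hand side, I would change variables $y=\tau(t)$. Since $\tau$ is invertible modulo null sets, with both $\tau$ and $\tau^{-1}$ nonsingular, the pushforward $\tau_*m$ is equivalent to $m$. The Radon--Nikodym derivative $J_\tau=d(\tau_*m)/dm$ therefore exists; it is positive almost everywhere, and $\sigma$-finiteness of Lebesgue measure on $I$ justifies using it. For an integrable $\phi$ we have $\int\phi(\tau(t))\,dm(t)=\int\phi(y)J_\tau(y)\,dm(y)$. Applying this with $\phi(y)=f(\tau^{-1}(y))c(\tau^{-1}(y))g(y)$ gives
\[
 \int_I (Af)\,g\,dm=\int_I d(y)f(\tau^{-1}(y))g(y)\,dm(y),
\]
with $d$ as in the statement. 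Integrability of the integrand is automatic, because it equals $f\cdot A^*g$ and $A^*g\in X^\times$.

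The main obstacle is concluding pointwise equality from this identity. Take $g=\chi_B\,\operatorname{sgn}(h)$, where $h=Af-d\,(f\circ\tau^{-1})$ and $B$ runs over sets of finite measure on which $d\,(f\circ\tau^{-1})$ is bounded. This shows $\int_B|h|\,dm=0$. Such sets $B$ exhaust $I$ up to a null set, because $d(f\circ\tau^{-1})$ is an a.e. finite measurable function: it is $f\cdot A^*g$ transported back, and $J_\tau$ is finite a.e. Hence $Af=d(f\circ\tau^{-1})$ almost everywhere for bounded finite-support $f$. To extend to all $f\in X$, argue exactly as in \Cref{lem:core-extension}. Approximate $f$ in norm and almost everywhere by bounded finite-support $f_n$; the null-set property of $\tau^{-1}$ gives $f_n\circ\tau^{-1}\to f\circ\tau^{-1}$ a.e., and a subsequence of $Af_n$ converges to $Af$ a.e.

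Finally, $d\neq0$ almost everywhere. Indeed, $J_\tau>0$ a.e. by equivalence of $\tau_*m$ and $m$. Also $c(\tau^{-1}(y))\ne0$ a.e., because the set of $y$ where it vanishes is $\tau(\{c=0\})$, the image of a null set, which is null since $\tau^{-1}$ is nonsingular.
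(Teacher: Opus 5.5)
Your proof is correct and follows the paper's argument: you pair $Af$ with bounded finite-support $g\in X^\times$, change variables using $J_\tau$, conclude pointwise equality by testing, and obtain $d\ne0$ from $J_\tau>0$ together with the nonsingularity of $\tau^{-1}$. The differences are cosmetic. The paper runs the duality computation directly for every $f\in X$, since $f\cdot A^*g$ is integrable for all $f\in X$, so your restriction to bounded finite-support $f$ and the later approximation step are unnecessary; your explicit test function $g=\chi_B\operatorname{sgn}(h)$ makes the paper's brief ``separate points'' remark precise.
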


\begin{proof}
For $f\in X$ and every bounded $g\in X^\times$ with support of finite measure,
\[
 \begin{aligned}
 \int (Af)(y)g(y)\,dm(y)
 &=\int f(t)(A^*g)(t)\,dm(t)\\
 &=\int f(t)c(t)g(\tau(t))\,dm(t)\\
 &=\int J_\tau(y)c(\tau^{-1}(y))
       f(\tau^{-1}(y))g(y)\,dm(y).
 \end{aligned}
\]
Bounded finite-support functions in $X^\times$ separate points of
$X$, and hence
\[
 Af(y)=J_\tau(y)c(\tau^{-1}(y))f(\tau^{-1}(y))
\]
almost everywhere.

Since $\tau$ and $\tau^{-1}$ preserve null sets under inverse images,
$\tau_*m$ and $m$ have the same null sets. Hence $J_\tau>0$ almost
everywhere. Since $c\ne0$ almost everywhere, we obtain
$d\ne0$ almost everywhere.
\end{proof}

By \cite[Lemma~5.2]{KR}, a non-Hilbertian finite interval satisfies
$(P)$ or $(P')$. The two cases give the same weighted-composition
formula.
\begin{theorem}
\label{thm:main}
Let $X$ be a real rearrangement-invariant Banach function space on
$(0,\infty)$ with order-continuous norm, and suppose that $X$ is
non-Hilbertian. Then every surjective real-linear isometry
\[
 T:X\longrightarrow X
\]
has the form
\[
 Tf(s)=a(s)f(\sigma(s)),
 \qquad f\in X,
\]
where $a$ is measurable and nonzero almost everywhere, and
$\sigma:(0,\infty)\to(0,\infty)$ is an invertible measurable
transformation modulo null sets. Moreover, inverse images of null sets
under both $\sigma$ and $\sigma^{-1}$ are null.
\end{theorem}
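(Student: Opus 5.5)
The plan is to reduce Theorem~\ref{thm:main} to \Cref{thm:direct-elementary} in the property-$(P)$ case and to its dual version combined with \Cref{lem:preadjoint} in the property-$(P')$ case.

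\emph{Step 1: a non-Hilbertian finite window.} Since $X$ is non-Hilbertian, \Cref{lem:ri-hilbert} shows that its norm is not proportional to the $L_2$-norm. \Cref{lem:nonhilbert-scale} then gives $R>0$ with $X_R$ non-Hilbertian. Every larger window $X_S$, $S\ge R$, is also non-Hilbertian, because an inner product on $X_S$ would restrict to one on $X_R$. By \cite[Lemma~5.2]{KR}, $X_R$ has property $(P)$ or property $(P')$.

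\emph{Step 2: the property-$(P)$ case.} If $X_R$ has $(P)$, \Cref{thm:direct-elementary} applies verbatim. It gives $Tf=a(f\circ\sigma)$ with $a\ne0$ almost everywhere and $\sigma$ invertible modulo null sets, both $\sigma$ and $\sigma^{-1}$ being nonsingular.

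\emph{Step 3: the property-$(P')$ case.} Here $(X_R)^\times=X^\times(0,R)$ by \eqref{eq:local-dual}. This space has $(P)$, and it is non-Hilbertian, since otherwise $X_R$, whose norm is the associate norm of a Hilbert norm, would also be Hilbertian. Take $Y=X^\times$ with the $\sigma(X^\times,X)$ topology. Because the norm of $X$ is order continuous, $X^*=X^\times$ isometrically, so $T^*:X^\times\to X^\times$ is a surjective isometry. It is weak-star continuous, being an adjoint, and its inverse $(T^{-1})^*$ is weak-star continuous as well. The dual case of \Cref{thm:exactly-one-atom} then yields the one-atom kernel \eqref{eq:one-atom-kernel} for $T^*$, and likewise for $(T^{-1})^*=(T^*)^{-1}$.

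The argument of \Cref{thm:direct-elementary} uses the identities $ST\chi_I=\chi_I$ only on the bounded finite-support functions $\chi_I$, which lie in $X^\times$. It therefore carries over and shows that
\[
   T^*g(t)=c(t)g(\tau(t))
\]
for bounded finite-support $g\in X^\times$. Here $c\ne0$ almost everywhere and $\tau$ is invertible modulo null sets, with both directions nonsingular. \Cref{lem:preadjoint} then gives $Tf=d\,(f\circ\tau^{-1})$ with $d\ne0$ almost everywhere. Setting $a=d$ and $\sigma=\tau^{-1}$ gives the required form, and the null-set properties transfer since $\tau$ and $\tau^{-1}$ both have them.

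\emph{Main obstacle.} The delicate point is Step 3. It requires checking that the dual-case hypotheses of \Cref{thm:at-most-one-atom} hold for $T^*$, namely weak-star continuity of the isometry and of its inverse. It also requires extending the one-atom kernel formula for $T^*$ from bounded finite-support functions to the form needed in \Cref{lem:preadjoint}. The latter asks only for such test functions, so no order continuity of $X^\times$ is needed; separation of points is exactly what that lemma uses. I would verify these points first, and then the two cases combine to give Theorem~\ref{thm:main}.
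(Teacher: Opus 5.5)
Your proposal follows the paper's proof: find a non-Hilbertian window $X_R$, apply \Cref{thm:direct-elementary} under $(P)$, and under $(P')$ pass to the weak-star class on $X^\times$ and finish with \Cref{lem:preadjoint}; the only change is that you dualize $T$ itself rather than $T^{-1}$ (the paper takes $U=(T^{-1})^*$), which lets you apply \Cref{lem:preadjoint} directly with $A=T$ and skip the paper's final inversion of the weighted-composition form. One correction to your justification: the inverse-map argument of \Cref{thm:direct-elementary} applies the formula for $(T^{-1})^*$ to $T^*\chi_I=c\,\chi_{\tau^{-1}(I)}$, which need not be bounded or finitely supported, so in the weak-star case you must first extend the one-atom formula beyond bounded finite-support functions (for instance by monotone truncation, the Fatou property of $X^\times$, dominated convergence and weak-star continuity), a step the paper also leaves implicit.
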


\begin{proof}
By \Cref{lem:nonhilbert-scale}, there exists $R>0$ such that $X_R$ is
non-Hilbertian. By \cite[Lemma~5.2]{KR}, $X_R$ has either property
$(P)$ or property $(P')$.

If $X_R$ has property $(P)$, the conclusion follows from
\Cref{thm:direct-elementary}.

Suppose that $X_R$ has property $(P')$. By \eqref{eq:local-dual},
\[
 X^\times(0,R)=(X_R)^\times
\]
has property $(P)$. Moreover, $(X_R)^\times$ is non-Hilbertian, since
otherwise its K\"othe dual $X_R$ would be Hilbertian.

Set
\[
 U=(T^{-1})^*:X^\times\longrightarrow X^\times.
\]
Then $U$ and $U^{-1}$ are weak-star continuous surjective isometries.
Hence the weak-star versions of
\Cref{prop:atomic-kernel-estimates,thm:kernel-composition,thm:finite-matrix-construction,thm:at-most-one-atom}
apply to both operators. By \Cref{thm:exactly-one-atom},
\[
 Ug(t)=c(t)g(\tau(t))
\]
for every bounded $g\in X^\times$ with support of finite measure, where
$c\ne0$ almost everywhere.

Apply the one-atom conclusion to $U^{-1}$. The countable family of
intervals with rational endpoints then separates the two coordinate
maps, showing that $\tau$ is invertible modulo null sets and that both
$\tau$ and $\tau^{-1}$ preserve null sets under inverse images.

Since $U=(T^{-1})^*$, \Cref{lem:preadjoint} gives
\[
 T^{-1}f(y)=d(y)f(\tau^{-1}(y)),
 \qquad f\in X,
\]
with $d\ne0$ almost everywhere. Since $\tau$ is invertible, the inverse
operator $T$ is again of weighted-composition form. Thus
\[
 Tf(s)=a(s)f(\sigma(s))
\]
for a measurable $a\ne0$ almost everywhere and an invertible
measurable transformation $\sigma$, with the stated null-set
properties.
\end{proof}

\begin{remark}
The transformation $\sigma$ in \Cref{thm:main} need not preserve
Lebesgue measure.  It is enough that inverse images of null sets under
both $\sigma$ and $\sigma^{-1}$ are null.
\end{remark}

\subsection{Isometries between two spaces}
\label{sec:between}

For an isometry between two different spaces, measurable sign changes
determine the corresponding band projections.

Let $(\Omega,\mu)$ be an atomless standard $\sigma$-finite measure
space, let $\mathfrak A_\Omega$ be its measure algebra, and write
$\operatorname{Aut}(\mathfrak A_\Omega)$ for the group of complete
measure-class Boolean automorphisms.  If $Z$ is a real Banach function
space over $\Omega$, put
\[
 \mathfrak D_Z
 =\{M_\varepsilon:\varepsilon\in L_\infty(\Omega),\
                 \ \varepsilon^2=1\text{ a.e.}\}.
\]
Thus $\mathfrak D_Z$ is the abelian group of all measurable sign
changes.  Every element of $\mathfrak D_Z$ is an involutive
surjective isometry.

If $\alpha\in\operatorname{Aut}(\mathfrak A_\Omega)$, denote by
$\Phi_\alpha$ the induced normal lattice automorphism of
$L_0(\Omega)$.  An invertible elementary operator has the form
\[
 W=M_b\Phi_\alpha,
\]
where $b$ is measurable and nonzero almost everywhere.

The Boolean automorphism in an elementary representation is unique.
\begin{lemma}
\label{lem:induced-automorphism}
Let $Z$ be a real Banach function space on an atomless semifinite
measure space, and assume that every surjective real-linear
self-isometry of $Z$ is invertible elementary. For each $W\in\Iso(Z)$
there is a unique $\alpha_W\in\operatorname{Aut}(\mathfrak A_\Omega)$
such that
\[
 W=M_b\Phi_{\alpha_W}
\]
for some measurable $b$ with full support. The map
\[
 \pi_Z:\Iso(Z)\longrightarrow\operatorname{Aut}(\mathfrak A_\Omega),
 \qquad \pi_Z(W)=\alpha_W,
\]
is a group homomorphism, and $\mathfrak D_Z$ is a normal subgroup of
$\Iso(Z)$.
\end{lemma}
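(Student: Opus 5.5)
Existence of $\alpha_W$ is immediate from the hypothesis: $W$ is invertible elementary, so $W=M_b\Phi_\alpha$ for some Boolean automorphism $\alpha$ and some measurable $b$ that is nonzero almost everywhere. The real content is uniqueness; the homomorphism and normality statements then follow by short algebra.

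For uniqueness, suppose $M_b\Phi_\alpha=M_c\Phi_\beta$ on $Z$. Since the measure space is semifinite and atomless, we can fix a set $A$ of finite positive measure and work with characteristic functions $\chi_B\in Z$ for measurable $B\subseteq A$ of finite measure. Because $\Phi_\alpha$ is a lattice and algebra isomorphism, $\Phi_\alpha(\chi_B)=\chi_{\alpha(B)}$. Applying both representations to $\chi_B$ gives
\[
   b\,\chi_{\alpha(B)}=c\,\chi_{\beta(B)} .
\]
Since $b$ and $c$ have full support, comparing supports yields $\alpha(B)=\beta(B)$ for every finite-measure $B$. By semifiniteness, every element of the measure algebra is the supremum of its finite-measure subelements, and complete Boolean isomorphisms preserve suprema. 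Hence $\alpha=\beta$. Comparing once more on a finite-measure cover then gives $b=c$ almost everywhere, although the lemma only requires uniqueness of $\alpha$.

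For the homomorphism property, take $W_1=M_{b_1}\Phi_{\alpha_1}$ and $W_2=M_{b_2}\Phi_{\alpha_2}$. Multiplicativity of $\Phi$ gives the commutation rule
\[
   \Phi_\alpha M_b=M_{\Phi_\alpha(b)}\Phi_\alpha .
\]
From this,
\[
   W_1W_2=M_{b_1\Phi_{\alpha_1}(b_2)}\,\Phi_{\alpha_1}\Phi_{\alpha_2}.
\]
The weight $b_1\Phi_{\alpha_1}(b_2)$ has full support, because $\Phi_{\alpha_1}$ preserves supports. We also have $\Phi_{\alpha_1}\Phi_{\alpha_2}=\Phi_{\alpha_1\circ\alpha_2}$, up to the fixed order convention for how $\alpha$ acts. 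Uniqueness then gives $\pi_Z(W_1W_2)=\pi_Z(W_1)\pi_Z(W_2)$.

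For normality, note that $\mathfrak D_Z\subseteq\Iso(Z)$, since each sign change is a surjective isometry. I will identify $\mathfrak D_Z$ with $\ker\pi_Z$ intersected with the isometries given by unimodular weights. The cleaner route is to check conjugation directly. For $W=M_b\Phi_\alpha$ and $\varepsilon^2=1$,
\[
   W M_\varepsilon W^{-1}=M_b\,M_{\Phi_\alpha(\varepsilon)}\,\Phi_\alpha\Phi_\alpha^{-1}M_{b}^{-1}=M_{\Phi_\alpha(\varepsilon)} ,
\]
using the commutation rule and the fact that multiplication operators commute. Since $\Phi_\alpha$ is an algebra homomorphism, $\Phi_\alpha(\varepsilon)^2=1$, so the conjugate lies in $\mathfrak D_Z$.

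I expect the main obstacle to be the bookkeeping in the uniqueness step. There I must make sure that $\Phi_\alpha$ maps $\chi_B$ to $\chi_{\alpha(B)}$ even when $\Phi_\alpha$ acts on $L_0$ rather than on $Z$. I must also make sure that semifiniteness suffices to pass from finite-measure sets to the whole measure algebra. Both facts follow from normality of $\Phi_\alpha$ together with the completeness of the Boolean isomorphism.
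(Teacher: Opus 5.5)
Your proposal is correct and follows the paper's proof essentially step for step. Uniqueness of $\alpha$ comes from $\supp(W\chi_B)=\alpha(B)=\beta(B)$ for all finite-measure $B$, together with order density and completeness; the homomorphism property comes from $(M_b\Phi_\alpha)(M_c\Phi_\beta)=M_{b\Phi_\alpha(c)}\Phi_{\alpha\beta}$; and normality comes from $WM_\varepsilon W^{-1}=M_{\Phi_\alpha(\varepsilon)}$ with $\Phi_\alpha(\varepsilon)^2=1$. One slip: fixing a single set $A$ and taking only $B\subseteq A$ would not suffice, but you then correctly use every finite-measure $B$, which is what the argument needs.
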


\begin{proof}
Write $W=M_b\Phi_\alpha$, where $b$ has full support.  If also
$W=M_c\Phi_\beta$, then for every set of finite measure $A$,
\[
 \supp W\chi_A=\alpha(A)=\beta(A).
\]
Order density of the finite-measure elements and Dedekind completeness
of the measure algebra
\cite[313K, 322B, 322E]{Fremlin3} force $\alpha=\beta$.
The composition formula
\[
 (M_b\Phi_\alpha)(M_c\Phi_\beta)
 =M_{b\Phi_\alpha(c)}\Phi_{\alpha\beta}
\]
shows that $\pi_Z$ is a homomorphism.

If $M_\varepsilon\in\mathfrak D_Z$, then
\[
 W M_\varepsilon W^{-1}
 =M_{\Phi_\alpha(\varepsilon)}.
\]
Since $\Phi_\alpha(\varepsilon)^2=1$, the right-hand side is again a
sign change.  Hence $\mathfrak D_Z\triangleleft\Iso(Z)$.
\end{proof}

The argument uses a standard fact about automorphisms of an atomless
measure algebra.

\begin{lemma}
\label{lem:no-normal-involutions}
Let $(\Omega,\mu)$ be atomless, standard, and $\sigma$-finite.  Let
$\mathcal G$ be an abelian subgroup of
$\operatorname{Aut}(\mathfrak A_\Omega)$ such that
\[
 \alpha^2=\id\qquad(\alpha\in\mathcal G)
\]
and such that $\mathcal G$ is normalized by every measure-preserving
Boolean automorphism.  Then
\[
 \mathcal G=\{\id\}.
\]
\end{lemma}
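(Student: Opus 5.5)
The plan is to argue by contradiction: assume some $\alpha\in\mathcal G$ with $\alpha\ne\id$, and produce a measure-preserving automorphism $\theta$ for which $\beta=\theta\alpha\theta^{-1}$ does not commute with $\alpha$. Since $\mathcal G$ is normalized by measure-preserving automorphisms, $\beta\in\mathcal G$, and $\mathcal G$ is abelian, so this is a contradiction. Only the involution property of $\alpha$, the atomlessness of $\mu$, and the freedom to build measure-preserving maps on a standard space will be used.

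\emph{Step 1: a wandering set.} Since $\alpha\ne\id$, there is $B$ with $\alpha(B)\ne B$. By symmetry we may assume $C=B\setminus\alpha(B)\ne0$. Because $\alpha^2=\id$,
\[
  \alpha(C)=\alpha(B)\setminus B,
\]
which is disjoint from $C$. Using semifiniteness we shrink $C$ to a set $A$ with $0<\mu(A)<\infty$ and $\alpha(A)\wedge A=0$. Shrinking $A$ further, using that $\mu\circ\alpha$ and $\mu$ are mutually absolutely continuous, we may also arrange $0<\mu(\alpha(A))<\infty$. Then $\alpha$ restricts to an isomorphism of the measure algebras of $A$ and $\alpha(A)$, and this isomorphism is in general not measure preserving.

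\emph{Step 2: choosing $\theta$.} Partition $A$ into three pieces $E_1,E_2,E_3$. The measure-preserving automorphism $\theta$ will be supported in $A\sqcup\alpha(A)$ and will act as a \emph{non-involutive} permutation, namely a $3$-cycle of pieces of equal measure. Using atomlessness (Sierpi\'nski's intermediate-value property for $\mu$ and for $\mu\circ\alpha$), choose $E_1,E_2\subset A$ and $F\subset\alpha(A)$ with
\[
  \mu(E_1)=\mu(E_2)=\mu(F)>0,
  \qquad
  F\wedge\alpha(E_1)=F\wedge\alpha(E_2)=0 .
\]
Let $\theta$ be the cycle $E_1\to E_2\to F\to E_1$, implemented by measure-preserving point maps between standard atomless pieces of equal measure, and the identity elsewhere. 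We then compute $\alpha\beta$ and $\beta\alpha$ on $E_1$ and on $E_2$ by tracking which of the sets $E_i$, $F$, $\alpha(E_i)$, $\alpha(F)$ are moved or fixed by $\theta^{\pm1}$. The goal is to show that one side lands in $\alpha(A)$ while the other lands in $A$, or that the two sides land in disjoint pieces.

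\emph{Main obstacle.} The hard part is this bookkeeping. Naive choices in which $\theta$ is a single transposition, for example swapping $E\subset A$ with $F\subset\alpha(A)$, produce conjugates $\beta$ that \emph{do} commute with $\alpha$, because both $\alpha$ and $\beta$ then act as products of commuting transpositions on the same four blocks. The first remedy is the $3$-cycle above, which breaks this symmetry.

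If that bookkeeping still fails, the fallback uses the fact that $\alpha$ need not be measure preserving. We compare the measures $\mu(\alpha\beta(E))$ and $\mu(\beta\alpha(E))$ for a suitably chosen $E$. We choose $\theta$ to move a set $E$ to a set $E'$ of equal $\mu$-measure but with $\mu(\alpha(E))\ne\mu(\alpha(E'))$; such sets exist unless $\alpha$ scales $\mu$ by a constant on $A$. In the remaining constant-scaling case, $\alpha$ is essentially measure preserving between $A$ and $\alpha(A)$ up to a factor. In that case we instead use a $\theta$ that cycles three disjoint sets inside $A$ alone and fixes $\alpha(A)$, and we check directly that $\alpha\beta(E)\ne\beta\alpha(E)$ for one of the cycled sets $E$. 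Either way the contradiction shows $\mathcal G=\{\id\}$.
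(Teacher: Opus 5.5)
Your main construction already works, and it is essentially the paper's idea: conjugate $\alpha$ by a measure-preserving $3$-cycle of equal-measure pieces that avoid their own $\alpha$-images. As you note, a transposition does not suffice. What the proposal lacks is the one computation you flag as the main obstacle, and it is a two-line check. Take $\theta\colon E_1\to E_2\to F\to E_1$. The set $\alpha(E_1)\le\alpha(A)$ misses $A$, and by hypothesis it misses $F$, so $\theta^{\pm1}$ fixes it. The set $\alpha(F)\le A$ misses $F$, and since $F\wedge\alpha(E_i)=0$ and $\alpha^2=\id$ it also misses $E_1$ and $E_2$, so $\theta$ fixes it. Hence
\[
 \beta\alpha(E_1)=\theta\alpha\theta^{-1}\alpha(E_1)=\theta\alpha\alpha(E_1)=\theta(E_1)=E_2,
\]
\[
 \alpha\beta(E_1)=\alpha\theta\alpha\theta^{-1}(E_1)=\alpha\theta\alpha(F)=\alpha\alpha(F)=F.
\]
Here $E_2\ne F$, because both are nonzero and lie in the disjoint elements $A$ and $\alpha(A)$. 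You should commit to this argument and drop the fallback.

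The paper places the cycle entirely inside the wandering set. It takes disjoint $c_0,c_1,c_2\le a$ of equal finite measure and the cycle $\tau\colon c_0\to c_1\to c_2\to c_0$. This $\tau$ fixes everything below $\alpha(c)$, so $\beta\alpha(c_0)=c_1$ while $\alpha\beta(c_0)=c_2$. That placement avoids your auxiliary condition $F\wedge\alpha(E_i)=0$ and the extra shrinking needed to make $\mu(\alpha(A))$ finite.

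You reserve this in-$A$ cycle for a ``constant-scaling'' subcase, but it works unconditionally. Neither version uses anything about how $\alpha$ distorts $\mu$; both need only $A\wedge\alpha(A)=0$ and $\alpha^2=\id$. Your measure-comparison fallback, on the other hand, is not an argument as written. For a transposition $E\leftrightarrow E'$ inside $A$ one gets $\alpha\beta(E)=\beta\alpha(E)=E'$, whatever $\mu(\alpha(E))$ and $\mu(\alpha(E'))$ are. The whole case split should therefore be discarded.
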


\begin{proof}
Suppose that $\alpha\in\mathcal G$ is nontrivial.  Choose
$b\in\mathfrak A_\Omega$ with $\alpha(b)\ne b$.  Replacing $b$ by
$\alpha(b)$ if necessary, the element
\[
 a=b\wedge\alpha(b)^c
\]
is nonzero.  Since $\alpha^2=\id$,
\[
 a\wedge\alpha(a)=0.
\]

By atomlessness and semifiniteness, choose pairwise disjoint elements
$c_0,c_1,c_2\le a$ such that
\[
0<m(c_0)=m(c_1)=m(c_2)<\infty.
\]
Choose measurable representatives $C_0,C_1,C_2$ of these elements.
Since the $C_i$ have the same finite measure and the underlying
measure space is standard and atomless, the isomorphism theorem for
atomless standard probability spaces
\cite[Theorem~9.2.2]{Bogachev} gives measure-preserving isomorphisms
\[
\theta_0:C_0\to C_1,\qquad
\theta_1:C_1\to C_2,\qquad
\theta_2:C_2\to C_0
\]
modulo null sets.  Patching the maps $\theta_i$ on $C_i$ and taking the identity outside
$C_0\cup C_1\cup C_2$ defines a measure-preserving automorphism
satisfying
\[
\tau(C_0)=C_1,\qquad
\tau(C_1)=C_2,\qquad
\tau(C_2)=C_0
\]
modulo null sets.

Put $c=c_0\vee c_1\vee c_2$.  Since $c\le a$ and
$a\wedge\alpha(a)=0$, one has $c\wedge\alpha(c)=0$; hence $\tau$ is
the identity on $\alpha(c)$.  By normality,
\[
 \beta:=\tau\alpha\tau^{-1}\in\mathcal G.
\]
A direct computation in the measure algebra gives
\[
 \alpha\beta(c_0)=c_2,
 \qquad
 \beta\alpha(c_0)=c_1.
\]
Indeed, $\tau^{-1}(c_0)=c_2$ and $\tau$ fixes $\alpha(c_2)$, whereas
$\tau^{-1}$ fixes $\alpha(c_0)$.  Thus $\alpha\beta\ne\beta\alpha$,
contrary to the abelianness of $\mathcal G$.  Therefore
$\mathcal G$ is trivial.
\end{proof}

Conjugation by $U$ preserves the sign-change groups.
\begin{proposition}
\label{prop:sign-characteristic}
Let $E(\Omega,\mu)$ and $F(\Lambda,\lambda)$ be real rearrangement-invariant Banach function
spaces on atomless standard $\sigma$-finite measure spaces.  Assume
that every surjective real-linear self-isometry of $E$ and every such
self-isometry of $F$ is invertible elementary.  If
\[
 U:E\longrightarrow F
\]
is a surjective real-linear isometry, then
\[
 U\mathfrak D_EU^{-1}=\mathfrak D_F.
\]
\end{proposition}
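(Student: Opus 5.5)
The plan is to transport the sign-change group $\mathfrak D_E$ to $F$ and apply \Cref{lem:no-normal-involutions} to its image under $\pi_F$. Put
\[
 \mathcal H=U\mathfrak D_EU^{-1}\subseteq\Iso(F).
\]
Conjugation by $U$ is a group isomorphism $\Iso(E)\to\Iso(F)$, since $W\mapsto UWU^{-1}$ has inverse $V\mapsto U^{-1}VU$. \Cref{lem:induced-automorphism} says $\mathfrak D_E\triangleleft\Iso(E)$, so $\mathcal H$ is a normal subgroup of $\Iso(F)$. Since $\mathfrak D_E$ is abelian and every element is an involution, the same holds for $\mathcal H$.

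Next I apply the homomorphism $\pi_F$ of \Cref{lem:induced-automorphism}. The group $\mathcal G=\pi_F(\mathcal H)\subseteq\operatorname{Aut}(\mathfrak A_\Lambda)$ is abelian, and $\alpha^2=\id$ for every $\alpha\in\mathcal G$. It is normalized by $\pi_F(\Iso(F))$, because $\mathcal H$ is normal in $\Iso(F)$.

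To invoke \Cref{lem:no-normal-involutions}, I must check that every measure-preserving Boolean automorphism $\tau$ of $\mathfrak A_\Lambda$ lies in $\pi_F(\Iso(F))$.
\begin{itemize}
 \item Since $(\Lambda,\lambda)$ is standard, $\tau$ is induced by a measure-preserving point transformation modulo null sets.
 \item The induced map $\Phi_\tau$ on $L_0(\Lambda)$ preserves distribution functions, so $(\Phi_\tau f)^*=f^*$.
 \item By rearrangement invariance, $\Phi_\tau$ is a surjective real-linear isometry of $F$, with inverse $\Phi_{\tau^{-1}}$.
 \item Its elementary representation is $M_{\one}\Phi_\tau$. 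By the uniqueness part of \Cref{lem:induced-automorphism}, $\pi_F(\Phi_\tau)=\tau$.
\end{itemize}
Hence $\mathcal G$ is normalized by every measure-preserving automorphism, and \Cref{lem:no-normal-involutions} gives $\mathcal G=\{\id\}$.

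Now take any $V\in\mathcal H$. Since $\pi_F(V)=\id$, \Cref{lem:induced-automorphism} gives $V=M_b$ for some measurable $b$ with full support. Because $V^2=\id$, we get $M_{b^2}=\id$. Applying this to $\chi_A$ for sets $A$ of finite measure, and using semifiniteness, yields $b^2=1$ almost everywhere. So $b$ is a measurable sign, $b\in L_\infty$, and $V\in\mathfrak D_F$. This proves
\[
 U\mathfrak D_EU^{-1}\subseteq\mathfrak D_F.
\]

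For the reverse inclusion, the hypotheses are symmetric in $E$ and $F$. Running the same argument with $U^{-1}:F\to E$ gives $U^{-1}\mathfrak D_FU\subseteq\mathfrak D_E$, which is equivalent to $\mathfrak D_F\subseteq U\mathfrak D_EU^{-1}$. Together the two inclusions give equality.

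The step I expect to need the most care is the membership of all measure-preserving automorphisms in $\pi_F(\Iso(F))$. It relies on standardness, to realize $\tau$ pointwise, and on rearrangement invariance for all of $F$, not just for finite-support functions. If the Fatou property or order continuity is needed there, I would first establish the isometry on simple functions of finite support and then pass to the whole space by density or by the Fatou property. The rest is group-theoretic bookkeeping.
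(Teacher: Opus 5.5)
Your proposal is correct and follows the paper's proof essentially step for step. In both, $\mathfrak D_E$ is conjugated to a normal abelian subgroup of involutions in $\Iso(F)$ and pushed forward by $\pi_F$. The rearrangement isometries $\Phi_\gamma$ show that the image is normalized by all measure-preserving automorphisms, \Cref{lem:no-normal-involutions} gives $\pi_F(\mathcal H)=\{\id\}$, the multipliers are identified as signs, and the reverse inclusion follows by symmetry.

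Your extra checks, $\pi_F(\Phi_\tau)=\tau$ via uniqueness and $b^2=1$ a.e.\ via $\sigma$-finiteness, fill in details the paper leaves implicit. No Fatou or order-continuity argument is needed at the step you flagged, since rearrangement invariance already gives $\Phi_\tau F=F$ isometrically.
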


\begin{proof}
Set
\[
 \mathcal H=U\mathfrak D_EU^{-1}.
\]
Conjugation by $U$ identifies $\Iso(E)$ with $\Iso(F)$.  Since
$\mathfrak D_E$ is normal in $\Iso(E)$ by
\Cref{lem:induced-automorphism}, $\mathcal H$ is a normal subgroup of $\Iso(F)$.
It is abelian and every one of its elements is an involution.
Consequently,
\[
 \pi_F(\mathcal H)
 \subseteq\operatorname{Aut}(\mathfrak A_\Lambda)
\]
is an abelian group of involutions.

Every measure-preserving automorphism $\gamma$ of
$\mathfrak A_\Lambda$ induces the rearrangement isometry
$\Phi_\gamma\in\Iso(F)$.  Normality of $\mathcal H$ therefore shows
that $\pi_F(\mathcal H)$ is normalized by all measure-preserving
automorphisms.  By Lemma~\ref{lem:no-normal-involutions},
\[
 \pi_F(\mathcal H)=\{\id\}.
\]
Hence every $H\in\mathcal H$ is a multiplication operator.  Since
$H^2=I$ and the scalar field is real, its multiplier is equal to
$\pm1$ almost everywhere.  Thus
\[
 \mathcal H\subseteq\mathfrak D_F.
\]
Interchanging $E$ and $F$ and replacing $U$ by $U^{-1}$ gives
\[
 U^{-1}\mathfrak D_FU\subseteq\mathfrak D_E.
\]
Conjugating the latter inclusion by $U$ gives the reverse inclusion
$\mathfrak D_F\subseteq\mathcal H$.
\end{proof}

The sign change $I-2P_A$ determines the band projection $P_A$.
\begin{proposition}
\label{prop:band-conjugacy-between}
Under the assumptions of Proposition~\ref{prop:sign-characteristic}, there is a
unique complete Boolean isomorphism
\[
 \theta:\mathfrak A_\Omega\longrightarrow\mathfrak A_\Lambda
\]
such that
\begin{equation}
 U P_A U^{-1}=P_{\theta(A)}
 \qquad(A\in\mathfrak A_\Omega).
 \label{eq:between-band-conjugacy}
\end{equation}
Consequently,
\begin{equation}
 \supp(Uf)=\theta(\supp f),
 \qquad f\in E.
 \label{eq:support-transport}
\end{equation}
\end{proposition}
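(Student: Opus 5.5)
The plan is to read off $\theta$ from Proposition~\ref{prop:sign-characteristic}, using the correspondence $A\mapsto I-2P_A$ between $\mathfrak A_\Omega$ and $\mathfrak D_E$. Every $M_\varepsilon\in\mathfrak D_E$ can be written uniquely as $I-2P_A$ with $A=\{\varepsilon=-1\}$. Because conjugation preserves identities, $U(I-2P_A)U^{-1}$ is an element of $\mathfrak D_F$, so it equals $I-2P_B$ for a unique $B\in\mathfrak A_\Lambda$. We set $\theta(A)=B$. Then
\[
UP_AU^{-1}=\tfrac12\bigl(I-U(I-2P_A)U^{-1}\bigr)=P_{\theta(A)},
\]
which is \eqref{eq:between-band-conjugacy}. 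The same construction applied to $U^{-1}$ produces a map inverse to $\theta$, so $\theta$ is a bijection.

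Next we show that $\theta$ is a Boolean isomorphism. Note that $\theta$ is determined by the operators $P_{\theta(A)}$, since $P_B=P_{B'}$ forces $B=B'$ (test on $\chi_C$ for $C\subseteq B\triangle B'$ of finite positive measure). Conjugation $X\mapsto UXU^{-1}$ is multiplicative, so it respects the operations on projections:
\[
P_AP_{A'}=P_{A\wedge A'},\qquad I-P_A=P_{A^c}.
\]
Consequently $\theta(A\wedge A')=\theta(A)\wedge\theta(A')$ and $\theta(A^c)=\theta(A)^c$. This makes $\theta$ a Boolean isomorphism, with $\theta(0)=0$ and $\theta(1)=1$. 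A Boolean isomorphism is order preserving in both directions, so it preserves every existing supremum. Both measure algebras are Dedekind complete, hence $\theta$ is automatically complete.

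For \eqref{eq:support-transport}, let $A=\supp f$. Then $P_Af=f$ and $P_{A^c}f=0$. Applying $U$ gives $P_{\theta(A)}Uf=Uf$ and $P_{\theta(A)^c}Uf=0$, so $\supp Uf\le\theta(A)$. Running the same argument with $U^{-1}$ and $\theta^{-1}$ on $g=Uf$ yields $A\le\theta^{-1}(\supp Uf)$. Together these give equality.

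For uniqueness, suppose $\theta'$ also satisfies \eqref{eq:between-band-conjugacy}. Then $P_{\theta(A)}=P_{\theta'(A)}$, and injectivity of $B\mapsto P_B$ forces $\theta'(A)=\theta(A)$.

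The only step that needs real care is the injectivity of $B\mapsto P_B$. It relies on semifiniteness, which guarantees that every nonzero $C$ contains a finite-measure subset $C_0$ with $\chi_{C_0}\in F$. The remaining steps are formal algebra once Proposition~\ref{prop:sign-characteristic} is available.
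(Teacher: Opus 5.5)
Your proof is correct and follows the paper's structure: $\theta$ is read off from the sign changes $I-2P_A$, the Boolean operations come from conjugating $P_AP_{A'}=P_{A\wedge A'}$ and $I-P_A=P_{A^c}$, bijectivity comes from $U^{-1}$, and the support identity is obtained in two directions. One wording fix: the fact that $U(I-2P_A)U^{-1}$ lies in $\mathfrak D_F$ is exactly the content of Proposition~\ref{prop:sign-characteristic}. It is not a consequence of conjugation preserving identities.

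The one genuine difference is the completeness step. The paper's proof of this proposition argues through the operators. It puts $A=\bigvee_iA_i$ and $B=\bigvee_i\theta(A_i)$, and supposes $\theta(A)\wedge B^c\neq0$. Semifiniteness then gives a finite-measure $C$ below this element, and the paper sets $f=U^{-1}\chi_C$. The band identities give $P_Af=f$ and $P_{A_i}f=0$ for every $i$, which forces $f=0$, a contradiction.

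You argue instead that a Boolean isomorphism is an order isomorphism, since its inverse is again meet-preserving. An order isomorphism preserves every supremum that exists, so completeness is automatic. This is exactly the argument the paper itself uses later, in Proposition~\ref{prop:band-conjugacy-general}.

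Your route is shorter and purely lattice-theoretic. Semifiniteness enters only through the injectivity of $B\mapsto P_B$, and Dedekind completeness is needed only to guarantee that the suprema exist. The paper's version derives the same conclusion analytically, at the cost of an extra test-function argument.
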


\begin{proof}
For $A\in\mathfrak A_\Omega$, put
\[
 R_A=I-2P_A\in\mathfrak D_E.
\]
By Proposition~\ref{prop:sign-characteristic}, there is a unique
$\theta(A)\in\mathfrak A_\Lambda$ such that
\[
 U R_AU^{-1}=I-2P_{\theta(A)}.
\]
This is equivalent to \eqref{eq:between-band-conjugacy}.  Applying
conjugation to
\[
 P_AP_B=P_{A\wedge B},
 \qquad I-P_A=P_{A^c},
\]
shows that $\theta$ preserves finite meets and complements.  The
reverse construction with $U^{-1}$ shows that it is bijective.

We verify completeness.  Let $A=\bigvee_{i\in I}A_i$ and put
$B=\bigvee_{i\in I}\theta(A_i)$.  Monotonicity gives
$B\le\theta(A)$.  If $\theta(A)\wedge B^c\ne0$, semifiniteness \cite[211F]{Fremlin2} yields
a nonzero element of finite measure
\[
 0<C\le\theta(A)\wedge B^c.
\]
Set $g=\chi_C\in F$ and $f=U^{-1}g$.  Since
$C\le\theta(A)$, equation \eqref{eq:between-band-conjugacy} gives
$P_Af=f$.  Since $C\wedge\theta(A_i)=0$ for every $i$, it also gives
$P_{A_i}f=0$ for every $i$.  But $A=\bigvee_iA_i$, so $f=0$, a
contradiction.  Hence $\theta(A)=B$, and $\theta$ is complete.

Since $P_{\supp f}f=f$,
\eqref{eq:between-band-conjugacy} gives
\[
 \supp(Uf)\le\theta(\supp f).
\]
Let $C=\supp(Uf)$ and $D=\theta^{-1}(C)$.  Since $P_CUf=Uf$,
\eqref{eq:between-band-conjugacy} and injectivity of $U$ imply
$P_Df=f$.  Thus $\supp f\le D$, and applying $\theta$ yields the
reverse inequality in \eqref{eq:support-transport}.
\end{proof}

Let
\[
 \Phi:L_0(\Omega,\mu)\longrightarrow L_0(\Lambda,\lambda)
\]
be the normal lattice and algebra isomorphism induced by $\theta$;
thus
\[
 \Phi(\chi_A)=\chi_{\theta(A)}.
\]

The band-projection identity extends to bounded measurable
multipliers.
\begin{lemma}
\label{lem:between-module}
For every $\varphi\in L_\infty(\Omega)$ and every $f\in E$,
\begin{equation}
 U(\varphi f)=\Phi(\varphi)Uf.
 \label{eq:between-module}
\end{equation}
\end{lemma}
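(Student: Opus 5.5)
The plan is to pass from characteristic functions to simple functions by linearity, and then to all of $L_\infty(\Omega)$ by a norm approximation. For $\varphi=\chi_A$ with $A\in\mathfrak A_\Omega$, \eqref{eq:between-band-conjugacy} gives
\[
 U(\chi_Af)=UP_Af=P_{\theta(A)}Uf=\chi_{\theta(A)}Uf=\Phi(\chi_A)Uf,
\]
which is \eqref{eq:between-module} in this case. Both sides of \eqref{eq:between-module} are real-linear in $\varphi$: the left because $U$ is real-linear, the right because $\Phi$ is an algebra isomorphism. Hence the identity holds for every real simple function $\varphi=\sum_{k}c_k\chi_{A_k}$ with finitely many values. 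No finite-measure condition on the $A_k$ is needed, since $P_A$ is defined for every $A$ in the measure algebra.

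For general $\varphi\in L_\infty(\Omega)$, choose simple $\varphi_n$ with $\|\varphi_n-\varphi\|_\infty\to0$. In a Banach function space, $|(\varphi_n-\varphi)f|\le\|\varphi_n-\varphi\|_\infty|f|$, so the ideal property gives $\|\varphi_nf-\varphi f\|_E\le\|\varphi_n-\varphi\|_\infty\|f\|_E\to0$. Since $U$ is an isometry, $U(\varphi_nf)\to U(\varphi f)$ in $F$. On the other side, $\Phi$ is a lattice and algebra isomorphism induced by the Boolean isomorphism $\theta$, and therefore maps $L_\infty(\Omega)$ into $L_\infty(\Lambda)$ with $\|\Phi(\psi)\|_\infty=\|\psi\|_\infty$. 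One way to see this is that $|\psi|\le c\,\one$ implies $|\Phi(\psi)|\le c\,\Phi(\one)=c\,\one$, and the same argument applies to $\Phi^{-1}$. Hence $\|\Phi(\varphi_n)Uf-\Phi(\varphi)Uf\|_F\le\|\varphi_n-\varphi\|_\infty\|Uf\|_F\to0$, and passing to the limit gives \eqref{eq:between-module}.

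The only point needing care is this $L_\infty$-isometry property of $\Phi$, in particular $\Phi(\one)=\one$, which follows from completeness of $\theta$ and $\theta(\Omega)=\Lambda$. Equivalently, one can apply $\Phi$ to the level sets of $\psi$, using that $\Phi$ commutes with the spectral sets, $\Phi(\chi_{\{\psi>c\}})=\chi_{\{\Phi(\psi)>c\}}$. Once this is in place the argument is a direct density argument, and order continuity of the norms is not even needed.
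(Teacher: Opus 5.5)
Your proof is correct and follows the same route as the paper: the band-projection identity $UP_AU^{-1}=P_{\theta(A)}$ gives the claim for simple multipliers, and uniform approximation together with $\|\varphi f\|_E\le\|\varphi\|_\infty\|f\|_E$ extends it to all of $L_\infty(\Omega)$. You also check explicitly that $\Phi$ is contractive from $L_\infty(\Omega)$ to $L_\infty(\Lambda)$, which is needed for the right-hand side to converge; the paper leaves this step implicit.
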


\begin{proof}
For a simple function $\varphi=\sum_{k=1}^nc_k\chi_{A_k}$,
\eqref{eq:between-band-conjugacy} gives
\[
 U M_\varphi U^{-1}
 =\sum_{k=1}^nc_kP_{\theta(A_k)}
 =M_{\Phi(\varphi)}.
\]
Every bounded measurable function is a uniform limit of simple
functions \cite[Theorem~2.10, p.~47]{Folland}.  Since multiplication by $\varphi\in L_\infty$ satisfies
\[
 \norm{\varphi f}_X
 \le \norm{\varphi}_{\infty}\norm{f}_X
\]
by the lattice property of Banach function spaces
\cite[Chapter~1, \S1, p.~2]{BS}, passage to the uniform
limit proves \eqref{eq:between-module}.
\end{proof}

\begin{theorem}
\label{thm:between-transfer}
Let $(\Omega,\mu)$ and $(\Lambda,\lambda)$ be atomless standard
$\sigma$-finite measure spaces.  Let $E(\Omega,\mu)$ and
$F(\Lambda,\lambda)$ be real rearrangement-invariant Banach function spaces with
order-continuous norms.  Assume that every surjective real-linear
self-isometry of each of $E$ and $F$ is invertible elementary.  Then
every surjective real-linear isometry
\[
 U:E(\Omega,\mu)\longrightarrow F(\Lambda,\lambda)
\]
has a representation
\begin{equation}
 Uf=w\,\Phi(f),
 \qquad f\in E,
 \label{eq:between-elementary}
\end{equation}
where $\Phi:L_0(\Omega,\mu)\to L_0(\Lambda,\lambda)$ is the normal
lattice isomorphism induced by a complete measure-class Boolean
isomorphism and $w$ is measurable with full support.
\end{theorem}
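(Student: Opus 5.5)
The approach is to combine \Cref{prop:band-conjugacy-between} with \Cref{lem:between-module}, which already give the module identity $U(\varphi f)=\Phi(\varphi)Uf$ for $\varphi\in L_\infty(\Omega)$, and then to read off the weight $w$ locally on sets of finite measure. By $\sigma$-finiteness, fix a partition $\Omega=\bigsqcup_{n}A_n$ into sets with $0<\mu(A_n)<\infty$. Define
\[
 w_n=U\chi_{A_n}\in F,
\]
so that, by \eqref{eq:support-transport}, $\supp w_n=\theta(A_n)$. The sets $\theta(A_n)$ are pairwise disjoint, and by completeness of $\theta$ their supremum is $\theta(\Omega)=\Lambda$. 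We may therefore define a measurable function $w=\sum_n w_n$ pointwise, and it has full support.

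Next, verify \eqref{eq:between-elementary} on the dense class of bounded functions with support of finite measure.
\begin{itemize}
\item For $f$ bounded and supported in $A_n$, write $f=f\chi_{A_n}$ with $f\in L_\infty$. \Cref{lem:between-module} gives
\[
 Uf=U(f\chi_{A_n})=\Phi(f)U\chi_{A_n}=\Phi(f)w_n=w\,\Phi(f),
\]
where the last equality holds because $\Phi(f)$ is supported in $\theta(A_n)$, on which $w=w_n$.
\item A bounded function of finite-measure support is a norm-convergent sum $\sum_n f\chi_{A_n}$, by order continuity. Its image under $U$ is the corresponding sum in $F$, which converges a.e. along a subsequence. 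Since the pieces $w\,\Phi(f\chi_{A_n})$ are disjointly supported, the sum equals $w\,\Phi(f)$ a.e.
\end{itemize}

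To reach every $f\in E$, proceed as in \Cref{lem:core-extension}. Choose such $f_k\to f$ in norm with $f_k\to f$ a.e. Then $Uf_k\to Uf$ in $F$, hence a.e. along a subsequence. Also $\Phi(f_k)\to\Phi(f)$ a.e., because $\Phi$ is a normal lattice and algebra isomorphism of $L_0$ induced by a measure-class isomorphism; it preserves null sets in both directions and therefore a.e. convergence. Passing to the limit gives $Uf=w\,\Phi(f)$.

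The main obstacle is this last justification: that $\Phi$, which is defined abstractly from $\theta$ on $L_0$, respects a.e. convergence. I would handle it by noting that a.e. convergence $g_k\to g$ is expressible in the measure algebra through the sets $\{|g_k-g|>1/j\}$:
\[
 \bigwedge_{m}\bigvee_{k\ge m}\{|g_k-g|>1/j\}=0\qquad\text{for every } j.
\]
Complete Boolean isomorphisms preserve these countable lattice expressions, and $\Phi$ commutes with taking level sets. Everything else reduces to earlier results.
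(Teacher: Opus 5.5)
Your proposal is correct and follows essentially the paper's argument. The paper takes the weak unit $e=\sum_n c_n\chi_{A_n}\in E$ and sets $w=Ue/\Phi(e)$, which is exactly your patched function $\sum_n U\chi_{A_n}$; it then approximates $f$ by $r_ne$, with $r_n$ a truncation of $f/e$, so that \Cref{lem:between-module} applies directly, and passes to the limit through a.e.\ convergent subsequences and normality of $\Phi$ just as you do (your partition-and-density bookkeeping matches the paper's later localizable argument, and your measure-algebra check of a.e.\ continuity spells out what the paper calls normality).
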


\begin{proof}
Let $\theta$ and $\Phi$ be supplied by
Proposition~\ref{prop:band-conjugacy-between}.  Since $\Omega$ is $\sigma$-finite,
there exists $e\in E$ with
\[
 e>0\qquad\text{almost everywhere}.
\]
Choose a measurable partition
$\Omega=\bigsqcup_{n\ge1}A_n$ with $0<\mu(A_n)<\infty$, choose
$c_n>0$ so that
\[
 \sum_{n\ge1}c_n\norm{\chi_{A_n}}_E<\infty,
\]
and take $e=\sum_nc_n\chi_{A_n}$.

By \eqref{eq:support-transport}, both $Ue$ and $\Phi(e)$ have full
support.  Define
\[
 w=\frac{Ue}{\Phi(e)}.
\]
Then $w$ is measurable and nonzero almost everywhere.

Fix $f\in E$ and put $r=f/e$.  Let
\[
 r_n=(-n)\vee(r\wedge n).
\]
Then $r_n\in L_\infty(\Omega)$,
\[
 r_n e\longrightarrow f\quad\text{almost everywhere},
 \qquad |r_n e|\le|f|,
\]
and order continuity of the norm gives
\[
 \norm{r_n e-f}_E\longrightarrow0.
\]
By Lemma~\ref{lem:between-module},
\[
 U(r_n e)=\Phi(r_n)Ue
       =w\,\Phi(r_n)\Phi(e)
       =w\,\Phi(r_n e).
\]
The left-hand side converges to $Uf$ in $F$.  After passing to a
subsequence it converges almost everywhere.  Normality of $\Phi$
gives
\[
 \Phi(r_n e)\longrightarrow\Phi(f)
 \qquad\text{almost everywhere}.
\]
Hence \eqref{eq:between-elementary} follows.

\end{proof}

For standard $\sigma$-finite spaces, the self-isometry hypotheses
follow from \cite{KR} in finite measure and from \Cref{thm:main} in
infinite measure.
\begin{theorem}
\label{thm:between-standard}
Let $(\Omega,\mu)$ and $(\Lambda,\lambda)$ be atomless standard
$\sigma$-finite semifinite measure spaces.  Let $E(\Omega,\mu)$ and
$F(\Lambda,\lambda)$ be real rearrangement-invariant Banach function
spaces with order-continuous, non-Hilbertian norms.  If
\[
 U:E(\Omega,\mu)\longrightarrow F(\Lambda,\lambda)
\]
is a surjective real-linear isometry, then
\[
 Uf=w\,\Phi(f),
\]
where $w$ has full support and $\Phi$ is induced by a complete
measure-class Boolean isomorphism.  If the two spaces are represented
on standard intervals or half-lines, this can be written as
\[
 (Uf)(s)=w(s)f(\sigma(s))
\]
for an invertible nonsingular measurable transformation $\sigma$
modulo null sets.
\end{theorem}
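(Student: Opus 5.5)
The statement follows from \Cref{thm:between-transfer} once its hypothesis is checked: every surjective real-linear self-isometry of $E$ and of $F$ must be invertible elementary. The plan is to verify this separately for $E$ and for $F$, then add a transport step that yields the interval form.

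\emph{Step 1: reduce to an interval or half-line.} An atomless standard $\sigma$-finite measure space $(\Omega,\mu)$ is isomorphic modulo null sets to $((0,\alpha),m)$ by a measure-preserving isomorphism, with $\alpha=\mu(\Omega)\in(0,\infty]$; this follows from \cite[Theorem~9.2.2]{Bogachev}, applied piece by piece on a finite-measure partition when $\alpha=\infty$. Composition with this isomorphism is a lattice isometry from $E(\Omega,\mu)$ onto a rearrangement-invariant space $\tilde E$ on $(0,\alpha)$. It preserves order continuity, and by \Cref{lem:ri-hilbert} it also preserves the property of not being proportional to the $L_2$-norm. Conjugating by this isometry carries invertible elementary operators to invertible elementary operators, because a measure-preserving isomorphism induces a complete Boolean isomorphism of the measure algebras. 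The same holds for $F$.

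\emph{Step 2: self-isometries are elementary.} If $\alpha=\infty$, \Cref{thm:main} shows that every surjective real-linear self-isometry of $\tilde E$ has the form $M_a\Phi_\sigma$ with $a\ne0$ a.e. and $\sigma$ invertible and nonsingular in both directions. Such an operator is invertible elementary. If $\alpha<\infty$, the same conclusion is the Kalton--Randrianantoanina theorem \cite{KR} for non-Hilbertian real rearrangement-invariant spaces on a finite interval. The non-Hilbertian hypothesis needed there comes from \Cref{lem:ri-hilbert}. I expect the main care to be needed here: matching the hypotheses of \cite{KR} (order continuity, or separability, and the Fatou property) to the present setting, and confirming that their $\sigma$ is invertible modulo null sets so that it induces an element of $\operatorname{Aut}(\mathfrak A)$. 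Invertibility should follow exactly as in the proof of \Cref{thm:direct-elementary}, by applying the result to $T^{-1}$.

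\emph{Step 3: conclude.} Once both hypotheses hold, \Cref{thm:between-transfer} gives $Uf=w\,\Phi(f)$, with $w$ of full support and $\Phi$ induced by the complete Boolean isomorphism $\theta$ of \Cref{prop:band-conjugacy-between}. For the pointwise form on standard intervals or half-lines, we realize $\theta^{-1}$ by a point map. A complete measure-class isomorphism between measure algebras of standard Borel spaces is induced by a Borel isomorphism $\sigma:\Lambda\to\Omega$ modulo null sets, with $\sigma$ and $\sigma^{-1}$ both nonsingular (a von Neumann-type realization theorem). It remains to identify $\Phi(f)=f\circ\sigma$. This holds on indicators by construction, extends to simple functions by linearity, and extends to all of $L_0$ by normality of $\Phi$ under a.e. limits, which gives $(Uf)(s)=w(s)f(\sigma(s))$.
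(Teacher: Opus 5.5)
Your proposal is correct and follows the paper's proof. The paper likewise checks the self-isometry hypothesis of \Cref{thm:between-transfer} separately for $E$ and for $F$: in finite measure it uses the Kalton--Randrianantoanina theorem after rescaling to a finite interval, and in infinite measure it uses \Cref{thm:main} after identifying the space with the half-line; it then applies \Cref{thm:between-transfer}. Your Step~3, where you realize $\theta$ by a nonsingular point isomorphism via von Neumann's theorem and extend $\Phi(f)=f\circ\sigma$ from indicators by normality, supplies the pointwise form that the paper states without a separate argument.
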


\begin{proof}
We verify the two self-isometry hypotheses in
Theorem~\ref{thm:between-transfer}.  On an atomless finite standard measure
space, the required elementary representation of self-isometries is
the theorem of Kalton and Randrianantoanina \cite{KR}, after the
usual rescaling to a finite interval.  On an atomless infinite
standard $\sigma$-finite measure space, a measure-space isomorphism
identifies the space with the half-line, and the required conclusion
is \cref{thm:main}. These two verifications apply separately to $E$
and $F$, so Theorem~\ref{thm:between-transfer} applies.
\end{proof}

\subsection{Localizable semifinite measure spaces}
\label{sec:separable-reduction}

The standard $\sigma$-finite result extends to complete localizable
semifinite measure spaces by separable reduction.  Order
continuity is understood for decreasing nets in the Banach-lattice
sense.  The formulation is needed later for maximal abelian
subalgebras of semifinite von Neumann algebras.

Let $\mathscr S$ be a complete sub-$\sigma$-algebra on a measurable
set $A\subseteq\Omega$.  We write
\[
 E[\mathscr S]
 =
 \{f\in E(\Omega,\mu):
   \supp f\le A
   \text{ and } f \text{ is }\mathscr S\text{-measurable}\}.
\]
The notation $F[\mathscr T]$ has the analogous meaning on the target
space.

Each function has $\sigma$-finite support.
\begin{lemma}
\label{lem:sigma-finite-support}
Let $E(\Omega,\mu)$ be a rearrangement-invariant Banach function space
with order-continuous norm over an atomless semifinite measure space.
Then, for every $f\in E$,
\[
 \mu\{|f|>\varepsilon\}<\infty
 \qquad(\varepsilon>0).
\]
In particular, $\supp f$ is $\sigma$-finite, and bounded
simple functions with support of finite measure are norm dense in $E$.
\end{lemma}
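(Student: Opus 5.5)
The plan is to argue by contradiction and use order continuity together with rearrangement invariance. Fix $f\in E$ and $\varepsilon>0$, and put $B=\{|f|>\varepsilon\}$. By the lattice property, $\varepsilon\chi_C\le|f|$ for every measurable $C\subseteq B$, so
\[
 \varepsilon\|\chi_C\|_E\le\|f\|_E .
\]
Suppose $\mu(B)=\infty$. Since the space is semifinite and atomless, $B$ contains sets of arbitrarily large finite measure. Indeed, the supremum of $\mu(C)$ over finite-measure subsets $C\subseteq B$ is infinite; this is the standard semifiniteness fact \cite[213A]{Fremlin2}. Atomlessness then lets us choose, inside these subsets, an increasing sequence $C_1\subseteq C_2\subseteq\cdots\subseteq B$ with $\mu(C_n)=n$.

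Next I would use order continuity along a suitable sequence. Because $\mu(C_n)=n$ grows, the sets $C_n\setminus C_1$ have measure $n-1$. Rearrangement invariance gives $\|\chi_{C_n\setminus C_m}\|_E=\varphi_E(n-m)$, where $\varphi_E$ is the (increasing) fundamental function.

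I expect the key step to be showing that $\varphi_E(t)\to\infty$ would contradict the bound above. It is not obvious that $\varphi_E$ must be unbounded. Instead I would use order continuity directly, via decreasing nets. Consider $g=|f|\chi_B$, and choose the sequence so that $D_n=C_{2n}\setminus C_n$ are disjoint, of measure $n$... A simpler route is to pick pairwise disjoint sets $D_k\subseteq B$ with $\mu(D_k)=1$. This is possible because $B$ has infinite measure: pick inductively in $B\setminus(D_1\cup\dots\cup D_{k-1})$, which still has infinite measure, and use semifiniteness plus atomlessness to get a subset of measure exactly $1$. Then the tails
\[
 h_n=\varepsilon\chi_{\bigcup_{k\ge n}D_k}
\]
satisfy $0\le h_n\le|f|$ and $h_n\downarrow0$ almost everywhere, since the $D_k$ are disjoint. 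Order continuity forces $\|h_n\|_E\to0$. On the other hand, $h_n\ge\varepsilon\chi_{D_n}$, so
\[
 \|h_n\|_E\ge\varepsilon\varphi_E(1)>0
\]
for all $n$, which is a contradiction. Hence $\mu\{|f|>\varepsilon\}<\infty$. The only delicate point in this part is the inductive choice of the $D_k$, which needs semifiniteness applied to sets of infinite measure.

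The remaining assertions then follow. The support satisfies $\supp f=\bigcup_m\{|f|>1/m\}$, a countable union of finite-measure sets, so it is $\sigma$-finite. For density, set
\[
 f_m=\bigl((-m)\vee(f\wedge m)\bigr)\chi_{\{|f|>1/m\}} .
\]
Each $f_m$ is bounded with support of finite measure, $|f-f_m|\le|f|$, and $f-f_m\to0$ almost everywhere. Order continuity (for sequences, which is a special case of nets) gives $\|f-f_m\|_E\to0$. Finally, each $f_m$ is a uniform limit of simple functions supported in the same finite-measure set $\{|f|>1/m\}$. Uniform convergence on a set $S$ of finite measure gives norm convergence, because $\|g\|_E\le\|g\|_\infty\varphi_E(\mu(S))$ whenever $\supp g\subseteq S$.
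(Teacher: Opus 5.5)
Your proof is correct and follows essentially the paper's route. Both arguments assume $\mu\{|f|>\varepsilon\}=\infty$, use semifiniteness and atomlessness to produce tails of $|f|$ on $A=\{|f|>\varepsilon\}$ that decrease to $0$ but always dominate $\varepsilon$ times the indicator of a set of fixed positive measure, contradict order continuity, and then truncate via $f_m=((-m)\vee(f\wedge m))\chi_{\{|f|>1/m\}}$ for density. The only difference is the indexing. The paper runs the argument along the net $|f|\chi_{A\setminus C}$, indexed by all finite-measure $C\subseteq A$, so it needs order continuity for decreasing nets and a semifiniteness check that the infimum is $0$. Your disjoint unit-measure sets $D_k$ give a sequence converging to $0$ everywhere, so the sequential form of order continuity from the preliminaries already suffices. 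The exploratory detours about the increasing sets $C_n$ and the unboundedness of $\varphi_E$ can simply be deleted.
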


\begin{proof}
Fix $\varepsilon>0$ and set
\[
 A=\{|f|>\varepsilon\}.
\]
Suppose that $\mu(A)=\infty$.  Let $\mathcal C$ be the family of
measurable subsets of finite measure of $A$, directed by inclusion, and
define
\[
 x_C=|f|\chi_{A\setminus C},
 \qquad C\in\mathcal C.
\]
By semifiniteness,
\[
 x_C\downarrow0
\]
in $L_0(\Omega)$.  Indeed, if the infimum had nonzero support, then
semifiniteness would provide a nonzero subset of finite measure of that
support, which would be contained in some $C\in\mathcal C$, a
contradiction.  Hence order continuity gives
\[
 \norm{x_C}_E\longrightarrow0.
\]

Choose a measurable set $B_0\subset A$ with
\[
 0<t:=\mu(B_0)<\infty.
\]
For every $C\in\mathcal C$, we have $\mu(A\setminus C)=\infty$.
By semifiniteness and atomlessness
\cite[215D]{Fremlin2}, there exists
$B_C\subset A\setminus C$ with $\mu(B_C)=t$.  Since
$|f|\ge\varepsilon$ on $A$,
\[
 \norm{x_C}_E
 \ge
 \varepsilon\norm{\chi_{B_C}}_E
 =
 \varepsilon\varphi_E(t)>0,
\]
contradicting $\norm{x_C}_E\to0$ where $\varphi_E$ is the fundamental function of $E$;
see \cite[Chapter~2, \S5]{BS}.  Thus
\[
 \mu\{|f|>\varepsilon\}<\infty
 \qquad(\varepsilon>0).
\]

Consequently,
\[
 \supp f
 =
 \bigvee_{n\ge1}\{|f|>1/n\}
\]
is $\sigma$-finite.

Define
\[
 f_n
 =
 \bigl((-n)\vee(f\wedge n)\bigr)
 \chi_{\{|f|>1/n\}}.
\]
Then each $f_n$ is bounded and has support of finite measure,
$f_n\to f$ almost everywhere, and $|f_n|\le|f|$.  By order continuity,
\[
 \norm{f_n-f}_E\longrightarrow0.
\]
Approximating each $f_n$ by simple functions on its support of finite
measure \cite[Chapter~1, \S3]{BS} proves that bounded simple functions
with support of finite measure are norm dense in $E$.
\end{proof}

A countable family is contained in an atomless, countably generated
$\sigma$-finite sub-$\sigma$-algebra.
\begin{lemma}
\label{lem:atomless-hull}
Let $\mathcal C\subset L_0(\Omega,\mu)$ be countable and suppose that
\[
 A=\bigvee_{f\in\mathcal C}\supp f
\]
is $\sigma$-finite.  There exists a complete, countably generated,
atomless sub-$\sigma$-algebra $\mathscr S$ on $A$ such that every
$f\in\mathcal C$ is $\mathscr S$-measurable.  The restricted measure
space $(A,\mathscr S,\mu)$ is standard and $\sigma$-finite modulo null
sets.
\end{lemma}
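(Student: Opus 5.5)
Since $A$ is $\sigma$-finite, I will fix a measurable partition $A=\bigsqcup_{n\ge1}A_n$ with $0<\mu(A_n)<\infty$. The generating family will be built in three countable layers, and $\mathscr S$ will be its completion on $A$.

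The first layer consists of the sets $A_n$ and the level sets $\{f>q\}\cap A$ for $f\in\mathcal C$ and $q\in\mathbb Q$. Every $f\in\mathcal C$ is then measurable with respect to any $\sigma$-algebra containing this layer, and the sets $A_n$ make the restricted measure $\sigma$-finite. This family is countable, but the $\sigma$-algebra $\mathscr S_0$ it generates may have atoms. For example, if $\mathcal C$ consists of a single characteristic function, $\mathscr S_0$ is essentially finite.

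The second layer removes the atoms, and this is the step needing care. Since $(\Omega,\mu)$ is atomless and semifinite, each set $B$ of finite positive measure can be split into two measurable halves of equal measure \cite[215D]{Fremlin2}. I will start from the countable family of atoms of $\mathscr S_0$ of finite measure, together with $A_n$. Each such set is split dyadically: halve it, halve each piece, and continue. This gives, for each starting set $B$, a countable tree of sets $B_w$ indexed by finite binary words, with $\mu(B_w)=2^{-|w|}\mu(B)$. I then adjoin all the sets $B_w\cap C$ with $C$ in a countable generating algebra of $\mathscr S_0$, together with the sets $B_w$ themselves.

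The resulting $\sigma$-algebra $\mathscr S$ is countably generated and contains $\mathscr S_0$. To see that it is atomless, I will choose the generating algebra so that every element of finite positive measure is refined by the dyadic trees. Concretely, I will apply the dyadic halving to every set in a countable generating algebra $\mathcal G$ of $\mathscr S_0$ restricted to each $A_n$, not merely to the atoms.

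Then every generator of finite measure lies in arbitrarily fine dyadic partitions. A short argument then shows that no set of positive measure in the generated $\sigma$-algebra is an atom. Take a candidate atom $D$; it is approximated in measure by finite unions of generators, and the dyadic refinements let us cut $D$ into two pieces of smaller positive measure, which is the contradiction.

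To be safe, I expect to iterate the construction countably many times, at each stage halving the sets of the algebra generated so far, and take the union of the stages. This closes the family under refinement, and atomlessness follows via the approximation argument just described.

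Finally, $(A,\mathscr S,\mu|_{\mathscr S})$ is an atomless, countably generated, $\sigma$-finite measure space. Its measure algebra is therefore separable and atomless, and each finite-measure piece $A_n$ is isomorphic modulo null sets to an interval with Lebesgue measure (Carathéodory/Maharam; \cite[Theorem~9.2.2]{Bogachev}). Gluing these pieces gives standardness modulo null sets, and completion keeps every $f\in\mathcal C$ measurable. I expect the main obstacle to be a rigorous proof that the iterated dyadic refinement makes $\mathscr S$ atomless. The approximation-in-measure argument on $\mathscr S$ handles this, and it works because all the halvings take place inside sets of finite measure.
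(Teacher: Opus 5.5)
Your final construction is the paper's proof. You start from the $A_n$ and the rational level sets, repeatedly halve every nonnull finite-measure set of the countable algebra generated so far, and complete. You then split any candidate atom by approximating it in measure by an algebra set that is halved at a later stage; an atom of infinite measure is handled by first intersecting with some $A_n$, a step you leave implicit. Finally you obtain standardness from the isomorphism theorem for separable atomless measure algebras.

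Two small remarks. First, the correct reference for the last step is the measure-algebra form \cite[Theorem~9.3.4]{Bogachev} used in the paper; Theorem~9.2.2 presupposes a standard space. Second, the iteration you add ``to be safe'' is not needed. The dyadic tree of each $A_n$ alone already excludes atoms, because an atom would have to lie essentially in a single cell of measure $2^{-k}\mu(A_n)$ for every $k$.
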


\begin{proof}
Choose a countable measurable partition
\[
 A=\bigsqcup_{n\ge1}A_n,
 \qquad \mu(A_n)<\infty.
\]
Let $\mathcal R_0$ be a countable Boolean algebra containing the sets
$A_n$ and all rational level sets of the members of $\mathcal C$.
Inductively, after $\mathcal R_k$ has been constructed, for every
nonnull $B\in\mathcal R_k$ of finite measure choose, using
atomlessness, a set $B^\flat\subset B$ such that
\[
 \mu(B^\flat)=\frac12\mu(B),
\]
and let $\mathcal R_{k+1}$ be the Boolean algebra generated by
$\mathcal R_k$ and all such $B^\flat$.  Every $\mathcal R_k$ is
countable.  Let $\mathscr S$ be the completion of
\[
 \sigma\left(\bigcup_{k\ge0}\mathcal R_k\right)
\]
inside $A$.

To verify atomlessness, let $H\in\mathscr S$ satisfy
$0<\mu(H)<\infty$.  Choose $B\in\bigcup_k\mathcal R_k$ such that
\[
 \mu(H\triangle B)<\frac18\mu(H).
\]
Then $0<\mu(B)<\infty$.  At a later stage, the set $B$ is split into
$B^\flat$ and $B\setminus B^\flat$, both of measure $\mu(B)/2$.  The
preceding approximation gives
\[
 \mu(H\cap B^\flat)>0,
 \qquad
 \mu(H\setminus B^\flat)>0.
\]
Thus $H$ is not an atom.  If $H$ has infinite measure, then
$H\cap A_n$ is nonnull for some $n$ and supplies a finite nonzero
part which can be split.  Hence $\mathscr S$ and its associated
measure algebra are atomless; the latter is also separable. By
the isomorphism theorem for separable atomless measure algebras
\cite[Theorem~9.3.4]{Bogachev}, $(A,\mathscr S,\mu)$ is standard
modulo null sets.
\end{proof}

The restricted space over this subalgebra remains
rearrangement-invariant, order continuous, and non-Hilbertian; its
measure algebra is standard.
\begin{lemma}
\label{lem:separable-function-subspace}
Let $\mathscr S$ be as in \Cref{lem:atomless-hull}.  Then
$E[\mathscr S]$ is a closed separable rearrangement-invariant Banach
function space over $(A,\mathscr S,\mu)$, and its norm is order
continuous.

If $\mathscr S_0\subseteq\mathscr S_1\subseteq\cdots$ are such
sub-$\sigma$-algebras and
\[
 \mathscr S_\infty
 =\sigma\left(\bigcup_{n\ge0}\mathscr S_n\right)
\]
up to completion, then
\[
 \overline{\bigcup_{n\ge0}E[\mathscr S_n]}^{\,\norm{\cdot}_E}
 =E[\mathscr S_\infty].
\]
Moreover, if every $\mathscr S_n$ is atomless, then so is
$\mathscr S_\infty$.
\end{lemma}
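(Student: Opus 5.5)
We prove the three assertions of \Cref{lem:separable-function-subspace} in turn: first the structural claims about $E[\mathscr S]$, then the density identity for increasing sub-$\sigma$-algebras, and finally atomlessness of $\mathscr S_\infty$.

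\emph{Structure of $E[\mathscr S]$.} Closedness follows because norm convergence in $E$ implies convergence in measure on sets of finite measure. Hence a norm limit of $\mathscr S$-measurable functions supported in $A$ has a subsequence converging almost everywhere, and its limit is again $\mathscr S$-measurable (using completeness of $\mathscr S$) and supported in $A$.

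The lattice ideal property and the Fatou property are inherited from $E$. Indeed, if $g$ is $\mathscr S$-measurable and $|g|\le|f|$ with $f\in E[\mathscr S]$, then $g\in E$, so $g\in E[\mathscr S]$. Rearrangement invariance is also inherited, since the distribution function of an $\mathscr S$-measurable $g$ computed in $(A,\mathscr S,\mu)$ coincides with the one computed in $(\Omega,\mu)$. Thus two $\mathscr S$-equimeasurable functions are equimeasurable in $\Omega$ and have the same $E$-norm.

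Order continuity passes to the subspace. A decreasing net $x_\gamma\downarrow0$ in $E[\mathscr S]$ also decreases to $0$ in $L_0(\Omega)$: all supports lie in the $\sigma$-finite set $A$, so one may pass to countable infima and infima agree almost everywhere. Hence $\norm{x_\gamma}_E\to0$.

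For separability, let $\mathcal R=\bigcup_k\mathcal R_k$ be the countable generating algebra from \Cref{lem:atomless-hull}. Rational linear combinations of $\chi_B$, for $B\in\mathcal R$ of finite measure, form a countable set. Every $\chi_H$ with $H\in\mathscr S$ and $\mu(H)<\infty$ is approximable by such $\chi_B$ with $\mu(H\triangle B)\to0$. Moreover $\norm{\chi_H-\chi_B}_E=\varphi_E(\mu(H\triangle B))$, and $\varphi_E(t)\to0$ as $t\to0$ by order continuity. Combined with the density of bounded simple functions of finite-measure support, obtained as in \Cref{lem:sigma-finite-support} but with $\mathscr S$-measurable truncations, this gives separability.

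\emph{The density identity.} The inclusion $\subseteq$ is clear, by closedness. For the reverse inclusion, by the density just proved it suffices to approximate $\chi_H$ for $H\in\mathscr S_\infty$ of finite measure. The union $\bigcup_n\mathscr S_n$ is an algebra of sets generating $\mathscr S_\infty$. The standard approximation lemma for $\sigma$-finite measures, applied on the $\sigma$-finite set carrying everything, gives $B\in\bigcup_n\mathscr S_n$ with $\mu(H\triangle B)$ small. The estimate with $\varphi_E$ then concludes.

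One detail needs care here: the sets $A_n$ carrying the $\mathscr S_n$ may grow. We take the union of the $A_n$ as the carrier of $\mathscr S_\infty$, and we must choose finite-measure pieces inside some $A_n$. This is possible by $\sigma$-finiteness of each $A_n$ and monotone convergence.

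\emph{Atomlessness of $\mathscr S_\infty$.} This is the main obstacle. We use the same approximation trick as in \Cref{lem:atomless-hull}. Let $H\in\mathscr S_\infty$ with $0<\mu(H)<\infty$. Choose $n$ and $B\in\mathscr S_n$ with
\[
 \mu(H\triangle B)<\tfrac18\mu(H).
\]
Since $\mathscr S_n$ is atomless, we can split $B$ (intersected with a finite-measure piece if necessary) into $B_1,B_2\in\mathscr S_n$ with $\mu(B_1)=\mu(B_2)=\mu(B)/2$, using the intermediate value property of atomless measures. Then
\[
 \mu(H\cap B_i)\ge\tfrac12\mu(B)-\tfrac18\mu(H)>0
\]
for both $i$, because $\mu(B)>\tfrac78\mu(H)$. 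Hence $H$ is not an atom. Sets of infinite measure contain nonnull sets of finite measure by $\sigma$-finiteness, so they are not atoms either.
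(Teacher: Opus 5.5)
Your proposal is correct and follows essentially the same route as the paper. Closedness comes from an almost-everywhere convergent subsequence, separability and the density identity come from approximating finite-measure sets by members of a countable (respectively, union) algebra together with $\varphi_E(t)\to0$, and atomlessness of $\mathscr S_\infty$ comes from the same $\tfrac18$-approximation and halving argument. Your version also spells out two points the paper leaves implicit: why order continuity passes to $E[\mathscr S]$, and how to handle the possibly growing carriers $A_n$.
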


\begin{proof}
Suppose that $f_n\in E[\mathscr S]$ and $f_n\to f$ in $E$. Then
$f_n\to f$ locally in measure
\cite[Chapter~1, \S1]{BS}. Since $A$ is
$\sigma$-finite, a diagonal application of
\cite[Theorem~2.30, pp.~61--62]{Folland} gives a subsequence
converging almost everywhere on $A$. Hence $f$ is
$\mathscr S$-measurable, and therefore $E[\mathscr S]$ is closed.

To prove separability, take a countable algebra generating
$\mathscr S$ modulo null sets. Simple functions with rational
coefficients and sets from this algebra form a countable family.
By truncating a function, restricting it to sets of finite measure,
and then approximating it by such simple functions, order continuity
\cite[Chapter~1, \S3]{BS} shows that this family is norm
dense in $E[\mathscr S]$. Thus $E[\mathscr S]$ is separable.
Rearrangement invariance and order continuity are inherited from $E$.

For the density assertion, simple functions with support of finite
measure are dense in $E[\mathscr S_\infty]$.  If $B\in\mathscr S_\infty$ has finite
measure, the algebra $\bigcup_n\mathscr S_n$ contains sets $B_k$ with
\[
 \mu(B_k\triangle B)\longrightarrow0.
\]
Since order continuity implies $\varphi_E(t)\to0$ as $t\downarrow0$,
\[
 \norm{\chi_{B_k}-\chi_B}_E
 =\varphi_E(\mu(B_k\triangle B))\longrightarrow0.
\]
This approximation gives density first for simple functions and then
for all of $E[\mathscr S_\infty]$.

To prove atomlessness, let $H\in\mathscr S_\infty$ satisfy
$0<\mu(H)<\infty$. By the monotone class lemma
\cite[Theorem~2.35, p.~66]{Folland}, choose
$B\in\mathscr S_n$ for some $n$ such that
\[
 \mu(H\triangle B)<\frac18\mu(H).
\]
Since $\mathscr S_n$ is atomless, write
\[
 B=B_1\sqcup B_2,
 \qquad
 \mu(B_1)=\mu(B_2)=\frac12\mu(B).
\]
Then
\[
 \mu(H\cap B_i)>0,
 \qquad i=1,2,
\]
so $H$ is not an atom. If $\mu(H)=\infty$, choose by
$\sigma$-finiteness a measurable subset of $H$ of finite positive
measure and apply the finite-measure case to that subset.
\end{proof}

Alternating the construction in the domain and range gives standard
$\sigma$-finite subspaces which are mapped onto one another.
\begin{proposition}
\label{prop:invariant-separable-subspaces}
Let $(\Omega,\mu)$ and $(\Lambda,\lambda)$ be complete atomless
localizable semifinite measure spaces. Let $E(\Omega,\mu)$ and
$F(\Lambda,\lambda)$ be rearrangement-invariant Banach function spaces
with order-continuous, non-Hilbertian norms, and let
\[
 U:E(\Omega,\mu)\longrightarrow F(\Lambda,\lambda)
\]
be a surjective real-linear isometry.

Given countable sets $\mathcal C\subset E$ and $\mathcal D\subset F$,
there exist complete atomless $\sigma$-finite sub-$\sigma$-algebras
$\mathscr S_\infty$ and $\mathscr T_\infty$, countably generated
modulo null sets, such that
\[
 \mathcal C\subset E[\mathscr S_\infty],
 \qquad
 \mathcal D\subset F[\mathscr T_\infty],
\]
and
\[
 U\bigl(E[\mathscr S_\infty]\bigr)
 =F[\mathscr T_\infty].
\]
The corresponding measure spaces are standard modulo null sets, and
both restricted spaces are non-Hilbertian.
\end{proposition}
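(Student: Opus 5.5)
We will build the two $\sigma$-algebras by a back-and-forth construction that alternates between domain and range. The only tools are \Cref{lem:sigma-finite-support}, \Cref{lem:atomless-hull} and \Cref{lem:separable-function-subspace}.

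Enumerate $\mathcal C$ and $\mathcal D$. Start from the countable family $\mathcal C\cup U^{-1}(\mathcal D)\subset E$; its supports are $\sigma$-finite by \Cref{lem:sigma-finite-support}. Apply \Cref{lem:atomless-hull} to get a complete, countably generated, atomless $\sigma$-finite $\mathscr S_0$ with $\mathcal C\cup U^{-1}\mathcal D\subset E[\mathscr S_0]$. By \Cref{lem:separable-function-subspace}, $E[\mathscr S_0]$ is separable, so we fix a countable dense set $\mathcal C_0\subset E[\mathscr S_0]$. Apply \Cref{lem:atomless-hull} in $\Lambda$ to $U(\mathcal C_0)\cup\mathcal D$ to obtain $\mathscr T_0$, and choose a countable dense $\mathcal D_0\subset F[\mathscr T_0]$.

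Inductively, let $\mathscr S_{n+1}$ be given by \Cref{lem:atomless-hull} applied to $\mathcal C_n\cup U^{-1}(\mathcal D_n)$ together with a countable generating algebra of $\mathscr S_n$. Adding the indicators of generating sets of finite measure, which lie in $E$, gives $\mathscr S_n\subseteq\mathscr S_{n+1}$. Let $\mathscr T_{n+1}$ be obtained analogously from $U(\mathcal C_{n+1})\cup\mathcal D_n$ together with generators of $\mathscr T_n$, and choose new countable dense sets $\mathcal C_{n+1}$ and $\mathcal D_{n+1}$. By construction,
\[
U(E[\mathscr S_n])\subseteq \overline{U(\mathcal C_n)}\subseteq F[\mathscr T_n],\qquad U^{-1}(F[\mathscr T_n])\subseteq E[\mathscr S_{n+1}],
\]
where closedness of $F[\mathscr T_n]$ and $E[\mathscr S_{n+1}]$ comes from \Cref{lem:separable-function-subspace}. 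Put $\mathscr S_\infty=\sigma(\bigcup\mathscr S_n)$ and $\mathscr T_\infty=\sigma(\bigcup\mathscr T_n)$, both completed. The density clause of \Cref{lem:separable-function-subspace} and continuity of $U$ give $U(E[\mathscr S_\infty])\subseteq F[\mathscr T_\infty]$, and symmetrically $U^{-1}(F[\mathscr T_\infty])\subseteq E[\mathscr S_\infty]$. This yields equality. The same lemma gives atomlessness, and the $\sigma$-algebras are countably generated and $\sigma$-finite as countable unions. Standardness then follows from the isomorphism theorem cited in \Cref{lem:atomless-hull}.

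The remaining point, which I expect to be the main obstacle, is non-Hilbertianity of the restricted spaces. By \Cref{lem:nonhilbert-scale} and \Cref{lem:ri-hilbert}, the norm of $E$ is not proportional to $\|\cdot\|_2$. So there are disjoint sets $B_1,B_2$ of finite measure, or simple functions, witnessing a failure of the parallelogram law, or witnessing $\varphi_E(t)^2\ne c^2t$. Since the norm is rearrangement invariant, the failure is detected by the fundamental function and by finitely many values on disjoint sets of prescribed finite measures $t_1,\dots,t_k$. So I will insert at stage $0$ finitely many disjoint sets with these measures, placed inside the support built so far, or adjoin them to $\mathcal C$. Then $E[\mathscr S_\infty]$ contains the same witness, because rearrangement invariance makes the norm depend only on the distributions. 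The same is done for $F$ via $\mathcal D$. The subtle point is whether a witness for a failure of $\|f\|_E=c\|f\|_2$ can always be realized by simple functions. It can, by density in \Cref{lem:sigma-finite-support} and the argument of \Cref{lem:ri-hilbert}, so adjoining finitely many simple functions suffices.
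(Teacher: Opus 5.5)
Your proposal is correct and is essentially the paper's proof: the same alternating construction, where each new $\sigma$-algebra is the hull from \Cref{lem:atomless-hull} of the image of a countable dense subset of the previous closed subspace, together with the indicators of finite-measure generators of the previous stage, and where the density and atomlessness clauses of \Cref{lem:separable-function-subspace} are then used to pass to $\mathscr S_\infty$ and $\mathscr T_\infty$. Non-Hilbertianity is also handled as in the paper, by adjoining at stage $0$ finite-support pairs $u_1,u_2\in E$ and $v_1,v_2\in F$ that violate the parallelogram identity; your detour through the fundamental function is unnecessary, since $\varphi_E$ alone need not detect non-Hilbertianity, but your final step of adjoining the witnessing simple functions themselves is exactly right.
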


\begin{proof}
Since simple functions with support of finite measure are dense and
the norms of $E$ and $F$ are non-Hilbertian, we may choose
\[
 u_1,u_2\in E,
 \qquad
 v_1,v_2\in F
\]
with supports of finite measure such that the parallelogram identity
fails for each pair.

By \Cref{lem:sigma-finite-support}, the union of the supports of the
countable family
\[
 \mathcal C\cup\{u_1,u_2\}
\]
is $\sigma$-finite. Apply \Cref{lem:atomless-hull} to obtain
$\mathscr S_0$. By \Cref{lem:separable-function-subspace},
$E[\mathscr S_0]$ is separable; choose a countable dense set
\[
 D_0\subset E[\mathscr S_0].
\]
Apply \Cref{lem:atomless-hull} on the target side to
\[
 U(D_0)\cup\mathcal D\cup\{v_1,v_2\}
\]
and obtain $\mathscr T_0$. Since $F[\mathscr T_0]$ is closed and
contains $U(D_0)$,
\[
 U(E[\mathscr S_0])\subseteq F[\mathscr T_0].
\]

Continue alternately on the two sides. Suppose that
$\mathscr S_n$ and $\mathscr T_n$ have been constructed. Choose a
countable dense set
\[
 C_n\subset F[\mathscr T_n].
\]
Since $\mathscr S_n$ is $\sigma$-finite and countably generated
modulo null sets, choose a countable family $\mathcal A_n\subset
\mathscr S_n$ of sets of finite measure which generates $\mathscr S_n$
modulo null sets. Apply \Cref{lem:atomless-hull} to
\[
 U^{-1}(C_n)\cup\{\chi_A:A\in\mathcal A_n\}.
\]
The characteristic functions belong to $E$ because their supports have
finite measure. By construction, $\mathscr S_{n+1}$ contains every member
of $\mathcal A_n$ modulo null sets, and hence
$\mathscr S_n\subseteq\mathscr S_{n+1}$ modulo null sets. Since
$E[\mathscr S_{n+1}]$ is closed,
\[
 U^{-1}(F[\mathscr T_n])
 \subseteq E[\mathscr S_{n+1}].
\]

Next choose a countable dense set
\[
 D_{n+1}\subset E[\mathscr S_{n+1}]
\]
and choose a countable family $\mathcal B_n\subset\mathscr T_n$ of
sets of finite measure which generates $\mathscr T_n$ modulo null
sets. Applying \Cref{lem:atomless-hull} to
\[
 U(D_{n+1})\cup\{\chi_B:B\in\mathcal B_n\}
\]
gives $\mathscr T_{n+1}$ containing $\mathcal B_n$ modulo null sets.
Thus $\mathscr T_n\subseteq\mathscr T_{n+1}$ modulo null sets and
\[
 U(E[\mathscr S_{n+1}])
 \subseteq F[\mathscr T_{n+1}].
\]

Let
\[
 \mathscr S_\infty
 =
 \overline{\sigma\left(\bigcup_{n\ge0}\mathscr S_n\right)},
 \qquad
 \mathscr T_\infty
 =
 \overline{\sigma\left(\bigcup_{n\ge0}\mathscr T_n\right)},
\]
where the bars denote completion. These $\sigma$-algebras are
$\sigma$-finite and countably generated modulo null sets. By
\Cref{lem:separable-function-subspace}, they are atomless and
\[
 E[\mathscr S_\infty]
 =
 \overline{\bigcup_{n\ge0}E[\mathscr S_n]},
 \qquad
 F[\mathscr T_\infty]
 =
 \overline{\bigcup_{n\ge0}F[\mathscr T_n]}.
\]
The inclusions obtained at each stage therefore imply
\[
 U(E[\mathscr S_\infty])
 \subseteq F[\mathscr T_\infty]
\]
and
\[
 U^{-1}(F[\mathscr T_\infty])
 \subseteq E[\mathscr S_\infty].
\]
Hence
\[
 U(E[\mathscr S_\infty])
 =F[\mathscr T_\infty].
\]

The associated measure spaces are standard modulo null sets by
\Cref{lem:atomless-hull}. Moreover,
$u_1,u_2\in E[\mathscr S_\infty]$ and
$v_1,v_2\in F[\mathscr T_\infty]$, and the parallelogram identity
fails for these pairs. Thus neither restricted space is Hilbertian.
\end{proof}

\begin{theorem}
\label{thm:general-DP}
Under the hypotheses of \Cref{prop:invariant-separable-subspaces},
\[
 f\perp g
 \quad\Longleftrightarrow\quad
 Uf\perp Ug,
 \qquad f,g\in E.
\]
\end{theorem}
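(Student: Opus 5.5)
Given $f,g\in E$ with $f\perp g$, the plan is to reduce to the standard $\sigma$-finite case by \Cref{prop:invariant-separable-subspaces} and then read off disjointness from \Cref{thm:between-standard}. Apply the proposition with
\[
 \mathcal C=\{f,g\},\qquad \mathcal D=\{Uf,Ug\}.
\]
This gives complete atomless $\sigma$-finite sub-$\sigma$-algebras $\mathscr S_\infty$ on $A\subseteq\Omega$ and $\mathscr T_\infty$ on $B\subseteq\Lambda$, standard modulo null sets, such that $U$ maps $E[\mathscr S_\infty]$ onto $F[\mathscr T_\infty]$. By \Cref{lem:separable-function-subspace}, both restricted spaces are rearrangement-invariant Banach function spaces over standard atomless $\sigma$-finite measure spaces, with order-continuous norms, and the proposition says they are non-Hilbertian. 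The restriction
\[
 U_0=U|_{E[\mathscr S_\infty]}:E[\mathscr S_\infty]\to F[\mathscr T_\infty]
\]
is therefore a surjective real-linear isometry satisfying the hypotheses of \Cref{thm:between-standard}.

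By \Cref{thm:between-standard}, $U_0h=w\,\Phi(h)$, where $w$ has full support in $B$ and $\Phi$ is the lattice isomorphism induced by a complete Boolean isomorphism $\theta$ between the measure algebras of $(A,\mathscr S_\infty)$ and $(B,\mathscr T_\infty)$. Since $\theta$ preserves meets and $\Phi$ is a lattice homomorphism, it suffices to note that
\[
 |U_0f|\wedge|U_0g|=|w|\,\bigl(|\Phi(f)|\wedge|\Phi(g)|\bigr)=|w|\,\Phi\bigl(|f|\wedge|g|\bigr)=0 .
\]
Disjointness in the subalgebra is the same as disjointness in $\Omega$ or $\Lambda$, because supports are computed from the same functions. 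Hence $f\perp g$ implies $Uf\perp Ug$. For the converse, take $Uf\perp Ug$ and apply the same argument to $U^{-1}$ on the same pair of subspaces, or simply use that $\Phi^{-1}$ is also a lattice isomorphism and $w$ has full support.

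The main point to check is that the restricted spaces really fall under \Cref{thm:between-standard}. The proposition already supplies non-Hilbertianity and standardness, and \Cref{lem:separable-function-subspace} supplies order continuity and rearrangement invariance. The one subtlety is that \Cref{thm:between-standard} is phrased for spaces over the full measure space, whereas here the measure space is $(A,\mathscr S_\infty,\mu)$. Since $E[\mathscr S_\infty]$ is rearrangement-invariant with respect to that restricted measure, nothing further is needed: the distribution function of an $\mathscr S_\infty$-measurable function is the same whether computed in the subalgebra or in $\Omega$.
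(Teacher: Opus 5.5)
Your proposal is correct and is essentially the paper's proof: you restrict via \Cref{prop:invariant-separable-subspaces} to invariant standard $\sigma$-finite subspaces containing $f,g$, apply \Cref{thm:between-standard}, and obtain the converse from $U^{-1}$ (or equivalently from $\Phi^{-1}$ and the full support of $w$). The only difference is cosmetic. You take $\mathcal D=\{Uf,Ug\}$ where the paper takes $\mathcal D=\varnothing$, and this is harmless because $U(E[\mathscr S_\infty])=F[\mathscr T_\infty]$ already contains $Uf$ and $Ug$.
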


\begin{proof}
Let $f,g\in E$ with $f\perp g$.  Apply
\cref{prop:invariant-separable-subspaces} with $\mathcal C=\{f,g\}$ and
$\mathcal D=\varnothing$.  The restriction of $U$ is a surjective
isometry between non-Hilbertian rearrangement-invariant spaces over
atomless standard $\sigma$-finite measure spaces.  Therefore
\cref{thm:between-standard} applies and gives $Uf\perp Ug$. Replacing
$U$ by $U^{-1}$ gives the converse implication.
\end{proof}

\begin{proposition}
\label{prop:band-conjugacy-general}
Under the hypotheses of \Cref{thm:general-DP}, there exists a unique
complete Boolean isomorphism
\[
 \theta:\mathfrak A_\Omega\longrightarrow\mathfrak A_\Lambda
\]
such that
\[
 UP_AU^{-1}=P_{\theta(A)},
 \qquad A\in\mathfrak A_\Omega.
\]
Moreover,
\[
 \supp(Uf)=\theta(\supp f),
 \qquad f\in E.
\]
\end{proposition}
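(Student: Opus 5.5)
We transfer the argument of \Cref{prop:band-conjugacy-between} to the localizable setting. In the standard case that argument rested on the sign-change groups; here the disjointness theorem \Cref{thm:general-DP} replaces them. The plan has four steps: construct $\theta$ from supports, show $\theta$ is a Boolean isomorphism, prove the projection identity, and check completeness.

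\emph{Step 1: definition of $\theta$.} For $A\in\mathfrak A_\Omega$ we set
\[
 \theta(A)=\bigvee\{\supp(U\chi_C): C\le A,\ \mu(C)<\infty\},
\]
a supremum that exists because $\mathfrak A_\Lambda$ is Dedekind complete by localizability. The key claim is that $\theta$ already satisfies the support formula: for every $f\in E$ we want $\supp(Uf)=\theta(\supp f)$.

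To prove the claim, fix $f$ and a finite-measure $C$. We apply \Cref{prop:invariant-separable-subspaces} with $\mathcal C=\{f,\chi_C\}$, and with $\mathcal D$ containing $\chi$ of finite-measure pieces of $\supp Uf$ if needed. Then \Cref{thm:between-standard} and \Cref{prop:band-conjugacy-between} on the separable pieces give a Boolean isomorphism $\theta_0$ of the sub-measure algebras with $\supp(Ug)=\theta_0(\supp g)$ for all $g$ in the restricted space. Two consequences follow, both valid because $\theta_0$ preserves order and disjointness.
\begin{itemize}
\item If $C\le\supp f$, then $\supp U\chi_C\le\supp Uf$.
\item Conversely, $\supp Uf=\bigvee_n\supp U(f\chi_{\{|f|>1/n\}})$, and each set $\{|f|>1/n\}$ has finite measure by \Cref{lem:sigma-finite-support}.
\end{itemize}
These consequences already hold at the level of the original algebras, since supports in the subalgebra agree with supports in $\Omega$ and $\Lambda$. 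Hence $\supp Uf=\theta(\supp f)$. Independence of the choices is automatic, because $\theta$ is defined intrinsically.

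\emph{Step 2: $\theta$ is a Boolean isomorphism.} Disjointness preservation in both directions (\Cref{thm:general-DP}) shows that $\theta$ preserves disjointness and order. The same construction applied to $U^{-1}$ gives $\theta'$ with $\supp(U^{-1}g)=\theta'(\supp g)$. The two maps are mutually inverse on supports of elements. Since every element of $\mathfrak A$ is a supremum of finite-measure supports, and both maps preserve suprema of such families by definition, the two maps are mutually inverse. An order isomorphism of Boolean algebras is a complete Boolean isomorphism, so $\theta$ is complete.

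\emph{Step 3: the projection identity $UP_AU^{-1}=P_{\theta(A)}$.} Take $f\in E$ and split $f=P_Af+P_{A^c}f$. The two summands are disjoint, so by Step 1 $UP_Af$ is supported in $\theta(A)$ and $UP_{A^c}f$ in $\theta(A^c)=\theta(A)^c$. Therefore $P_{\theta(A)}Uf=UP_Af$. Uniqueness of $\theta$ follows because the projection $P_{\theta(A)}$ determines $\theta(A)$ via semifiniteness.

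\emph{Main obstacle.} The hardest point is making the local isomorphisms $\theta_0$ coherent with the global $\theta$. We must verify that the support relations proved inside a separable subalgebra are genuine relations in $\mathfrak A_\Lambda$, which requires that the completed sub-$\sigma$-algebras compute supports correctly. We also need semifiniteness to reduce every sup/inf comparison to finite-measure pieces. We handle this by always placing the relevant finitely many functions into one separable reduction.
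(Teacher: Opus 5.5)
Your proof is correct, but it takes a genuinely different route from the paper's.

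\textbf{The paper's route.} The paper does not re-enter the separable reduction in this proof. For fixed $A$ it sets $Y_A=U(E(A))$ and $Y_{A^c}=U(E(A^c))$. It then uses \Cref{thm:general-DP} and Dedekind completeness of $\mathfrak A_\Lambda$ to obtain disjoint elements $C=\bigvee_{y\in Y_A}\supp y$ and $D=\bigvee_{z\in Y_{A^c}}\supp z$. Surjectivity, in the form $F=Y_A+Y_{A^c}$, together with semifiniteness, then gives $D=C^c$ and $U(E(A))=F(C)$. So the projection identity comes first. The Boolean operations follow by conjugating $P_AP_B=P_{A\wedge B}$ and $I-P_A=P_{A^c}$, bijectivity comes from running the construction for $U^{-1}$, and the support formula is read off at the end.

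\textbf{Your route.} You define $\theta$ intrinsically through the supports $\supp(U\chi_C)$. You get the support formula first by applying \Cref{prop:invariant-separable-subspaces} and \Cref{thm:between-standard} once more, so you use the full local elementary representation rather than only disjointness. You derive the projection identity last, by splitting $f=P_Af+P_{A^c}f$.

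\textbf{Comparison.} The paper's route is more economical: it needs only two-sided disjointness preservation plus surjectivity. Yours gives the explicit description $\theta(C)=\supp(U\chi_C)$ for finite-measure $C$ directly, which the paper itself uses later in the uniqueness part of \Cref{thm:between-main}. Your intrinsic definition also removes the coherence worry you raise in your last paragraph. In Step 1, note that $\chi_{\{|f|>1/n\}}$ and $f\chi_{\{|f|>1/n\}}$ lie in any restricted space containing $f$. So a single $\theta_0$ gives $\supp U(f\chi_{\{|f|>1/n\}})=\supp U\chi_{\{|f|>1/n\}}$, and this is the link to your definition of $\theta$.

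\textbf{One inaccurate justification in Step 2.} It is not ``by definition'' that $\theta'$ preserves the supremum of the family $\{\theta(C)\}$. The map $\theta'$ is defined as a supremum over all finite-measure subelements, not over an arbitrary family with the same supremum. Your conclusion is still right, and the repair is short:
\begin{itemize}
\item Since $\theta'\theta(C)=C$ for finite $C$, monotonicity gives $\theta'\theta(A)\ge A$ and $\theta'\theta(A^c)\ge A^c$.
\item Disjointness preservation of $\theta$ and $\theta'$ gives $\theta'\theta(A)\wedge\theta'\theta(A^c)=0$, hence $\theta'\theta(A)\le A$.
\item Symmetrically, $\theta\theta'=\id$.
\end{itemize}
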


\begin{proof}
Fix $A\in\mathfrak A_\Omega$ and set
\[
 Y_A=U(E(A)),
 \qquad
 Y_{A^c}=U(E(A^c)).
\]
By \Cref{thm:general-DP}, every element of $Y_A$ is disjoint from
every element of $Y_{A^c}$. Since the underlying measure spaces are
localizable, their measure algebras are complete
\cite[322B]{Fremlin3}. Hence the elements
\[
 C=\bigvee_{y\in Y_A}\supp y,
 \qquad
 D=\bigvee_{z\in Y_{A^c}}\supp z
\]
are well defined, and
\[
 C\wedge D=0.
\]

In fact,
\[
 C\vee D=1.
\]
Otherwise, by semifiniteness
\cite[211F]{Fremlin2}, there exists
\[
 0<H\le(C\vee D)^c,
 \qquad
 0<\lambda(H)<\infty.
\]
Then $\chi_H\in F$ by the standard Banach-function-space properties
\cite[Chapter~1, \S1]{BS}. Since
\[
 F=Y_A+Y_{A^c},
\]
we may write
\[
 \chi_H=y+z,
 \qquad
 y\in Y_A,\quad z\in Y_{A^c}.
\]
But $\supp y\le C$ and $\supp z\le D$, whereas
$H\le(C\vee D)^c$. Thus both $y$ and $z$ vanish on $H$, contradicting
$\chi_H=1$ on $H$. Therefore
\[
 D=C^c.
\]

Moreover,
\[
 Y_A=F(C).
\]
The inclusion $Y_A\subseteq F(C)$ follows from the definition of $C$.
Conversely, let $h\in F(C)$. Since
$F=Y_A+Y_{A^c}$, write
\[
 h=y+z,
 \qquad
 y\in Y_A,\quad z\in Y_{A^c}.
\]
Now $\supp y\le C$ and $\supp z\le D=C^c$. Since both $h$ and $y$
are supported in $C$, so is $z=h-y$. Hence
\[
 \supp z\le C\wedge C^c=0,
\]
and therefore $z=0$. Thus $h=y\in Y_A$, proving
\[
 Y_A=F(C).
\]

It follows that
\[
 U(E(A))=F(C),
\]
and hence
\[
 UP_AU^{-1}=P_C.
\]
Define
\[
 \theta(A)=C.
\]

Performing the construction for $U^{-1}$ defines
\[
 \psi:\mathfrak A_\Lambda\longrightarrow\mathfrak A_\Omega.
\]
The identities for the corresponding band projections imply
\[
 \psi(\theta(A))=A,
 \qquad
 \theta(\psi(B))=B.
\]
Thus $\theta$ is bijective.

Moreover,
\[
 P_{A\wedge B}=P_AP_B,
 \qquad
 P_{A^c}=I-P_A.
\]
Conjugating these identities by $U$ gives
\[
 \theta(A\wedge B)
 =\theta(A)\wedge\theta(B),
 \qquad
 \theta(A^c)=\theta(A)^c.
\]
Thus $\theta$ is a Boolean isomorphism.

To see that it is complete, let $(A_i)_{i\in I}$ be any family and put
\[
 A=\bigvee_{i\in I}A_i.
\]
Then $\theta(A)$ is an upper bound for every $\theta(A_i)$. Conversely,
if $B$ is any upper bound for all $\theta(A_i)$, then
$\theta^{-1}(B)$ is an upper bound for all $A_i$, and hence
\[
 A\le\theta^{-1}(B).
\]
Therefore
\[
 \theta(A)\le B.
\]
It follows that
\[
 \theta\left(\bigvee_{i\in I}A_i\right)
 =
 \bigvee_{i\in I}\theta(A_i),
\]
so $\theta$ is complete.

The isomorphism is unique, since the identity
\[
 UP_AU^{-1}=P_{\theta(A)}
\]
determines the band projection $P_{\theta(A)}$, and hence
$\theta(A)$, uniquely.

Let $A=\supp f$. Since $P_Af=f$,
\[
 P_{\theta(A)}Uf
 =
 UP_Af
 =
 Uf.
\]
Hence
\[
 \supp(Uf)\le\theta(\supp f).
\]
The corresponding support inequality for $U^{-1}$ is
\[
 \supp f
 \le
 \theta^{-1}(\supp(Uf)),
\]
and therefore
\[
 \theta(\supp f)\le\supp(Uf).
\]
Thus
\[
 \supp(Uf)=\theta(\supp f).
\]
\end{proof}

Let
\[
 \Phi:L_0(\Omega,\mu)\longrightarrow L_0(\Lambda,\lambda)
\]
be the normal lattice and algebra isomorphism induced by the complete
Boolean isomorphism $\theta$.

The band-projection identities extend to bounded measurable
multipliers.
\begin{lemma}
\label{lem:global-module}
For every $\varphi\in L_\infty(\Omega)$ and every $f\in E$,
\[
 U(\varphi f)=\Phi(\varphi)Uf.
\]
\end{lemma}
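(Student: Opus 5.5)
This is the localizable analogue of \Cref{lem:between-module}, and I will follow the same two-step argument. The only new input is \Cref{prop:band-conjugacy-general}, which replaces \Cref{prop:band-conjugacy-between}.

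First I treat simple multipliers. Let $\varphi=\sum_{k=1}^nc_k\chi_{A_k}$ with $A_k\in\mathfrak A_\Omega$, where the sets need not have finite measure. Since $M_\varphi=\sum_kc_kP_{A_k}$ and $UP_{A_k}U^{-1}=P_{\theta(A_k)}$ by \Cref{prop:band-conjugacy-general}, linearity gives
\[
 UM_\varphi U^{-1}=\sum_{k=1}^nc_kP_{\theta(A_k)}=M_{\Phi(\varphi)}.
\]
Here I use that $\Phi$ is the algebra and lattice isomorphism induced by $\theta$, so that $\Phi(\chi_{A})=\chi_{\theta(A)}$ and $\Phi$ is linear. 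Applying both sides to $Uf$ yields $U(\varphi f)=\Phi(\varphi)Uf$ for simple $\varphi$.

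Next I pass to general $\varphi\in L_\infty(\Omega)$. I choose simple $\varphi_n$ with $\norm{\varphi_n-\varphi}_\infty\to0$. The lattice property of Banach function spaces gives $\norm{(\varphi_n-\varphi)f}_E\le\norm{\varphi_n-\varphi}_\infty\norm{f}_E\to0$, and since $U$ is an isometry, $U(\varphi_nf)\to U(\varphi f)$ in $F$. On the target side I need two facts. The first is that $\Phi$ is isometric for the $L_\infty$ norms. This follows because $\Phi$ is a lattice isomorphism with $\Phi(\one)=\one$: from $|\psi|\le c\one$ we get $|\Phi(\psi)|\le c\one$, and the converse comes from $\Phi^{-1}$. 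Hence $\norm{\Phi(\varphi_n)-\Phi(\varphi)}_\infty\to0$, and therefore $\Phi(\varphi_n)Uf\to\Phi(\varphi)Uf$ in $F$, again by the lattice property. The second fact is that $\Phi(\varphi)$, for $\varphi$ only measurable rather than simple, is the same element obtained as the $L_\infty$-limit of the $\Phi(\varphi_n)$. This holds because $\Phi$ is normal: it preserves order convergence, and uniform convergence implies order convergence. Comparing the two limits proves the identity.

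I do not expect a real obstacle. The one point needing care is the extension of $\Phi$ from simple functions to $L_\infty$ on a merely localizable space, namely checking that uniform limits are respected. Normality of $\Phi$ together with $\Phi(\one)=\one$ handles this.
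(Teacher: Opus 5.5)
Your proof is correct and follows the paper's argument: \Cref{prop:band-conjugacy-general} handles characteristic functions, linearity handles simple multipliers, and uniform approximation with $\norm{M_\varphi}\le\norm{\varphi}_\infty$ gives the general case. You also make explicit the target-side step the paper leaves implicit, namely that $\Phi$ is $L_\infty$-isometric; since $\Phi$ is linear, this already gives $\Phi(\varphi_n)\to\Phi(\varphi)$ uniformly, so your separate appeal to normality is redundant but harmless.
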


\begin{proof}
The assertion holds for characteristic functions by
\Cref{prop:band-conjugacy-general}, hence for simple multipliers by
linearity.  Uniform approximation of a bounded measurable function by
simple functions and the estimate
\[
 \norm{M_\varphi}_{E\to E}\le\norm{\varphi}_\infty
\]
give the general case.
\end{proof}

\begin{proof}[Proof of \Cref{thm:between-main}]
By \Cref{lem:ri-hilbert}, the assumptions imply that both $E$ and $F$
are non-Hilbertian. Hence the hypotheses of
\Cref{prop:invariant-separable-subspaces,thm:general-DP} are satisfied.
By
\Cref{thm:general-DP,prop:band-conjugacy-general,lem:global-module},
only the multiplier $w$ remains to be constructed.

Choose a maximal family $(A_\gamma)_{\gamma\in\Gamma}$ of pairwise
disjoint measurable sets such that
\[
 0<\mu(A_\gamma)<\infty.
\]
By semifiniteness \cite[211F]{Fremlin2} and maximality,
\[
 \bigvee_{\gamma\in\Gamma}A_\gamma=1.
\]
Since $\theta$ is complete, the sets $\theta(A_\gamma)$ are pairwise
disjoint and
\[
 \bigvee_{\gamma\in\Gamma}\theta(A_\gamma)=1.
\]

For each $\gamma$, put
\[
 h_\gamma=U\chi_{A_\gamma}.
\]
By \Cref{prop:band-conjugacy-general},
\[
 \supp h_\gamma=\theta(A_\gamma).
\]
Since $(\Lambda,\lambda)$ is localizable,
\cite[241G]{Fremlin2} gives a function
$w\in L_0(\Lambda,\lambda)$ satisfying
\[
 w\chi_{\theta(A_\gamma)}
 =U\chi_{A_\gamma},
 \qquad \gamma\in\Gamma.
\]
Since $\bigvee_\gamma\theta(A_\gamma)=1$, the support identity gives
\[
 \supp w=1.
\]

Suppose first that $f\in E$ is bounded and
$\supp f\le A_\gamma$. Then
\[
 f=f\chi_{A_\gamma},
\]
and \Cref{lem:global-module} gives
\[
 \begin{aligned}
 Uf
 &=U(f\chi_{A_\gamma})\\
 &=\Phi(f)\,U\chi_{A_\gamma}\\
 &=w\,\Phi(f).
 \end{aligned}
\]

Now let $f\in E$ be arbitrary with $\supp f\le A_\gamma$, and set
\[
 f_n=(-n)\vee(f\wedge n).
\]
Then
\[
 |f_n-f|\downarrow0,
\]
so order continuity gives
\[
 \norm{f_n-f}_E\longrightarrow0.
\]
Hence
\[
 Uf_n\longrightarrow Uf
 \qquad\text{in }F.
\]
Moreover, since $\Phi$ is a lattice isomorphism,
\[
 \Phi(f_n)\longrightarrow\Phi(f)
 \qquad\text{almost everywhere}.
\]
By \Cref{lem:sigma-finite-support}, the functions involved are
supported on a common $\sigma$-finite measurable set. After passing
to a subsequence, a diagonal application of
\cite[Theorem~2.30, pp.~61--62]{Folland} gives
\[
 Uf_n\longrightarrow Uf
 \qquad\text{almost everywhere}.
\]
Since
\[
 Uf_n=w\Phi(f_n),
\]
passing to the limit yields
\[
 Uf=w\Phi(f).
\]

Let $f\in E$ be arbitrary. For every $\gamma$,
\[
 \begin{aligned}
 P_{\theta(A_\gamma)}Uf
 &=UP_{A_\gamma}f\\
 &=w\,\Phi(P_{A_\gamma}f)\\
 &=P_{\theta(A_\gamma)}\,w\Phi(f).
 \end{aligned}
\]
Since
\[
 \bigvee_{\gamma\in\Gamma}\theta(A_\gamma)=1,
\]
these identities imply
\[
 Uf=w\Phi(f)
\]
almost everywhere on $\Lambda$.

For uniqueness, suppose
\[
   U=M_w\Phi=M_{\widetilde w}\widetilde\Phi
\]
with both multipliers of full support, and let $\theta$ and
$\widetilde\theta$ be the complete Boolean isomorphisms inducing
$\Phi$ and $\widetilde\Phi$.  For every set $A$ of finite measure,
\[
   \theta(A)=\supp(U\chi_A)=\widetilde\theta(A).
\]
Finite-measure elements are order dense in a semifinite measure
algebra, and both Boolean maps are complete; hence
$\theta=\widetilde\theta$ and therefore $\Phi=\widetilde\Phi$.  For
the maximal family $(A_\gamma)$ used above,
\[
   (w-\widetilde w)\chi_{\theta(A_\gamma)}=0
   \qquad(\gamma\in\Gamma).
\]
Since $\bigvee_\gamma\theta(A_\gamma)=1$, it follows that
$w=\widetilde w$ almost everywhere.

\end{proof}

\part{Semifinite noncommutative symmetric operator spaces}
\label{part:noncommutative}

\section{Preliminaries}
\label{sec:prelim-semifinite}

Throughout Part~\ref{part:noncommutative}, $(\M,\tau)$ denotes a
semifinite von Neumann
algebra with a faithful normal semifinite trace. We write
$\mathcal P(\M)$ for its projections and
\[
   \Pf(\M)=\{p\in\mathcal P(\M):\tau(p)<\infty\}.
\]
Write $S(\M,\tau)$ for the algebra of $\tau$-measurable operators
and $\LS(\M)$ for the algebra of locally measurable operators
affiliated with $\M$; see
\cite{Segal,FackKosaki,KadisonRingrose2,BerChilinSukochev}. In
particular, $\LS(\Z(\M))$ denotes the locally measurable operators
affiliated with the center.

\subsection{Measurable operators and symmetric spaces}

For $x\in S(\M,\tau)$, write $x=u|x|$. Its right and left supports
are
\[
   r(x)=u^*u,\qquad l(x)=uu^*,
\]
and we put
\[
   s(x)=l(x)\vee r(x).
\]
For $x=x^*$ this agrees with
\[
   s(x)=1-e^{|x|}(\{0\}).
\]
The central support of a projection $p$ is denoted by $z(p)$.
Following Fack and Kosaki \cite{FackKosaki}, define
\[
   d_x(\lambda)=\tau\bigl(e^{|x|}(\lambda,\infty)\bigr),
   \qquad
   \mu_t(x)=\inf\{\lambda\ge0:d_x(\lambda)\le t\}.
\]
We write $\mu(x)$ for $t\mapsto\mu_t(x)$ and use
\[
   \mu(x)=\mu(|x|)=\mu(x^*),
   \qquad
   \mu_t(axb)\le
   \|a\|_\infty\|b\|_\infty\mu_t(x)
\]
for $a,b\in\M$.

Set
\[
   I_\tau=
   \begin{cases}
      (0,\tau(1)),&\tau(1)<\infty,\\
      (0,\infty),&\tau(1)=\infty.
   \end{cases}
\]
If $E$ is a symmetric Banach function space on $I_\tau$, then
\[
   E(\M,\tau)
   =\{x\in S(\M,\tau):\mu(x)\in E\},
   \qquad
   \|x\|_{E(\M,\tau)}=\|\mu(x)\|_E.
\]
This construction and its basic properties are recalled in
\cite{Ovcinnikov70,Ovcinnikov71,DoddsDoddsdePagter,
KaltonSukochev,HS}. In particular,
\[
   \|axb\|_E\le
   \|a\|_\infty\|x\|_E\|b\|_\infty,
   \qquad
   \mu(uxv)=\mu(x)
\]
for $a,b\in\M$ and unitaries $u,v\in\M$. We regard
$E(\M,\tau)_{\sa}$ as a real Banach space.

\subsection{Order continuity and duality}

The norm of $E$ is order continuous if
\[
   0\le f_n\le |f|,\qquad f_n\to0\ \text{a.e.}
\]
imply $\|f_n\|_E\to0$. Put
\[
   \Ftau=\{x\in\M:\tau(s(x))<\infty\}.
\]
We also write
\[
   C_0(\M,\tau)=\overline{\Ftau}^{\,\|\cdot\|_\infty},
\]
for the $C^*$-algebra of $\tau$-compact operators.
If $E$ has order-continuous norm, then
\[
   \overline{\Ftau_{\sa}}^{\,\|\cdot\|_E}
   =E(\M,\tau)_{\sa};
\]
see \cite{DoddsDePagter,HS}. If $e\in\Pf(\M)$, the symmetric norm on
the finite corner $e\M e$ is the norm inherited from the ambient
space; see \cite{FackKosaki}.

The K\"othe dual of $E$ is
\[
   E^\times
   =\left\{g:\int_{I_\tau}|fg|<\infty
     \text{ for every }f\in E\right\},
\]
with its usual norm. The associated operator space is
\[
   E^\times(\M,\tau)
   =\{y\in S(\M,\tau):\mu(y)\in E^\times\}.
\]
For $x\in E(\M,\tau)$ and $y\in E^\times(\M,\tau)$,
\[
   |\tau(xy)|\le\|x\|_E\|y\|_{E^\times}.
\]
When $E$ has order-continuous norm, the trace pairing gives the real
isometric identification
\[
   \bigl(E(\M,\tau)_{\sa}\bigr)^*
   \cong E^\times(\M,\tau)_{\sa},
   \qquad
   \langle x,y\rangle_\tau=\tau(xy);
\]
see \cite{DoddsDePagter,HS}.

\subsection{Skew-Hermitian operators}

For a real Banach space $X$, set
\[
   \Pi(X)=
   \{(x,\phi)\in X\times X^*:
      \|x\|=\|\phi\|=1,\ \phi(x)=1\}.
\]
A bounded real-linear operator $H:X\to X$ is called
\emph{skew-Hermitian} if
\[
   \phi(Hx)=0
   \qquad ((x,\phi)\in\Pi(X)).
\]
Equivalently, $\exp(tH)$ is a surjective real-linear isometry for every
$t\in\mathbb R$; see \cite{LumerSIP,AC}.

For $a=a^*\in\M$, the operator
\[
   \delta_a(x)=i(xa-ax),
   \qquad x\in E(\M,\tau)_{\sa},
\]
is skew-Hermitian because
\[
   \exp(t\delta_a)(x)=e^{-ita}xe^{ita}.
\]
For later use we write
\[
   \Delta_a^{\M}=\delta_a,
   \qquad
   \Delta_b^{\N}(y)=i(yb-by)
\]
for $a=a^*\in\M$, $b=b^*\in\N$, and self-adjoint elements in the
corresponding symmetric spaces.  Thus the macros $\DeltaM{a}$ and
$\DeltaN{b}$ always denote these commutator operators.  Also, if
$V:X\to Y$ is a surjective real-linear isometry, then
$VHV^{-1}$ is skew-Hermitian whenever $H$ is.

\subsection{Atomlessness and Jordan maps}

A von Neumann algebra is atomless if it has no nonzero minimal
projection. Thus, if $p\in\Pf(\M)$ and
$0\le\alpha\le\tau(p)$, there is $q\le p$ with $\tau(q)=\alpha$.

A bijective linear map $J:\M\to\N$ is a Jordan $*$-isomorphism if
\[
   J(x^*)=J(x)^*,
   \qquad
   J(x\circ y)=J(x)\circ J(y),
   \qquad
   x\circ y=\frac12(xy+yx).
\]
Equivalently, $J(x^2)=J(x)^2$ for self-adjoint $x$. The terminology is
that of \cite{Kadison,KadisonRingrose2,Sherman05}.
A normal Jordan $*$-isomorphism has the usual extension to the
corresponding algebras of measurable or locally measurable operators.

\begin{lemma}
\label{lem:nc-nonhilbert}
Let $(\M,\tau)$ be atomless and semifinite, and let $E$ be a symmetric
Banach function space on $I_\tau$ with order-continuous norm. If
$E(\M,\tau)_{\sa}$ is a Hilbert space, then
\[
   E=L_2(I_\tau)
\]
as sets and there is a constant $c>0$ such that
\[
   \|f\|_E=c\|f\|_2,
   \qquad f\in E.
\]
Consequently, if the norm of $E$ is not proportional to the
$L_2$-norm, then $E(\M,\tau)_{\sa}$ is non-Hilbertian.
\end{lemma}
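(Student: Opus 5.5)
The approach is to reduce to the commutative \Cref{lem:ri-hilbert}, applied on a maximal abelian von Neumann subalgebra. First I will choose a maximal abelian self-adjoint subalgebra $\A\subseteq\M$. A standard result says that $\tau|_{\A}$ is still semifinite when $\tau$ is semifinite: take a maximal orthogonal family of $\tau$-finite projections in $\A$ and use atomlessness of $\M$ together with the commutant $\A'=\A$. Atomlessness of $\M$ also gives atomlessness of $(\A,\tau|_\A)$. Indeed, a minimal projection $p$ of $\A$ would satisfy $p\M p=\mathbb C p$ by maximal abelianness, and such a $p$ would be minimal in $\M$. So $\A\cong L_\infty(\Omega,\mu)$ for a complete atomless localizable semifinite measure space, with $\tau|_\A$ corresponding to integration against $\mu$.

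Next I will identify the self-adjoint part of $E(\A,\tau|_\A)$ with the real rearrangement-invariant space $E(\Omega,\mu)$, using $\mu(x)=f^*$ for the corresponding function $f$. This is a real-linear isometric embedding into $E(\M,\tau)_{\sa}$, and it is a closed subspace. A closed subspace of a Hilbert space is Hilbert, so $E(\Omega,\mu)$ is Hilbertian. Order continuity transfers directly, since the norm on $\Omega$ is computed through $E$ on $I_\tau$.

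Now \Cref{lem:ri-hilbert} gives $\|g\|_{E(\Omega)}=c\|g\|_{L_2(\Omega)}$, so $\varphi_E(t)=c\sqrt t$ on the range of $\mu$. Every simple function on $I_\tau$ with finite-measure support is equimeasurable with some simple function on $\Omega$. This needs atomlessness and $\mu(\Omega)=\tau(1)$, so that disjoint sets of any prescribed finite measures (up to $\tau(1)$) exist. Hence $\|f\|_E=c\|f\|_2$ on these functions. Density of such functions in $E$ (order continuity, as in \Cref{lem:sigma-finite-support}) and in $L_2$ then gives $E=L_2(I_\tau)$ with proportional norms, exactly as at the end of \Cref{lem:ri-hilbert}. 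The consequence follows by contraposition.

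An alternative that avoids masas is to work directly with orthogonal projections in $\Pf(\M)$. The unitaries $1-2p$ act isometrically, so $(p,q)_E=0$ for orthogonal finite projections, and the parallelogram computation of \Cref{lem:ri-hilbert} goes through on commuting simple operators. I would fall back on this if the masa semifiniteness claim needs justification. The main obstacle, in either version, is ensuring that enough orthogonal finite projections of arbitrary trace exist to realize every finite-support simple function on $I_\tau$; this is exactly where atomlessness and semifiniteness are used.
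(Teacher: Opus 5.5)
Your main route relies on a false claim. It is not true that $\tau|_{\A}$ is semifinite for every maximal abelian subalgebra $\A$ of a semifinite $\M$; the paper itself warns about this just before \Cref{lem:semifinite-MASA}. For a counterexample, take the II$_\infty$ factor $B(L_2[0,1])\mathbin{\bar\otimes}R$ with trace $\mathrm{Tr}\otimes\tau_R$ and the masa $L_\infty[0,1]\mathbin{\bar\otimes}\A_R$, where $\A_R$ is a masa in $R$. For a nonzero projection $q$ of this masa, $(\mathrm{id}\otimes\tau_R)(q)$ is a nonzero positive multiplication operator on $L_2[0,1]$, and such an operator is never trace class. Hence $\tau(q)=\infty$ for every nonzero projection of $\A$.

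Your proposed justification therefore fails. A maximal orthogonal family of $\tau$-finite projections in $\A$ can be $\{0\}$, and neither atomlessness of $\M$ nor $\A'\cap\M=\A$ prevents this. The fix is to \emph{construct} the masa rather than take an arbitrary one. Take a maximal orthogonal family $(r_\alpha)$ of finite projections of $\M$, so that $\sum_\alpha r_\alpha=1$. Choose a masa $\A_\alpha\subseteq r_\alpha\M r_\alpha$ for each $\alpha$ and set $\A=\bigoplus_\alpha\A_\alpha$, as in \Cref{lem:semifinite-MASA}. With this choice the rest of your reduction works: atomlessness via $p\M p=\mathbb C p$, the identification with $E(\Omega,\mu)$ where $\mu(\Omega)=\tau(1)$, \Cref{lem:ri-hilbert}, transplanting simple functions, and density.

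Your fallback route is correct, and it is the more economical argument. One small correction: conjugation by $1-2p$ fixes both $p$ and $q$ when $pq=0$, so it does not yield $(p,q)_E=0$. What does yield it is that $p+q$ and $p-q$ have the same singular value function, so $\|p+q\|_E=\|p-q\|_E$. Equivalently, left multiplication by $1-2p$ is an isometric involution on the self-adjoint elements commuting with $p$. From orthogonality you get $\varphi_E(s+t)^2=\varphi_E(s)^2+\varphi_E(t)^2$ whenever $s+t\le\tau(1)$. Monotonicity then forces $\varphi_E(t)^2=c^2t$, and Pythagoras gives $\|\sum_j\alpha_jp_j\|_E=c\|\sum_j\alpha_jp_j\|_2$ for orthogonal finite projections $p_j$.

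The paper argues in the same spirit but less directly. It uses sign and permutation invariance only on dyadic grids of trace $2^{-m}\tau(e)$, obtaining $\varphi_E(d_m)=c\sqrt{d_m}$. It then approximates an arbitrary finite-spectrum element by grid-realizable ones, which needs order continuity. Your Cauchy-equation step removes that approximation, so order continuity is needed only for the final density argument. The masa route, even after repair, uses heavier machinery to reach the same conclusion.
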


\begin{proof}
Suppose that the norm of $E(\M,\tau)_{\sa}$ is induced by an inner
product. Choose a nonzero projection $e\in\Pf(\M)$, put
\[
   d=\tau(e),
   \qquad c=\frac{\|e\|_E}{\sqrt d},
\]
and, for $m\ge0$, decompose $e$ into $2^m$ mutually orthogonal
projections of trace $d_m=2^{-m}d$. The norm on the span of these
projections is a Hilbert norm invariant under all coordinate sign
changes and permutations. Hence it is a scalar multiple of the
Euclidean norm. Evaluating it at the sum of the $2^m$ projections
gives
\begin{equation}
   \varphi_E(d_m)=c\sqrt{d_m},
   \qquad m\ge0.
   \label{eq:nc-hilbert-grid}
\end{equation}
The same formula determines the norm on the span of every finite
family of mutually orthogonal projections of trace $d_m$, because a
symmetric operator norm depends only on the singular-value function.

Let
\[
   f=\sum_{j=1}^r\alpha_j\chi_{A_j}
\]
be a real simple function with pairwise disjoint sets $A_j$ of finite
measure. Atomlessness and semifiniteness provide mutually orthogonal
finite projections $p_j$ with
\[
   \tau(p_j)=m(A_j).
\]
Put
\[
   x=\sum_{j=1}^r\alpha_jp_j.
\]
Then $\mu(x)=f^*$ and consequently $\|x\|_E=\|f\|_E$ and
$\|x\|_2=\|f\|_2$. For every sufficiently large $m$, choose
$q_{j,m}\le p_j$ such that
\[
   \tau(q_{j,m})=k_{j,m}d_m,
   \qquad 0\le\tau(p_j-q_{j,m})<d_m,
\]
and split $q_{j,m}$ into $k_{j,m}$ projections of trace $d_m$. With
\[
   x_m=\sum_{j=1}^r\alpha_jq_{j,m},
\]
formula \eqref{eq:nc-hilbert-grid} and orthogonality in the Hilbert
space give
\[
   \|x_m\|_E
   =c\left(\sum_{j=1}^r|\alpha_j|^2\tau(q_{j,m})\right)^{1/2}
   =c\|x_m\|_2.
\]
Furthermore,
\[
   s(x-x_m)\le\sum_{j=1}^r(p_j-q_{j,m}),
   \qquad
   \tau(s(x-x_m))<rd_m,
\]
and hence, with $A=\max_j|\alpha_j|$,
\[
   \|x-x_m\|_E
   \le A\,\varphi_E(rd_m)\longrightarrow0
\]
by order continuity. Moreover, $\|x-x_m\|_2\to0$. Passing to the
limit yields
\[
   \|f\|_E=\|x\|_E=c\|x\|_2=c\|f\|_2.
\]
Finite-support simple functions are dense in $E$. This identity shows
that their $E$-completion is the same as their $L_2$-completion; hence $E=L_2(I_\tau)$ as sets and the asserted norm
identity holds throughout $E$.
\end{proof}

\section{Skew-Hermitian operators on the self-adjoint part}

\subsection{Representation of skew-Hermitian operators}

On a complex symmetric operator space, bounded Hermitian operators have
the form $x\mapsto ax+xb$ \cite[Theorem~3.10]{HS}. We prove
Theorem~\ref{thm:SH} directly on the real self-adjoint part.

For a finite projection $p$, the
fundamental-function identity
\[
 \norm{p}_E\,\norm{p}_{E^\times}=\tau(p)
\]
shows that
\[
 \phi_p(x)=\frac{\norm{p}_E}{\tau(p)}\tau(xp)
\]
is a norm-one functional satisfying
$\phi_p(p/\norm{p}_E)=1$.  Hence
\begin{equation}
 \tau(H(p)p)=0,
 \qquad p\in\Pf(\M).
 \label{eq:self-pairing-zero}
\end{equation}

\subsubsection{Trace vanishing on orthogonal projections}

The non-Hilbertian hypothesis yields a finite coordinate subspace on
which the norm is not Euclidean.
\begin{lemma}
\label{lem:block}
There are an integer $n\ge2$, a number $d>0$, and mutually orthogonal
projections $e_1,\dots,e_n\in\Pf(\M)$ with
$\tau(e_k)=d$ such that the symmetric coordinate norm
\[
 (\xi_1,\dots,\xi_n)\longmapsto
 \norm{\sum_{k=1}^n\xi_ke_k}_E
\]
is not Euclidean.
\end{lemma}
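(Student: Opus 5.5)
We argue by contradiction, in the spirit of \Cref{lem:nc-nonhilbert}. Assume that for every $n\ge2$, every $d>0$ and every family of mutually orthogonal projections $e_1,\dots,e_n\in\Pf(\M)$ with $\tau(e_k)=d$, the coordinate norm
\[
 (\xi_1,\dots,\xi_n)\longmapsto\norm{\sum_{k=1}^n\xi_ke_k}_E
\]
is Euclidean, i.e.\ induced by an inner product on $\R^n$. The goal is to derive from this that $E=L_2(I_\tau)$ with proportional norm. That conclusion contradicts the standing hypothesis, by \Cref{lem:nc-nonhilbert} or directly.

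First we identify the constant. The coordinate norm depends only on $\mu(\sum_k\xi_ke_k)$. It is therefore invariant under permutations and sign changes of the coordinates. A Euclidean norm with these invariances is a multiple of the standard one, since the Gram matrix must be diagonal with equal entries. Evaluating at the basis vectors and at $(1,\dots,1)$ gives
\[
 \varphi_E(nd)=\sqrt n\,\varphi_E(d).
\]
Fix a nonzero $e\in\Pf(\M)$ with $\tau(e)=D$, and use atomlessness to split it into dyadic pieces. This yields $\varphi_E(D2^{-m})=cD^{1/2}2^{-m/2}$ with $c=\varphi_E(D)/\sqrt D$. Multiplicativity in $n$ extends this to $\varphi_E(kD2^{-m})=c\sqrt{kD2^{-m}}$ for all integers $k$ with $kD2^{-m}<\tau(1)$, using atomlessness and semifiniteness to find $k$ orthogonal projections of trace $D2^{-m}$. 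Monotonicity of $\varphi_E$ and density of these values then give $\varphi_E(t)=c\sqrt t$ on $I_\tau$.

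Second, we pass to arbitrary simple functions. Let $f=\sum_j\alpha_j\chi_{A_j}$ have disjoint supports of finite measure. Realize it as $x=\sum_j\alpha_jp_j$ with orthogonal finite projections, $\tau(p_j)=m(A_j)$. As in the proof of \Cref{lem:nc-nonhilbert}, approximate each $p_j$ from below by a sum $q_{j,m}$ of projections of common trace $d_m=D2^{-m}$. Setting $x_m=\sum_j\alpha_jq_{j,m}$, the Euclidean hypothesis for this family (with common weight $d_m$) gives
\[
 \norm{x_m}_E=c\,\norm{x_m}_2 .
\]
Order continuity gives $\norm{x-x_m}_E\le\max_j|\alpha_j|\,\varphi_E(rd_m)\to0$, and $\norm{x-x_m}_2\to0$. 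Hence $\norm{f}_E=c\norm{f}_2$. Density of finite-support simple functions in $E$, which uses order continuity, then gives $E=L_2$ with proportional norm, the desired contradiction. Therefore some block $e_1,\dots,e_n$ yields a non-Euclidean coordinate norm, which is the statement.

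The main point needing care is the second step. The blocks $q_{j,m}$ must use one common trace $d_m$, so that the hypothesis applies to the whole family at once. We must also verify the approximation error through $\varphi_E(rd_m)\to0$. This is exactly where order continuity enters, and it is what makes the finite-dimensional Euclidean information propagate to the whole space.
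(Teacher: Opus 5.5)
Your proposal is correct and follows essentially the paper's argument: dyadic splitting gives $\varphi_E(d_m)=c\sqrt{d_m}$, finite-spectrum elements are approximated by common-trace blocks, and order continuity plus density finish the proof. The only differences are that you conclude at the function-space level, as in the proof of \Cref{lem:nc-nonhilbert}, rather than directly on $E(\M,\tau)_{\sa}$, and that extending $\varphi_E(t)=c\sqrt t$ to all $t$ is not needed; also, the final contradiction should come directly from $\norm{x}_E=\norm{\mu(x)}_E=c\norm{x}_2$, since \Cref{lem:nc-nonhilbert} itself runs in the opposite direction.
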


\begin{proof}
Suppose, to the contrary, that every finite equal-trace coordinate
block is Euclidean. Choose $0\ne e\in\Pf(\M)$, put
\[
   d_0=\tau(e),
   \qquad
   c=\frac{\|e\|_E}{\sqrt{d_0}},
\]
and, for $m\ge1$, decompose $e$ into $2^m$ mutually orthogonal
projections of equal trace
\[
   d_m=2^{-m}d_0.
\]
Euclideanity of the corresponding coordinate norm gives
\[
   \varphi_E(d_m)=c\sqrt{d_m}.
\]

Let
\[
   x=\sum_{j=1}^r\alpha_jp_j
\]
be finite-spectrum and self-adjoint, with finite support. For large
$m$, choose $q_{j,m}\le p_j$ such that
\[
   \tau(q_{j,m})=k_{j,m}d_m,
   \qquad
   \tau(p_j-q_{j,m})<d_m,
\]
and decompose each $q_{j,m}$ into $k_{j,m}$ projections of trace
$d_m$. Setting
\[
   x_m=\sum_{j=1}^r\alpha_jq_{j,m},
\]
we obtain
\[
\begin{aligned}
   \|x_m\|_E
   &=\varphi_E(d_m)
     \left(\sum_{j=1}^r k_{j,m}|\alpha_j|^2\right)^{1/2}\\
   &=c\left(
     \sum_{j=1}^r|\alpha_j|^2\tau(q_{j,m})
     \right)^{1/2}
   =c\|x_m\|_2.
\end{aligned}
\]
Since
\[
   \tau(s(x-x_m))\le rd_m\longrightarrow0,
\]
order continuity gives $\|x-x_m\|_E\to0$ and
$\|x-x_m\|_2\to0$. Hence
\[
   \|x\|_E=c\|x\|_2.
\]

By spectral approximation, the same identity holds for every
$x\in\Ftau_{\sa}$. Since $\Ftau_{\sa}$ is dense both in
$E(\M,\tau)_{\sa}$ and in $L_2(\M,\tau)_{\sa}$, the two completions
coincide and
\[
   E(\M,\tau)_{\sa}=L_2(\M,\tau)_{\sa},
   \qquad
   \|x\|_E=c\|x\|_2,
\]
contrary to the non-Hilbertian hypothesis.
\end{proof}

\begin{lemma}
\label{lem:offdiag}
If $p,q\in\Pf(\M)$ and $pq=0$, then
\[
 \tau(H(p)q)=0.
\]
\end{lemma}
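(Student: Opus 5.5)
The plan is to test the skew-Hermitian identity $\phi(Hx)=0$ on vectors of the form $x=p+sq$ with $s\in\mathbb R$. The point is to show that a norming functional for such a vector can be chosen inside the two-dimensional span of $p$ and $q$. Put
\[
 X=\tau(H(p)q),\qquad Y=\tau(H(q)p).
\]
Both quantities are real, since $H(p)$, $H(q)$, $p$, $q$ are self-adjoint.

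\textbf{Step 1: norming functionals in the span.} Let $\mathcal E$ denote the trace-preserving conditional expectation
\[
 \mathcal E(y)=\frac{\tau(yp)}{\tau(p)}\,p+\frac{\tau(yq)}{\tau(q)}\,q
\]
onto the span of $p$ and $q$. Since $\mathcal E$ is an averaging operator, $\mu(\mathcal E(y))\prec\!\prec\mu(y)$. The K\"othe dual $E^\times$ has the Fatou property and is therefore fully symmetric, so $\mathcal E$ is a contraction on $E^\times(\M,\tau)_{\sa}$. Now take any norm-one $\psi\in E^\times(\M,\tau)_{\sa}$ with $\tau(\psi x)=\|x\|_E$. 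Since $\tau(\mathcal E(\psi)x)=\tau(\psi x)$ for $x$ in the span, the element $\mathcal E(\psi)=\eta_1p+\eta_2q$ is again a norming functional for $x$.

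\textbf{Step 2: the pairing identity.} Insert $x=p+sq$ and $y=\eta_1p+\eta_2q$ into $\tau(H(x)y)=0$ and use \eqref{eq:self-pairing-zero} for $p$ and for $q$. The diagonal terms drop out and we obtain
\[
 \eta_2(s)\,X+s\,\eta_1(s)\,Y=0 .
\]
Next apply the same identity to $x'=p-sq$. The sign-change symmetry of the coordinate norm in $(p,q)$ (it depends only on $\mu$) shows that $\eta_1p-\eta_2q$ norms $x'$. This gives
\[
 -\eta_2X+s\eta_1Y=0 .
\]
Adding and subtracting the two relations yields $s\eta_1Y=0$ and $\eta_2X=0$.

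\textbf{Step 3: non-degeneracy, the main obstacle.} It remains to find some $s>0$ for which $\eta_2(s)\neq0$. Choose $s$ so large that $s\|q\|_E>\|p\|_E$, so that $\|p+sq\|_E\ge s\|q\|_E>\|p\|_E$. Suppose $\eta_2=0$. Then
\[
 \|p+sq\|_E=\tau\bigl(\eta_1p(p+sq)\bigr)=\tau(\eta_1p\cdot p)\le\|\eta_1p\|_{E^\times}\|p\|_E\le\|p\|_E,
\]
a contradiction. Hence $\eta_2\ne0$, and therefore $X=\tau(H(p)q)=0$. Exchanging the roles of $p$ and $q$ gives $Y=0$ as well, although this is not needed. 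The delicate points are the contractivity of $\mathcal E$ on the dual space, which is the majorization argument, and checking that the sign-flipped functional is norming. For the latter I would argue directly: the norm of $\xi_1p+\xi_2q$ depends only on $(|\xi_1|,|\xi_2|)$, and the same holds for the dual norm.
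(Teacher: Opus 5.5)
\textbf{The gap is in Step~2: the sign flip gives the same relation, not a new one.} If you change $q$ to $-q$ in both the test vector and its norming functional, \emph{both} cross terms are multiplied by $-1$:
\[
 \tau\bigl(H(p-sq)(\eta_1p-\eta_2q)\bigr)
 =-\eta_2X-s\eta_1Y .
\]
So your second relation is the first one again, and adding and subtracting gives nothing. The sign group acts on $\xi_1\eta_2$ and $\xi_2\eta_1$ by the same character, so on a two-dimensional block it can never separate them. Step~3 therefore has nothing to act on.

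The argument also never uses that $E$ is non-Hilbertian, and the lemma is false for $E=L_2$. Suppose $\tau(p)=\tau(q)$. On $L_2(\M,\tau)_{\sa}$ take $H(p)=q$, $H(q)=-p$, and $H=0$ on the orthogonal complement of $\operatorname{span}\{p,q\}$. This $H$ is skew-adjoint, hence skew-Hermitian, yet $\tau(H(p)q)=\tau(q)\ne0$.

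\textbf{The corrected two-dimensional argument still does not suffice.} Applying \eqref{eq:self-pairing-zero} also to $p+q$ gives $Y=-X$. Your computation then yields only $(\eta_2-s\eta_1)X=0$. You would need a norming pair in $\operatorname{span}\{p,q\}$ whose coefficient vectors $(1,s)$ and $(\eta_1,\eta_2)$ are linearly independent. Equivalently, the norm on this span must not be a multiple of $(\tau(p)\xi_1^2+\tau(q)\xi_2^2)^{1/2}$.

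The hypotheses do not give this for a prescribed pair. Take $E=L_2\cap L_4$ on $(0,\infty)$ with norm $\max(\|\cdot\|_2,\|\cdot\|_4)$; it is order continuous and not proportional to $L_2$. Take $\tau(p)=\tau(q)=2$. Then $\|x\|_4<\|x\|_2$ for every nonzero $x$ in the span, so near $x$ the norm coincides with $\|\cdot\|_2$. The only norming functional is $\tau(\,\cdot\,x)/\|x\|_2$, and every test vector in the span returns only $X+Y=0$.

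\textbf{What the paper does instead.} It leaves the span of $p$ and $q$:
\begin{enumerate}
\item \Cref{lem:block} supplies a non-Euclidean block of equal-trace projections.
\item Arazy's lemma gives a norming pair on that block with linearly independent coefficient vectors.
\item Sign averaging and coordinate permutations kill every $\tau(H(e_k)e_l)$ with $k\ne l$.
\item Atomless splitting transfers this to all orthogonal pairs of trace $2^{-m}d$.
\item General $p,q$ are reached by inner dyadic approximation, using order continuity and local integrability of $\mu(H(p))$.
\end{enumerate}
Your Step~1, compressing a norming functional by the trace-preserving expectation, is sound and is the kind of reduction used inside such a block. It cannot, however, replace the non-Euclidean input.
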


\begin{proof}
Choose mutually orthogonal projections $e_1,\ldots,e_n$ and $d>0$ as
in \Cref{lem:block}; thus $\tau(e_k)=d$ and the symmetric coordinate
space
\[
 Z=\left\{\sum_{k=1}^n\xi_ke_k:\xi_k\in\mathbb R\right\}
\]
is not Euclidean. Put
\[
 t_{kl}=\tau(H(e_k)e_l),\qquad 1\le k,l\le n.
\]
Applying \eqref{eq:self-pairing-zero} to $e_k$ and to $e_k+e_l$
gives
\begin{equation}
 t_{kk}=0,
 \qquad
 t_{kl}+t_{lk}=0\quad(k\ne l).
 \label{eq:t-antisymmetric}
\end{equation}

The finite-dimensional argument of Arazy
\cite[Lemma~4]{ArazySeq}, used also in
\cite[proof of Lemma~3.3]{HS}, supplies vectors
$\xi=(\xi_1,\ldots,\xi_n)$ and
$\eta=(\eta_1,\ldots,\eta_n)$ such that
\[
 x=\sum_{k=1}^n\xi_ke_k,
 \qquad
 y=\sum_{k=1}^n\eta_ke_k
\]
form a norming pair in
$E(\M,\tau)_{\sa}\times E^\times(\M,\tau)_{\sa}$ and $\xi,\eta$
are linearly independent. For
$\varepsilon=(\varepsilon_1,\ldots,\varepsilon_n)\in\{-1,1\}^n$,
put
\[
 x_\varepsilon=\sum_{k=1}^n\varepsilon_k\xi_ke_k,
 \qquad
 y_\varepsilon=\sum_{k=1}^n\varepsilon_k\eta_ke_k.
\]
Coordinate sign changes preserve both norms and the trace pairing, so
$(x_\varepsilon,y_\varepsilon)$ is again a norming pair. The
skew-Hermitian property and \eqref{eq:t-antisymmetric} therefore give
\begin{equation}
 0=\tau(H(x_\varepsilon)y_\varepsilon)
  =\sum_{k<l}\varepsilon_k\varepsilon_l
    (\xi_k\eta_l-\xi_l\eta_k)t_{kl}.
 \label{eq:sign-average}
\end{equation}
Multiplying by $\varepsilon_r\varepsilon_s$ and averaging over all
signs yields
\[
 (\xi_r\eta_s-\xi_s\eta_r)t_{rs}=0.
\]
Because $\xi$ and $\eta$ are linearly independent, there are
$r<s$ for which the determinant is nonzero. Let $k\ne l$ be
arbitrary and choose a coordinate permutation carrying $(r,s)$ to
$(k,l)$. The permuted vectors again form a norming pair, and applying
\eqref{eq:sign-average} to them gives $t_{kl}=0$. Hence
\begin{equation}
 \tau(H(e_k)e_l)=0
 \qquad(k\ne l).
 \label{eq:block-offdiag}
\end{equation}

For $m\ge0$, set
\[
 d_m=2^{-m}d,
 \qquad
 N_m=2^mn.
\]
Split each $e_k$ into $2^m$ mutually orthogonal projections of trace
$d_m$. The coordinate norm on the resulting $N_m$ projections is not
Euclidean: otherwise its restriction to vectors constant on each of
the $n$ groups would make the original norm on $Z$ Euclidean. Since
any two families of mutually orthogonal projections having the same
finite trace give the same symmetric coordinate norm, the
sign-and-permutation argument above applies to every family of $N_m$
mutually orthogonal projections of trace $d_m$.

Let $u,v\in\Pf(\M)$ satisfy $uv=0$ and
$\tau(u)=\tau(v)=d_m$. Since the original block has total trace $nd$,
\[
 \tau(1-u-v)\ge nd-2d_m=(N_m-2)d_m.
\]
By atomlessness, $u$ and $v$ can therefore be completed to a family of
$N_m$ mutually orthogonal projections of trace $d_m$. Applying
\eqref{eq:block-offdiag} to this family gives
\begin{equation}
 \tau(H(u)v)=0
 \quad\text{whenever }uv=0,
 \quad \tau(u)=\tau(v)=d_m.
 \label{eq:equal-trace-offdiag}
\end{equation}

Let $p,q\in\Pf(\M)$ with $pq=0$. For every $m$, choose
projections $p_m\le p$ and $q_m\le q$ such that
\[
 \tau(p-p_m)<d_m,
 \qquad
 \tau(q-q_m)<d_m,
\]
and such that $p_m$ and $q_m$ are finite sums of projections of trace
$d_m$. By \eqref{eq:equal-trace-offdiag} and linearity,
\begin{equation}
 \tau(H(p_m)q_m)=0.
 \label{eq:pm-qm-zero}
\end{equation}
We have
\begin{equation}
 \tau(H(p)q)
 =\tau\bigl(H(p)(q-q_m)\bigr)
  +\tau\bigl(H(p-p_m)q_m\bigr),
 \label{eq:offdiag-errors}
\end{equation}
where \eqref{eq:pm-qm-zero} has been used. For a measurable operator
$x$ and a finite projection $e$, the Fack--Kosaki inequality gives
\[
 \|xe\|_1
 \le\int_0^{\tau(e)}\mu_t(x)\,dt;
\]
see \cite{FackKosaki}.  K\"othe duality gives, for every $a<\infty$,
\[
   \int_0^a\mu_t(H(p))\,dt
   \le \|H(p)\|_E\,\|\chi_{(0,a)}\|_{E^\times}<\infty.
\]
Thus $\mu(H(p))$ is locally integrable.  Since
$\tau(q-q_m)\to0$, absolute continuity of the integral yields
\[
 \left|\tau\bigl(H(p)(q-q_m)\bigr)\right|
 \le\|H(p)(q-q_m)\|_1\longrightarrow0.
\]
For the second term, K\"othe duality gives
\[
\begin{aligned}
 \left|\tau\bigl(H(p-p_m)q_m\bigr)\right|
 &\le\|H(p-p_m)\|_E\,\|q_m\|_{E^\times}\\
 &\le\|H\|\,\|p-p_m\|_E\,\|q\|_{E^\times}
 \longrightarrow0,
\end{aligned}
\]
because $\|p-p_m\|_E=\varphi_E(\tau(p-p_m))\to0$ by order
continuity. Letting $m\to\infty$ in
\eqref{eq:offdiag-errors} proves the assertion.
\end{proof}

The trace identity forces both diagonal corners of $H(p)$ to
vanish.
\begin{lemma}
\label{lem:corners}
For every $p\in\Pf(\M)$,
\[
 pH(p)p=0,
 \qquad
 (1-p)H(p)(1-p)=0.
\]
\end{lemma}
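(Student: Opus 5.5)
The plan is to reduce both corner identities to the vanishing of $\tau(H(p)q)$ for suitable finite projections $q$, and then to use a spectral-projection argument to pass from ``trace zero against every finite subprojection'' to vanishing of the corner. Two inputs are available: the self-pairing identity \eqref{eq:self-pairing-zero}, and the off-diagonal vanishing of \Cref{lem:offdiag}.

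\emph{Step 1 (trace identities).} Fix $p\in\Pf(\M)$.
\begin{itemize}
\item If $q\le p$ is a projection, write $p=q+(p-q)$ with $q(p-q)=0$. Then
\[
 \tau(H(p)q)=\tau(H(q)q)+\tau(H(p-q)q)=0,
\]
by \eqref{eq:self-pairing-zero} for the first term and \Cref{lem:offdiag} for the second.
\item If $q\in\Pf(\M)$ and $qp=0$, then $\tau(H(p)q)=0$ directly by \Cref{lem:offdiag}.
\end{itemize}
In both cases $q$ has finite trace. Hence $H(p)q$ is trace class by the Fack--Kosaki estimate $\|xe\|_1\le\int_0^{\tau(e)}\mu_t(x)\,dt$ together with the local integrability of $\mu(H(p))$ established in the proof of \Cref{lem:offdiag}. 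So all the traces above are well defined.

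\emph{Step 2 (the corner $pH(p)p$).} Put $y=pH(p)p$, which is self-adjoint and $\tau$-measurable. Let $q=e^{y}(0,\infty)$. Then $q\le p$, so $\tau(q)<\infty$ and $q=pqp$. By cyclicity,
\[
 \tau(yq)=\tau(H(p)\,pqp)=\tau(H(p)q)=0 .
\]
Since $yq=q\,y\,q\ge0$, faithfulness of the trace gives $yq=0$, that is, $y_+=0$. The same argument with $e^{y}(-\infty,0)$ gives $y_-=0$, and therefore $y=0$.

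\emph{Step 3 (the corner $(1-p)H(p)(1-p)$).} Put $z=(1-p)H(p)(1-p)$. This is self-adjoint and lies in $E(\M,\tau)$, because two-sided multiplication by elements of $\M$ preserves $E(\M,\tau)$.

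The point to check is that the spectral projections $q_\varepsilon=e^{z}(\varepsilon,\infty)$ have finite trace for every $\varepsilon>0$. Order continuity of the norm forces $\mu_t(z)\to0$ as $t\to\infty$: otherwise $\mu(z)$ would dominate a nonzero constant on an infinite interval, and this is excluded as in \Cref{lem:sigma-finite-support}. Hence $d_z(\varepsilon)<\infty$.

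Each $q_\varepsilon$ is then a finite projection with $q_\varepsilon\le 1-p$, so $q_\varepsilon p=0$, and Step 1 gives
\[
 \tau(zq_\varepsilon)=\tau(H(p)q_\varepsilon)=0 .
\]
On the other hand $zq_\varepsilon\ge\varepsilon q_\varepsilon$, so $\tau(q_\varepsilon)=0$ and hence $q_\varepsilon=0$ for every $\varepsilon>0$. Thus $z\le0$, and the symmetric argument with $e^{z}(-\infty,-\varepsilon)$ gives $z\ge0$. Therefore $z=0$.

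\emph{Main obstacle.} The only delicate points are the trace-class justifications and the finiteness of the spectral projections in Step 3. I would handle the first with the Fack--Kosaki inequality, exactly as in \Cref{lem:offdiag}, and the second with the decay of $\mu(z)$ at infinity coming from order continuity. Everything else is bookkeeping with \eqref{eq:self-pairing-zero} and \Cref{lem:offdiag}.
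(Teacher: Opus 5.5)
Your proof is correct and follows essentially the paper's route: you obtain $\tau(H(p)q)=0$ for finite $q\le p$ and for finite $q\perp p$ from \eqref{eq:self-pairing-zero} and \Cref{lem:offdiag}, and then use spectral projections and faithfulness of $\tau$ to kill each corner. The one difference is in the second corner. You show that the spectral projections $e^{z}(\varepsilon,\infty)$ themselves have finite trace, using $\tau$-compactness coming from order continuity, whereas the paper uses semifiniteness to pick a nonzero finite subprojection of such a spectral projection, so order continuity is not needed at that step.
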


\begin{proof}
If $e\le p$, then \Cref{lem:offdiag} and
\eqref{eq:self-pairing-zero} give
\[
 \tau(H(p)e)=\tau(H(e)e)+\tau(H(p-e)e)=0.
\]
If $pH(p)p\ne0$, a nonzero positive or negative spectral projection
of $pH(p)p$ gives a finite $e\le p$ for which
$\tau(H(p)e)\ne0$, a contradiction.  Thus $pH(p)p=0$.

If $e\le1-p$ is finite, then \Cref{lem:offdiag} gives
$\tau(H(p)e)=0$.  If $(1-p)H(p)(1-p)\ne0$, semifiniteness supplies a
nonzero finite subprojection of a positive or negative spectral
projection, again contradicting the trace identity.
\end{proof}

The corner decomposition yields the operator-norm estimate
\[
\norm{H(p)}_\infty\le\norm{H}.
\]
\begin{lemma}
\label{lem:uniform}
For every $p\in\Pf(\M)$,
\[
 H(p)\in\Ftau_{\sa},
 \qquad
 \norm{H(p)}_\infty\le\norm{H}.
\]
Consequently,
\[
 \norm{H(x)}_\infty\le2\norm{H}\,\norm{x}_\infty,
 \qquad x\in\Ftau_{\sa}.
\]
\end{lemma}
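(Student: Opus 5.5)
The plan is to read off the finite support of $H(p)$ from \Cref{lem:corners}, then prove the operator-norm bound by a duality argument, and finally extend to $\Ftau_{\sa}$ by a layer-cake decomposition.

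\textbf{Finite support.} By \Cref{lem:corners}, the self-adjoint operator $H(p)$ has no diagonal corners relative to $p$:
\[
 H(p)=w^*+w,\qquad w:=(1-p)H(p)p .
\]
The right support of $w$ lies under $p$. Its left support $l(w)$ is equivalent to $r(w)\le p$, so $\tau(l(w))\le\tau(p)<\infty$. Hence $s(H(p))\le p\vee l(w)$ has finite trace. I still have to check that $H(p)$ is bounded. For an off-diagonal self-adjoint operator the singular values are those of $|w|$, each repeated twice, so $\|H(p)\|_\infty=\|w\|_\infty$. It therefore suffices to show $\|w\|_\infty\le\|H\|$; this gives $H(p)\in\M$, and with the support bound, $H(p)\in\Ftau_{\sa}$.

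\textbf{Norm bound.} Write $w=v|w|$ in polar form. Fix $\lambda>0$ and let $e\le p$ be the spectral projection of $|w|$ for $[\lambda,\infty)$; put $f=vev^*\le 1-p$, so that $f\sim e$. I would test $H(p)$ against the self-adjoint element
\[
 z=ve+ev^*,
\]
which has spectrum $\{\pm1\}$ on $e+f$. This gives $\tau(H(p)z)=2\tau(|w|e)\ge2\lambda\tau(e)$. Duality, together with the fundamental-function identity $\|q\|_E\|q\|_{E^\times}=\tau(q)$, then gives an upper bound for this pairing.

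The main obstacle is that the naive bound
\[
 |\tau(H(p)z)|\le\|H\|\,\|p\|_E\,\|z\|_{E^\times}
\]
carries the factor $\varphi_E(\tau(p))$ rather than $\varphi_E(\tau(e))$, so it is not uniform in $p$. To repair this, I would replace $p$ by $e$ inside the pairing. The identity $eH(p)f=eH(e)f+eH(p-e)f$ holds, and applying \Cref{lem:corners} to the projection $p-e$ should let me show that the cross term pairs to zero against $z$. For this I intend to use the antisymmetry $\tau(H(q)r)=-\tau(H(r)q)$ for finite projections $q,r$, which follows from \Cref{lem:offdiag} and \eqref{eq:self-pairing-zero}, after rotating $z$ into the abelian algebra generated by $e+f$ and a Cayley-type unitary. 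Once the pairing is reduced to $H(e)$, the estimate becomes
\[
 2\lambda\tau(e)\le\|H\|\,\|e\|_E\,\|z\|_{E^\times}.
\]
Here $\|z\|_{E^\times}=\varphi_{E^\times}(2\tau(e))$. The constants are then balanced through the fundamental-function identity, using the equal-trace splitting $e+f$; if a factor of $2$ survives, the second step must be refined. This is the step where I expect to work hardest. A fallback is to differentiate the isometries $\exp(tH)$ at $t=0$ on the two-by-two block generated by $e$ and $f$, using the block invariance already exploited in \Cref{lem:offdiag}.

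\textbf{Extension to $\Ftau_{\sa}$.} Take first $x\in\Ftau_{\sa}$ of finite spectrum and write $x=x_+-x_-$. The layer-cake formula
\[
 x_\pm=\int_0^{\|x\|_\infty}e_\pm(s)\,ds
\]
is, for finite spectrum, a finite positive combination of finite projections $e_\pm(s)$. The first part of the lemma therefore gives
\[
 \|H(x_\pm)\|_\infty\le\|H\|\,\|x_\pm\|_\infty,
\]
hence $\|H(x)\|_\infty\le2\|H\|\,\|x\|_\infty$. A general $x\in\Ftau_{\sa}$ is an operator-norm limit of finite-spectrum $x_n$ supported under the fixed finite projection $s(x)$, so $x_n\to x$ also in $E$, and thus $H(x_n)\to H(x)$ in $E$. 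Applied to the differences $x_n-x_m$, the bound shows that $(H(x_n))$ is Cauchy in operator norm. Convergence in $E$ implies convergence in measure, so its operator-norm limit must equal $H(x)$, and the estimate passes to the limit.
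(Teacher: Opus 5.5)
Your proof is correct. It shares the paper's decisive localization step, but it obtains the operator-norm bound by a different and heavier route.

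\textbf{The paper's route.} The paper never passes to duality. Take $q=e^{|A|}(\lambda,\infty)\le p$, which is your $e$. The corner identities of \Cref{lem:corners}, applied to $p-q$, give $Aq=(1-p)H(q)q$; this is your identity $eH(p)f=eH(e)f$. The paper then concludes directly that
\[
 \lambda\norm{q}_E\le\norm{Aq}_E\le\norm{H(q)}_E\le\norm{H}\,\norm{q}_E ,
\]
using only $|Aq|=|A|q\ge\lambda q$ and the ideal property.

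\textbf{Your route.} Pairing against $z=ve+ev^*$ reaches the same conclusion, and both points you hedge on resolve immediately.
\begin{enumerate}[label=\textup{(\roman*)}]
\item No antisymmetry or Cayley rotation is needed. Since $e,f\le 1-(p-e)$, you have $z=(1-(p-e))z(1-(p-e))$. The second corner identity for $p-e$ then gives $\tau(H(p-e)z)=0$ at once.
\item No factor of $2$ survives. Put $t=\tau(e)>0$ and use $\varphi_{E^\times}(2t)=2t/\varphi_E(2t)$. Then
\[
 2\lambda t\le\norm{H}\,\varphi_E(t)\,\frac{2t}{\varphi_E(2t)}
 \quad\Longrightarrow\quad
 \lambda\le\norm{H}\,\frac{\varphi_E(t)}{\varphi_E(2t)}\le\norm{H},
\]
because $\varphi_E$ is nondecreasing.
\end{enumerate}
So your version costs a test element, the trace duality and the fundamental-function identity. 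The paper's direct norm comparison needs none of these, and your fallback via $\exp(tH)$ is unnecessary.

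\textbf{The extension step.} You use an operator-norm Cauchy argument, with completeness of $\M$ and Hausdorffness of the measure topology. This replaces the paper's appeal to closedness of the unit ball of $\M$ in the measure topology, and it is slightly more elementary. It only shows $H(x)\in C_0(\M,\tau)$, however. The paper additionally checks $(1-r)H(x)(1-r)=0$ for $r=s(x)$, which gives $H(x)\in\Ftau$. Your conclusion is enough for the lemma as stated and for the later extension of $H$ to $C_0(\M,\tau)_{\sa}$.
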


\begin{proof}
By \Cref{lem:corners}, relative to $1=p+(1-p)$,
\[
 H(p)=\begin{pmatrix}0&A^*\\ A&0\end{pmatrix},
 \qquad A=(1-p)H(p)p.
\]
Suppose $\norm{A}_\infty>\norm{H}$ and choose
$\norm{H}<\lambda<\norm{A}_\infty$.  Let
$q=e^{|A|}(\lambda,\infty)\le p$.  Applying
\Cref{lem:corners} to $p-q$ gives
\[
 Aq=(1-p)H(q)q.
\]
Hence
\[
 \lambda\norm{q}_E
 \le\norm{Aq}_E
 \le\norm{H(q)}_E
 \le\norm{H}\,\norm{q}_E,
\]
a contradiction.  Thus $\norm{A}_\infty\le\norm{H}$, and the displayed
matrix form gives $\norm{H(p)}_\infty=\norm{A}_\infty$.  The right support of $A$ is dominated by $p$, while $l(A)\sim r(A)$ by
the polar decomposition; see, for example,
\cite[Section~6.1]{KadisonRingrose2}.  Hence $H(p)$ has
finite support.

For $0\le x\in\Ftau$, let $r=\supp x$ and $M=\norm{x}_\infty$.  Put
$\Delta_n=2^{-n}M$ and
\[
 x_n=\Delta_n\sum_{k=1}^{2^n}e^x([k\Delta_n,\infty)).
\]
Then $\norm{x_n-x}_E\to0$ and
\[
 \norm{H(x_n)}_\infty
 \le\Delta_n\sum_{k=1}^{2^n}\norm{H(e^x([k\Delta_n,\infty)))}_\infty
 \le M\norm{H}.
\]
The closed unit ball of $\M$ is closed in the measure topology
\cite[Theorem~32]{DoddsDePagter}; hence
$H(x)\in\M$ and $\norm{H(x)}_\infty\le M\norm{H}$.  Every
spectral projection occurring in $x_n$ is dominated by $r$, so
\Cref{lem:corners} gives
\[
   (1-r)H(x_n)(1-r)=0.
\]
Since $H(x_n)\to H(x)$ in measure and multiplication by bounded
operators is continuous in that topology, passage to the limit yields
\[
   (1-r)H(x)(1-r)=0.
\]
The remaining parts of $H(x)$ have one support projection dominated by
the finite projection $r$, while the other is equivalent to it.
Hence $H(x)$ has finite support and therefore belongs to $\Ftau$.
Applying this estimate to $x_+$ and $x_-$ gives the last
assertion.
\end{proof}

\begin{proof}[Proof of \Cref{thm:SH}]
By \Cref{lem:uniform}, $H|_{\Ftau_{\sa}}$ extends uniquely to a
bounded real-linear operator
\[
 \widehat H:C_0(\M,\tau)_{\sa}\longrightarrow
 C_0(\M,\tau)_{\sa}.
\]
Let $p_j,p_k$ be orthogonal finite projections.  The two corner
identities in \Cref{lem:corners}, first for $p_j$ and $p_k$ and then
for $p_j+p_k$, show that
\[
 H(p_j)p_k+p_jH(p_k)+H(p_k)p_j+p_kH(p_j)=0.
 \label{eq:projection-cross-identity}
\]
They also give
\[
   H(p_j)=H(p_j)p_j+p_jH(p_j).
\]
Consequently, if $x=\sum_j\lambda_jp_j$ is finite-spectrum and
self-adjoint, then the diagonal terms in $H(x)x+xH(x)$ equal
$\sum_j\lambda_j^2H(p_j)$, while the mixed terms vanish in pairs by
\eqref{eq:projection-cross-identity}.  Thus
\[
 H(x^2)=H(x)x+xH(x).
\]
Uniform approximation by finite-spectrum self-adjoint elements and the
spectral theorem \cite[Theorem~5.2.2]{KadisonRingrose1} extend this
identity to all $x=x^*\in C_0(\M,\tau)$.

Complexify $\widehat H$ by
\[
 \delta(x+iy)=\widehat H(x)+i\widehat H(y).
\]
Polarization of the square identity shows that $\delta$ is a bounded
Jordan $*$-derivation on the $C^*$-algebra $C_0(\M,\tau)$.  By
Johnson's theorem \cite[Theorem~6.3]{Johnson}, every bounded Jordan
derivation from a $C^*$-algebra into a Banach bimodule is a derivation.
By \cite[Lemma~3.9]{HS}, there exists $c\in\M$ such that
\[
 \delta(z)=cz-zc,
 \qquad z\in C_0(\M,\tau).
\]
Since $\delta$ is a $*$-derivation, the element $c+c^*$ commutes
with $C_0(\M,\tau)$.  In particular, it commutes with every finite
projection.  The finite projections generate $\M$ strongly, and hence
$c+c^*\in\Z(\M)$.  Replacing $c$ by $c-(c+c^*)/2$, we may assume
$c^*=-c$.  With $a=ic=a^*$,
\[
 \delta(z)=i(za-az).
\]
Hence the asserted formula holds on $\Ftau_{\sa}$. By order continuity,
$\Ftau_{\sa}$ is norm dense in $X=E(\M,\tau)_{\sa}$; see
\cite[p.~3306]{HS}. Thus the formula extends by continuity to all of
$X$. If both $a$ and $b$ implement $H$, then $a-b$ commutes with every finite projection and
hence with all of $\M$; thus $a-b\in\Z(\M)_{\sa}$.
\end{proof}

\subsection{The center and commutators}

Let
\[
 V:X=E(\M,\tau)_{\sa}\longrightarrow
 Y=F(\N,\nu)_{\sa}
\]
be a surjective real-linear isometry satisfying the hypotheses of
\Cref{thm:nc-main}. By \Cref{lem:nc-nonhilbert}, both $X$ and $Y$ are
non-Hilbertian. Thus \Cref{thm:SH} applies to skew-Hermitian operators
on either space.

Conjugation by $V$ preserves the center and its locally measurable
affiliated operators.
\begin{theorem}
\label{thm:center}
One has
\[
 V\bigl(X\cap\LS(\Z(\M))_{\sa}\bigr)
 =Y\cap\LS(\Z(\N))_{\sa}.
\]
\end{theorem}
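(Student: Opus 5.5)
The plan is to characterize the central part of $X$ intrinsically, as the common kernel of all bounded skew-Hermitian operators on $X$. Since this description uses only the real Banach-space structure, it is transported by $V$. Concretely, set
\[
 \mathcal K(X)=\{x\in X:\ H(x)=0\text{ for every bounded skew-Hermitian }H:X\to X\},
\]
and define $\mathcal K(Y)$ analogously. The proof will show that $\mathcal K(X)=X\cap\LS(\Z(\M))_{\sa}$ and that $V(\mathcal K(X))=\mathcal K(Y)$. Applying the first statement to $Y$ in place of $X$ then gives the theorem.

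\emph{Transport step.} If $H$ is skew-Hermitian on $X$, then $VHV^{-1}$ is skew-Hermitian on $Y$, as noted after the definition of $\DeltaM{a}$. Applying the same remark to $V^{-1}$ shows that $H\mapsto VHV^{-1}$ is a bijection between the two classes of skew-Hermitian operators. Hence $x\in\mathcal K(X)$ if and only if $VHV^{-1}(Vx)=0$ for all such $H$. This holds if and only if $Vx\in\mathcal K(Y)$.

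\emph{Identification step.} By \Cref{thm:SH}, which applies because $X$ is non-Hilbertian by \Cref{lem:nc-nonhilbert}, every bounded skew-Hermitian $H$ on $X$ equals $\DeltaM{a}$ for some $a=a^*\in\M$. Conversely, each $\DeltaM{a}$ is skew-Hermitian. Therefore
\[
 \mathcal K(X)=\{x\in X:\ xa=ax\ \text{for all } a=a^*\in\M\},
\]
where the products are taken in $S(\M,\tau)$. If $x\in\LS(\Z(\M))_{\sa}\cap X$, then the spectral projections of $x$ are central. Hence $x$ strongly commutes with every $a\in\M$, and the products $xa$ and $ax$ agree in $S(\M,\tau)$. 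This gives the inclusion $\supseteq$.

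For the inclusion $\subseteq$, let $x\in X$ satisfy $xa=ax$ for all $a=a^*\in\M$, and hence, by linearity, for every unitary $u\in\M$. Then $uxu^*=x$ in $S(\M,\tau)$. Since $x$ is a closed self-adjoint operator affiliated with $\M$, uniqueness of the spectral resolution gives $ue^x(B)u^*=e^x(B)$ for every Borel set $B$. Thus each spectral projection of $x$ lies in $\M\cap\M'=\Z(\M)$, and $x$ is affiliated with $\Z(\M)$. As $x$ is $\tau$-measurable, it belongs to $\LS(\Z(\M))_{\sa}$.

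I expect the only delicate point to be this passage from the algebraic identity $xa=ax$ in the algebra of measurable operators to strong commutation, that is, to commutation of the spectral projections, since $x$ may be unbounded. I will handle it through unitary conjugation invariance as above. The relation $uxu^*=x$ holds as an equality of closed operators because the strong product in $S(\M,\tau)$ is the closure of the ordinary product, and $u$ is bounded and invertible.
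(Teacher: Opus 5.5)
Your proposal is correct and is essentially the paper's argument. The paper likewise writes $V^{-1}\DeltaN{b}V=\DeltaM{a}$ via \Cref{thm:SH}, observes that central elements are killed by every such commutator, concludes that $V$ sends them to elements commuting with all of $\N_{\sa}$, and gets the reverse inclusion from $V^{-1}$. Your intrinsic kernel $\mathcal K(X)$ and the unitary-conjugation argument for unbounded $x$ only spell out details that the paper's short proof leaves implicit.
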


\begin{proof}
Let $z\in X\cap\LS(\Z(\M))_{\sa}$ and $b=b^*\in\N$.
The operator $\DeltaN{b}$ is skew-Hermitian on $Y$, and hence
$V^{-1}\DeltaN{b}V$ is skew-Hermitian on $X$.  By
\Cref{thm:SH}, there is $a=a^*\in\M$ such that
\[
 V^{-1}\DeltaN{b}V=\DeltaM{a}.
\]
Since $z$ is affiliated with the center,
\[
 \DeltaN{b}(Vz)=V\DeltaM{a}(z)=0.
\]
Thus $Vz$ commutes with every bounded self-adjoint element of $\N$ and
is affiliated with $\Z(\N)$. Replacing $V$ by $V^{-1}$ gives the
reverse inclusion.
\end{proof}

For $a=a^*\in\M$, conjugation by $V$ sends $\DeltaM{a}$ to an inner
skew-Hermitian operator on $Y$.  The uniqueness assertion in
\Cref{thm:SH} therefore defines a bijective real-linear map
\begin{equation}
 \Theta_V:
 \M_{\sa}/\Z(\M)_{\sa}
 \longrightarrow
 \N_{\sa}/\Z(\N)_{\sa}
 \label{eq:Theta}
\end{equation}
by the rule
\begin{equation}
 V\DeltaM{a}V^{-1}=\DeltaN{b}
 \quad\Longleftrightarrow\quad
 \Theta_V(\dot a)=\dot b.
 \label{eq:Theta-def}
\end{equation}
Choose, for each $a\in\M_{\sa}$, a representative
$\widehat\Theta(a)\in\N_{\sa}$ of $\Theta_V(\dot a)$. Identities
involving these representatives are understood modulo the center.
The map $\Theta_V$ is a Lie isomorphism because conjugation preserves
commutators of operators. In particular,
\begin{equation}
 \Theta_V\bigl(\dot{-i[a,c]}\bigr)
 =
 \dot{-i[\widehat\Theta(a),\widehat\Theta(c)]},
 \qquad a,c\in\M_{\sa}.
 \label{eq:Theta-Lie}
\end{equation}

\begin{lemma}
\label{lem:central-commutator}
If $a,b\in\M$ and $[a,b]\in\Z(\M)$, then $[a,b]=0$.
\end{lemma}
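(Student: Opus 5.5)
The plan is to prove the statement via the Kleinecke--Shirokov theorem, using that central elements of a von Neumann algebra are normal. Put $c=[a,b]=ab-ba\in\Z(\M)$ and let $\delta(x)=[a,x]$ be the inner derivation of $\M$ implemented by $a$, so that $\norm{\delta}\le2\norm{a}_\infty$. The argument has three steps: show that $c$ is quasinilpotent, observe that $c$ is normal, and conclude that $c=0$.

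\emph{Step 1: $c$ is quasinilpotent.} Since $c$ is central, $\delta(c)=[a,c]=0$ and $c$ commutes with $b$. The Leibniz rule together with induction on $n$ gives
\[
 \delta(b^n)=\sum_{j=0}^{n-1}b^j\,c\,b^{n-1-j}=n\,c\,b^{n-1},
 \qquad n\ge1.
\]
Applying $\delta$ repeatedly, and again using $\delta(c)=0$ and that $c$ commutes with $b$, one obtains
\[
 \delta^k(b^n)=\frac{n!}{(n-k)!}\,c^k\,b^{n-k},
 \qquad 0\le k\le n .
\]
Taking $k=n$ yields $\delta^n(b^n)=n!\,c^n$. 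Hence
\[
 \norm{c^n}_\infty\le\frac{(2\norm{a}_\infty\norm{b}_\infty)^n}{n!},
\]
and Gelfand's spectral radius formula gives $r(c)=\lim_n\norm{c^n}_\infty^{1/n}=0$.

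\emph{Step 2: $c$ is normal.} The center $\Z(\M)$ is an abelian von Neumann algebra, so it is closed under adjoints. Therefore $c^*\in\Z(\M)$, and in particular $c^*$ commutes with $c$.

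\emph{Step 3: conclusion.} For a normal element of a $C^*$-algebra, the norm equals the spectral radius, so $\norm{c}_\infty=r(c)=0$. Thus $[a,b]=0$.

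The only step that needs care is the iterated-derivation identity in Step 1. It is a routine induction on $k$, provided one uses at each stage both $\delta(c)=0$ and the commutation of $c$ with $b$. I expect no other obstacle.
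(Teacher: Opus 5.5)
Your proof is correct, but it takes a different route from the paper's.

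\textbf{The paper's route.} The paper argues by contradiction and localizes. If $c=[a,b]\neq0$, it cuts down by a nonzero central projection, a spectral slice of $c$, on which $c$ is boundedly invertible in the center. Multiplying one factor by this central inverse gives elements $A,B$ of a unital Banach algebra with $[A,B]=1$. Wintner's estimate $\|B^n\|\ge n!/(2\|A\|)^n$ then rules this out.

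\textbf{Your route.} You run the Kleinecke--Shirokov computation $\delta^n(b^n)=n!\,c^n$ globally. This shows that $c$ is quasinilpotent. You then use the fact that central elements are normal, so $\|c\|_\infty=r(c)=0$.

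\textbf{Comparison.} Both arguments rest on the same factorial growth coming from iterated commutators with powers of $b$. Your version needs no localization. In particular, you never have to find a central projection on which $c$ is bounded below. That step relies on functional calculus in the projection-rich abelian von Neumann algebra $\Z(\M)$, and it quietly uses normality as well. Consequently, your argument works verbatim in any $C^*$-algebra. It also needs less than you used: the general Leibniz identity $\delta^n(b^n)=n!\,\delta(b)^n$ holds whenever $\delta^2(b)=0$, so only $[a,c]=0$ is required. Commutation of $c$ with $b$ is not needed, though it is available here anyway. The paper's version, in return, reduces directly to the classical fact that the identity is not a commutator in a Banach algebra.
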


\begin{proof}
If a central commutator were nonzero, then after restricting to a
nonzero central projection and then to a central spectral slice, it
would be boundedly invertible in the center.  Multiplying one factor by
that central inverse would give bounded elements $A,B$ in a unital
Banach algebra satisfying $[A,B]=1$.  Wintner's argument excludes this:
from $[A,B^n]=nB^{n-1}$ one obtains
\[
 \norm{B^n}\ge \frac{n!}{(2\norm{A})^n},
\]
contrary to $\norm{B^n}\le\norm{B}^n$.
\end{proof}

\subsubsection{Maximal abelian subalgebras}

Let $\A\subseteq\M$ be a maximal abelian von Neumann subalgebra and set
\begin{equation}
 \B_{\A}
 =\{\widehat\Theta(a):a\in\A_{\sa}\}'\cap\N.
 \label{eq:BA-def}
\end{equation}
The algebra $\B_{\A}$ does not depend on the chosen representatives,
since two representatives differ by a central element.

Equation~\eqref{eq:Theta-Lie} also determines the corresponding
commutants.
\begin{proposition}
\label{prop:masa-image}
The algebra $\B_{\A}$ is a maximal abelian von Neumann subalgebra of
$\N$, and
\begin{equation}
 V\bigl(E(\A,\tau|_{\A})_{\sa}\bigr)
 =Y\cap S(\B_{\A})_{\sa}.
 \label{eq:MASA-range}
\end{equation}
\end{proposition}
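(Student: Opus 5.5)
The plan is to characterize both sides of \eqref{eq:MASA-range} as common kernels of commutator operators, and then transport these kernels by $V$ using \eqref{eq:Theta-def}. On the domain side, $\A$ is maximal abelian, so $\A'\cap\M=\A$. Hence an element $x\in X$ satisfies $\DeltaM{a}(x)=0$ for all $a\in\A_{\sa}$ exactly when $x$ commutes with $\A$, that is, when $x$ is affiliated with $\A'=\A$. So I would first record
\[
 E(\A,\tau|_{\A})_{\sa}=X\cap S(\A)_{\sa}
 =\bigcap_{a\in\A_{\sa}}\ker\DeltaM{a}.
\]
The first equality uses that $\mu(x)$ computed in $\A$ agrees with that computed in $\M$. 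I take this identification of $E(\A,\tau|_{\A})$ with the affiliated part of $X$ as the paper's convention; semifiniteness of $\tau|_{\A}$ may have to be checked or assumed there. Applying $V$ and \eqref{eq:Theta-def}, $V\DeltaM{a}V^{-1}=\DeltaN{\widehat\Theta(a)}$. This gives
\[
 V\bigl(E(\A,\tau|_{\A})_{\sa}\bigr)
 =\bigcap_{a\in\A_{\sa}}\ker\DeltaN{\widehat\Theta(a)}
 =Y\cap S(\B_{\A})_{\sa}.
\]
The last equality holds because $y\in Y$ is annihilated by all these commutators iff it commutes with the self-adjoint set $\{\widehat\Theta(a)\}$, iff it is affiliated with the von Neumann algebra $\B_{\A}$ that this set commutes with. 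So \eqref{eq:MASA-range} follows formally once the commutant is identified. What remains, and where the real work lies, is that $\B_{\A}$ is maximal abelian.

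Let $\mathcal C$ be the von Neumann algebra generated by $\{\widehat\Theta(a):a\in\A_{\sa}\}\cup\Z(\N)$. Since central summands do not change commutants, $\B_{\A}=\mathcal C'\cap\N$. First I show that $\mathcal C$ is abelian. For $a,c\in\A_{\sa}$ we have $[a,c]=0$, so by \eqref{eq:Theta-Lie} the element $-i[\widehat\Theta(a),\widehat\Theta(c)]$ represents $\Theta_V(\dot 0)=0$, i.e. it is central. Lemma~\ref{lem:central-commutator} then forces $[\widehat\Theta(a),\widehat\Theta(c)]=0$. Hence $\mathcal C$ is abelian and $\mathcal C\subseteq\B_{\A}$.

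Next I prove the reverse inclusion $\B_{\A}\subseteq\mathcal C$, which is the main step. Let $b=b^*\in\B_{\A}$ and use bijectivity of $\Theta_V$ to choose $a'\in\M_{\sa}$ with $\Theta_V(\dot a')=\dot b$. For every $a\in\A_{\sa}$, the element $b$ commutes with $\widehat\Theta(a)$. Hence the operators $\DeltaN{b}$ and $\DeltaN{\widehat\Theta(a)}$ commute, since $[\DeltaN{b},\DeltaN{c}]$ is a commutator operator $\DeltaN{\pm i[b,c]}$. Conjugating back by $V^{-1}$, the operators $\DeltaM{a'}$ and $\DeltaM{a}$ commute, so $\DeltaM{i[a',a]}=0$ and $[a',a]$ is central. (Here I must check that a commutator operator vanishes on $X$ only for central implementers; this is the uniqueness clause of Theorem~\ref{thm:SH}.) Lemma~\ref{lem:central-commutator} again gives $[a',a]=0$. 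Thus $a'\in\A'\cap\M=\A$, so $b\equiv\widehat\Theta(a')$ modulo $\Z(\N)$ and $b\in\mathcal C$. Since $\B_{\A}$ is spanned by its self-adjoint part, $\B_{\A}=\mathcal C$.

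Consequently $\B_{\A}$ is abelian and equals its own commutant: $\B_{\A}'\cap\N=\mathcal C'\cap\N=\B_{\A}$. That is precisely maximal abelianness. I expect the main obstacle to be exactly this reverse inclusion, since it needs the surjectivity of $\Theta_V$ and the two uses of Lemma~\ref{lem:central-commutator} to upgrade ``commutes modulo the center'' to genuine commutation.
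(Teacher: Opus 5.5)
Your proposal is correct and follows the paper's proof step for step. You show that the algebra generated by $\Z(\N)$ and the $\widehat\Theta(a)$ is abelian via \eqref{eq:Theta-Lie} and \Cref{lem:central-commutator}, obtain the reverse inclusion from surjectivity of $\Theta_V$ and maximality of $\A$, and get \eqref{eq:MASA-range} by transporting the common kernels of the commutator operators through \eqref{eq:Theta-def}. The only cosmetic difference is that you obtain ``$[a',a]$ central'' by pulling $[\DeltaN{b},\DeltaN{\widehat\Theta(a)}]=\DeltaN{-i[b,\widehat\Theta(a)]}$ back to $\M$ and invoking the uniqueness clause of \Cref{thm:SH}; this is exactly the injectivity of $\Theta_V$, which the paper uses directly.
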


\begin{proof}
If $a,c\in\A_{\sa}$, then $[a,c]=0$. By \eqref{eq:Theta-Lie},
\[
 [\widehat\Theta(a),\widehat\Theta(c)]\in\Z(\N).
\]
It is therefore zero by \Cref{lem:central-commutator}.  Hence
\[
 \mathcal C_{\A}
 :=W^*(\Z(\N),\widehat\Theta(a):a\in\A_{\sa})
\]
is abelian and is contained in $\B_{\A}$.

Conversely, take $b=b^*\in\B_{\A}$.  Surjectivity of \eqref{eq:Theta}
gives $x=x^*\in\M$ such that $\Theta_V(\dot x)=\dot b$.  For every
$a\in\A_{\sa}$,
\[
 \Theta_V(\dot{-i[x,a]})
 =\dot{-i[b,\widehat\Theta(a)]}=0.
\]
Thus $[x,a]$ is central and hence zero by
\Cref{lem:central-commutator}.  Since $\A$ is maximal abelian,
$x\in\A$.  Consequently,
$b-\widehat\Theta(x)\in\Z(\N)$ and $b\in\mathcal C_{\A}$.
Thus $\B_{\A}=\mathcal C_{\A}$ is abelian. By the definition of
$\B_{\A}$,
\[
 \B_{\A}'\cap\N=\B_{\A}.
\]
Hence $\B_{\A}$ is maximal abelian; see
\cite[Section~9.4]{KadisonRingrose2}.

If $x\in E(\A,\tau|_{\A})_{\sa}$, then
$\DeltaM{a}(x)=0$ for every $a\in\A_{\sa}$.  From
\eqref{eq:Theta-def},
\[
 \DeltaN{\widehat\Theta(a)}(Vx)=V\DeltaM{a}(x)=0.
\]
Therefore $Vx$ is affiliated with $\B_{\A}$.  Conversely, if
$y\in Y\cap S(\B_{\A})_{\sa}$, then the same intertwining identity
gives $\DeltaM{a}(V^{-1}y)=0$ for every $a\in\A_{\sa}$, and hence
$V^{-1}y$ is affiliated with $\A$, which is
\eqref{eq:MASA-range}.
\end{proof}

\section{Proof of the noncommutative representation theorem}
\label{sec:nc-proof}

We apply the projection-extension theorem of de Jager and Conradie
\cite{DJC} to the support map on finite projections. The map is first
obtained on maximal abelian von Neumann subalgebras with semifinite
restricted trace.

\subsection{Construction of the Jordan map}

The restriction of the trace to an arbitrary maximal abelian
subalgebra need not be semifinite.  Finite families of projections,
however, can always be embedded in a maximal abelian subalgebra with
semifinite restricted trace.

\begin{lemma}
\label{lem:semifinite-MASA}
\begin{enumerate}[label=\textup{(\roman*)}]
\item Every finite family of commuting projections in $\Pf(\M)$ is
contained in a maximal abelian von Neumann subalgebra
$\A\subseteq\M$ such that $\tau|_{\A}$ is semifinite.
\item More generally, if $p\in\Pf(\M)$ and
$\mathcal C\subseteq p\M p$ is an abelian von Neumann subalgebra, then
$\mathcal C$ is contained in such an $\A$.
\item Every such $\A$ is atomless.
\end{enumerate}
\end{lemma}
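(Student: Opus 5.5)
Part (i) reduces to part (ii). If $p_1,\dots,p_n\in\Pf(\M)$ commute, then $p=p_1\vee\dots\vee p_n$ lies in $\Pf(\M)$, since $\tau(p)\le\sum\tau(p_k)$. The von Neumann algebra $\mathcal C$ generated by $p_1,\dots,p_n$ inside $p\M p$ is abelian and finite dimensional. Hence it suffices to prove (ii).

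For (ii), the plan is to build $\A$ in two pieces and glue them.

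On the corner $p\M p$: enlarge $\mathcal C$ by Zorn's lemma to a maximal abelian von Neumann subalgebra $\A_1$ of $p\M p$. The trace $\tau|_{\A_1}$ is finite because $\tau(p)<\infty$, so it is semifinite.

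On the complement: use semifiniteness of $\M$ to choose a maximal family $(q_\gamma)$ of mutually orthogonal nonzero projections in $\Pf(\M)$ with $q_\gamma\le 1-p$. Maximality and semifiniteness give $\sum_\gamma q_\gamma=1-p$. In each finite corner $q_\gamma\M q_\gamma$ choose a maximal abelian subalgebra $\A_\gamma$; its restricted trace is finite.

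Set $\A=\A_1\oplus\bigoplus_\gamma\A_\gamma$, the weak closure of the algebraic direct sum. Then $\A$ is abelian and contains $\mathcal C$. The trace on $\A$ is semifinite: every nonzero projection $e\in\A$ has some nonzero cut $e\wedge p=ep$ or $eq_\gamma$, which lies in $\A$ and has finite trace.

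The main obstacle is maximality of $\A$ in $\M$, because an element commuting with $\A$ might have off-diagonal pieces between different blocks. Let $x\in\A'\cap\M$. Since $p$ and every $q_\gamma$ lie in $\A$, $x$ commutes with them. Therefore $x=pxp+\sum_\gamma q_\gamma xq_\gamma$ (strongly), so $x$ has no off-diagonal pieces. Each block $pxp$ commutes with $\A_1$ inside $p\M p$, hence lies in $\A_1$ by maximality of $\A_1$; similarly $q_\gamma xq_\gamma\in\A_\gamma$. Thus $x\in\A$, so $\A'\cap\M=\A$ and $\A$ is maximal abelian.

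For (iii), suppose $e$ were a minimal projection of $\A$. Since $\A$ is maximal abelian, $e\A e=\A e$ is maximal abelian in $e\M e$; this is the standard fact that for a projection $e$ in a maximal abelian subalgebra $\A$, $(\A e)'\cap e\M e=e(\A'\cap\M)e=\A e$. Minimality of $e$ in $\A$ gives $\A e=\mathbb{C}e$, so $e\M e=\mathbb{C}e$. That makes $e$ a minimal projection of $\M$, contradicting atomlessness of $\M$.

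The only point to verify carefully is the corner identity $(\A e)'\cap e\M e=\A e$. To check it: if $y\in e\M e$ commutes with $\A e$, then $y$ commutes with all of $\A$, because for $a\in\A$ we have $ya=yea=y(ae)=(ae)y=aey=ay$. Hence $y\in\A$ and $y=eye\in\A e$.
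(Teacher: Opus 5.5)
Your proposal is correct and follows essentially the same route as the paper: a Zorn-maximal abelian subalgebra in the finite corner, maximal abelian subalgebras in the corners of a maximal orthogonal family of finite projections under the complement, the block-diagonal argument giving $\A'\cap\M=\A$, and the corner argument $e\A e=\mathbb{C}e\Rightarrow e\M e=\mathbb{C}e$ for atomlessness. The differences are cosmetic. You reduce (i) to (ii) instead of treating both in parallel. You verify semifiniteness through nonzero finite-trace cuts $ep$, $eq_\gamma$, where the paper uses finite projections $e_F\uparrow 1$. You also check explicitly the corner commutant identity that the paper only asserts.
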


\begin{proof}
For \textup{(i)}, let $r$ be the supremum of the prescribed
projections. Since the family is finite and each projection has finite
trace, $r\in\Pf(\M)$. For \textup{(ii)}, set $r=p$.

In either case, let $\mathcal D\subseteq r\M r$ be the abelian von
Neumann algebra generated by the prescribed elements. By Zorn's
lemma, $\mathcal D$ is contained in a maximal abelian von Neumann
subalgebra
\[
 \A_0\subseteq r\M r.
\]

Since $\tau$ is semifinite, every nonzero projection contains a
nonzero subprojection of finite trace; see, for example,
\cite[Section~8.1]{KadisonRingrose2}. Hence we may choose a maximal
mutually orthogonal family $(r_\alpha)$ of finite projections
satisfying
\[
 r_\alpha\le 1-r.
\]
Maximality implies
\[
 \sum_\alpha r_\alpha=1-r.
\]
For each $\alpha$, choose a maximal abelian von Neumann subalgebra
\[
 \A_\alpha\subseteq r_\alpha\M r_\alpha,
\]
and define
\[
 \A=\A_0\oplus\bigoplus_\alpha\A_\alpha.
\]

Suppose that $x\in\M$ commutes with $\A$. Then $x$ commutes with
$r$ and with every $r_\alpha$.
Consequently,
\[
 x
 =
 rxr+\sum_\alpha r_\alpha x r_\alpha.
\]
Since $rxr$ commutes with $\A_0$ and each
$r_\alpha x r_\alpha$ commutes with $\A_\alpha$, maximality of these
abelian algebras gives
\[
 rxr\in\A_0,
 \qquad
 r_\alpha x r_\alpha\in\A_\alpha.
\]
Thus $x\in\A$, and
\[
 \A'\cap\M=\A.
\]
This proves that $\A$ is maximal abelian; see
\cite[Section~9.4]{KadisonRingrose2}.

For every finite set $F$ of indices, put
\[
 e_F=r+\sum_{\alpha\in F}r_\alpha.
\]
Then $e_F\in\A$, $\tau(e_F)<\infty$, and $e_F\uparrow1$. If
$0\le a\in\A$, then
\[
 0\le ae_F\le a,
 \qquad
 \tau(ae_F)\le \norm{a}_\infty\,\tau(e_F)<\infty,
\]
and $ae_F\uparrow a$, proving that $\tau|_{\A}$ is semifinite.

To exclude atoms in $\A$, suppose that $e$ is a nonzero atom. Then
\[
 e\A e=\mathbb C e.
\]
Maximal abelianness of $\A$ makes $e\A e$ maximal abelian in $e\M e$;
hence
\[
 e\M e=\mathbb C e,
\]
so $e$ is a minimal projection of $\M$. This contradicts the
atomlessness of $\M$. Hence $\A$ is atomless.
\end{proof}

For a maximal abelian $\A\subseteq\M$, let $\B_{\A}$ be defined by
\eqref{eq:BA-def} and put
\begin{equation}
 e_{\A}
 =\bigvee\{q\in\mathcal P(\B_{\A}):\nu(q)<\infty\}.
 \label{eq:eA}
\end{equation}
Then $\nu|_{e_{\A}\B_{\A}}$ is faithful, normal and semifinite.
Moreover,
\begin{equation}
 Y\cap S(\B_{\A})_{\sa}
 =F(e_{\A}\B_{\A},\nu|_{e_{\A}\B_{\A}})_{\sa}.
 \label{eq:target-abelian-space}
\end{equation}
Indeed, order continuity implies
$F(\N,\nu)\subseteq S_0(\N,\nu)$.  Thus, for
$y\in Y\cap S(\B_{\A})$, every spectral projection
$\mathbf1_{(1/n,\infty)}(|y|)$ has finite $\nu$-trace and
\[
 s(y)=\bigvee_{n\ge1}\mathbf1_{(1/n,\infty)}(|y|)\le e_{\A}.
\]
The reverse inclusion in \eqref{eq:target-abelian-space} follows
directly from the definition of the symmetric space on the corner.
The projection $e_{\A}$ is nonzero: choose a nonzero
$p\in\Pf(\M)\cap\A$; then $V(p)\ne0$ and a nonzero finite spectral
projection of $|V(p)|$ belongs to $\B_{\A}$ and is dominated by
$e_{\A}$.  The maximal abelian algebra $\B_{\A}$ is atomless. Indeed,
if $q$ were an atom of $\B_{\A}$, then $q\B_{\A}q=\mathbb Cq$ would be
maximal abelian in $q\N q$, forcing $q\N q=\mathbb Cq$. This would make
$q$ a minimal projection of $\N$, contrary to atomlessness. Hence the
corner $e_{\A}\B_{\A}$ is atomless as well.

The abelian restrictions also have order-continuous norm in the
Banach-lattice sense.

\begin{lemma}
\label{lem:abelian-order-continuity}
Let $\A\subseteq\M$ be an abelian von Neumann subalgebra such that
$\tau|_{\A}$ is semifinite. If the norm of the symmetric function
space $E$ is order continuous, then the norm of
$E(\A,\tau|_{\A})$ is order continuous in the Banach-lattice sense:
whenever $0\le x_\alpha\downarrow0$ in $E(\A,\tau|_{\A})$, one has
$\|x_\alpha\|_E\to0$.
\end{lemma}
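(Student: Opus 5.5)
Given a decreasing net $0\le x_\alpha\downarrow0$ in $E(\A,\tau|_{\A})$, the plan is to reduce the net to one with a single finite-trace support, and there use sequential order continuity of $E$ on singular-value functions.

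\textbf{Step 1 (tails).} Fix the first index $\alpha_0$ and put $y=x_{\alpha_0}$, so $0\le x_\alpha\le y$ for $\alpha\ge\alpha_0$. Since $\mu(y)\in E$ and $E$ is order continuous on $I_\tau$, the argument of \Cref{lem:sigma-finite-support} shows that $\mu_t(y)\to0$ as $t\to\infty$ (when $\tau(1)=\infty$). Hence every spectral projection $e_\varepsilon=\one_{(\varepsilon,\infty)}(y)$ has finite trace. Given $\eta>0$, we will choose $\varepsilon$ with $\|y(1-e_\varepsilon)\|_E<\eta$. This is possible because $\mu(y(1-e_\varepsilon))\le\min(\mu(y),\varepsilon)$ pointwise, and this bound tends to $0$ a.e. as $\varepsilon\downarrow0$, so sequential order continuity of $E$ applies. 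Since $\A$ is abelian, $x_\alpha(1-e_\varepsilon)\le y(1-e_\varepsilon)$, and therefore
\[
 \|x_\alpha\|_E\le\|x_\alpha e_\varepsilon\|_E+\eta .
\]

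\textbf{Step 2 (finite corner).} Put $e=e_\varepsilon\in\Pf(\A)$. The net $x_\alpha e$ decreases to $0$ inside $e\A$, and it is bounded above by $ye\le\|y\|_\infty$ on the $e$-part. Because $\tau|_{e\A}$ is a finite faithful normal trace, order convergence in $e\A$ gives $\tau(x_\alpha e)\downarrow0$ by normality of $\tau$. Distributional estimates then control the tail of $\mu(x_\alpha e)$: for every $\delta>0$, $d_{x_\alpha e}(\delta)\le\tau(x_\alpha e)/\delta\to0$. Consequently
\[
 \mu(x_\alpha e)\le \|y\|_\infty\chi_{(0,s_\alpha)}+\delta\chi_{(0,\tau(e))},
 \qquad s_\alpha=\tau(x_\alpha e)/\delta\to0,
\]
and so
\[
 \|x_\alpha e\|_E\le\|y\|_\infty\varphi_E(s_\alpha)+\delta\varphi_E(\tau(e)).
\]
Order continuity of $E$ gives $\varphi_E(s)\to0$ as $s\downarrow0$, as already used in \Cref{lem:separable-function-subspace}. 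Choosing $\delta$ first and then letting $\alpha$ increase makes this small.

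\textbf{Main obstacle.} The net $x_\alpha$ lives in measurable operators, not in $\A$ itself, so $y$ may be unbounded. I would handle this by adding a truncation step: split off $y\one_{(N,\infty)}(y)$ as well, whose norm is small for large $N$ by the same order-continuity argument applied to $\mu(y)\chi_{(0,\tau(\one_{(N,\infty)}(y)))}$. The delicate point is justifying $\tau(x_\alpha e)\downarrow0$ from $x_\alpha\downarrow0$ in the lattice $E(\A)$. For this I would identify $\A$ with $L_\infty(\Omega,\mu)$ of a localizable measure space, so that $E(\A)$ is a function space. In that picture, infima of nets agree with infima in $L_0$, and normality of $\tau$ is monotone convergence on the finite-measure set $e$.
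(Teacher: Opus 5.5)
Your proposal is correct, and it reaches the conclusion by a somewhat different mechanism than the paper.

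\textbf{Reduction step.} Both arguments first reduce to a bounded element with finite-trace support, and the two reductions are equivalent.
The paper picks a bounded $0\le g\le x_{\alpha_0}$ of finite support with $\|x_{\alpha_0}-g\|_E<\varepsilon$. It then replaces $x_\alpha$ by $x_\alpha\wedge g$, using $0\le x_\alpha-x_\alpha\wedge g\le x_{\alpha_0}-g$.
You cut with the spectral projection $e=\one_{(\varepsilon,N]}(y)$ and use commutativity to get $x_\alpha(1-e)\le y(1-e)$.

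\textbf{Finite corner, paper's route.} This is where the real difference lies. The paper proves by hand that a decreasing net of sets of finite measure with zero infimum in the measure algebra has measures tending to zero. From this it deduces that $x_\alpha\wedge g\to0$ in measure. It then argues by contradiction through a sequence: a monotone sequence converging in measure converges a.e., so only \emph{sequential} order continuity of $E$ is needed.

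\textbf{Finite corner, your route.} You invoke normality of $\tau$ on $e\A$ to get $\tau(x_\alpha e)\to0$. This is legitimate once $x_\alpha e\le Ne$ and $\tau(Ne)<\infty$, so that $Ne-x_\alpha e\uparrow Ne$. You then convert this $L_1$ information into an $E$-norm bound. The tools are the Chebyshev estimate $d_{x_\alpha e}(\delta)\le\tau(x_\alpha e)/\delta$ and $\varphi_E(0+)=0$. Together they give the explicit inequality $\|x_\alpha e\|_E\le N\varphi_E(\tau(x_\alpha e)/\delta)+\delta\varphi_E(\tau(e))$.

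\textbf{Comparison.} Your route is more quantitative and avoids extracting sequences from the net. The paper's route is more self-contained: it proves the needed order continuity of the finite measure rather than citing normality of the trace, and it uses only the defining sequential property of $E$.

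You flag as delicate the fact that infima in $E(\A,\tau|_{\A})$ agree with infima in $L_0$. This holds because $E(\A,\tau|_{\A})$ is an ideal in the Dedekind complete lattice $L_0$ of a localizable measure space. The paper's argument also uses this fact, but only implicitly, so making it explicit is a genuine improvement.
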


\begin{proof}
Fix an index $\alpha_0$ and put $x=x_{\alpha_0}$.  By truncation and
order continuity, for every $\varepsilon>0$ there is a bounded
$0\le g\le x$, supported by a projection of finite trace, such that
\[
   \|x-g\|_E<\varepsilon.
\]
For $\alpha\ge\alpha_0$, set $y_\alpha=x_\alpha\wedge g$. Then
$0\le y_\alpha\downarrow0$ and
\[
   0\le x_\alpha-y_\alpha\le x-g.
\]
We may therefore work in the finite-measure abelian algebra supporting
$g$. A decreasing net $(A_\alpha)$ of measurable subsets of a
finite-measure set, with $\bigwedge_\alpha A_\alpha=0$ in the measure
algebra, satisfies $\mu(A_\alpha)\to0$. Indeed, if
$c=\inf_\alpha\mu(A_\alpha)>0$, choose an increasing sequence of
indices $\alpha_n$ with $\mu(A_{\alpha_n})\downarrow c$. Sequential
continuity from above gives
$\mu(\bigcap_nA_{\alpha_n})=c$.  Fix $\alpha$. For every $n$, choose
$\beta_n\ge\alpha,\alpha_n$. Then
\[
 \mu\bigl((\bigcap_jA_{\alpha_j})\setminus A_{\alpha}\bigr)
 \le \mu(A_{\alpha_n}\setminus A_{\beta_n})
 \le \mu(A_{\alpha_n})-c\longrightarrow0.
\]
Thus the intersection is contained modulo null sets in every
$A_\alpha$, a contradiction.

Applying this observation to the level sets of $y_\alpha$ shows that
$y_\alpha\to0$ in measure. If $\|y_\alpha\|_E$ did not tend to zero,
we could choose an increasing sequence of indices $(\alpha_n)$ such
that $\|y_{\alpha_n}\|_E$ stays bounded away from zero while
$y_{\alpha_n}\to0$ in measure. Since the sequence is decreasing, it
converges almost everywhere to zero. The original order continuity of
$E$ and the domination $y_{\alpha_n}\le g$ would then give
$\|y_{\alpha_n}\|_E\to0$, a contradiction. Hence
$\|y_\alpha\|_E\to0$, and therefore
\[
   \limsup_\alpha\|x_\alpha\|_E\le\varepsilon.
\]
Since $\varepsilon>0$ was arbitrary, $\|x_\alpha\|_E\to0$.
\end{proof}

\begin{lemma}
\label{lem:masa-nonhilbert}
Let $\A\subseteq\M$ be an atomless maximal abelian von Neumann
subalgebra such that $\tau|_{\A}$ is semifinite.  If the norm of
$E(\M,\tau)$ is not proportional to the $L_2$-norm, then
$E(\A,\tau|_{\A})_{\sa}$ is not a Hilbert space.  Consequently, the
space
\[
   F(e_{\A}\B_{\A},\nu|_{e_{\A}\B_{\A}})_{\sa}
\]
is also non-Hilbertian.
\end{lemma}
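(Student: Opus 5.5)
The plan is to argue by contradiction in the first assertion, reducing to \Cref{lem:block}. That lemma says that, since the norm of $E$ is not proportional to the $L_2$-norm, there are $n\ge2$, $d>0$ and mutually orthogonal $e_1,\dots,e_n\in\Pf(\M)$ of trace $d$ whose coordinate norm is not Euclidean. The key observation is that this coordinate norm depends only on $n$ and $d$: $\mu\bigl(\sum_k\xi_ke_k\bigr)$ is determined by the values $\xi_k$ and the common trace $d$. Hence it suffices to find, inside $\A$, mutually orthogonal projections $f_1,\dots,f_n$ with $\tau(f_k)=d$.

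To produce them, I use semifiniteness of $\tau|_{\A}$ together with atomlessness of $\A$. The steps are as follows.

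\begin{enumerate}[label=\textup{(\arabic*)}]
\item First I need a projection $r\in\A$ with $\tau(r)\ge nd$. Semifiniteness of $\tau|_\A$ gives nonzero projections of finite trace in $\A$, and a maximal orthogonal family of such projections sums to $1$. If $\tau(1)\ge nd$, finite subsums then exceed $nd$ by normality. If $\tau(1)<nd$, the block of \Cref{lem:block} could not exist in $\M$, because its total trace $nd$ would exceed $\tau(1)$; so this case does not arise.
\item Next I split $r$. The restriction of $\tau$ to $r\A$ is a finite atomless measure, so by the Lyapunov/Darboux property there are orthogonal $f_1,\dots,f_n\le r$ in $\A$ with $\tau(f_k)=d$.
\item Then the span of the $f_k$ lies in $E(\A,\tau|_\A)_{\sa}$, and the norm on it is the non-Euclidean coordinate norm. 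Hence the parallelogram law fails there, and $E(\A,\tau|_{\A})_{\sa}$ is not Hilbertian.
\end{enumerate}

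For the consequence I argue as follows. Conjugation by $V$ restricts, by \Cref{prop:masa-image} and \eqref{eq:target-abelian-space}, to a surjective real-linear isometry
\[
E(\A,\tau|_\A)_{\sa}\longrightarrow F(e_\A\B_\A,\nu|_{e_\A\B_\A})_{\sa}.
\]
A linear isometry transports parallelogram-law failures, so the target is non-Hilbertian as well. The only point needing care is that $E(\A,\tau|_\A)$ really is a subspace of $X$ with the inherited norm. This holds because the singular value function computed in $\A$ agrees with the one computed in $\M$ for elements affiliated with $\A$, the spectral projections being the same.

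The main obstacle is step (1): making sure enough trace is available in $\A$ to fit $n$ projections of trace $d$. If $\tau|_\A$ had only small finite pieces, one would instead rescale. Splitting each $e_k$ dyadically as in the proof of \Cref{lem:offdiag} keeps the norm non-Euclidean at every scale $d_m=2^{-m}d$ with $N_m=2^mn$ blocks, which does not help with total trace. However, the total $nd\le\tau(1)=\tau|_\A(1)$ is automatic, so normality of $\tau$ on $\A$ settles the issue.
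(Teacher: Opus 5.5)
Your argument is correct, but it is organized differently from the paper's. The paper argues by contradiction through \Cref{lem:ri-hilbert}. A Hilbertian $E(\A,\tau|_{\A})_{\sa}$ would carry the norm $c\|\cdot\|_2$. Transplanting every finite-support simple function $f=\sum_k\alpha_k\chi_{I_k}$ on $I_\tau$ into $\A$ as $x=\sum_k\alpha_kp_k$, with $\tau(p_k)=m(I_k)$, then gives $\|f\|_E=c\|f\|_2$ on a dense class, so $E$ itself would be a multiple of $L_2$.

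You instead take the finite non-Euclidean block of \Cref{lem:block} in $\M$ and move only that block into $\A$. Its non-Hilbertian hypothesis comes from \Cref{lem:nc-nonhilbert}, which you should cite. This exhibits an explicit $n$-dimensional subspace of $E(\A,\tau|_{\A})_{\sa}$ on which the parallelogram law fails.

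Both routes rest on the same mechanism. The singular value function depends only on the traces of the spectral projections, so orthogonal families with prescribed traces can be realized inside $\A$ using atomlessness and semifiniteness of $\tau|_{\A}$. Both also need the room check, which you make explicit in step (1) and the paper leaves implicit.

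Your version is more economical. It works in a finite-dimensional subspace and never has to treat $E(\A,\tau|_{\A})$ as a rearrangement-invariant function space in order to apply \Cref{lem:ri-hilbert}. The paper's version is self-contained rather than outsourcing the dyadic approximation to \Cref{lem:block}. It also gives the direct implication that Hilbertianity of $E(\A,\tau|_{\A})_{\sa}$ forces $E$ to be proportional to $L_2$.

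Two small points need fixing. In step (1), if $\tau(1)<\infty$ just take $r=1$; when $\tau(1)=nd$, finite subsums need not reach $nd$. In step (3), passing from ``not Euclidean'' to a failure of the parallelogram law uses that an inner-product norm invariant under coordinate sign changes and permutations is a multiple of the $\ell_2^n$-norm, as in the proof of \Cref{lem:nc-nonhilbert}.

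The second assertion is handled exactly as in the paper.
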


\begin{proof}
Suppose that $E(\A,\tau|_{\A})_{\sa}$ is Hilbertian.  By
\Cref{lem:ri-hilbert}, its norm is equal to $c\|\cdot\|_2$ for some
$c>0$.  Let
\[
   f=\sum_{k=1}^n\alpha_k\chi_{I_k}
\]
be a simple function on $I_\tau$ whose support has finite measure,
where the intervals $I_k$ are pairwise disjoint.  Atomlessness and
semifiniteness of $\tau|_{\A}$ provide mutually orthogonal projections
$p_k\in\A$ with $\tau(p_k)=m(I_k)$.  For
$x=\sum_k\alpha_kp_k$ we have $\mu(x)=f^*$, and hence
\[
   \|f\|_E=\|x\|_{E(\A)}=c\|x\|_2=c\|f\|_2.
\]
Simple functions with support of finite measure are dense by order
continuity.  It follows that the original function norm defining
$E(\M,\tau)$ is proportional to the $L_2$-norm, a contradiction.
Thus the source restriction is non-Hilbertian.  By
\Cref{prop:masa-image} and \eqref{eq:target-abelian-space}, its image under $V$ is
isometric to the displayed target space, which is therefore
non-Hilbertian as well.
\end{proof}

\begin{proposition}
\label{prop:abelian-restriction}
Let $\A$ be a maximal abelian von Neumann subalgebra such that
$\tau|_{\A}$ is semifinite.  By \Cref{thm:between-main}, the restriction
\[
 V_{\A}:
 E(\A,\tau|_{\A})_{\sa}
 \longrightarrow
 F(e_{\A}\B_{\A},\nu|_{e_{\A}\B_{\A}})_{\sa}
\]
is an elementary surjective real-linear isometry.  In particular,
there exist a measurable multiplier $w_{\A}$ with support $e_{\A}$
and a normal lattice isomorphism $\Phi_{\A}$ such that
\begin{equation}
 V(x)=w_{\A}\Phi_{\A}(x),
 \qquad x\in E(\A,\tau|_{\A})_{\sa}.
 \label{eq:abelian-elementary}
\end{equation}
If $\theta_{\A}$ is the induced complete Boolean isomorphism, then
\begin{equation}
 s(V(p))=\theta_{\A}(p),
 \qquad p\in\Pf(\M)\cap\A.
 \label{eq:theta-support}
\end{equation}
\end{proposition}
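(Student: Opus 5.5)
The plan is to reduce the statement to \Cref{thm:between-main} by realizing both abelian spaces as rearrangement-invariant function spaces over complete atomless localizable semifinite measure spaces. Then the support identity \eqref{eq:theta-support} is read off from the support transport in \Cref{prop:band-conjugacy-general}.

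First, the source side. By Gelfand--Segal theory, the abelian von Neumann algebra $\A$ with the faithful normal semifinite trace $\tau|_{\A}$ is $*$-isomorphic to $L_\infty(\Omega,\mu)$ for a complete localizable (strictly, a semifinite measure algebra realized on a complete locally determined) measure space $(\Omega,\mu)$, with $\tau(\chi_A)=\mu(A)$. Under this identification $S(\A,\tau|_{\A})$ becomes the space of measurable functions and $\mu(x)$ becomes the decreasing rearrangement $x^*$. Hence $E(\A,\tau|_{\A})_{\sa}$ is exactly the real rearrangement-invariant space $E(\Omega,\mu)$. I take $\A$ to be one of the algebras produced by \Cref{lem:semifinite-MASA}, so that it is atomless. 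Its norm is order continuous in the Banach-lattice sense by \Cref{lem:abelian-order-continuity}. It is non-Hilbertian by \Cref{lem:masa-nonhilbert}, so by \Cref{lem:ri-hilbert} the norm is not proportional to the $L_2$-norm.

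Second, the target side. The corner $e_{\A}\B_{\A}$ is abelian and atomless, as shown before \Cref{lem:abelian-order-continuity}. The restricted trace $\nu|_{e_{\A}\B_{\A}}$ is faithful, normal and semifinite by the choice \eqref{eq:eA}. The same identification gives a localizable $(\Lambda,\lambda)$ with $F(e_{\A}\B_{\A},\nu)_{\sa}=F(\Lambda,\lambda)$. Order continuity again follows from \Cref{lem:abelian-order-continuity}, now applied in $\N$, and the non-Hilbertian property from \Cref{lem:masa-nonhilbert}.

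By \Cref{prop:masa-image} and \eqref{eq:target-abelian-space}, $V$ maps $E(\A,\tau|_{\A})_{\sa}$ onto this target space. It is real-linear and isometric because it is a restriction of $V$. \Cref{thm:between-main} then yields $V(x)=w_{\A}\Phi_{\A}(x)$ with $w_{\A}$ of full support in $\Lambda$, that is, with $s(w_{\A})=e_{\A}$ when $w_{\A}$ is viewed inside $\LS(\B_{\A})$. This is \eqref{eq:abelian-elementary}.

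For \eqref{eq:theta-support}, take $p\in\Pf(\M)\cap\A$; it corresponds to $\chi_A$ with $\mu(A)<\infty$, so $p\in E(\A)$. \Cref{prop:band-conjugacy-general}, or directly the formula just obtained, gives $\supp V(p)=\theta_{\A}(\supp\chi_A)$. This is computed as $\supp(w_{\A}\chi_{\theta_{\A}(A)})=\theta_{\A}(A)$, since $w_{\A}$ has full support. In operator language the support of a self-adjoint element of $S(\B_{\A})$ is its support projection $s(\cdot)$, which gives $s(V(p))=\theta_{\A}(p)$.

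The main obstacle is the first step. One must justify carefully that an abelian von Neumann algebra with a faithful normal semifinite trace is the $L_\infty$ of a complete atomless localizable semifinite measure space. One must also check that the Banach-lattice order continuity and the identification of $\mu(x)$ with the function rearrangement are compatible with the hypotheses of \Cref{thm:between-main}. I would cite the standard measure-algebra representation (Fremlin's Stone-space construction, \cite[322B]{Fremlin3}) and check that measurable operators coincide with $L_0$ of that space.
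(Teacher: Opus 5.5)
Your proposal is correct and follows essentially the same route as the paper: you identify both abelian spaces with rearrangement-invariant spaces over complete atomless localizable semifinite measure spaces, verify order continuity, non-Hilbertianity and surjectivity via \Cref{lem:abelian-order-continuity}, \Cref{lem:masa-nonhilbert}, \Cref{lem:ri-hilbert}, \Cref{prop:masa-image} and \eqref{eq:target-abelian-space}, apply \Cref{thm:between-main}, and read off \eqref{eq:theta-support} from the full support of the multiplier. You do not need to restrict to the algebras constructed in \Cref{lem:semifinite-MASA}, because the atomlessness argument in its part (iii) uses only maximal abelianness of $\A$ and atomlessness of $\M$, so it covers every $\A$ in the statement.
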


\begin{proof}
Surjectivity of the restriction follows from
\Cref{prop:masa-image} and \eqref{eq:target-abelian-space}. By
\Cref{lem:semifinite-MASA} and the discussion following
\eqref{eq:eA}, both abelian measure spaces are atomless and semifinite.
Their measure algebras are complete and localizable, as follows from
the standard representation of commutative von Neumann algebras with
faithful normal semifinite traces. Their norms are order continuous in
the Banach-lattice sense by
\Cref{lem:abelian-order-continuity}. They are non-Hilbertian by
\Cref{lem:masa-nonhilbert}, and hence are not proportional to the
corresponding $L_2$-norms by \Cref{lem:ri-hilbert}. Therefore
\Cref{thm:between-main} gives
\eqref{eq:abelian-elementary}.  Since the multiplier has full support,
it does not change supports, and \eqref{eq:theta-support} follows.
\end{proof}

Define
\begin{equation}
 \phi:\Pf(\M)\longrightarrow\mathcal P(\N),
 \qquad
 \phi(p)=s(V(p)).
 \label{eq:phi}
\end{equation}

Because $\phi(p)=s(V(p))$ is defined independently of the chosen
abelian subalgebra, the local Boolean maps agree on finite
projections.  In particular, $\phi$ is orthogonally additive.
\begin{proposition}
\label{prop:phi-additive}
If $p,q\in\Pf(\M)$ and $pq=0$, then
\begin{equation}
 \phi(p+q)=\phi(p)+\phi(q).
 \label{eq:phi-additive}
\end{equation}
In particular, $\phi(p)\phi(q)=0$.  Also,
$\phi(p)\ne0$ whenever $p\ne0$.
\end{proposition}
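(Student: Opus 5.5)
Since $pq=0$, the projections $p$ and $q$ commute, and $p$, $q$, $p+q$ form a finite family of commuting projections in $\Pf(\M)$. By \Cref{lem:semifinite-MASA}\textup{(i)} there is a maximal abelian von Neumann subalgebra $\A\subseteq\M$ containing $p$ and $q$ (hence $p+q$) such that $\tau|_{\A}$ is semifinite; by part \textup{(iii)} it is atomless. We then apply \Cref{prop:abelian-restriction} to this $\A$. It supplies a complete Boolean isomorphism $\theta_{\A}$ from the measure algebra of $(\A,\tau|_{\A})$ onto that of $(e_{\A}\B_{\A},\nu|_{e_{\A}\B_{\A}})$ with
\[
 s(V(r))=\theta_{\A}(r),\qquad r\in\Pf(\M)\cap\A .
\]
The essential point is that the left side, $\phi(r)=s(V(r))$, is defined intrinsically and does not depend on $\A$. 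So the identity $\phi(r)=\theta_{\A}(r)$ holds for any admissible $\A$ containing $r$, and no compatibility between different maximal abelian subalgebras is needed.

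Because $\theta_{\A}$ is a Boolean isomorphism, it preserves disjointness and finite joins. Since $p\wedge q=0$ and $p\vee q=p+q$ in $\mathcal P(\A)$, we obtain
\[
 \theta_{\A}(p)\wedge\theta_{\A}(q)=0,\qquad
 \theta_{\A}(p+q)=\theta_{\A}(p)\vee\theta_{\A}(q).
\]
The projections $\theta_{\A}(p)$ and $\theta_{\A}(q)$ lie in the abelian algebra $\B_{\A}$, so they commute. Disjointness therefore means $\theta_{\A}(p)\theta_{\A}(q)=0$, and their join is their sum. Translating back through the support identity gives $\phi(p)\phi(q)=0$ and $\phi(p+q)=\phi(p)+\phi(q)$, which is \eqref{eq:phi-additive}.

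For the last assertion, if $p\ne0$ then $V(p)\ne0$ because $V$ is injective, so $s(V(p))\ne0$. Equivalently, $\theta_{\A}$ is injective and so $\theta_{\A}(p)\ne0$.

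The step needing the most care is checking that \eqref{eq:theta-support} really computes $s(V(p))$ as an element of $\mathcal P(\N)$, not merely a support inside the corner $e_{\A}\B_{\A}$. This holds because $V(p)\in F(e_{\A}\B_{\A})_{\sa}$ by \eqref{eq:target-abelian-space}, and the support of an operator affiliated with $e_{\A}\B_{\A}$ is the same whether computed in that corner or in $\N$. A second point to verify is that the multiplier $w_{\A}$ in \eqref{eq:abelian-elementary} has full support on $e_{\A}$, so that multiplying by it does not change supports. Everything else is formal.
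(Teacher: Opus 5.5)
Your proposal is correct and is essentially the paper's own argument. You embed $p$ and $q$ in a semifinite maximal abelian subalgebra via \Cref{lem:semifinite-MASA}, identify $\phi$ with $\theta_{\A}$ on $p$, $q$ and $p+q$ through \Cref{prop:abelian-restriction}, use that the Boolean isomorphism is additive on orthogonal projections, and get $\phi(p)\ne0$ from injectivity of $V$. Your extra checks on supports in the corner versus in $\N$ are sound; they only make explicit what the paper takes for granted.
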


\begin{proof}
Choose, by \Cref{lem:semifinite-MASA}, a semifinite maximal abelian
algebra $\A$ containing $p$ and $q$.  By
\Cref{prop:abelian-restriction},
\[
 \phi(p)=\theta_{\A}(p),\qquad
 \phi(q)=\theta_{\A}(q),\qquad
 \phi(p+q)=\theta_{\A}(p+q).
\]
The Boolean isomorphism $\theta_{\A}$ is additive on orthogonal
projections, giving \eqref{eq:phi-additive}.  If $p\ne0$, then
$V(p)\ne0$ because $V$ is injective, so $\phi(p)=s(V(p))\ne0$.
\end{proof}

We verify the remaining hypotheses of the projection-extension theorem
of de Jager and Conradie \cite{DJC}.

For $x\in\Ftau$, write uniquely $x=x_1+ix_2$ with
$x_1,x_2\in\Ftau_{\sa}$ and define
\begin{equation}
 U(x)=V(x_1)+iV(x_2).
 \label{eq:complexification}
\end{equation}
Then $U:\Ftau\to S(\N,\nu)$ is complex linear and
\[
 \phi(p)=s(U(p)),\qquad p\in\Pf(\M).
\]

The remaining hypothesis is measure continuity on each finite
corner.
\begin{lemma}
\label{lem:U-measure-continuous}
Suppose $(x_n)\subseteq\Ftau$, $x\in\Ftau$,
$x_n\to x$ in operator norm and $s(x_n)\le s(x)$ for all $n$.  Then
\[
 U(x_n)\longrightarrow U(x)
\]
in the $\nu$-measure topology.
\end{lemma}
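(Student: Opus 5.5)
The idea is to reduce to convergence in the $F$-norm, which dominates measure convergence. Put $e=s(x)\in\Pf(\M)$. Every $x_n$ and $x$ lies in the finite corner $e\M e$. By complex linearity of $U$ it suffices to treat the real and imaginary parts separately. Write $x_n-x=a_n+ib_n$ with $a_n,b_n\in\Ftau_{\sa}$, where $a_n=\tfrac12\bigl((x_n-x)+(x_n-x)^*\bigr)$ and similarly for $b_n$. Since $s(x_n)\le e$ and $s(x)\le e$, the left and right supports of $x_n-x$ are dominated by $e$. Hence $a_n=ea_ne$ and $b_n=eb_ne$, so $s(a_n),s(b_n)\le e$. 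Operator-norm convergence gives
\[
 \|a_n\|_\infty\le\|x_n-x\|_\infty\to0,
 \qquad
 \|b_n\|_\infty\le\|x_n-x\|_\infty\to0.
\]

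Next, transfer this to the symmetric norm on the corner. For a self-adjoint $y$ with $s(y)\le e$ we have $\mu(y)\le\|y\|_\infty\chi_{(0,\tau(e))}$, hence
\[
 \|y\|_E\le\|y\|_\infty\,\varphi_E(\tau(e)).
\]
Applied to $a_n$ and $b_n$, this gives $\|a_n\|_E,\|b_n\|_E\to0$. Since $V$ is an isometry,
\[
 \|U(x_n)-U(x)\|_F\le\|V(a_n)\|_F+\|V(b_n)\|_F
 =\|a_n\|_E+\|b_n\|_E\to0 .
\]

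The remaining step is that norm convergence in $F(\N,\nu)$ implies convergence in the $\nu$-measure topology. This is the only point needing care, and it is standard. If $\|z\|_F\le\varepsilon$, then for each $t>0$ we have $\mu_t(z)\,\varphi_F(t)\le\|\mu(z)\chi_{(0,t)}\|_F\le\varepsilon$, since $\mu(z)$ is decreasing. Therefore $\mu_t(z)\le\varepsilon/\varphi_F(t)$, with $\varphi_F(t)>0$ for $t>0$. Convergence in measure is equivalent to $\mu_t(z_n)\to0$ for every $t>0$, so $U(x_n)\to U(x)$ in measure. Equivalently, one may cite the continuous embedding $F(\N,\nu)\subseteq S(\N,\nu)$ from \cite{DoddsDePagter}.

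In this argument the hypothesis $s(x_n)\le s(x)$ is used only to confine everything to the single finite corner $e\M e$. Without a common finite support, operator-norm smallness would not control the $E$-norm.
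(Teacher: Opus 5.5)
Your proof is correct and follows essentially the same route as the paper. You confine everything to the finite corner of $s(x)$, bound the $E$-norms of $\operatorname{Re}(x_n-x)$ and $\operatorname{Im}(x_n-x)$ by the operator norm times $\varphi_E(\tau(s(x)))$, use that $V$ is an isometry, and then pass from $F$-norm convergence to convergence in measure; the only difference is that you justify this last step explicitly via $\mu_t(z)\le\|z\|_F/\varphi_F(t)$, which the paper simply asserts.
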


\begin{proof}
Put $p=s(x)$.  Since $s(x_n)\le p$, the right support of $x_n-x$ is
dominated by $p$.  Hence the ideal property of a symmetric operator
space \cite{DoddsDoddsdePagter} gives
\[
 \norm{x_n-x}_E
 \le \norm{x_n-x}_\infty\,\norm{p}_E.
\]
Moreover,
\[
 \norm{\operatorname{Re}(x_n-x)}_E,
 \ \norm{\operatorname{Im}(x_n-x)}_E
 \le \norm{x_n-x}_E.
\]
Since $V$ is an isometry on the self-adjoint part, each of
$V(\operatorname{Re}(x_n-x))$ and
$V(\operatorname{Im}(x_n-x))$ converges to zero in $F$-norm, hence in
measure.  Their complex sum is $U(x_n)-U(x)$, which also converges to
zero in measure.
\end{proof}

By \Cref{prop:phi-additive}, \Cref{lem:U-measure-continuous}, and
\cite[Theorem~3.7]{DJC}, the map $\phi$ extends to a positive complex
linear map
\begin{equation}
 \Phi:\Ftau\longrightarrow\N
 \label{eq:Phi-finite-core}
\end{equation}
which is contractive in operator norm and satisfies
\begin{equation}
 \Phi(x^2)=\Phi(x)^2,
 \qquad x=x^*\in\Ftau.
 \label{eq:Phi-square}
\end{equation}
Moreover, the factorization lemma in
\cite[Lemma~3.4]{DJC} gives
\begin{equation}
 U(x)=U(p)\Phi(x),
 \qquad x=x^*\in\Ftau,
 \quad p\in\Pf(\M),\ p\ge s(x).
 \label{eq:local-factorization-Phi}
\end{equation}

Normality on finite corners is inherited from the local Boolean
maps.
\begin{lemma}
\label{lem:Phi-normal}
The map $\Phi:\Ftau\to\N$ is normal.
\end{lemma}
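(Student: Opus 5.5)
We need to show $\Phi$ is normal on $\Ftau$. What "normal" means here must be fixed first: $\Phi$ is defined only on $\Ftau$, so the natural reading is normality on each finite corner. That is, for every $p\in\Pf(\M)$, the restriction $\Phi|_{p\M p}$ is $\sigma$-weakly continuous. Equivalently, if $(x_\alpha)$ is a bounded increasing net of positive elements of $p\M p$ with $x_\alpha\uparrow x$ strongly, then $\Phi(x_\alpha)\uparrow\Phi(x)$ strongly.

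Since $\Phi$ is positive and contractive, this reduces to a statement about projections. Fix $p\in\Pf(\M)$. A positive Jordan map on the von Neumann algebra $p\M p$, contractive and sending projections to projections (here $\Phi(q)=\phi(q)$ for projections $q\le p$), is normal as soon as it is completely additive on projections. This is the standard criterion for positive maps: complete additivity on orthogonal projections implies normality, via the Dixmier-type argument for positive functionals applied to $\omega\circ\Phi$ with $\omega$ a normal state of $\N$. So it suffices to prove the following: if $(q_i)$ is an orthogonal family of projections under $p$, then $\phi\bigl(\sum_iq_i\bigr)=\sum_i\phi(q_i)$.

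To prove this, use \Cref{lem:semifinite-MASA}(ii). The family $(q_i)$ generates an abelian von Neumann subalgebra $\mathcal C\subseteq p\M p$, and $\mathcal C\cup\{p\}$ is contained in a maximal abelian $\A$ with $\tau|_{\A}$ semifinite. By \Cref{prop:abelian-restriction}, $\phi(q)=\theta_{\A}(q)$ for every finite projection $q\in\A$. Here $\theta_{\A}$ is a complete Boolean isomorphism, so
\[
 \phi\Bigl(\sum_iq_i\Bigr)=\theta_{\A}\Bigl(\bigvee_iq_i\Bigr)=\bigvee_i\theta_{\A}(q_i)=\sum_i\phi(q_i).
\]
The second equality is exactly the completeness of $\theta_{\A}$. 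The last equality holds because the $\phi(q_i)$ are mutually orthogonal, by \Cref{prop:phi-additive}.

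The main obstacle is the passage from complete additivity on projections to normality on $p\M p$. First, extend additivity to the positive elements with finite spectrum. Then fix a normal state $\omega$ on $\N$. The functional $\omega\circ\Phi|_{p\M p}$ is positive and completely additive on projections, so it is normal by the classical theorem (see \cite[Section~7.1]{KadisonRingrose2}). Normality of $\Phi$ itself follows because normal states separate $\N$ and monotone nets are bounded.

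An alternative route avoids the classical theorem. For increasing nets of projections, note that $\Phi$ is contractive and fixes order on projections. Combined with the measure continuity from \Cref{lem:U-measure-continuous} and the factorization \eqref{eq:local-factorization-Phi}, this could give strong convergence on spectral approximants. The approach via complete additivity is simpler, and I would pursue it first.
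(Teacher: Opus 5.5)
Your proposal is correct and takes essentially the paper's route. You embed the relevant commuting projections in $p\M p$ into a semifinite maximal abelian subalgebra via \Cref{lem:semifinite-MASA}, use completeness of $\theta_{\A}$ from \Cref{prop:abelian-restriction}, and then invoke the standard fact that a positive map preserving suprema of projections (equivalently, one that is completely additive on projections) is normal. The only differences are cosmetic: you work with orthogonal families instead of increasing nets, and you spell out the reduction to the functionals $\omega\circ\Phi$ via normal states, which the paper leaves implicit.
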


\begin{proof}
Fix $p\in\Pf(\M)$. Normality may be checked on the finite von
Neumann algebra $p\M p$.  Let $(q_\alpha)$ be an increasing net of
projections in $p\M p$ with supremum $q$.  The abelian von Neumann
algebra generated by $p,q$ and the $q_\alpha$ is contained, by
\Cref{lem:semifinite-MASA}, in a semifinite maximal abelian algebra
$\A$.  Since $\Phi$ agrees with $\phi$ on finite projections,
\Cref{prop:abelian-restriction} gives
\[
 \Phi(q_\alpha)=\theta_{\A}(q_\alpha)
 \uparrow\theta_{\A}(q)=\Phi(q).
\]
A positive bounded linear map on a von Neumann algebra is normal if and
only if it preserves suprema of increasing nets of projections.
Therefore $\Phi|_{p\M p}$ is normal.  If
$0\le x_\alpha\uparrow x$ in $\Ftau$, then all these operators belong
to the finite corner $p\M p$ for $p=s(x)$, so
$\Phi(x_\alpha)\uparrow\Phi(x)$.
\end{proof}

By \Cref{lem:Phi-normal} and \cite[Theorem~4.5]{DJC}, $\Phi$ extends
uniquely to a normal Jordan $*$-homomorphism
\begin{equation}
 J:\M\longrightarrow\N.
 \label{eq:J}
\end{equation}
It satisfies
\begin{equation}
 J(p)=\phi(p)=s(V(p)),
 \qquad p\in\Pf(\M).
 \label{eq:J-support}
\end{equation}
Because $\phi(p)\ne0$ for every nonzero finite projection $p$, the
extension is injective; equivalently, it is isometric on $\M_{\sa}$,
as observed in \cite[Remark~4.6]{DJC}.

Repeating the finite-projection construction for $V^{-1}:Y\to X$
yields a normal Jordan $*$-homomorphism
\begin{equation}
 K:\N\longrightarrow\M
 \label{eq:K}
\end{equation}
which extends the finite-projection support map
\[
 q\longmapsto s(V^{-1}(q)),
 \qquad q\in\Pf(\N).
\]

\begin{theorem}
\label{thm:J-isomorphism}
The maps $J$ and $K$ are inverse to one another.  In particular,
$J:\M\to\N$ is a normal surjective Jordan $*$-isomorphism.
\end{theorem}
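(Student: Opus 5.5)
Since $J$ and $K$ are normal Jordan $*$-homomorphisms, and finite projections generate $\M$ and $\N$ as von Neumann algebras (in the $\sigma$-weak sense, with the sets of finite-trace projections closed under sums of orthogonal families and norm-dense finite-spectrum linear combinations in $\Ftau$), it suffices to prove
\[
 K(J(p))=p,\qquad p\in\Pf(\M),
\]
together with the symmetric identity $J(K(q))=q$ for $q\in\Pf(\N)$. Normality and linearity then give $K\circ J=\id_{\M}$ on the $\sigma$-weak closure of the span of $\Pf(\M)$, which is $\M$ by semifiniteness. The same holds for $J\circ K$, so $J$ is bijective with inverse $K$.

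The key step is to show that $J(p)=s(V(p))$ lies in $\Pf(\N)$ up to suitable approximation, and that $K(s(V(p)))=p$. I will work inside one semifinite maximal abelian algebra. Fix $p\in\Pf(\M)$ and choose, by \Cref{lem:semifinite-MASA}, a semifinite masa $\A\ni p$. By \Cref{prop:abelian-restriction}, $V$ restricted to $E(\A)_{\sa}$ is $w_\A\Phi_\A$ onto $F(e_\A\B_\A)_{\sa}$, with $\theta_\A(p)=s(V(p))=:r\in\B_\A$. In general $r$ need not have finite trace, so I cannot plug it directly into $K$. Instead I write $r=\bigvee_n r_n$ with $r_n=\mathbf1_{(1/n,\infty)}(|V(p)|)\in\Pf(\N)\cap\B_\A$; these are finite by the inclusion $F\subseteq S_0$ noted after \eqref{eq:eA}. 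Now apply the abelian picture to $V^{-1}$. By \Cref{prop:masa-image}, the inverse restriction maps $F(e_\A\B_\A)_{\sa}$ onto $E(\A)_{\sa}$ and is the elementary map with Boolean isomorphism $\theta_\A^{-1}$, by the uniqueness in \Cref{thm:between-main}. Hence $s(V^{-1}(r_n))=\theta_\A^{-1}(r_n)\le\theta_\A^{-1}(r)=p$.

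To conclude I need $K(r_n)=s(V^{-1}(r_n))$. For this I need the masa $\B_\A$ to be one of the semifinite masas used in constructing $K$. I will verify that the construction for $V^{-1}$ is insensitive to which semifinite masa contains the projection: $\phi^{V^{-1}}(q)=s(V^{-1}(q))$ is defined intrinsically, and $\B_\A$ with $\nu$ restricted to $e_\A\B_\A$ is semifinite there. So $K(r_n)=\theta_\A^{-1}(r_n)\uparrow\theta_\A^{-1}(r)=p$, using completeness of $\theta_\A$. Normality of $K$ then gives $K(r)=p$, and since $J(p)=r$ we get $K(J(p))=p$. The identity $J(K(q))=q$ follows by the symmetric argument with the roles of $V$ and $V^{-1}$ exchanged.

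The main obstacle is the compatibility step: that $K$ restricted to projections of $\B_\A$ below $e_\A$ is given by $\theta_\A^{-1}$. If $\B_\A$ is not itself semifinite on all of $\N$, I will enlarge $e_\A\B_\A$ by the direct-sum device of \Cref{lem:semifinite-MASA}, choosing masas in finite corners of $1-e_\A$. The projections $r_n$ and the generated algebra then lie in a semifinite masa of $\N$, and \Cref{prop:abelian-restriction} for $V^{-1}$ identifies the support map there with the Boolean isomorphism. That isomorphism must agree with $\theta_\A^{-1}$ on $r_n$, because both equal $s(V^{-1}(r_n))$.
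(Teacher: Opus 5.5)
Your proposal is correct and is essentially the paper's argument: fix $p$ in a semifinite masa $\A$, approximate $J(p)=\theta_\A(p)$ from below by $\nu$-finite projections of $\B_\A$ (you use the spectral projections $\mathbf1_{(1/n,\infty)}(|V(p)|)$, the paper uses semifiniteness of $\nu$ on $e_\A\B_\A$), identify $K$ on them with $\theta_\A^{-1}$, conclude $K(J(p))=p$ by normality of $K$ and completeness of $\theta_\A$, then symmetrize and extend to all of $\M$ and $\N$. The compatibility step you single out as the main obstacle is immediate, because $K$ extends $q\mapsto s(V^{-1}(q))$ on all of $\Pf(\N)$ by construction, so the enlargement device in your last paragraph is not needed.
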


\begin{proof}
Fix $p\in\Pf(\M)$ and choose a semifinite maximal abelian algebra
$\A$ containing $p$.  Let $\theta_{\A}$ be the Boolean isomorphism in
\Cref{prop:abelian-restriction}.  By \eqref{eq:J-support},
\[
 J(p)=\theta_{\A}(p)\in e_{\A}\B_{\A}.
\]
Since $\nu|_{e_{\A}\B_{\A}}$ is semifinite, there is an increasing net
$(q_\gamma)$ of $\nu$-finite projections in $\B_{\A}$ such that
$q_\gamma\uparrow J(p)$.  On the abelian restriction, the inverse
isometry $V_{\A}^{-1}$ has Boolean map $\theta_{\A}^{-1}$.  Hence
\[
 K(q_\gamma)
 =s(V^{-1}(q_\gamma))
 =\theta_{\A}^{-1}(q_\gamma).
\]
By the normality of $K$ established above and the completeness of
$\theta_{\A}$ from \Cref{prop:band-conjugacy-general},
\[
 K(J(p))
 =\sup_\gamma K(q_\gamma)
 =\sup_\gamma\theta_{\A}^{-1}(q_\gamma)
 =p.
\]
Interchanging the roles of $\M$ and $\N$ and of $J$ and $K$ in the
preceding argument yields
\[
 J(K(q))=q,
 \qquad q\in\Pf(\N).
\]

Now let $e\in\mathcal P(\M)$.  By semifiniteness there is an increasing
net $p_\alpha\in\Pf(\M)$ with $p_\alpha\uparrow e$.  Normality yields
\[
 KJ(e)=\sup_\alpha KJ(p_\alpha)=\sup_\alpha p_\alpha=e.
\]
Similarly, $JK(f)=f$ for every projection $f\in\mathcal P(\N)$.
By the spectral theorem
\cite[Theorem~5.2.2]{KadisonRingrose1}, every self-adjoint element of
$\M$ is the operator-norm limit of finite linear combinations of its
spectral projections. Since $KJ$ and $JK$ are linear and fix all
projections, it follows that
\[
KJ=\id_{\M},
\qquad
JK=\id_{\N}.
\]
\end{proof}

\subsection{The central multiplier}

Since $J$ extends $\Phi$, formula
\eqref{eq:local-factorization-Phi} becomes
\begin{equation}
 V(x)=V(p)J(x),
 \qquad x=x^*\in\Ftau,
 \quad p\in\Pf(\M),\ p\ge s(x).
 \label{eq:local-factorization-J}
\end{equation}

For a finite projection, the coefficient in
\eqref{eq:local-factorization-J} is central on the corresponding
central summand.
\begin{proposition}
\label{prop:hp}
For every $p\in\Pf(\M)$ there is a unique
\[
 h_p\in\LS(\Z(\N)z(J(p)))_{\sa}
\]
such that
\begin{equation}
 V(p)=h_pJ(p).
 \label{eq:Vp-hp}
\end{equation}
If $p\le r$ are finite projections, then
\begin{equation}
 h_p=h_r
 \quad\text{on }z(J(p)).
 \label{eq:hp-compatible}
\end{equation}
\end{proposition}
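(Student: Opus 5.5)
The plan is to show that $V(p)$ lies, as a locally measurable operator, in the centre of the corner $J(p)\N J(p)$. The identification
\[
 \Z\bigl(J(p)\N J(p)\bigr)=\Z(\N)J(p),
\]
together with injectivity of $c\mapsto cJ(p)$ on $\Z(\N)z(J(p))$, then produces $h_p$ and gives its uniqueness. Compatibility will follow from \eqref{eq:local-factorization-J} combined with uniqueness.

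\emph{Step 1: support and commutation.} Fix $p\in\Pf(\M)$ and write $e=J(p)$. By \eqref{eq:J-support}, $s(V(p))=e$, so $V(p)$ is affiliated with $e\N e$. Let $x=x^*\in p\M p$. Then $s(x)\le p$, and \eqref{eq:local-factorization-J} gives $V(x)=V(p)J(x)$. Since both $V(x)$ and $V(p)$ are self-adjoint, taking adjoints yields
\[
 V(p)J(x)=J(x)V(p).
\]
Hence $V(p)$ commutes with $J\bigl((p\M p)_{\sa}\bigr)$.

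\emph{Step 2: identifying the corner.} I would check that $J(p\M p)=e\N e$. The triple identity $J(pxp)=J(p)J(x)J(p)$ holds for every Jordan $*$-homomorphism. Combined with surjectivity of $J$ (\Cref{thm:J-isomorphism}), it gives $J(p\M p)=e\,J(\M)\,e=e\N e$. Consequently $V(p)$ is a self-adjoint locally measurable operator affiliated with $e\N e$ that commutes with all of $e\N e$. It is therefore affiliated with $\Z(e\N e)=\Z(\N)e$.

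\emph{Step 3: lifting to the centre.} The map $c\mapsto ce$ is a normal $*$-isomorphism from $\Z(\N)z(e)$ onto $\Z(\N)e$; this is the standard fact that central support is faithful on the central summand. Its extension to locally measurable operators, $\LS(\Z(\N)z(e))\to\LS(\Z(\N)e)$, is again an isomorphism. Pulling $V(p)$ back through this isomorphism gives a unique $h_p\in\LS(\Z(\N)z(e))_{\sa}$ with $V(p)=h_pe=h_pJ(p)$. This proves \eqref{eq:Vp-hp} and the uniqueness assertion.

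\emph{Step 4: compatibility.} If $p\le r$ are finite, then \eqref{eq:local-factorization-J} applied with $x=p$ gives
\[
 V(p)=V(r)J(p)=h_rJ(r)J(p)=h_rJ(p),
\]
because $J(p)\le J(r)$. Both $h_rz(J(p))$ and $h_p$ represent $V(p)$ in the form required in Step 3. By uniqueness, $h_p=h_r$ on $z(J(p))$, which is \eqref{eq:hp-compatible}.

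The main obstacle is expected in Steps 2 and 3. One must justify that an unbounded, merely locally measurable operator commuting with the von Neumann algebra $e\N e$ is affiliated with its centre. One must also pass the centre isomorphism to $\LS$. For the first point, I would apply the bounded statement to spectral projections and Cayley transforms of $V(p)$ inside $e\N e$. For the second, I would use functoriality of $\LS$ under normal $*$-isomorphisms of abelian algebras.
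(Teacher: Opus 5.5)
Your proposal is correct and follows the paper's proof essentially step for step. You obtain commutation of $V(p)$ with the image of the corner from \eqref{eq:local-factorization-J} and self-adjointness, deduce affiliation with $\Z(J(p)\N J(p))=\Z(\N)J(p)$, lift to $\Z(\N)z(J(p))$, and get compatibility from $V(p)=V(r)J(p)$ together with uniqueness; the only cosmetic difference is that you reach all of $J(p)\N J(p)$ through the triple identity $J(pxp)=J(p)J(x)J(p)$ and surjectivity of $J$, whereas the paper uses $K=J^{-1}$ to write every projection $Q\le J(p)$ as $J(q)$ with $q\le p$.
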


\begin{proof}
Fix $p\in\Pf(\M)$.  If $q\le p$ is a finite projection, then
\eqref{eq:local-factorization-J}, applied with the majorizing
projection $p$, gives
\[
 V(q)=V(p)J(q).
\]
Since both $V(q)$ and $V(p)$ are self-adjoint, taking adjoints yields
\[
 V(p)J(q)=J(q)V(p).
 \label{eq:Vp-commutes-Jq}
\]

Now let $Q\le J(p)$ be any projection in $\N$. Since $K=J^{-1}$ on
projections, we may write
\[
 Q=J(q),
 \qquad
 q=K(Q)\le p.
\]
In particular, $q$ is finite. Hence \eqref{eq:Vp-commutes-Jq} shows
that $V(p)$ commutes with every projection in $J(p)\N J(p)$. Since
$s(V(p))=J(p)$ by \eqref{eq:J-support}, $V(p)$ is affiliated with
$J(p)\N J(p)$. Its spectral projections therefore commute with every
projection in this algebra and hence belong to its center. Thus
$V(p)$ is affiliated with
\[
 Z(J(p)\N J(p)).
\]

By the standard center-of-a-corner identity
\cite[Proposition~5.5.6]{KadisonRingrose1},
\[
 Z(J(p)\N J(p))
 =
 J(p)Z(\N)J(p).
\]
This center is naturally identified with
$Z(\N)z(J(p))$. Hence there exists
\[
 h_p\in\LS(\Z(\N)z(J(p)))_{\sa}
\]
such that
\[
 V(p)=h_pJ(p).
\]

To prove uniqueness, suppose that $\widetilde h_p$ has the same
property. Then
\[
 (h_p-\widetilde h_p)J(p)=0.
\]
Since $h_p-\widetilde h_p$ is central and $z(J(p))$ is the central
support of $J(p)$, it follows that
\[
 (h_p-\widetilde h_p)z(J(p))=0.
\]
Thus $h_p=\widetilde h_p$ on $z(J(p))$.

Let $p\le r$ be finite projections. Applying
\eqref{eq:local-factorization-J} to $p$ with the majorizing projection
$r$ gives
\[
 h_pJ(p)
 =
 V(p)
 =
 V(r)J(p)
 =
 h_rJ(r)J(p)
 =
 h_rJ(p).
\]
Hence
\[
 (h_p-h_r)J(p)=0.
\]
Since $h_p-h_r$ is central, the definition of $z(J(p))$ gives
\[
 (h_p-h_r)z(J(p))=0,
\]
which is precisely \eqref{eq:hp-compatible}.
\end{proof}

The compatibility relation in \eqref{eq:hp-compatible} determines a
single central locally measurable operator.
\begin{proposition}
\label{prop:global-h}
There is a unique $h\in\LS(\Z(\N))_{\sa}$ such that
\begin{equation}
 V(p)=hJ(p),
 \qquad p\in\Pf(\M).
 \label{eq:finite-projection-form}
\end{equation}
Moreover,
\begin{equation}
 V(x)=hJ(x),
 \qquad x=x^*\in\Ftau.
 \label{eq:finite-core-form}
\end{equation}
\end{proposition}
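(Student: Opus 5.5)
We glue the local coefficients $h_p$ of \Cref{prop:hp} into one central locally measurable operator, using the compatibility relation \eqref{eq:hp-compatible} and the fact that $\LS(\Z(\N))$ is the algebra of all measurable functions on the (localizable) spectrum of the center. Note first that $\bigvee_{p\in\Pf(\M)}z(J(p))=1$. By normality and surjectivity of $J$ (\Cref{thm:J-isomorphism}), $\bigvee_p J(p)=J(1)=1$ as $p$ ranges over an increasing net in $\Pf(\M)$ with supremum $1$, and $J(p)\le z(J(p))$.

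\textbf{Construction of $h$.} Choose by Zorn's lemma a maximal family $(z_\gamma)_{\gamma\in\Gamma}$ of pairwise orthogonal nonzero central projections in $\N$, each dominated by $z(J(p_\gamma))$ for some $p_\gamma\in\Pf(\M)$. If $z_0=1-\sum_\gamma z_\gamma\ne0$, then some $z(J(p))z_0\ne0$ by the previous observation, contradicting maximality; hence $\sum_\gamma z_\gamma=1$. Put $h=\sum_\gamma h_{p_\gamma}z_\gamma$, which defines an element of $\LS(\Z(\N))_{\sa}$ because locally measurable operators glue along orthogonal central partitions of unity; this is the defining feature of $\LS$, equivalently localizability of the center.

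\textbf{Well-definedness.} We show $hz(J(p))=h_p$ for every $p\in\Pf(\M)$. Fix $p$ and $\gamma$, and set $r=p\vee p_\gamma\in\Pf(\M)$. Applying \eqref{eq:hp-compatible} twice gives $h_p=h_r$ on $z(J(p))$ and $h_{p_\gamma}=h_r$ on $z(J(p_\gamma))\ge z_\gamma$. Hence $h_p$ and $h_{p_\gamma}$ agree on $z_\gamma z(J(p))$, and summing over $\gamma$ yields $hz(J(p))=h_pz(J(p))$. Consequently $hJ(p)=h_pJ(p)=V(p)$, which is \eqref{eq:finite-projection-form}. For uniqueness, if $\widetilde h$ also satisfies \eqref{eq:finite-projection-form}, then $(h-\widetilde h)J(p)=0$ for all $p$. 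Centrality gives $(h-\widetilde h)z(J(p))=0$, and since these central supports have supremum $1$, we get $h=\widetilde h$.

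\textbf{Extension to $\Ftau$.} For $x=x^*\in\Ftau$, take $p=s(x)$, which is finite. Then \eqref{eq:local-factorization-J} gives $V(x)=V(p)J(x)=hJ(p)J(x)$. Since $J$ is a Jordan $*$-isomorphism and $x=pxp$, we have $J(x)=J(p)J(x)J(p)$; this identity is preserved by Jordan maps via $J(pxp)=J(p)J(x)J(p)$. Therefore $J(p)J(x)=J(x)$ and $V(x)=hJ(x)$.

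The main obstacle is the gluing step, namely producing a bona fide element of $\LS(\Z(\N))$ from locally defined pieces. This is handled by the maximal orthogonal family of central projections together with the standard fact that $\LS$ is closed under such central direct sums. Everything else reduces directly to \eqref{eq:hp-compatible} and \eqref{eq:local-factorization-J}.
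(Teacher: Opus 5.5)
Your proof is correct and follows essentially the same route as the paper. Both arguments establish compatibility of the $h_p$ through finite joins $p\vee r$, glue them into one operator in $\LS(\Z(\N))_{\sa}$, prove uniqueness by passing from $J(p)$ to central supports whose supremum is $1$, and extend to $\Ftau$ via \eqref{eq:local-factorization-J}. The differences are cosmetic: you glue along a maximal orthogonal family of central projections rather than the increasing net $z(J(p_\alpha))\uparrow1$ (your version is the orthogonal form in which the gluing property is usually stated), and you obtain $J(p)J(x)=J(x)$ from the triple-product identity $J(pxp)=J(p)J(x)J(p)$ rather than from preservation of products of commuting elements.
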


\begin{proof}
Let $p,r\in\Pf(\M)$. Since
\[
 \tau(p\vee r)\le \tau(p)+\tau(r)<\infty,
\]
the projection $u=p\vee r$ is finite. By \Cref{prop:hp},
\[
 h_p=h_u \quad\text{on }z(J(p)),
 \qquad
 h_r=h_u \quad\text{on }z(J(r)).
\]
Thus the family $(h_p)_{p\in\Pf(\M)}$ is compatible on common central
supports.

By semifiniteness
\cite[Section~8.1]{KadisonRingrose2}, there is an increasing net
$(p_\alpha)\subseteq\Pf(\M)$ such that
\[
 p_\alpha\uparrow1.
\]
Since $J$ is normal and unital,
\[
 J(p_\alpha)\uparrow1.
\]
Consequently, with
\[
 e_\alpha=z(J(p_\alpha)),
\]
we have
\[
 e_\alpha\uparrow1.
\]
By the mixing property of locally measurable operators
\cite[p.~498]{AyupovKudaybergenov},
the compatible family $(h_{p_\alpha})$ determines a unique
\[
 h\in\LS(\Z(\N))_{\sa}
\]
such that
\[
 he_\alpha=h_{p_\alpha},
 \qquad \alpha.
\]

The operator $h$ gives the required coefficient for every
$p\in\Pf(\M)$. Fix such a $p$ and put $e=z(J(p))$. For each $\alpha$,
apply \Cref{prop:hp} to the finite projection $p\vee p_\alpha$.
Restricting to $e e_\alpha$ yields
\[
 h_p=h_{p\vee p_\alpha}=h_{p_\alpha}=h.
\]
Since $e_\alpha\uparrow1$, we have $ee_\alpha\uparrow e$, and hence
\[
 he=h_p.
\]
Therefore
\[
 V(p)=h_pJ(p)=hJ(p),
\]
which proves \eqref{eq:finite-projection-form}.

To prove uniqueness, suppose that
$g\in\LS(\Z(\N))_{\sa}$ also satisfies
\[
 V(p)=gJ(p),
 \qquad p\in\Pf(\M),
\]
then
\[
 (g-h)J(p_\alpha)=0.
\]
Since $g-h$ is central,
\[
 (g-h)e_\alpha=0.
\]
Letting $e_\alpha\uparrow1$ gives $g=h$.

Let $x=x^*\in\Ftau$, and choose $p\in\Pf(\M)$ with
$p\ge s(x)$. Then
\[
 px=xp=x.
\]
By \eqref{eq:local-factorization-J} and
\eqref{eq:finite-projection-form},
\[
 V(x)
 =V(p)J(x)
 =hJ(p)J(x)
 =hJ(x).
\]
The last equality uses preservation of products of commuting elements
by Jordan $*$-homomorphisms; see \cite[Section~1]{Hamhalter23}.
Hence \eqref{eq:finite-core-form} holds.
\end{proof}

\begin{proof}[Proof of \Cref{thm:nc-main}]
Fix $x\in E(\M,\tau)_{\sa}$. By order continuity,
$\Ftau_{\sa}$ is norm dense in $E(\M,\tau)_{\sa}$; see
\cite[p.~3306]{HS}. Together with the spectral theorem
\cite[Theorem~5.2.2]{KadisonRingrose1}, this allows us to choose
finite-spectrum elements
\[
 x_n\in\Ftau_{\sa}
\]
such that
\[
 \norm{x_n-x}_E\longrightarrow0.
\]
Since $V$ is an isometry,
\[
 V(x_n)\longrightarrow V(x)
\]
in $F(\N,\nu)$, and hence locally in measure.

By the central decomposition theorem for Jordan $*$-isomorphisms
\cite[Theorem~3.3]{Stormer}, there is a central projection
$e\in\Z(\N)$ such that $eJ$ is a normal $*$-homomorphism and
$(1-e)J$ is a normal $*$-antihomomorphism. Consequently, $J$ has a
canonical extension to a Jordan $*$-isomorphism
\[
 J:\LS(\M)\longrightarrow\LS(\N),
\]
which is continuous for the local measure topologies. The extension is
again denoted by $J$.

Since $\LS(\N)$ is a topological $*$-algebra for the local measure
topology, multiplication by the fixed operator
$h\in\LS(\Z(\N))_{\sa}$ is continuous; see, for example,
\cite[Section~2]{BerChilinSukochev}. Therefore
\[
 hJ(x_n)\longrightarrow hJ(x)
\]
locally in measure. On the other hand, by \Cref{prop:global-h},
\[
 V(x_n)=hJ(x_n),
 \qquad n\ge1.
\]
The same sequence converges locally in measure to both $V(x)$ and
$hJ(x)$. Since the local measure topology is Hausdorff,
\[
 V(x)=hJ(x).
\]
As $x$ was arbitrary,
\[
 V(x)=hJ(x),
 \qquad x\in E(\M,\tau)_{\sa}.
\]

To prove that $h$ has full support, use semifiniteness to choose an
increasing net $(p_\alpha)\subseteq\Pf(\M)$ such that
$p_\alpha\uparrow1$. For every $\alpha$,
\[
 \begin{aligned}
 J(p_\alpha)
 &=s(V(p_\alpha))\\
 &=s\bigl(hJ(p_\alpha)\bigr)\\
 &=s(h)J(p_\alpha).
 \end{aligned}
\]
Since $J$ is normal and unital,
\[
 J(p_\alpha)\uparrow1.
\]
Taking suprema in this identity gives
\[
 s(h)=1.
\]

For uniqueness, suppose also that
\[
   V(x)=gL(x),
   \qquad x\in E(\M,\tau)_{\sa},
\]
where $L:\M\to\N$ is a normal surjective Jordan $*$-isomorphism and
$g\in\LS(\Z(\N))_{\sa}$ has full support.  For every
$p\in\Pf(\M)$,
\[
   L(p)=s(gL(p))=s(V(p))=J(p).
\]
Every projection of $\M$ is the supremum of an increasing net of
finite projections.  Normality therefore gives $L=J$ on all
projections, and the spectral theorem gives $L=J$ on $\M$.  Hence
\[
   (g-h)J(p)=0,
   \qquad p\in\Pf(\M).
\]
Choosing $p_\alpha\uparrow1$ as above and using the central supports
$z(J(p_\alpha))\uparrow1$, we obtain $g=h$, which is the asserted
uniqueness.

\end{proof}

\section{Mityagin's question}
\label{sec:real-Mityagin}

Mityagin asked whether a symmetric function space on a finite interval,
other than an $L_p$-space, can be linearly isomorphic or isometric to a
symmetric function space on the half-line or on the whole axis
\cite{Mityagin}. The complex isometric problem and its noncommutative
analogue were considered in \cite{FGHS}. We apply the representation
theorems proved above to the restriction argument used there.

\subsection{The commutative real case}

Two facts about restriction to sets of finite measure will be used.

\begin{lemma}
\label{lem:local-non-Lp}
Let $E(0,1)$ be a rearrangement-invariant Banach function space and let
$A\subset(0,1)$ have positive measure.  If the restriction $E(A)$,
after the usual measure-space identification, coincides with
$L_p(A)$ as a set for some $1\le p\le\infty$, then
\[
 E(0,1)=L_p(0,1)
\]
as sets.
\end{lemma}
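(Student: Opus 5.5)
The idea is to transport the set identity from $A$ back to the whole interval using rearrangement invariance and the local comparison of norms given by the fundamental functions. Put $a=m(A)>0$. Every function on $(0,1)$ supported in a set of measure at most $a$ is equimeasurable with a function supported in $A$, because $A$ is an atomless finite measure space of measure $a$. Concretely, any distribution with support of measure at most $a$ is realized on $A$ via a measure-preserving map from $(0,m(A))$ onto $A$ modulo null sets. Hence the hypothesis $E(A)=L_p(A)$ as sets yields:
\[
 f\in E(0,1)\iff f\in L_p(0,1)
\]
for every $f$ with $m(\supp f)\le a$. Both spaces are rearrangement invariant, and membership depends only on the decreasing rearrangement $f^*$, which is supported in $(0,a)$.

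Next we cover $(0,1)$ by finitely many sets $B_1,\dots,B_N$ of measure at most $a$, for instance intervals of length $\le a$, with $N\le \lceil 1/a\rceil$.

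For $f\in E(0,1)$, each piece $f\chi_{B_k}$ belongs to $E$ by the lattice property. Its support has measure $\le a$, so by the first step it belongs to $L_p$. Summing the finitely many pieces gives $f\in L_p(0,1)$.

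Conversely, for $f\in L_p(0,1)$, each $f\chi_{B_k}$ lies in $L_p$ with small support, hence in $E$ by the first step. Again $f=\sum_k f\chi_{B_k}\in E(0,1)$. This proves $E(0,1)=L_p(0,1)$ as sets.

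The only delicate point is the first step, namely justifying that membership is determined by the distribution on a subset of measure $a$. This needs the standard identification of $E(A)$ with the rearrangement-invariant space over $(0,a)$ obtained by restricting $E$. For $p=\infty$ one just notes that $L_\infty$ membership is also rearrangement invariant, so the argument is unchanged. In both directions no norm estimates are needed, only that both classes are linear, solid and rearrangement invariant.
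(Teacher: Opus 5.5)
Your proposal is correct and follows essentially the same route as the paper: it transports the set identity to every function supported on a set of measure at most $a=m(A)$ by equimeasurable rearrangement, then splits an arbitrary $f$ along finitely many sets of measure at most $a$ and uses that both spaces are linear and solid. One small wording fix: take the $B_k$ to be a genuine partition, e.g.\ the intervals $((k-1)/N,k/N]$ with $N=\lceil 1/a\rceil$, so that $f=\sum_k f\chi_{B_k}$ holds literally (for a mere cover, solidity via $|f|\le\sum_k|f|\chi_{B_k}$ does the same job).
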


\begin{proof}
Put $a=m(A)>0$.  By rearrangement invariance, the hypothesis is
equivalent to
\[
 E(0,a)=L_p(0,a)
\]
as sets.  Choose a finite measurable partition
\[
 (0,1)=A_1\sqcup\cdots\sqcup A_N
\]
with $m(A_j)\le a$ for every $j$.  If $f\in E(0,1)$, then
$f\chi_{A_j}\in E(A_j)$ and, after an equimeasurable transport into
$(0,a)$, belongs to $L_p$.  Since the partition is finite,
$f\in L_p(0,1)$.  Conversely, if $f\in L_p(0,1)$, then each
$f\chi_{A_j}$ belongs to $E(A_j)$, hence to $E(0,1)$, and therefore
$f=\sum_j f\chi_{A_j}\in E(0,1)$. The finite partition argument is
unchanged for $p=\infty$.
\end{proof}

\begin{lemma}
\label{lem:finite-band-modulus}
Let $(A,\mu)$ and $(B,\lambda)$ be atomless standard finite measure
spaces of positive measure.  Let $E(A)$ and $F(B)$ be real
rearrangement-invariant Banach function spaces with order-continuous
norms.  Assume that $E(A)$ does not coincide with $L_p(A)$ as a set
for any $1\le p\le\infty$.  If
\[
 W:E(A)\longrightarrow F(B)
\]
is a surjective real-linear isometry, then
\begin{equation}
 |W\chi_A|
 =\frac{\norm{\chi_A}_E}{\norm{\chi_B}_F}\,\chi_B
 \qquad\text{a.e.\ on }B.
 \label{eq:finite-band-modulus}
\end{equation}
\end{lemma}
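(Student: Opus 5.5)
The plan is to combine the elementary representation of $W$ from \Cref{thm:between-standard} with the rigidity of self-isometries of non-$L_p$ spaces on a finite interval. From the rigidity statement we will deduce that the modulus of the weight is invariant under every measure-preserving automorphism of $B$, hence constant.

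\emph{Step 1: representation of $W$.} Since $E(A)$ is not $L_2(A)$ as a set, its norm is not proportional to the $L_2$-norm, so $E(A)$ is non-Hilbertian by \Cref{lem:ri-hilbert}. Being isometric to $E(A)$, the space $F(B)$ is non-Hilbertian as well. After rescaling to intervals, \Cref{thm:between-standard} gives
\[
 Wf=w\,\Phi(f),\qquad f\in E(A),
\]
with $w$ of full support on $B$ and $\Phi$ induced by a complete measure-class Boolean isomorphism $\theta$. In particular $\Phi(\chi_A)=\chi_B$, so $W\chi_A=w$. The claim therefore reduces to showing that $|w|$ is a.e.\ constant.

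\emph{Step 2: invariance of $|w|$.} Let $\gamma$ be any measure-preserving automorphism of the measure algebra of $B$. The rearrangement operator $\Phi_\gamma$ is a surjective isometry of $F(B)$. Hence $T_\gamma:=W^{-1}\Phi_\gamma W$ is a surjective isometry of $E(A)$, and a direct computation gives
\[
 T_\gamma f=\Phi^{-1}\!\bigl(w^{-1}\Phi_\gamma(w)\bigr)\cdot\Phi^{-1}\Phi_\gamma\Phi(f).
\]
Now invoke the finite-interval theorem of Kalton and Randrianantoanina \cite{KR}. When $E(A)$ is not an $L_p$-space as a set, it says that every surjective isometry of $E(A)$ has the form $f\mapsto\varepsilon\,(f\circ\sigma)$ with $|\varepsilon|=1$ and $\sigma$ measure preserving. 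By the uniqueness part of \Cref{lem:induced-automorphism}, the multiplier of $T_\gamma$ is determined, so $|w^{-1}\Phi_\gamma(w)|=1$. That is,
\[
 |w|\circ\gamma=|w|\qquad\text{a.e., for every measure-preserving }\gamma.
\]

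\emph{Step 3: constancy and normalization.} Suppose $|w|$ were not a.e.\ constant. Then there would be $c$ and disjoint sets $C_1\subseteq\{|w|>c\}$, $C_2\subseteq\{|w|\le c\}$ of equal positive measure. A measure-preserving automorphism swapping $C_1$ and $C_2$ exists by the isomorphism theorem \cite[Theorem~9.2.2]{Bogachev}, as in the proof of \Cref{lem:no-normal-involutions}, and it contradicts the invariance in Step 2. Hence $|w|=\kappa$ a.e.\ for some constant $\kappa>0$. Taking norms in $W\chi_A=w$ and using the lattice property gives $\norm{\chi_A}_E=\norm{W\chi_A}_F=\kappa\norm{\chi_B}_F$. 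This determines $\kappa$ and yields \eqref{eq:finite-band-modulus}.

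\emph{Main obstacle.} Step 2 is the delicate point. It needs the precise form of the \cite{KR} rigidity statement: unimodular weights and measure-preserving transformations for non-$L_p$ spaces. It also needs care that the set-theoretic "not $L_p$" hypothesis is exactly what that theorem requires. If only the weaker elementary form were available, I would instead compare fundamental functions. I would test $\norm{W\chi_{D}}_F=\varphi_E(\mu(D))$ over many sets $D$, and try to show that a nonconstant $|w|$ forces a power-law fundamental function together with an $L_p$-type norm identity, which contradicts the hypothesis.
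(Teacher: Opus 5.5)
Your proof is correct, but it takes a different route from the paper.

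\emph{The paper's route.} The paper identifies $A$ and $B$ with $(0,1)$ and normalizes both norms so that the unit function has norm one. It then cites the real finite-interval classification of isometries \emph{between two} rearrangement-invariant spaces \cite[Theorem~1.1.2]{FGHS}. That theorem writes $W$ as a sign-valued multiplier times composition with a measure-preserving map, so $|W\chi_A|$ is constant, and undoing the normalization gives the constant.

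\emph{Your route.} You take the elementary form $W=w\Phi$ from \Cref{thm:between-standard}. You then push the rearrangement group of $F(B)$ back into $E(A)$ by conjugation, so only the \emph{one-space} rigidity of \cite{KR} for $E(A)$ is needed. Evaluating $T_\gamma=W^{-1}\Phi_\gamma W$ at $\chi_A$ already gives its multiplier $\Phi^{-1}(w^{-1}\Phi_\gamma(w))$. So the uniqueness part of \Cref{lem:induced-automorphism} is not even needed. The swapping automorphism then forces $|w|$ to be constant.

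\emph{What each approach buys.} Your version uses the non-$L_p$ hypothesis exactly where the lemma places it, on $E$ alone. It needs no normalization of $F$ and no rigidity input for $F$. The concern in your last paragraph is harmless. Not coinciding with $L_p$ as a set implies that the norm is not proportional to an $L_p$-norm, so the hypothesis covers either form in which the one-space rigidity theorem may be stated. The cost is length and a dependence on \Cref{thm:between-standard}, which itself rests on \cite{KR}. You also obtain only the modulus of $w$. The two-space classification additionally shows that $\theta$ multiplies measure by the constant $\lambda(B)/\mu(A)$.
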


\begin{proof}
Identify $A$ and $B$ with finite intervals, rescale both intervals to
$(0,1)$, and divide the two norms by the norms of their unit
functions.  The real finite-interval classification recalled in
\cite[Theorem~1.1.2]{FGHS} then yields a representation by a
measure-preserving transformation and a sign-valued multiplier.  In
the normalized spaces the image of the unit function therefore has
modulus one.  Undoing the norm normalization gives
\eqref{eq:finite-band-modulus}.
\end{proof}

\begin{theorem}
\label{thm:real-Mityagin-commutative}
Let $E(0,1)$ and $F(0,\infty)$ be real rearrangement-invariant Banach
function spaces with order-continuous norms.  Assume that
\[
 E(0,1)\ne L_p(0,1)
 \qquad\text{as sets for every }1\le p<\infty.
\]
The case $p=\infty$ is automatically excluded by order continuity.
Then there is no surjective real-linear isometry
\[
 T:E(0,1)\longrightarrow F(0,\infty).
\]
The same conclusion holds with $(0,\infty)$ replaced by the real
axis.
\end{theorem}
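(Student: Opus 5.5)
Suppose, towards a contradiction, that $T:E(0,1)\to F(0,\infty)$ is a surjective real-linear isometry. The argument has three steps: first represent $T$ as a weighted Boolean isomorphism, then compute the modulus of the weight by restricting to finite bands, and finally compare norms of $T\chi_{(0,1)}$ and of pieces of it.

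\emph{Step 1: representation.} The norm of $E$ is not proportional to the $L_2$-norm, because $E\neq L_2$ as sets. Suppose the norm of $F$ were proportional to the $L_2$-norm. Then $E$, being isometric to a Hilbert space, would itself be Hilbertian. \Cref{lem:ri-hilbert} would then give $E=L_2$, a contradiction. Hence \Cref{thm:between-main} applies and yields
\[
Tf=w\,\Phi(f),
\]
where $\theta$ is a complete measure-class Boolean isomorphism from the measure algebra of $(0,1)$ onto that of $(0,\infty)$ and $w$ has full support. In particular
\[
T\bigl(E(A)\bigr)=F(\theta(A)).
\]

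\emph{Step 2: the modulus of $T\chi_A$.} Fix $A\subseteq(0,1)$ of positive measure with $\lambda(\theta(A))<\infty$. Such $A$ exist: partition $(0,\infty)$ into unit intervals $B_k$ and take $A=\theta^{-1}(B_k)$. The restriction $W:E(A)\to F(\theta(A))$ is a surjective real-linear isometry between spaces over atomless standard finite measure spaces. By \Cref{lem:local-non-Lp}, $E(A)$ is not $L_p(A)$ as a set for any $p$, since that would force $E(0,1)=L_p(0,1)$. Applying \Cref{lem:finite-band-modulus} gives
\[
|w|\,\chi_{\theta(A)}=\frac{\varphi_E(m(A))}{\varphi_F(\lambda(\theta(A)))}\,\chi_{\theta(A)}.
\]
Apply this on every $B_k$ and on every subset of $B_k$. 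Comparing the constants obtained on a set and on its subsets, $|w|$ is constant on each $B_k$. The constant relation $\varphi_E(m(C))=c_k\varphi_F(\lambda(\theta C))$ for all $C\le A_k:=\theta^{-1}(B_k)$ ties the fundamental functions together. Comparing blocks $B_k\cup B_l$ then shows that $|w|\equiv c$ is a single constant on all of $(0,\infty)$.

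\emph{Step 3: contradiction.} By Step 2,
\[
T\chi_{(0,1)}=w\,\chi_{(0,\infty)}\quad\text{has}\quad |T\chi_{(0,1)}|=c\,\chi_{(0,\infty)}.
\]
This is a nonzero constant on a set of infinite measure, so it lies in $F(0,\infty)$ only if $\chi_{(0,\infty)}\in F$. Order continuity together with \Cref{lem:sigma-finite-support} gives $\lambda\{|g|>\varepsilon\}<\infty$ for every $g\in F$, which excludes this. Hence no such $T$ exists. The real-axis case is identical, since $\mathbb R$ is measure-isomorphic to $(0,\infty)$ and the argument only used that the target is atomless, standard, and of infinite measure.

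The main obstacle is Step 2, specifically the passage from local constants $c_k$ to one global constant. If that passage proves delicate, Step 3 can be run with only local information. Set $u=T\chi_{(0,1)}$. Then $|u|=c_k$ on $B_k$, and
\[
\|u\|_F=\|\chi_{(0,1)}\|_E=\varphi_E(1)<\infty,
\]
while each $c_k=\varphi_E(m(A_k))/\varphi_F(1)$ with $\sum_k m(A_k)=1$. Lemma~\ref{lem:sigma-finite-support} forces $c_k\to0$, hence $m(A_k)\to0$. The intended contradiction then comes from comparing the norm of $\sum_{k\ge N}c_k\chi_{B_k}$ with $\varphi_E\bigl(\sum_{k\ge N}m(A_k)\bigr)$. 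Concretely, I would show that
\[
\varphi_E(s+t)=\varphi_E(s)\,\psi(t/s)
\]
behaves multiplicatively in the way forced by these identities; this power-type behaviour of the fundamental functions is what characterizes $L_p$.
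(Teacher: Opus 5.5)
Your proof is correct, and Steps 1 and 2 follow the paper's argument. Both use \Cref{thm:between-main}, band conjugacy, and \Cref{lem:local-non-Lp,lem:finite-band-modulus} on finite bands, and then compare on larger bands to make $|w|$ a single constant $c$.

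The paper does the comparison with the nested sets $\theta^{-1}((0,n))$. Your comparison on $\theta^{-1}(B_k\cup B_l)$ is the same idea. So the step you flag as the main obstacle is in fact immediate: \Cref{lem:finite-band-modulus} applied to a union already forces a single constant on that union.

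The only real difference is the final contradiction. The paper uses the sets $G_n=\theta^{-1}((n-1,n))$. These are disjoint in $(0,1)$, so $\|\chi_{G_n}\|_E=\varphi_E(m(G_n))\to0$ by order continuity of $E$. On the other hand, $\|\chi_{G_n}\|_E=\|w\chi_{(n-1,n)}\|_F=c\,\varphi_F(1)$ is constant in $n$. You instead note that $T\chi_{(0,1)}=w$ with $|w|=c$ on all of $(0,\infty)$, which \Cref{lem:sigma-finite-support} forbids; this uses order continuity of $F$ rather than of $E$. Both are one-line consequences of the global constant, and yours is slightly more direct.

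You should delete the fallback paragraph. The local data alone ($c_k\to0$ and $m(A_k)\to0$) are mutually consistent and give no contradiction. Moreover, power-type behaviour of a fundamental function does not characterize $L_p$: the Lorentz spaces $L_{p,q}$ also have fundamental function proportional to $t^{1/p}$.
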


\begin{proof}
Suppose that such an isometry $T$ exists. Neither norm is Hilbertian,
so \Cref{thm:between-main} gives
\[
   Tf=w\,\Phi(f),
\]
where $w$ has full support and $\Phi$ is induced by a complete Boolean
isomorphism $\theta$. For $n\ge1$, put
\[
   B_n=(0,n),\qquad A_n=\theta^{-1}(B_n),\qquad
   G_n=A_n\setminus A_{n-1},
\]
with $A_0=\varnothing$. Then the $G_n$ are disjoint subsets of
$(0,1)$, so $m(G_n)\to0$.

Band conjugacy gives a surjective isometry
\[
   T:E(A_n)\longrightarrow F(B_n).
\]
By \Cref{lem:local-non-Lp,lem:finite-band-modulus}, there is $c_n>0$
such that
\[
   |T\chi_{A_n}|=c_n\chi_{B_n}.
\]
Since $T\chi_{A_n}=w\chi_{B_n}$ and the sets $B_n$ are nested, all
$c_n$ equal one constant $c>0$. Moreover,
$\theta(G_n)=(n-1,n)$, and hence
\[
   \|\chi_{G_n}\|_E
   =\|T\chi_{G_n}\|_F
   =c\|\chi_{(0,1)}\|_F>0
\]
for every $n$. This contradicts order continuity, since
$\|\chi_{G_n}\|_E=\varphi_E(m(G_n))\to0$.

The real line is measure-isomorphic to the half-line in the atomless
standard $\sigma$-finite category, which gives the stated real-line
case.
\end{proof}

\subsection{The noncommutative real case}

The noncommutative assertion follows from the same restriction argument
and \Cref{thm:nc-main}.

\begin{theorem}
\label{thm:real-Mityagin-nc}
Let $(\M,\tau)$ be an atomless finite von Neumann algebra with a
faithful normal tracial state, and let $(\N,\nu)$ be an atomless
semifinite von Neumann algebra with a faithful normal semifinite trace
satisfying
\[
 \nu(1)=\infty.
\]
Assume that the algebras act on separable Hilbert spaces.  Let
$E(0,1)$ and $F(0,\infty)$ be real rearrangement-invariant Banach
function spaces with order-continuous norms, and assume that
\[
 E(0,1)\ne L_p(0,1)
 \qquad\text{as sets for every }1\le p<\infty.
\]
Again, order continuity excludes coincidence with $L_\infty(0,1)$.
Then there is no surjective real-linear isometry
\[
 V:E(\M,\tau)_{\sa}
 \longrightarrow
 F(\N,\nu)_{\sa}.
\]
\end{theorem}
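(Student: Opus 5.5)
Suppose that such an isometry $V$ exists. The plan is to push the projection-level structure of $V$ from \Cref{thm:nc-main} down to a maximal abelian subalgebra. There it reduces to the commutative contradiction already obtained in \Cref{thm:real-Mityagin-commutative}, i.e.\ a family of disjoint projections of vanishing trace whose images all have norm bounded below.

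First, the hypotheses of \Cref{thm:nc-main} hold. Since $E(0,1)\ne L_2(0,1)$, the norm of $E$ is not proportional to the $L_2$-norm. For $F$, \Cref{lem:nc-nonhilbert} applied to the isometric space $F(\N,\nu)_{\sa}$ shows that it is non-Hilbertian, since $E(\M,\tau)_{\sa}$ is non-Hilbertian. Hence
\[
 V(x)=hJ(x),\qquad s(h)=1,
\]
with $J:\M\to\N$ a normal surjective Jordan $*$-isomorphism and $h\in\LS(\Z(\N))_{\sa}$.

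Next, choose a nested family of projections in $\N$ exhausting $1$. Since $\nu(1)=\infty$ and $\nu$ is semifinite and atomless, with the separable Hilbert space giving $\sigma$-finiteness, there is an increasing sequence $Q_n\in\Pf(\N)$ with $Q_n\uparrow1$ and $\nu(Q_n-Q_{n-1})=1$. These commute, so by \Cref{lem:semifinite-MASA} applied in $\N$ they lie in a semifinite maximal abelian subalgebra $\mathcal B$. Put $P_n=J^{-1}(Q_n)$ and $G_n=P_n-P_{n-1}$. These are mutually orthogonal projections in the finite algebra $\M$, so $\sum_n\tau(G_n)\le1$ and $\tau(G_n)\to0$. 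Hence $\|G_n\|_E=\varphi_E(\tau(G_n))\to0$ by order continuity.

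The core step is to show that $\|V(G_n)\|_F=\|hJ(G_n)\|_F=\|h(Q_n-Q_{n-1})\|_F$ stays bounded below. Take $\A=J^{-1}(\mathcal B)$, which is a maximal abelian subalgebra of $\M$ because Jordan isomorphisms preserve commutation of self-adjoint elements and maximal abelianness. On it $V$ restricts to a surjective isometry
\[
 E(\A,\tau|_\A)_{\sa}\longrightarrow F(\mathcal B,\nu|_{\mathcal B})_{\sa},
\]
given by $x\mapsto hJ(x)$, where $J|_\A$ is a $*$-isomorphism of abelian algebras. Because $\tau$ is finite, $\A$ is a standard atomless probability space, so the source is a copy of $E(0,1)$. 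The target has infinite semifinite measure with $Q_n$ corresponding to $(0,n)$, and is a copy of $F(0,\infty)$. This is exactly the setting of \Cref{thm:real-Mityagin-commutative}, whose argument gives $|h|Q_n=cQ_n$ for a constant $c$. It does so through \Cref{lem:local-non-Lp} and \Cref{lem:finite-band-modulus} applied to the band restrictions $E(P_n\A)\to F(Q_n\mathcal B)$. Therefore
\[
 \|V(G_n)\|_F=c\,\varphi_F(1)>0
\]
for every $n$, contradicting $\|G_n\|_E\to0$. Alternatively, one may simply invoke \Cref{thm:real-Mityagin-commutative} on the abelian restriction, after identifying the measure spaces with $(0,1)$ and $(0,\infty)$ via standardness.

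The main obstacle is this identification. One must check that $\A=J^{-1}(\mathcal B)$ is maximal abelian with $\tau|_\A$ semifinite, which is automatic since $\tau$ is finite. One must also check that the abelian restricted spaces are genuinely $E(0,1)$ and $F(0,\infty)$ over standard measure spaces. This is where separability of the Hilbert spaces enters, to guarantee that the abelian von Neumann algebras are countably generated, hence standard $\sigma$-finite. The rest is the commutative argument verbatim.
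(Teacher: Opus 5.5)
Your proposal is correct and follows essentially the paper's route: apply \Cref{thm:nc-main}, take a maximal abelian $\B\subseteq\N$ with $\sigma$-finite infinite restricted trace and put $\A=J^{-1}(\B)$, then reduce to \Cref{thm:real-Mityagin-commutative} (whose argument you inline, though you also note it can be cited directly). The differences are minor. You get $\sigma$-finiteness from separability via a nested sequence $Q_n\uparrow1$, whereas the paper uses the central spectral projections of $h=V(1)\in F(\N,\nu)$. \Cref{lem:semifinite-MASA} is stated only for finite families, but any maximal abelian algebra containing the $Q_n$ is automatically semifinite. You should also spell out that the restricted map is onto; the paper checks this using $h^{-1}\in\LS(\Z(\N))$ and the extension of $J^{-1}$ to locally measurable operators.
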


\begin{proof}
Assume that such an isometry $V$ exists.  The function norm of $E$ is
not proportional to the $L_2$-norm, since $E(0,1)$ does not even
coincide with $L_2(0,1)$ as a set.  By \Cref{lem:nc-nonhilbert}, the real Banach space
$E(\M,\tau)_{\sa}$ is not Hilbertian.  Hence the isometric target
cannot have norm proportional to the $L_2$-norm.
All hypotheses of \Cref{thm:nc-main} are therefore satisfied, and
\[
   V(x)=hJ(x),
\]
where $J:\M\to\N$ is a normal surjective Jordan $*$-isomorphism and
$h\in\LS(\Z(\N))_{\sa}$ has full support. Since $1\in E(\M,\tau)$,
we have $h=V(1)\in F(\N,\nu)$. Thus the central projections
\[
   e_k=\mathbf1_{(1/k,\infty)}(|h|)
\]
have finite trace and satisfy $e_k\uparrow1$.

Choose a maximal abelian von Neumann subalgebra $\B\subseteq\N$ such
that $\nu|_{\B}$ is semifinite. Since $e_k\in\Z(\N)\subseteq\B$,
the restriction $\nu|_{\B}$ is $\sigma$-finite and has infinite total
mass. Put $\A=J^{-1}(\B)$. Then $\A$ is an atomless maximal abelian
subalgebra of the finite algebra $\M$, and the restriction of $V$
defines a real-linear isometry
\[
   V|_{E(\A,\tau|_{\A})_{\sa}}:
   E(\A,\tau|_{\A})_{\sa}
   \longrightarrow
   F(\B,\nu|_{\B})_{\sa}.
\]
We verify that this restriction is onto.  If
$x\in E(\A,\tau|_{\A})_{\sa}$, then the extension of $J$ to locally
measurable operators sends $x$ to an operator affiliated with $\B$.
Since $h$ is affiliated with $\Z(\N)\subseteq\B$, the product
$V(x)=hJ(x)$ is affiliated with $\B$.  Conversely, let
$y\in F(\B,\nu|_{\B})_{\sa}$ and put $x=V^{-1}y$.  The condition
$s(h)=1$ implies that the measurable reciprocal $h^{-1}$ belongs to
$\LS(\Z(\N))$.  Hence
\[
   J(x)=h^{-1}y
\]
is affiliated with $\B$.  Applying the extension of $J^{-1}$ to
locally measurable operators shows that $x$ is affiliated with
$\A=J^{-1}(\B)$.  Since $x\in E(\M,\tau)_{\sa}$ already, it follows
that $x\in E(\A,\tau|_{\A})_{\sa}$.  Thus the displayed restriction is
surjective.

The separability assumption makes the two abelian measure spaces
standard, with total measures $1$ and $\infty$, respectively. This
contradicts \Cref{thm:real-Mityagin-commutative}.
\end{proof}

\appendix

\section{Local random-measure estimates}
\label{app:local-kernels}

This appendix develops the estimates used in
\Cref{thm:at-most-one-atom}. They produce measurable atomic kernels and,
on every source set of finite measure, a uniform $p$-variation bound.
Throughout \Cref{app:local-kernels,sec:kernel-composition}, $X$ denotes
a real rearrangement-invariant Banach function space on $(0,\infty)$
with order-continuous norm; the direct and weak-star settings are those
fixed in \Cref{sec:representing-kernels}.

We work with the Kalton--Weis random-measure representation
\cite{KaltonRep,Weis} in the form used in \cite[Section~6]{KR}. The
estimates are local in the source variable. The elementary two-band
identity below will be used repeatedly.

\begin{lemma}
\label{lem:triangular}
Let $E$ and $F$ be real Banach function spaces, and let
$T:E\to F$ be a surjective isometry. Suppose measurable sets $A$
and $D$ satisfy
\[
  P_DT=P_DTP_A,
  \qquad
  P_AT^{-1}=P_AT^{-1}P_D.
\]
Then
\[
  P_{D^c}TP_A=0.
\]
\end{lemma}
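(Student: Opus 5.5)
The plan is to combine the two hypotheses into an invariance statement for the complementary bands, and then use disjointness plus the isometry property on a one-parameter family of vectors. No kernel machinery is needed; only the lattice property of the norms is used.

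\emph{Step 1 (complementary bands are mapped onto each other).} From $P_DT=P_DTP_A$ we get
\[
  P_DTP_{A^c}=P_DTP_AP_{A^c}=0 ,
\]
so $T(E(A^c))\subseteq F(D^c)$. Likewise, from $P_AT^{-1}=P_AT^{-1}P_D$ we get $P_AT^{-1}P_{D^c}=0$, that is, $T^{-1}(F(D^c))\subseteq E(A^c)$. Together these give $T(E(A^c))=F(D^c)$.

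\emph{Step 2 (decomposition).} Fix $f\in E(A)$ and write $Tf=g_1+g_2$ with $g_1=P_DTf$ and $g_2=P_{D^c}Tf$, so $g_1\perp g_2$. By Step~1, $v:=T^{-1}g_2\in E(A^c)$, hence $v\perp f$. For every real $t$,
\[
  T(f+tv)=g_1+(1+t)g_2 .
\]
Since the norms of $E$ and $F$ depend only on moduli and the pairs $(f,v)$ and $(g_1,g_2)$ are disjoint, the isometry property gives
\[
  \|f+|t|\,v\|_E=\|g_1+|1+t|\,g_2\|_F ,\qquad t\in\mathbb R .
\]

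\emph{Step 3 (the key comparison).} Taking $t=n$ and $t=-(n+2)$ for $n\ge0$, both right-hand sides equal $\|g_1+(n+1)g_2\|_F$. Hence
\[
  \|f+nv\|_E=\|f+(n+2)v\|_E ,
\]
and by induction $\|f+2kv\|_E=\|f\|_E$ for all $k\ge0$. Disjointness gives $|2kv|\le|f+2kv|$, so the lattice property yields
\[
  2k\|v\|_E\le\|f\|_E\qquad\text{for all }k .
\]
Therefore $v=0$, so $g_2=Tv=0$, which is $P_{D^c}Tf=0$. As $f\in E(A)$ was arbitrary, $P_{D^c}TP_A=0$.

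The only delicate point is Step~3, specifically the choice of the two parameter values $t=n$ and $t=-(n+2)$. They produce the same target vector up to modulus while giving different source vectors, and this is what turns the isometry into a contradiction with unbounded growth. The rest is formal manipulation of band projections.
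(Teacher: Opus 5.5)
Your proof is correct. It runs on the same mechanism as the paper's proof, but it is organized differently. The paper writes $T$ and $T^{-1}$ as block lower-triangular operators with respect to $E(A)\oplus E(A^c)$ and $F(D)\oplus F(D^c)$. It uses both hypotheses to make the diagonal blocks invertible and to write the inverse explicitly. It then shows that $U=T^{-1}R_FTR_E$, with $R_E=2P_A-I$ and $R_F=2P_D-I$, is an isometry of the form $I+N$ with $N^2=0$, and the linear growth of $U^n=I+nN$ forces $N=0$.

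In your notation, for $f\in E(A)$ one has $Nf=-2v$ with $v=T^{-1}P_{D^c}Tf$, so $U^kf=f-2kv$. Your identity $\|f+2kv\|_E=\|f\|_E$ is therefore exactly $\|U^kf\|_E=\|f\|_E$, up to the sign change $R_E$. Your parameter pair $t=n$, $t=-(n+2)$ is how the reflection $R_F$ enters without being named, since $T(f-(n+2)v)=R_FT(f+nv)$.

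Your route buys economy and a little generality. No block inverses are needed, and only the second hypothesis is used, to place $v$ in $E(A^c)$. The first hypothesis, and the resulting equality $T(E(A^c))=F(D^c)$ in your Step~1, play no role. So you have actually shown $P_AT^{-1}P_{D^c}=0\Rightarrow P_{D^c}TP_A=0$. The paper's formulation packages the same computation as a statement about a single unipotent isometry. That makes the source of the contradiction more visible, at the cost of the block-inverse bookkeeping.
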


\begin{proof}
Relative to
\[
 E=E(A)\oplus E(A^c),
 \qquad
 F=F(D)\oplus F(D^c),
\]
write
\[
 T=
 \begin{pmatrix}
 P_DTP_A & P_DTP_{A^c}\\
 P_{D^c}TP_A & P_{D^c}TP_{A^c}
 \end{pmatrix}.
\]
The first hypothesis annihilates the upper-right block:
\[
 P_DT=P_DTP_A,
\]
and hence
\[
 P_DTP_{A^c}=P_DT(I-P_A)=0.
\]
The inverse relation gives, in the same way,
\[
 P_AT^{-1}=P_AT^{-1}P_D
\]
and
\[
 P_AT^{-1}P_{D^c}
 =P_AT^{-1}(I-P_D)=0.
\]
Relative to the two decompositions, therefore,
\[
 T=
 \begin{pmatrix}
 T_{11}&0\\
 T_{21}&T_{22}
 \end{pmatrix},
 \qquad
 T^{-1}=
 \begin{pmatrix}
 R_{11}&0\\
 R_{21}&R_{22}
 \end{pmatrix}.
\]
The identities $TT^{-1}=I_F$ and $T^{-1}T=I_E$ make both diagonal
blocks invertible, and the inverse matrix is
\[
 T^{-1}=
 \begin{pmatrix}
 T_{11}^{-1}&0\\
 -T_{22}^{-1}T_{21}T_{11}^{-1}&T_{22}^{-1}
 \end{pmatrix}.
\]

Let
\[
 R_E=2P_A-I,
 \qquad
 R_F=2P_D-I.
\]
Both are surjective isometries. Hence
\[
 U:=T^{-1}R_FTR_E
\]
is a surjective isometry. Direct multiplication gives
\[
 U=
 \begin{pmatrix}
 I&0\\
 -2T_{22}^{-1}T_{21}&I
 \end{pmatrix}
 =I+N,
 \qquad N^2=0.
\]
If $N\ne0$, then $U^n=I+nN$ is an isometry for every
$n\in\mathbb N$. Choosing $x$ such that $Nx\ne0$, we obtain
\[
 \norm{x}
 =\norm{U^nx}
 =\norm{x+nNx}
 \ge n\norm{Nx}-\norm{x},
\]
which is impossible as $n\to\infty$. The contradiction forces
$N=0$ and hence $T_{21}=0$; equivalently,
\[
 P_{D^c}TP_A=0.
\]
\end{proof}

\subsection{Estimates on sets of finite measure}

Fix a finite interval $I=(0,R)$ and let
\[
 I_{n,k}=((k-1)R2^{-n},kR2^{-n}],
 \qquad 1\le k\le2^n.
\]
Put
\[
 e_{n,k}=\chi_{I_{n,k}},
 \qquad
 e_{n,k}^*=\frac{2^n}{R}\chi_{I_{n,k}}.
\]
The normalization gives $\int e_{n,k}e_{n,k}^*\,dm=1$.

Let $Y$ be either $X$, with its norm topology, or $X^\times$, with the
weak-star topology $\sigma(X^\times,X)$. Let $U:Y\to Y$ be a
surjective isometry, assumed weak-star continuous in the second case.
In the weak-star case write $U=W^*$ for its preadjoint $W:X\to X$.
Since $U$ maps the closed unit ball of $X^\times$ onto itself,
\[
   \|Wx\|_X
   =\sup_{g\in B_{X^\times}}|\langle Ug,x\rangle|
   =\|x\|_X,
   \qquad x\in X.
\]
The range of $W$ is closed, while its annihilator is the kernel of
$W^*=U$; thus $W$ is onto. It follows that
$U^{-1}=(W^{-1})^*$ is weak-star continuous. In this case write
$V=W^{-1}$ for the preadjoint of $U^{-1}$.
Put
\[
   f_{n,k}=Ue_{n,k},
   \qquad
   g_{n,k}=
   \begin{cases}
      (U^{-1})^*e_{n,k}^*,&Y=X,\\
      Ve_{n,k}^*,&Y=X^\times.
   \end{cases}
\]
Then, in both cases,
\begin{equation}
   \langle U^{-1}h,e_{n,k}^*\rangle
   =
   \langle h,g_{n,k}\rangle,
   \qquad h\in Y.
   \label{eq:preadjoint}
\end{equation}
Define
\[
 B_n=\{s:g_{n,k}(s)=0\text{ for every }k\},
 \qquad B=\bigcap_{n\ge1}B_n.
\]
The definitions imply that $B_{n+1}\subset B_n$.

\begin{lemma}
\label{lem:dyadic-detection}
For every $n$ and $k$,
\[
  P_Bf_{n,k}=0.
\]
\end{lemma}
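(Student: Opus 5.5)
The plan is to show first that $U^{-1}$ sends everything supported in $B$ to functions vanishing on $I$. A two-band block argument, in the spirit of \Cref{lem:triangular}, then transfers this to $U$ on $I$. Since $e_{n,k}=P_Ie_{n,k}$, the assertion $P_Bf_{n,k}=0$ is equivalent to $P_BUP_I e_{n,k}=0$, so it suffices to prove
\[
 P_BUP_I=0 .
\]

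\emph{Step 1: $P_IU^{-1}P_B=0$.} Let $h\in Y$ with $h=P_Bh$. By \eqref{eq:preadjoint} and the definition of $B$,
\[
 \langle U^{-1}h,e_{m,j}^*\rangle=\langle h,g_{m,j}\rangle=0
 \qquad\text{for all } m,j,
\]
because every $g_{m,j}$ vanishes on $B\subset B_m$. Thus $U^{-1}h$ has integral zero over every dyadic subinterval $I_{m,j}$ of $I$. Elements of $X$ and of $X^\times$ are integrable on sets of finite measure, since $\chi_I\in X\cap X^\times$. The dyadic intervals generate the Borel sets of $I$, so the signed measure $(U^{-1}h)\,dm$ restricted to $I$ vanishes. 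Hence $P_IU^{-1}h=0$.

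\emph{Step 2: the conjugated reflection.} Put $Q=U^{-1}P_BU$. This is an idempotent, and
\[
 S=U^{-1}(I-2P_B)U=I-2Q
\]
is a surjective isometry of $Y$. By Step~1, $P_IQ=0$. Relative to $Y=Y(I)\oplus Y(I^c)$, $Q$ therefore has the block form
\[
 Q=\begin{pmatrix}0&0\\ Q_{21}&Q_{22}\end{pmatrix}.
\]
The identity $Q^2=Q$ gives $Q_{22}^2=Q_{22}$ and $Q_{22}Q_{21}=Q_{21}$. Put $K=I-2Q_{22}$, an involution, and $R=2P_I-I$, a sign-change isometry. A direct block multiplication gives
\[
 (SR)^2=\begin{pmatrix}I&0\\ -2(I-K)Q_{21}&K^2\end{pmatrix}
 =\begin{pmatrix}I&0\\ -4Q_{21}&I\end{pmatrix}=I+N,
 \qquad N^2=0 .
\]

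\emph{Step 3: growth argument.} Since $(SR)^{2n}=I+nN$ is an isometry for every $n$, the estimate already used in \Cref{lem:triangular} forces $N=0$, that is, $Q_{21}=0$. Consequently $QP_I=0$, and so
\[
 P_BUP_I=UQP_I=0 .
\]
Applying this to $e_{n,k}$ gives $P_Bf_{n,k}=0$.

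The main obstacle is Step~1, the justification that vanishing of all dyadic averages forces vanishing on $I$. This needs local integrability, which holds for elements of $X$ and of $X^\times$. In the weak-star case it also requires that the pairing \eqref{eq:preadjoint} is the genuine trace pairing; this holds because $e_{m,j}^*\in X$. The block computation in Steps~2 and~3 is routine.
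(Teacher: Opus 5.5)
Your proof is correct and is essentially the paper's argument. Step~1 is the same dyadic-average observation, and your unipotent computation with $(SR)^2$ is the argument of \Cref{lem:triangular} carried out inline. The only difference is cosmetic: the paper conjugates in the other direction, applies \Cref{lem:triangular} with $A=D=B^c$ to the involution $2\,UP_IU^{-1}-I$, and obtains $P_B\,UP_IU^{-1}=0$. That identity is equivalent to your $P_BUP_I=0$.
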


\begin{proof}
Let $Q=UP_IU^{-1}$. To verify $QP_B=0$, take $h\in Y(B)$.
By \eqref{eq:preadjoint},
\[
 \int (U^{-1}h)e_{n,k}^*\,dm
 =\int h\,g_{n,k}\,dm=0
\]
for all $n,k$. Hence every dyadic average of $P_IU^{-1}h$ vanishes,
so $P_IU^{-1}h=0$ and therefore $Qh=0$.

Set $R=2Q-I$. Since $Q=UP_IU^{-1}$, we have
$R=U(2P_I-I)U^{-1}$.  As $2P_I-I$ is an involutive surjective
isometry, so is $R$.

Since $QP_B=0$,
\[
 P_{B^c}RP_B=0.
\]
Apply \Cref{lem:triangular} to $R$, with $A=D=B^c$. We get
\[
 P_BRP_{B^c}=0.
\]
The last two identities give $P_BQP_{B^c}=0$. Together with $QP_B=0$
this says $P_BQ=0$, and $P_Bf_{n,k}=0$ because $f_{n,k}\in\Ran Q$.
\end{proof}

Fix a non-Hilbertian finite interval $I_0=(0,R_0)$ with property $(P)$.
In the dyadic notation above, take $I=I_0$ from this point onward.

The properties $(P)$ and $(P')$ are those of
\Cref{sec:representing-kernels}. The argument requires a finite-window
version of \cite[Theorem~6.1]{KR}, with a constant uniform over the
isometry. The proof below records that dependence and handles the
weak-star case simultaneously.

\begin{theorem}
\label{thm:fixed-window}
Let $0<p\le1$. There is $C_{p,I_0}<\infty$, depending only on $p$ and
the norm on $I_0$, such that every surjective isometry $U:X\to X$
satisfies
\begin{equation}
 \norm{
 \sup_{n\ge1}
 \left(\sum_{k=1}^{2^n}|Ue_{n,k}|^p\right)^{1/p}
 }_X
 \le C_{p,I_0}.
 \label{eq:fixed-window}
\end{equation}
The same conclusion holds on $X^\times$ for every weak-star continuous
surjective isometry, provided $X^\times(I_0)$ is non-Hilbertian and
has property $(P)$.
\end{theorem}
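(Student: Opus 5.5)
The plan follows the proof of \cite[Theorem~6.1]{KR}, keeping track of which constants depend on the isometry. The first observation is monotonicity. Put
\[
 F_n=\Bigl(\sum_{k=1}^{2^n}|f_{n,k}|^p\Bigr)^{1/p},\qquad f_{n,k}=Ue_{n,k}.
\]
Since $e_{n,k}=e_{n+1,2k-1}+e_{n+1,2k}$, we have $f_{n,k}=f_{n+1,2k-1}+f_{n+1,2k}$. For $0<p\le1$ the inequality $|a+b|^p\le|a|^p+|b|^p$ then gives $F_n\le F_{n+1}$ pointwise. Hence $\sup_nF_n=\lim_nF_n$, and by the Fatou property of $X$ it suffices to prove
\[
 \|F_n\|_X\le C_{p,I_0}\qquad\text{for every }n,
\]
with a constant independent of $n$ and of $U$. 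In the weak-star case the same reduction applies in $X^\times$, which also has the Fatou property. There the dyadic images are $Ue_{n,k}$, with $U=W^*$ weak-star continuous, and \eqref{eq:preadjoint} replaces the Banach adjoint throughout.

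The core step is a two-term estimate coming from property $(P)$ on $I_0$. Let $J\subseteq I_0$ be a dyadic interval with halves $J_1,J_2$, and put $u=U\chi_{J_1}$, $v=U\chi_{J_2}$. Composing $U$ with measure-preserving rearrangements of $I_0$ and with the sign change $2P_{J_1}-I$ produces further surjective isometries. By rearrangement invariance, every dyadic pair can therefore be moved to a fixed reference pair of the same length without changing the norms involved.

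I would then use Flinn pairs. Property $(P)$ makes $\chi_{J_1}$, together with the normalized functional $\|\chi_{J_1}\|_{X^\times}^{-1}\chi_{J_1}$, a Flinn pair; this is where the non-Hilbertian hypothesis enters, as in \cite[Section~5]{KR}. By \cite[Proposition~3.2]{KR}, $\bigl(u,(U^*)^{-1}(\cdot)\bigr)$ is again a Flinn pair. Testing the numerical-positivity inequality $f(x)x^*(u)\ge0$ on norming pairs built from $u\pm tv$ should yield a lattice inequality of the form
\[
 \bigl\|(|u|^r+|v|^r)^{1/r}\bigr\|_X\le K\,\|u+v\|_X ,
\]
where $r>0$ and $K$ depend only on the norm on $I_0$ (through the strict inequality in $(P)$ and the fundamental function $\varphi_X$). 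These are exactly the quantities allowed in $C_{p,I_0}$.

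Next I would iterate this estimate down the dyadic tree. The quantity to control at level $n$ is $\|F_n\|$, and passing from level $n$ to level $n+1$ applies the two-term inequality to each pair $(f_{n+1,2k-1},f_{n+1,2k})$. This is the step I expect to be the main obstacle: a naive iteration multiplies constants $K$ at every level and blows up. Following \cite{KR}, the remedy I would try is to choose $p$ small relative to $r$, and to compare the whole family simultaneously with the norm $\|U\chi_{I_0}\|=\varphi_X(R_0)$, using randomized signs $\sum_k\varepsilon_kf_{n,k}$, all of which have norm $\varphi_X(R_0)$. In this way the errors form a geometric series in $n$, with ratio governed by the gap in property $(P)$ and by the decay $\varphi_X(R_02^{-n})\to0$ supplied by order continuity. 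If the direct geometric summation fails, the fallback is KR's contradiction argument: assume $\|F_n\|$ is unbounded, extract a limiting isometry on $X(I_0)$ via the random-measure representation, and contradict the Flinn property there. This argument is compactness-based rather than constructive, so one has to check that the resulting constant is still independent of $U$.

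Finally, I would check that nothing used depends on the total measure of the ambient space. Only the window $I_0$, its rearrangements, and \Cref{lem:triangular}/\Cref{lem:dyadic-detection} (for the support bookkeeping) enter the argument. The images $f_{n,k}$ may spread over all of $(0,\infty)$, but the estimate is a norm bound in $X$ and does not require their supports to stay inside a finite set.
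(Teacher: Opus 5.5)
Your reduction is sound. You observe that $F_n\le F_{n+1}$ pointwise, and the Fatou property then reduces everything to a bound on $\|F_n\|$ that is uniform in $n$ and in $U$; the paper finishes the same way. The gap is in the core estimate, for three reasons.

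\begin{itemize}
\item The claim that property $(P)$ makes $\chi_{J_1}$ a Flinn element of $X$ is false. The space $L_1$ is order continuous, non-Hilbertian and has $(P)$. Suppose $f(\chi_{J_1})=1$ and $\|x-f(x)\chi_{J_1}\|_1\le\|x\|_1$ for all $x$. Test $x=\chi_A$ with $A\subset J_1$ and $m(A)<m(J_1)/2$, and put $c=f(\chi_A)$. This gives $|1-c|\,m(A)+|c|\,(m(J_1)-m(A))\le m(A)$, which forces $c=0$. Hence $f=0$ on $J_1$, a contradiction. The coordinate vectors $e_{n,k}$ are Flinn in the finite-dimensional span $E_n$, but only because of the lattice property. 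Moreover, \cite[Proposition~3.2]{KR} cannot transport that fact, since $U|_{E_n}$ is not onto $X$.
\item The two-term inequality $\|(|u|^r+|v|^r)^{1/r}\|\le K\|u+v\|$ is only asserted. For $r=2$ it is just Krivine's inequality. The real difficulty is passing from square functions to $\ell_p$-sums with $p\le1$.
\item As you note yourself, iterating a two-term estimate down $n$ levels multiplies constants. Neither the geometric-series bookkeeping nor the compactness fallback is an argument, and the fallback's constant need not be independent of $U$.
\end{itemize}

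The missing idea is to use Flinn elements pointwise in the range, inside the finite-dimensional dual $E_n^*$, at each level separately, so that no iteration occurs.

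\begin{itemize}
\item Identify $E_n^*$ isometrically with the span of the $e_{n,k}^*$ via the dyadic conditional expectation. Take $S=(U^{-1})^*$ in the direct case and $S=V$ in the weak-star case. Every $(x,x^*)\in\Pi(E_n)$ then goes to a norming pair $(Ux,Sx^*)$.
\item The equality $\int(Ux)(Sx^*)\,dm=\int|Ux|\,|Sx^*|\,dm$ yields $\bigl(\sum_ka_kf_{n,k}(s)\bigr)\bigl(\sum_ka_k^*g_{n,k}(s)\bigr)\ge0$ for almost every $s$. A countable dense subset of $\Pi(E_n)$ makes this hold off a single null set.
\item Consequently the rank-one operator $\varphi\mapsto\varphi\bigl(\sum_kg_{n,k}(s)e_{n,k}\bigr)\sum_kf_{n,k}(s)e_{n,k}^*$ on $E_n^*$ is numerically positive. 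So $\sum_kf_{n,k}(s)e_{n,k}^*$ is a Flinn element of $E_n^*$ whenever $\sum_kg_{n,k}(s)e_{n,k}\ne0$.
\item Since $Z^\times$ has $(P')$ and is non-Hilbertian, \cite[Proposition~5.5]{KR} gives $\bigl(\sum_k|f_{n,k}(s)|^p\bigr)^{1/p}\le A_{p,I_0}\max_k|f_{n,k}(s)|$. The constant $A_{p,I_0}$ is independent of $n$ and $U$.
\item Bound the maximum by the square function and apply Krivine's inequality. Because the $e_{n,k}$ are disjoint, this gives the level bound $K_GA_{p,I_0}\|\chi_{I_0}\|$ on that set.
\item \Cref{lem:dyadic-detection} shows that every $f_{n,k}$ vanishes where all $g_{n,k}$ vanish for every $n$. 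Your monotonicity and Fatou step then finishes the proof.
\end{itemize}
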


\begin{proof}
Let $Z=Y(I_0)$ and, for $n\ge1$, put
\[
   E_n=\operatorname{span}\{e_{n,k}:1\le k\le2^n\}.
\]
The dyadic conditional expectation on $I_0$, extended by zero outside
$I_0$, is
\begin{equation}
   \mathbb E_n h
   =\sum_{k=1}^{2^n}
      \left(\int_{I_0}h e_{n,k}^*\,dm\right)e_{n,k}.
   \label{eq:dyadic-expectation}
\end{equation}
The usual majorization inequality for conditional expectations makes
$\mathbb E_n$ contractive on each rearrangement-invariant space under
consideration and on its K\"othe dual. It is also self-adjoint for the
integral pairing. Under that pairing, $E_n^*$ is therefore identified
isometrically with
\[
   \operatorname{span}\{e_{n,k}^*:1\le k\le2^n\},
\]
where the span is taken in $X^\times$ in the direct case and in $X$ in
the weak-star case. In the latter case
\eqref{eq:dyadic-expectation} is weak-star continuous and its
preadjoint is the same averaging operator on $X$. Every functional on
$E_n$ consequently has a representative in the displayed span with the
same norm.

The only finite-dimensional input needed here is a coefficient estimate
uniform in the dyadic level. There is a constant $A_{p,I_0}$, depending
only on $p$ and the norm of $Z$, such that
\begin{equation}
   \left(\sum_{k=1}^{2^n}|c_k|^p\right)^{1/p}
   \le A_{p,I_0}\max_{1\le k\le2^n}|c_k|
   \label{eq:local-flinn-coefficients}
\end{equation}
whenever
\[
   F=\sum_{k=1}^{2^n}c_ke_{n,k}^*
\]
is a Flinn element of $E_n^*$.

To verify this, let $Z^\times$ be the K\"othe dual of $Z$.  The
canonical map from $Z$ into $Z^{\times\times}$ is isometric in both
cases.  In the direct case this follows from order continuity, since
$Z^*=Z^\times$; in the weak-star case $Z$ is a K\"othe dual and has the
Fatou property.  The two norms in the definition of $(P)$ have the same values in
$Z$ and in its canonical image in $Z^{\times\times}$. Hence
$Z^{\times\times}$ has property $(P)$, and $Z^\times$ has property
$(P')$.

The norm of $Z^\times$ is not proportional to the $L_2(I_0)$-norm.  If
it were, then $Z^{\times\times}$ would have a norm proportional to the
$L_2$-norm.  In the weak-star case $Z=Z^{\times\times}$ isometrically.
In the direct case finite-support simple functions are norm dense in
$Z$, and the isometric embedding $Z\hookrightarrow Z^{\times\times}$
would make the $Z$-norm proportional to the $L_2$-norm on this dense
class and hence on all of $Z$.  Both conclusions contradict the choice
of $I_0$.

Let $\widehat Z$ be the rearrangement-invariant space on $(0,1)$
obtained from $Z$ by the isometry
\[
   (\mathcal Rz)(t)=z(R_0t),
   \qquad 0<t<1.
\]
The corresponding isometry of K\"othe duals is
\[
   (\mathcal R^\times h)(t)=R_0h(R_0t).
\]
Accordingly, $\widehat Z^\times$ is not proportional to $L_2(0,1)$
and has property $(P')$.  Multiplying its norm by a positive scalar, if
necessary, gives the normalization used in \cite{KR}; this scalar
renorming does not change numerical positivity or the set of Flinn
elements.  If
\[
   J_{n,k}=((k-1)2^{-n},k2^{-n}],
\]
then
\[
   \mathcal R^\times F
   =2^n\sum_{k=1}^{2^n}c_k\chi_{J_{n,k}}.
\]
The map $\mathcal R^\times$ preserves Flinn elements.  Applying
\cite[Proposition~5.5]{KR} to the dyadic subspace of
$\widehat Z^\times$ gives
\[
   \left(\sum_{k=1}^{2^n}|2^nc_k|^p\right)^{1/p}
   \le A_{p,I_0}\max_k|2^nc_k|.
\]
Cancelling $2^n$ proves \eqref{eq:local-flinn-coefficients}.  In
particular, the same constant works for every $n$.

Let
\[
   S=
   \begin{cases}
      (U^{-1})^*,&Y=X,\\
      V,&Y=X^\times.
   \end{cases}
\]
Then $S$ is a surjective isometry on the space in which
$E_n^*$ is represented, and
\begin{equation}
   \langle Ux,Sx^*\rangle=\langle x,x^*\rangle
   \label{eq:paired-isometries}
\end{equation}
for $x\in Y$ and for $x^*$ in the corresponding dual or predual.
Recall that
\[
   f_{n,k}=Ue_{n,k},
   \qquad
   g_{n,k}=Se_{n,k}^*.
\]
Choose measurable representatives of these countably many functions.

Fix $n$ and let $(x,x^*)\in\Pi(E_n)$, where
\[
   x=\sum_{k=1}^{2^n}a_ke_{n,k},
   \qquad
   x^*=\sum_{k=1}^{2^n}a_k^*e_{n,k}^*.
\]
Under the isometric identification just established, this is also a
norming pair in the ambient K\"othe pairing.  Equation
\eqref{eq:paired-isometries} shows that $(Ux,Sx^*)$ is a norming pair.
The norming relation gives
\[
   1=\int (Ux)(Sx^*)\,dm
   \le\int |Ux|\,|Sx^*|\,dm\le1,
\]
and hence
\begin{equation}
   \left(\sum_{k=1}^{2^n}a_kf_{n,k}(s)\right)
   \left(\sum_{k=1}^{2^n}a_k^*g_{n,k}(s)\right)
   \ge0
   \label{eq:pointwise-numerical-positivity}
\end{equation}
for almost every $s$.

The set $\Pi(E_n)$ is compact and metrizable.  Taking a countable
dense subset and using continuity of the two finite sums in the
coefficients, we obtain a null set $N_n$ outside which
\eqref{eq:pointwise-numerical-positivity} holds for every
$(x,x^*)\in\Pi(E_n)$.  Put
\[
   N=\bigcup_{n\ge1}N_n.
\]
For $s\notin N$, define
\[
   F_n(s)=\sum_{k=1}^{2^n}f_{n,k}(s)e_{n,k}^*\in E_n^*,
   \qquad
   G_n(s)=\sum_{k=1}^{2^n}g_{n,k}(s)e_{n,k}\in E_n.
\]
Then the rank-one operator on $E_n^*$ given by
\[
   R_{n,s}\varphi=\varphi(G_n(s))F_n(s)
\]
is numerically positive.  Indeed, under the canonical identification
$E_n^{**}=E_n$, every element of $\Pi(E_n^*)$ has the form
$(x^*,x)$ with $(x,x^*)\in\Pi(E_n)$, and
\eqref{eq:pointwise-numerical-positivity} is precisely
\[
   x(R_{n,s}x^*)=x^*(G_n(s))F_n(s)(x)\ge0.
\]
Suppose that $G_n(s)\ne0$. If $F_n(s)=0$, the desired coefficient
estimate is trivial. Otherwise choose $x_0\in E_n$ with
$\|x_0\|=1$ and $F_n(s)(x_0)=\|F_n(s)\|$.  Applying numerical
positivity to $(F_n(s)/\|F_n(s)\|,x_0)\in\Pi(E_n^*)$ gives
$F_n(s)(G_n(s))\ge0$.  If equality held, then $R_{n,s}^2=0$.  The exponential
characterization of numerical positivity
\cite[Section~3]{KR} would then give
\[
   \|I-tR_{n,s}\|=\|\exp(-tR_{n,s})\|\le1,
   \qquad t\ge0,
\]
which is impossible for a nonzero $R_{n,s}$ as $t\to\infty$.
It follows that
\[
   \varphi\longmapsto
   \frac{\varphi(G_n(s))}{F_n(s)(G_n(s))}F_n(s)
\]
is a numerically positive rank-one projection.  Thus $F_n(s)$ is a
Flinn element of $E_n^*$.  By
\eqref{eq:local-flinn-coefficients},
\begin{equation}
   \left(\sum_{k=1}^{2^n}|f_{n,k}(s)|^p\right)^{1/p}
   \le A_{p,I_0}\max_k|f_{n,k}(s)|
   \label{eq:pointwise-flinn}
\end{equation}
whenever $s\notin N$ and $G_n(s)\ne0$.

Let
\[
   D_n=(B_n\cup N)^c.
\]
The sets $D_n$ increase with $n$.  On $D_n$, the lattice property,
\eqref{eq:pointwise-flinn}, and Krivine's inequality
\cite[Theorem~1.f.14, p.~93]{LT} give
\begin{equation}
 \begin{aligned}
 \norm{\chi_{D_n}
 \left(\sum_k|f_{n,k}|^p\right)^{1/p}}_Y
 &\le A_{p,I_0}
    \norm{\left(\sum_k|f_{n,k}|^2\right)^{1/2}}_Y\\
 &\le K_GA_{p,I_0}
    \norm{\left(\sum_k|e_{n,k}|^2\right)^{1/2}}_Y\\
 &=K_GA_{p,I_0}\norm{\chi_{I_0}}_Y.
 \end{aligned}
 \label{eq:fixed-window-level}
\end{equation}
The constant on the right is independent of $n$ and $U$.

Since
\[
   e_{n,k}=e_{n+1,2k-1}+e_{n+1,2k},
\]
linearity of $U$ and the inequality $|a+b|^p\le|a|^p+|b|^p$ show that
\[
   \sum_{k=1}^{2^n}|f_{n,k}|^p
   \le\sum_{k=1}^{2^{n+1}}|f_{n+1,k}|^p.
\]
The functions
\[
   H_n=\chi_{D_n}
       \left(\sum_{k=1}^{2^n}|f_{n,k}|^p\right)^{1/p}
\]
form an increasing sequence. By \Cref{lem:dyadic-detection}, every
$f_{n,k}$ vanishes on $B=\bigcap_nB_n$, and hence
\[
   H_n\uparrow
   \sup_{n\ge1}
   \left(\sum_{k=1}^{2^n}|f_{n,k}|^p\right)^{1/p}
\]
almost everywhere.  The space $Y$ has the Fatou property: this is part
of the Banach function-space convention in the direct case and is
automatic for the K\"othe dual $X^\times$ in the weak-star case.
Applying the Fatou property to \eqref{eq:fixed-window-level} gives
\eqref{eq:fixed-window}, with
\[
   C_{p,I_0}=K_GA_{p,I_0}\norm{\chi_{I_0}}_Y.
\]
\end{proof}

A measure-preserving transport now moves the estimate from $I_0$ to
any set of finite measure.

\begin{lemma}
\label{lem:arbitrary-window}
Let $B\subset(0,\infty)$ have finite measure and let $0<p\le1$.
There is a constant $C_{p,B}$, independent of the isometry $U$ in the
relevant direct or weak-star class, and a positive function
$H_{U,p,B}$ in the space on which $U$ acts such that
\begin{equation}
   |Uf|\le \|f\|_\infty H_{U,p,B}
   \quad\text{for every bounded }f\text{ supported in }B.
   \label{eq:controllable-window}
\end{equation}
Moreover,
\[
   \|H_{U,p,B}\|\le C_{p,B}.
\]
\end{lemma}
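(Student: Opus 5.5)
The plan is to transport \Cref{thm:fixed-window} from the fixed window $I_0=(0,R_0)$ to $B$ by conjugating with measure-preserving rearrangements, and then to obtain the pointwise domination \eqref{eq:controllable-window} from the $p$-variation bound through a dyadic approximation of $f$.

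\emph{Reduction to a set of measure at most $R_0$.} First cover $B$ by finitely many pieces $B_1,\dots,B_N$ with $m(B_j)\le R_0$, where $N=\lceil m(B)/R_0\rceil$. If the pieces satisfy the lemma with functions $H_j$, then for bounded $f$ supported in $B$ we have $|Uf|\le\sum_j|U(f\chi_{B_j})|\le\|f\|_\infty\sum_jH_j$. We may therefore take $H_{U,p,B}=\sum_jH_j$ and $C_{p,B}=N\max_jC_{p,B_j}$. Enlarging each piece if necessary, we may also assume $m(B_j)=R_0$; this is possible because $(0,\infty)$ has infinite measure.

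\emph{Transport.} For a set $B'$ with $m(B')=R_0$, choose a measure-preserving invertible transformation $\rho$ of $(0,\infty)$ modulo null sets that carries $I_0$ onto $B'$. One way is to map $I_0$ onto $B'$ and $B'\setminus I_0$ onto $I_0\setminus B'$ by standard isomorphisms, and to take the identity elsewhere. The composition operator $\Psi f=f\circ\rho^{-1}$ is a surjective isometry of $X$ by rearrangement invariance. In the weak-star case it is also a weak-star continuous isometry of $X^\times$, being the adjoint of the analogous map on $X$. Put $\widetilde U=U\Psi$. This is again a surjective isometry in the same class, so \Cref{thm:fixed-window} gives, with the same constant $C_{p,I_0}$, a function
\[
 H=\sup_n\Bigl(\sum_k|\widetilde Ue_{n,k}|^p\Bigr)^{1/p},\qquad \|H\|\le C_{p,I_0}.
\]

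\emph{Domination.} Let $f$ be bounded and supported in $B'$, and put $g=\Psi^{-1}f=f\circ\rho$. Then $g$ is supported in $I_0$, $\|g\|_\infty=\|f\|_\infty$, and $Uf=\widetilde Ug$. Write $\mathbb E_ng=\sum_kc_{n,k}e_{n,k}$ with $|c_{n,k}|\le\|g\|_\infty$. Using $p\le1$,
\[
 |\widetilde U\mathbb E_ng|\le\|g\|_\infty\sum_k|\widetilde Ue_{n,k}|\le\|g\|_\infty\Bigl(\sum_k|\widetilde Ue_{n,k}|^p\Bigr)^{1/p}\le\|f\|_\infty H .
\]
Here $\sum_k|a_k|\le(\sum_k|a_k|^p)^{1/p}$ holds because $\ell_p\subset\ell_1$ with norm one for $p\le1$.

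\emph{Limit, the main obstacle.} It remains to pass to the limit as $n\to\infty$. By the martingale convergence theorem, $\mathbb E_ng\to g$ almost everywhere with $|\mathbb E_ng|\le\|g\|_\infty\chi_{I_0}$. In the direct case, order continuity then gives norm convergence. So $\widetilde U\mathbb E_ng\to\widetilde Ug$ in norm, a subsequence converges almost everywhere, and the bound passes to $Uf$.

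In the weak-star case we only get $\mathbb E_ng\to g$ weak-star: for $x\in X$, dominated convergence gives $\int x\,\mathbb E_ng\to\int xg$. Hence $\widetilde U\mathbb E_ng\to\widetilde Ug$ weak-star. The set $\{h:|h|\le\|f\|_\infty H\}$ is weak-star closed in $X^\times$, since it is cut out by the inequalities $\int hx\le\|f\|_\infty\int H|x|$ over all $x\in X$. So the bound again passes to the limit, and this handles the weak-star subtlety.

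This gives \eqref{eq:controllable-window} with $H_{U,p,B'}=H$ and $C_{p,B'}=C_{p,I_0}$. The constant is independent of $U$, so we obtain $C_{p,B}\le\lceil m(B)/R_0\rceil\,C_{p,I_0}$.
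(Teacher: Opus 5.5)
Your argument is correct and is essentially the paper's proof. It uses the same partition of $B$ into pieces of measure at most $R_0$, enlarged to measure $R_0$, and the same transport by a measure-preserving rearrangement so that \Cref{thm:fixed-window} applies to $U\Psi$. It also uses the same domination of $U\Psi\,\mathbb E_ng$ via $p\le1$ and the same summation over pieces. The one difference is the weak-star limit: the paper extracts an almost-everywhere convergent subsequence via Proposition~2.1 of \cite{KR}, whereas you use that the order interval $\{h:|h|\le cH\}$ is $\sigma(X^\times,X)$-closed. That closedness holds because testing against $\operatorname{sgn}(h)\chi_A$ with $m(A)<\infty$ recovers the pointwise bound, and it gives a more elementary, self-contained finish.
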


\begin{proof}
Let $Y=X$ in the direct case and $Y=X^\times$ in the weak-star case.
Choose a measurable partition
\[
   B=B_1\sqcup\cdots\sqcup B_q,
   \qquad m(B_r)\le R_0.
\]
For each $r$, choose a measurable set $\widetilde B_r\supset B_r$ with
$m(\widetilde B_r)=R_0$.  Since both $I_0$ and $\widetilde B_r$ have
measure $R_0$, and their complements have infinite measure, there is a
measure-preserving automorphism
\[
   \theta_r:(0,\infty)\longrightarrow(0,\infty)
\]
such that $\theta_r(I_0)=\widetilde B_r$ modulo null sets.  The operator
\[
   W_r:Y\longrightarrow Y,
   \qquad W_rg=g\circ\theta_r,
\]
is a surjective isometry; in the weak-star case it is weak-star
continuous and its preadjoint is composition with $\theta_r^{-1}$.

Fix $r$ and let $g$ be bounded and supported in $B_r$.  Put $h=W_rg$.
Then $h$ is supported in $I_0$.  Let
\[
   h_n=\sum_{k=1}^{2^n}c_{n,k}e_{n,k}
\]
be its dyadic conditional expectation on $I_0$.  We have
$|c_{n,k}|\le\|g\|_\infty$, and therefore, since $0<p\le1$,
\[
\begin{aligned}
   |UW_r^{-1}h_n|
   &\le \sum_{k=1}^{2^n}|c_{n,k}|\,|UW_r^{-1}e_{n,k}|\\
   &\le \|g\|_\infty
      \left(\sum_{k=1}^{2^n}|UW_r^{-1}e_{n,k}|^p\right)^{1/p}.
\end{aligned}
\]
The operator $UW_r^{-1}:Y\to Y$ is a surjective isometry in the same
class as $U$.  Hence \Cref{thm:fixed-window} gives
\[
   H_r:=\sup_{n\ge1}
   \left(\sum_{k=1}^{2^n}|UW_r^{-1}e_{n,k}|^p\right)^{1/p}
   \in Y_+,
   \qquad
   \|H_r\|_Y\le C_{p,I_0}.
\]
We have
\begin{equation}
   |UW_r^{-1}h_n|\le\|g\|_\infty H_r.
   \label{eq:window-dyadic-domination}
\end{equation}

In the direct case, $h_n\to h$ in $Y$ by order continuity, and a
subsequence of $UW_r^{-1}h_n$ converges almost everywhere to $Ug$.
In the weak-star case, $h_n\to h$ in $\sigma(X^\times,X)$.  Since
$UW_r^{-1}$ is weak-star continuous, Proposition~2.1 of \cite{KR}
implies that, for every $0<v\in X$,
\[
   \int v\,|UW_r^{-1}(h_n-h)|\,dm\longrightarrow0.
\]
Taking $v$ strictly positive and passing to a subsequence gives
almost-everywhere convergence.  In either case, passage to the limit
in \eqref{eq:window-dyadic-domination} yields
\[
   |Ug|\le\|g\|_\infty H_r.
\]

For a bounded function $f$ supported in $B$,
\[
   |Uf|
   \le\sum_{r=1}^q|U(f\chi_{B_r})|
   \le\|f\|_\infty\sum_{r=1}^qH_r.
\]
Thus one may take
\[
   H_{U,p,B}=\sum_{r=1}^qH_r,
   \qquad
   C_{p,B}=qC_{p,I_0}.
\]
\end{proof}

For property $(P')$, one also needs bounded continuity in measure.

\begin{lemma}
\label{lem:wstar-measure-cont}
Let $U:X^\times\to X^\times$ be a weak-star continuous surjective
isometry.  For every set $B$ of finite measure, the operator
\[
   UP_B:L_\infty(B)\longrightarrow L_0(0,\infty)
\]
is continuous from bounded convergence in measure on $B$ to local
convergence in measure on $(0,\infty)$.
\end{lemma}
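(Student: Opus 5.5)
We will combine the pointwise domination of \Cref{lem:arbitrary-window} with the weak-star-to-measure principle \cite[Proposition~2.1]{KR}, which was already used in the proof of \Cref{lem:arbitrary-window}. Fix $B$ of finite measure. Let $(g_n)\subset L_\infty(B)$ satisfy $\sup_n\|g_n\|_\infty\le M$ and $g_n\to g$ in measure on $B$. Since $m(B)<\infty$, each $g_n$ and $g$ belongs to $X^\times(B)$. We must show that $UP_Bg_n\to UP_Bg$ locally in measure. By linearity we may replace $g_n$ by $g_n-g$. Thus we may assume $g=0$, $\|g_n\|_\infty\le 2M$, and $g_n\to0$ in measure on $B$.

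The first step is to show that $g_n\to0$ in $\sigma(X^\times,X)$. For $x\in X$, the product $x\chi_B$ is integrable, because $\chi_B\in X^\times$. The functions $|x g_n|$ are dominated by $2M|x|\chi_B$ and tend to $0$ in measure on the finite-measure set $B$. Hence $\int x g_n\,dm\to0$ by dominated convergence, applied along subsequences. Since $U$ is weak-star continuous, $Ug_n\to0$ weak-star in $X^\times$.

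The second step is to upgrade this to local convergence in measure. We argue by subsequences: it suffices to show that every subsequence has a further subsequence along which $Ug_n\to0$ almost everywhere. \Cref{lem:arbitrary-window}, applied with $p=1$ in the weak-star class, gives $H=H_{U,1,B}\in X^\times_+$ with $|Ug_n|\le 2MH$ for all $n$. As in the proof of \Cref{lem:arbitrary-window}, \cite[Proposition~2.1]{KR} gives $\int v\,|Ug_n|\,dm\to0$ for every $0<v\in X$. If one prefers not to cite that proposition for this sequence, the following direct argument also works. The family $\{Ug_n\}$ is order bounded by $2MH$. Fix a strictly positive $v\in X$, which exists by $\sigma$-finiteness; then $vH\in L_1$. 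The family $\{v\,Ug_n\}$ is therefore uniformly integrable. Weak-star convergence yields $\int v\,\varepsilon\,Ug_n\to0$ for every bounded sign function $\varepsilon$, because $v\varepsilon\in X$. Uniform integrability then lets us pass from these signed integrals to the modulus: choose $\varepsilon$ adapted to a subsequence limit point in the weak $L_1(v\,dm)$ topology, and use Dunford--Pettis together with the order bound. Once $\int v|Ug_n|\to0$, the sequence $Ug_n$ tends to $0$ in $L_1(v\,dm)$. Since $v>0$ almost everywhere, a subsequence converges almost everywhere, hence locally in measure, and the subsequence principle finishes the proof.

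The main obstacle is the passage from weak-star convergence to convergence of $\int v|Ug_n|$. Weak convergence plus domination alone does not give norm convergence in $L_1$; for example, Rademacher-type oscillation is excluded only by the special structure of isometries. So I would first try to invoke \cite[Proposition~2.1]{KR} exactly as it is used in \Cref{lem:arbitrary-window}, checking that its hypotheses need only weak-star continuity and the isometric property. If that citation does not cover bounded sequences converging in measure, I would fall back on the dyadic structure instead. Approximate $g_n$ uniformly on most of $B$ by dyadic step functions $\mathbb E_k g_n$. Bound the remainder via \eqref{eq:controllable-window} applied on a small exceptional set, using $\|H_{U,p,B'}\|\le C_{p,B'}$ with the constant controlled through $\varphi$ as $m(B')\to0$. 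Then use the $p$-variation bound \eqref{eq:fixed-window} with $p<1$. This bound forces $\sum_k|Ue_{k,j}|$ to be dominated, so that small coefficients contribute small pointwise mass.
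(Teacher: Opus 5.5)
Your main line is the paper's proof. Uniform boundedness and convergence in measure give $g_n\to0$ in $\sigma(X^\times,X)$ by dominated convergence along subsequences. Then \cite[Proposition~2.1]{KR}, applied to the weak-star continuous $U$, gives $\int v\,|Ug_n|\,dm\to0$ for a strictly positive $v\in X$. Local convergence in measure follows, together with the subsequence criterion; the paper uses Markov's inequality on $A\cap\{v\ge\delta\}$ where you pass through $L_1(v\,dm)$, which amounts to the same thing. The paper invokes that citation in exactly this situation, so your hedging is unnecessary. The domination $|Ug_n|\le 2MH_{U,1,B}$ from \Cref{lem:arbitrary-window} plays no role.

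You should, however, discard both fallbacks.

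The ``direct argument'' is wrong, as your own final paragraph in effect concedes. It uses only that $Ug_n$ is order bounded and weak-star null, and these two properties never force $\int v\,|Ug_n|\,dm\to0$. Take $U=I$ and $g_n=r_n\chi_B$ with Rademacher functions $r_n$. Both properties hold, yet $\int v\,|g_n|\,dm=\int_Bv\,dm$ for every $n$. This sequence is not null in measure, and that is the point: your argument never uses the hypothesis on the input.

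What excludes oscillation is not the isometric structure of $U$. It is weak-star continuity combined with convergence in measure of $g_n$. Writing $U=W^*$,
\[
  \int v\,|Ug_n|\,dm=\sup_{|\varepsilon|\le1}\int W(v\varepsilon)\,g_n\,dm .
\]
The set $K=W([-v,v])$ is weakly compact in $X$, because order intervals of an order-continuous lattice are weakly compact. Since $\chi_B\in X^\times$, restriction to $B$ maps $K$ to a weakly compact subset of $L_1(B)$, which is uniformly integrable by Dunford--Pettis. Now split $B$ into $\{|g_n|>\eta\}$, whose measure tends to $0$, and its complement. This gives $\sup_{k\in K}\int|k|\,|g_n|\,dm\to0$. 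So Dunford--Pettis does enter, but applied to the image of an order interval, not to $\{v\,Ug_n\}$.

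The dyadic fallback is not viable as sketched either. For $m(B')\le R_0$ the constant supplied by \Cref{lem:arbitrary-window} is $C_{p,B'}=C_{p,I_0}$, which does not shrink as $m(B')\to0$. Moreover, $\varphi_{X^\times}(t)=t/\varphi_X(t)$ need not tend to $0$, so pieces supported on small sets are not controlled.
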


\begin{proof}
Let $(f_n)\subset L_\infty(B)$ be uniformly bounded and suppose that
$f_n\to0$ in measure.  Every subsequence has a further subsequence
converging to zero almost everywhere.  Along such a subsequence,
dominated convergence gives $f_n\to0$ in $\sigma(X^\times,X)$.
Proposition~2.1 of \cite{KR}, applied to the weak-star continuous
operator $U$, then gives
\[
   \int h|Uf_n|\,dm\longrightarrow0
   \qquad(0<h\in X).
\]
Choose $h\in X$ strictly positive almost everywhere.  On a set $A$ of
finite measure, first choose $\delta>0$ so that
$m(A\cap\{h<\delta\})$ is small and then apply Markov's inequality on
$A\cap\{h\ge\delta\}$.  Hence $Uf_n\to0$ locally in measure along the
chosen subsequence.  The usual subsequence criterion then gives local convergence in
measure for the original sequence.
\end{proof}

Controllability and bounded continuity in measure give a local
random-measure representation.

\begin{proposition}
\label{prop:local-random-measure}
Let $B\subset(0,\infty)$ have finite measure.  For every isometry $U$ in
the direct or weak-star class there is a weak-star Borel family
$(\nu_s^{U,B})_s$ of finite signed Borel measures on $B$ such that
\begin{equation}
   Uf(s)=\int_B f(t)\,d\nu_s^{U,B}(t)
   \label{eq:local-kernel-formula}
\end{equation}
for every bounded measurable $f$ supported in $B$.  The family is
unique up to a null set in the parameter $s$.  Moreover, if
$N\subset B$ is null, then
\begin{equation}
   |\nu_s^{U,B}|(N)=0
   \qquad\text{for almost every }s.
   \label{eq:null-set-property}
\end{equation}
If $B\subset C$ and both sets have finite measure, then
\begin{equation}
   \nu_s^{U,C}|_B=\nu_s^{U,B}
   \qquad\text{for almost every }s.
   \label{eq:kernel-compatibility}
\end{equation}
\end{proposition}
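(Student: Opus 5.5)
The proof is a disintegration (Kalton--Weis) argument. For each fixed $s$ we build a finitely additive set function on $B$, show it extends to a countably additive signed measure, and then handle measurability, null sets and compatibility. Fix $B$ of finite measure and $U$ in the relevant class. \Cref{lem:arbitrary-window} gives $H=H_{U,1,B}\in Y_+$ with $|Uf|\le\|f\|_\infty H$ for every bounded $f$ supported in $B$. Thus $f\mapsto Uf$ is a bounded linear map from $L_\infty(B)$ into the order ideal of $L_0$ generated by $H$, with $\|Uf/H\|_\infty\le\|f\|_\infty$ on $\{H>0\}$.

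Continuity in measure comes first. In the direct case, take $f_n\to0$ in measure with $\|f_n\|_\infty\le1$. Then $|f_n|\chi_B\to0$ in $X$ by order continuity, since $\varphi_X(t)\to0$ as $t\downarrow0$. Hence $Uf_n\to0$ in norm, and so locally in measure. In the weak-star case this is exactly \Cref{lem:wstar-measure-cont}.

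Next we produce the measures. Let $\mathcal R$ be a countable algebra of subsets of $B$ generated by dyadic subintervals of a Borel model of $B$. Choose representatives of $U\chi_A$ for $A\in\mathcal R$, and discard a null set so that, for every $s$ outside it:
\begin{enumerate}
\item $A\mapsto U\chi_A(s)$ is finitely additive on $\mathcal R$;
\item $\sum_j|U\chi_{A_j}(s)|\le H(s)$ for every finite $\mathcal R$-partition $(A_j)$ of $B$.
\end{enumerate}
Property (ii) holds because $\sum_j\epsilon_jU\chi_{A_j}=U(\sum_j\epsilon_j\chi_{A_j})$ is dominated by $H$ for every choice of signs, and there are only countably many partitions and sign choices. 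So each set function has bounded variation.

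Countable additivity is the main obstacle. For finitely additive functions on a countable algebra, bounded variation does not give it. My first approach is the Kalton--Weis device of realising $B$ as a compact metrizable space (a Cantor-set model of the dyadic algebra), so that $\mathcal R$ consists of clopen sets. Every finitely additive bounded function on the clopen algebra is then automatically countably additive, because a countable clopen cover of a clopen set has a finite subcover. This yields Radon measures $\nu_s$ on the compact model, with $s\mapsto\nu_s(A)$ measurable for $A\in\mathcal R$; a monotone class argument then gives weak-star Borel measurability.

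Formula \eqref{eq:local-kernel-formula} holds for simple $\mathcal R$-functions by linearity. For general bounded $f$ supported in $B$, approximate by uniformly bounded $\mathcal R$-simple $f_k\to f$ in measure and almost everywhere. The left side converges locally in measure by the continuity step. The right side converges once we know that $\nu_s$ does not charge the null set where $f_k\not\to f$, so the null-set property \eqref{eq:null-set-property} must be proved first. To prove it, let $N\subset B$ be null and choose open $\mathcal R$-unions $O_k\downarrow\supseteq N$ with $m(O_k)\to0$. The functions $\chi_{O_k}$ and their signed restrictions tend to $0$ in measure, so $\sup_\epsilon|U(\epsilon\chi_{O_k})|\to0$ locally in measure along a subsequence. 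This controls $|\nu_s|(O_k)$ for almost every $s$, using a countable family of sign patterns from $\mathcal R$ and $\sigma$-compactness, and hence gives $|\nu_s|(N)=0$. With the null-set property in hand, dominated convergence applies in each $\nu_s$ and the formula extends.

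Uniqueness and compatibility remain. If two families represent $U$, they agree on every $A\in\mathcal R$ outside a single null set of parameters $s$, and measures agreeing on a generating algebra are equal. For \eqref{eq:kernel-compatibility}, take $B\subset C$ and build the algebra for $C$ so that it refines the one for $B$. Then $\nu_s^{U,C}|_B$ represents $UP_B$ on bounded functions supported in $B$, and uniqueness gives the identity almost everywhere.
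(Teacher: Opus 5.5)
Your first two steps, controllability via $H_{U,1,B}$ from \Cref{lem:arbitrary-window} and continuity in measure, are exactly the hypotheses the paper checks. The paper then cites Kalton's representation theorem \cite[Theorem~3.1]{KaltonRep}, which gives existence, uniqueness and \eqref{eq:null-set-property} together, and it derives \eqref{eq:kernel-compatibility} from uniqueness as you do. Your own proof of Kalton's theorem handles countable additivity correctly via the compact model.

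\textbf{The gap.} The null-set step does not work. You infer from ``the functions $\epsilon\chi_{O_k}$ tend to $0$ in measure'' that $\sup_\epsilon|U(\epsilon\chi_{O_k})|\to0$ locally in measure. Continuity in measure only controls $Ug_k$ along a \emph{single} sequence with $|g_k|\le\chi_{O_k}$. The sign pattern that nearly attains the supremum at $s$ depends on $s$, so no single sequence captures it. By regularity on the compact model, that supremum equals $|\nu_s|(O_k)$, that is, the modulus $|U|\chi_{O_k}$. Its convergence to zero is the $\sigma$-order continuity of $|U|$, which is equivalent to the null-set property you want to prove. The step is therefore circular. Because your extension of \eqref{eq:local-kernel-formula} to general bounded $f$ relies on it, that extension is unsupported as well. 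There is a second dependence on the same fact: the Radon measures live on the model $K$, and $B$ corresponds to $K$ only up to a null set (for instance the one-sided limit points at dyadic endpoints). Pulling $\nu_s$ back to a measure on $B$ already needs the null-set property for null subsets of $K$, not only for $N\subset B$.

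\textbf{The fix: reverse the order.} Take Borel representatives on $K$ and apply the functional monotone class theorem to the class of bounded Borel $f$ with $Uf(s)=\int f\,d\nu_s$ almost everywhere.
\begin{enumerate}
\item This class contains the $\mathcal R$-simple functions.
\item It is closed under bounded \emph{everywhere} convergence. The left side converges by continuity in measure. For each fixed $s$, the right side converges by dominated convergence for the finite measure $\nu_s$. No information about null sets is used here.
\item For a Borel null set $N$, $U(g\chi_N)=0$ for every bounded Borel $g$. Let $g$ run through a countable uniformly dense family of clopen-simple functions. This gives $\nu_s|_N=0$, hence $|\nu_s|(N)=0$, for almost every $s$.
\end{enumerate}
This proves \eqref{eq:null-set-property}. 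It also shows that almost every $\nu_s$ is carried by the image of $B$, so it pulls back to $B$. Your uniqueness and compatibility arguments then go through as written.
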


\begin{proof}
Set $T_B=UP_B$.  By \Cref{lem:arbitrary-window}, $T_B$ is controllable.
In the direct case, bounded convergence in measure on $B$ implies
convergence in $X$: every subsequence has an almost-everywhere
convergent further subsequence dominated by a multiple of $\chi_B$,
and order continuity applies.  In the weak-star case the required
measure continuity is \Cref{lem:wstar-measure-cont}.  The random-measure
representation theorem \cite[Theorem~3.1]{KaltonRep} (see also
\cite{Weis} and \cite[Section~6]{KR}) therefore yields the weak-star
Borel family in \eqref{eq:local-kernel-formula}.  The representation theorem also gives uniqueness and the
total-variation absolute continuity in \eqref{eq:null-set-property}.
When $B\subset C$, both sides of \eqref{eq:kernel-compatibility}
represent the same operator on $L_\infty(B)$, so uniqueness identifies
them.
\end{proof}

Choose $I_n=(0,n]$.  After removing one null set, the compatibilities
\eqref{eq:kernel-compatibility} hold simultaneously for all pairs
$I_m\subset I_n$.  Write $\nu_s^U$ for the resulting locally
finite signed kernel, so that
\[
   \nu_s^U|_{I_n}=\nu_s^{U,I_n}.
\]
For an arbitrary set $B$ of finite measure, the notation
$\nu_s^U|_B$ refers to the unique local kernel supplied by
\Cref{prop:local-random-measure}; this convention is consistent on all
intersections of finite-measure sets.

For a finite signed measure $\mu$ and $0<p\le1$, define
\[
\|\mu\|_p
=
\sup\left\{
\left(\sum_{k=1}^n |\mu(A_k)|^p\right)^{1/p}
\,\middle|\,
\begin{array}{c}
n\in\NN,\quad A_1,\ldots,A_n\text{ measurable},\\
A_k\cap A_l=\varnothing\quad(k\ne l)
\end{array}
\right\}.
\]
For pairwise disjoint $B_1,\ldots,B_q$,
\begin{equation}
   \left\|\mu\bigm|_{\bigcup_{r=1}^qB_r}\right\|_p^p
   \le\sum_{r=1}^q\|\mu|_{B_r}\|_p^p.
   \label{eq:pvariation-disjoint-restrictions}
\end{equation}
For a disjoint family $(A_k)$,
\[
   |\mu(A_k)|^p
   =\left|\sum_r\mu(A_k\cap B_r)\right|^p
   \le\sum_r|\mu(A_k\cap B_r)|^p,
\]
and taking suprema gives \eqref{eq:pvariation-disjoint-restrictions}.

\begin{corollary}
\label{cor:pvariation-window}
Let $B\subset(0,\infty)$ have finite measure and let $0<p\le1$.
Then there are a constant $C_{p,B}$, independent of $U$, and a positive
function $H_{U,p,B}$ in the range space such that
\begin{equation}
   \|\nu_s^U|_B\|_p\le H_{U,p,B}(s)
   \quad\text{for almost every }s,
   \qquad
   \|H_{U,p,B}\|\le C_{p,B}.
   \label{eq:pvar-control}
\end{equation}
\end{corollary}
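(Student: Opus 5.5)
We take $H_{U,p,B}$ to be the function supplied by \Cref{lem:arbitrary-window}, and we show that the pointwise domination \eqref{eq:controllable-window} passes to the $p$-variation of the kernel. The plan first reduces the supremum defining $\|\nu_s^U|_B\|_p$ to countably many partitions. It then uses the kernel formula \eqref{eq:local-kernel-formula} with sign-weighted indicator functions.

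\textbf{Step 1 (one partition).} Fix a finite disjoint family $A_1,\dots,A_n\subset B$. For $\varepsilon\in\{-1,1\}^n$, put $f_\varepsilon=\sum_k\varepsilon_k\chi_{A_k}$, so that $\|f_\varepsilon\|_\infty\le1$. By \eqref{eq:local-kernel-formula} and \eqref{eq:controllable-window} (applied with exponent $p$),
\[
\Bigl|\sum_k\varepsilon_k\nu_s^U(A_k)\Bigr|=|Uf_\varepsilon(s)|\le H_{U,p,B}(s)
\]
for almost every $s$, simultaneously for all $2^n$ sign choices. Choosing $\varepsilon_k=\operatorname{sign}\nu_s^U(A_k)$ gives $\sum_k|\nu_s^U(A_k)|\le H_{U,p,B}(s)$. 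This only controls the $\ell^1$ sum, however, and for $p<1$ we need the $\ell^p$ sum. So instead of this crude choice we go back into the proof of \Cref{lem:arbitrary-window}: there $H_{U,p,B}=\sum_rH_r$, and each $H_r$ is a supremum of dyadic $\ell^p$ sums $\bigl(\sum_k|UW_r^{-1}e_{n,k}|^p\bigr)^{1/p}$. We therefore aim at the sharper estimate
\[
\Bigl(\sum_k|U(\chi_{A_k\cap B_r})(s)|^p\Bigr)^{1/p}\le H_r(s).
\]

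\textbf{Step 2 (dyadic sets).} For dyadic unions of the transported cells $\theta_r(I_{n,k})$, the quantity $U(\chi_{\text{cell}})(s)$ is a finite sum of the functions $UW_r^{-1}e_{n,k}$ over the cells it contains. By $p$-subadditivity, $|a+b|^p\le|a|^p+|b|^p$, the $\ell^p$ sum over any disjoint family of dyadic unions at level $n$ is bounded by $\sum_k|UW_r^{-1}e_{n,k}|^p\le H_r^p$. Hence Step 1 holds with the $\ell^p$ sum for all disjoint families of dyadic-generated sets in $B_r$. This is a countable collection, so a single null set of parameters $s$ serves for all of them.

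\textbf{Step 3 (approximation).} This is the main obstacle: passing from dyadic sets to arbitrary measurable $A_k$ without losing a null set for each family. Fix $s$ outside the exceptional set. The measure $\nu_s^U|_{B_r}$ is a finite signed Borel measure. For it, the $\ell^p$ sum over an arbitrary finite disjoint Borel family is a limit of sums over disjoint families from the generating algebra, because each $A_k$ is approximated in $|\nu_s|$-measure by algebra sets and these can be disjointified. The transported dyadic sets generate the Borel $\sigma$-algebra of $B_r$ modulo null sets. By \eqref{eq:null-set-property}, $|\nu_s|$ does not charge the exceptional null sets, for almost every $s$. One caveat is that the dyadic estimate involves $U(\chi_{\text{cell}})(s)=\nu_s(\text{cell})$ only almost everywhere; we fix this by using the countable algebra and the kernel identity on it simultaneously. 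We conclude $\|\nu_s^U|_{B_r}\|_p\le H_r(s)$ almost everywhere. Finally, \eqref{eq:pvariation-disjoint-restrictions} gives
\[
\|\nu_s^U|_B\|_p\le\Bigl(\sum_rH_r(s)^p\Bigr)^{1/p}\le q^{1/p}\max_rH_r(s)\le q^{1/p}H_{U,p,B}(s).
\]
We then replace $H_{U,p,B}$ by $q^{1/p}H_{U,p,B}$ and $C_{p,B}$ by $q^{1/p}C_{p,B}$, which is still independent of $U$.
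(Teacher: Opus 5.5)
Your proof is correct, but it obtains the key local bound $\|\nu_s^U|_{\widetilde B_r}\|_p\le H_r(s)$ by a different route from the paper.

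The paper pulls $\nu_s^U|_{\widetilde B_r}$ back to $I_0$ under $\theta_r^{-1}$ and notes that its dyadic values are $UW_r^{-1}e_{n,k}(s)$. It then cites Lemma~6.2 and the proof of Proposition~6.3 of Kalton and Randrianantoanina, i.e.\ the fact that the $p$-variation of a measure on an interval is controlled by its dyadic $p$-sums. It finishes with $H_{U,p,B}=(\sum_rH_r^p)^{1/p}\le q^{1/p-1}\sum_rH_r$.

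You instead prove the needed inequality directly. For a fixed finite signed measure, the $p$-variation is the supremum over disjoint families from any generating algebra: approximate each $A_k$ in $|\nu_s|$-measure and disjointify, using that the number of sets is fixed. Then $p$-subadditivity bounds every such family by the finest dyadic sum. Your argument is self-contained; the paper's citation is shorter. Your Step~1 is superfluous. Your final constant $q^{1/p}\cdot qC_{p,I_0}$ serves just as well as the paper's, since it is independent of $U$.

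Two points in Step~3 need to be stated precisely.

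\begin{itemize}
\item The transported cells lie in $\widetilde B_r$, not in $B_r$. The approximation therefore has to be carried out for the kernel on $\widetilde B_r$, which is compatible with the one on $B_r$ by \eqref{eq:kernel-compatibility}; only afterwards do you restrict to $B_r$.
\item The phrase ``generate modulo null sets'' must be reduced to a single exceptional set, because \eqref{eq:null-set-property} produces an $s$-null set that depends on $N$. To do this, realize $\theta_r$ as a Borel isomorphism from a conull Borel set $I_0^0\subset I_0$ onto a conull Borel set $G\subset\widetilde B_r$. Then the cells $\theta_r(I_{n,k}\cap I_0^0)$ are genuine nested partitions of $G$ that generate its Borel $\sigma$-algebra, and you apply \eqref{eq:null-set-property} once, to $\widetilde B_r\setminus G$.
\end{itemize}

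The paper's pullback under $\theta_r^{-1}$ tacitly needs the same single-null-set remark.
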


\begin{proof}
Use the sets $B_r\subset\widetilde B_r$ and the global rearrangement
isometries $W_r$ from the proof of \Cref{lem:arbitrary-window}.  The
pullback of $\nu_s^U|_{\widetilde B_r}$ under $\theta_r^{-1}$ is the
representing measure of $UW_r^{-1}$ on $I_0$, and its value on the
dyadic interval $I_{n,k}$ is
\[
   UW_r^{-1}e_{n,k}(s).
\]
Lemma~6.2 and the proof of Proposition~6.3 in \cite{KR} therefore give
\[
   \|\nu_s^U|_{\widetilde B_r}\|_p\le H_r(s),
   \qquad
   \|H_r\|\le C_{p,I_0},
\]
where $H_r$ is the function occurring in the proof of
\Cref{lem:arbitrary-window}.  Hence
\[
   \|\nu_s^U|_{B_r}\|_p\le H_r(s).
\]
By \eqref{eq:pvariation-disjoint-restrictions},
\[
   \|\nu_s^U|_B\|_p
   \le\left(\sum_{r=1}^qH_r(s)^p\right)^{1/p}.
\]
Set
\[
   H_{U,p,B}=\left(\sum_{r=1}^qH_r^p\right)^{1/p}.
\]
For $p<1$,
\[
   H_{U,p,B}\le q^{1/p-1}\sum_{r=1}^qH_r,
\]
and for $p=1$ the same estimate holds with constant one. This is
\eqref{eq:pvar-control}.
\end{proof}

\begin{lemma}
\label{lem:pvariation-atomic}
Let $0<p<1$ and let $\mu$ be a finite signed Borel measure on a Borel
subset of the real line.  Then $\|\mu\|_p<\infty$ if and only if
\[
   \mu=\sum_{j\ge1}c_j\delta_{t_j},
\]
where the points $t_j$ are distinct and $\sum_j|c_j|^p<\infty$.  In
this case
\begin{equation}
   \|\mu\|_p^p=\sum_j|c_j|^p.
   \label{eq:pvariation-atomic-formula}
\end{equation}
\end{lemma}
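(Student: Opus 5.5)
The plan is to split $\mu$ into atomic and diffuse parts and show that $\|\mu\|_p<\infty$ rules out any diffuse part. Write $\mu=\mu_a+\mu_c$, where $\mu_a=\sum_j c_j\delta_{t_j}$ is the atomic part. Since $|\mu|$ is finite, it has at most countably many atoms, and $\sum_j|c_j|\le|\mu|(\R)<\infty$. The part $\mu_c$ has no atoms.

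For the sufficiency direction and for the formula \eqref{eq:pvariation-atomic-formula}, take a purely atomic $\mu$ and any disjoint family $A_1,\dots,A_n$. Then
\[
 |\mu(A_k)|^p=\Bigl|\sum_{t_j\in A_k}c_j\Bigr|^p\le\sum_{t_j\in A_k}|c_j|^p,
\]
by subadditivity of $u\mapsto u^p$ for $0<p\le1$, extended to countable sums via partial sums and continuity. Summing over $k$ gives $\|\mu\|_p^p\le\sum_j|c_j|^p$. For the reverse inequality, test with the singletons $A_k=\{t_k\}$, $k\le n$, and let $n\to\infty$. This argument also shows $\|\mu_a\|_p^p\ge\sum_j|c_j|^p$ for any $\mu$, since testing on singletons sees only atoms.

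For the necessity direction, assume $\|\mu\|_p<\infty$. First, testing on singletons as above gives $\sum_j|c_j|^p\le\|\mu\|_p^p<\infty$. It remains to show $\mu_c=0$, which is the main step. Suppose $\mu_c\ne0$. By the Hahn decomposition there is a Borel set $P$, which we may take disjoint from the countable atom set $\{t_j\}$, with $\mu_c(P)=\delta>0$ (or the analogous statement for the negative part). Then $\mu$ restricted to $P$ equals $\mu_c^+$ restricted to $P$, a nonnegative atomless measure. Moreover, $\|\cdot\|_p$ is monotone under restriction, since sets inside $P$ are admissible test sets.

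By Sierpi\'nski's theorem, which holds for atomless finite measures (or concretely via the distribution function $x\mapsto\mu(P\cap(-\infty,x])$, which is continuous), we can partition $P$ into $N$ Borel pieces each of measure $\delta/N$. This gives
\[
 \|\mu\|_p^p\ge N(\delta/N)^p=\delta^pN^{1-p}\longrightarrow\infty,
\]
because $p<1$. This contradicts $\|\mu\|_p<\infty$, so $\mu_c=0$ and $\mu=\mu_a$ has the asserted form.

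The only delicate point is isolating a set on which $\mu$ is a positive atomless measure. This is handled by removing the countable atom set, which is $|\mu_c|$-null, and then taking the Hahn positive set of $\mu_c$. Continuity of the distribution function of $\mu_c^+$ restricted to $P$ then yields the equal-mass partition. This is exactly where $p<1$ is used; for $p=1$ the quantity $\|\mu\|_1$ is just the total variation, and the statement would fail.
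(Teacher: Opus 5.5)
Your proposal is correct and follows essentially the same route as the paper. You use the Jordan/Hahn decomposition to isolate a set on which $\mu$ is $\pm$ a nonzero atomless positive measure, split it into $N$ pieces of equal mass to get $\|\mu\|_p^p\ge \delta^pN^{1-p}\to\infty$, and in the purely atomic case use subadditivity of $u\mapsto u^p$ for the upper bound and singletons for the lower bound. Your explicit equal-mass partition via the continuous distribution function, and your direct singleton bound $\sum_j|c_j|^p\le\|\mu\|_p^p$ in the necessity direction, just fill in details the paper leaves implicit.
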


\begin{proof}
Suppose that the atomless part of $\mu$ is nonzero.  By the Jordan
decomposition, after restricting to a measurable set $D$, we obtain a
nonzero finite atomless positive measure $\lambda$ such that
$\mu|_D=\lambda$ or $\mu|_D=-\lambda$.  Put $a=\lambda(D)>0$.  For every
$n$, atomlessity gives a partition
\[
   D=D_1\sqcup\cdots\sqcup D_n,
   \qquad \lambda(D_k)=a/n.
\]
For this partition,
\[
   \|\mu\|_p^p\ge\sum_{k=1}^n|\mu(D_k)|^p=a^pn^{1-p}.
\]
Letting $n\to\infty$ shows that $\|\mu\|_p=\infty$.  Thus finite
$p$-variation forces $\mu$ to be purely atomic.

Conversely, suppose that $\mu=\sum_jc_j\delta_{t_j}$ with distinct
$t_j$.  If $A_1,\ldots,A_n$ are pairwise disjoint, then
\[
\begin{aligned}
   \sum_{k=1}^n|\mu(A_k)|^p
   &\le\sum_{k=1}^n\sum_{t_j\in A_k}|c_j|^p
    \le\sum_j|c_j|^p,
\end{aligned}
\]
because $0<p<1$.  Taking $A_k=\{t_k\}$ for the first $n$ atoms and letting
$n\to\infty$ gives the reverse inequality. Hence
\eqref{eq:pvariation-atomic-formula} holds.
\end{proof}

\begin{proposition}
\label{prop:atomic-kernel-estimates}
Fix $0<p<1$ and let
\[
   Y=
   \begin{cases}
      X,&\text{in the direct case},\\
      X^\times,&\text{in the weak-star case}.
   \end{cases}
\]
Every surjective isometry $U:Y\to Y$ in the corresponding class has a
representation
\begin{equation}
   \nu_s^U=\sum_{j=1}^\infty
   a_j^U(s)\delta_{\sigma_j^U(s)},
   \label{eq:atomic-kernel}
\end{equation}
where $a_j^U$ and $\sigma_j^U$ are measurable and, for almost every
$s$, the points corresponding to nonzero coefficients are pairwise
distinct.  If $N$ is null, then
\begin{equation}
   m\{s:a_j^U(s)\ne0,\ \sigma_j^U(s)\in N\}=0
   \qquad(j\ge1).
   \label{eq:atom-null-property}
\end{equation}
For every set $B$ of finite measure,
\begin{equation}
   \|\nu_s^U|_B\|_p^p
   =\sum_{\sigma_j^U(s)\in B}|a_j^U(s)|^p
   \quad\text{for almost every }s.
   \label{eq:atomic-pmass}
\end{equation}
Consequently, for all sets $A,B$ of finite measure,
\begin{equation}
   \int_A\sum_{\sigma_j^U(s)\in B}|a_j^U(s)|^p\,ds<\infty,
   \label{eq:local-pmass-finite}
\end{equation}
and
\begin{equation}
   \sup_U
   \int_A\sum_{\sigma_j^U(s)\in B}|a_j^U(s)|^p\,ds<\infty,
   \label{eq:uniform-ceiling}
\end{equation}
where the supremum is taken over the corresponding class of
surjective isometries.
\end{proposition}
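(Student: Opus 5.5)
The plan is to read off atomicity from the $p$-variation control of \Cref{cor:pvariation-window}, and then to measurably enumerate the atoms. For each $n$, apply \Cref{cor:pvariation-window} with $B=I_n=(0,n]$. This gives $\|\nu_s^U|_{I_n}\|_p\le H_{U,p,I_n}(s)<\infty$ for almost every $s$, since $H_{U,p,I_n}$ lies in a Banach function space and is therefore finite almost everywhere. Discarding a countable union of null sets, \Cref{lem:pvariation-atomic} shows that every restriction $\nu_s^U|_{I_n}$ is purely atomic with $p$-summable masses. Consequently $\nu_s^U$ is a countable sum of point masses on $(0,\infty)$, and \eqref{eq:atomic-pmass} is exactly \eqref{eq:pvariation-atomic-formula} applied to $\nu_s^U|_B$. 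Here we use the consistency convention for local kernels and the compatibility \eqref{eq:kernel-compatibility}, and we take $B$ inside some $I_n$ or handle it by restriction.

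The main obstacle is the measurable choice of $a_j^U,\sigma_j^U$. I would follow the standard route used in \cite[Section~6]{KR}. For a weak-star Borel family of atomic measures, the functions $s\mapsto \nu_s(J)$ are measurable for every dyadic interval $J$. One then extracts atoms level by level. On the $n$-th dyadic partition of $I_N$, the largest $|\nu_s(J)|$ and the leftmost maximizing interval are measurable choices, and nested maximizing intervals shrink to a point $\sigma(s)$ carrying the largest atom. Measurability of $\sigma$ follows because it is a pointwise limit of measurable interval endpoints. Set $a(s)=\nu_s(\{\sigma(s)\})$, which is the limit of $\nu_s(J_n(s))$. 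Then subtract $a(s)\delta_{\sigma(s)}$, which preserves weak-star measurability, and repeat. Interleaving the windows $I_N$ over $N$ by a diagonal enumeration produces all atoms on $(0,\infty)$. Points with nonzero coefficients are distinct by construction, and ties are broken by the leftmost rule. Because $\sum|a_j|^p<\infty$ locally, the residual measures tend to zero in total variation on each $I_N$, so the enumeration exhausts $\nu_s^U$.

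For \eqref{eq:atom-null-property}, take $N$ null. By \eqref{eq:null-set-property}, for almost every $s$ we have $|\nu_s^U|(N\cap I_n)=0$ for all $n$, and this quantity dominates $|a_j^U(s)|\one_N(\sigma_j^U(s))$. Finally, \eqref{eq:local-pmass-finite} and \eqref{eq:uniform-ceiling} come from integrating \eqref{eq:atomic-pmass} over $A$ and bounding it by
\[
\int_A H_{U,p,B}^p\,dm .
\]
For $p<1$, the lattice estimates give
\[
\int_A H^p\le m(A)^{1-p}\Bigl(\int_A H\Bigr)^p ,
\]
by Hölder's inequality, and
\[
\int_A H\le \|H\|\,\|\chi_A\|_{\text{dual}}\le C_{p,B}\,\varphi(m(A)) .
\]
Here $\varphi$ is the fundamental function of the Köthe-dual side. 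The resulting bound depends only on $A$, $B$ and $p$, not on $U$, which gives the uniform ceiling.
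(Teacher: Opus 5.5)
\textbf{There is a local but genuine gap: your hand-built measurable enumeration of the atoms fails when two atoms share the maximal modulus.} Everything else in your outline matches the paper's proof:
\begin{itemize}
\item finiteness of $\|\nu_s^U|_{I_n}\|_p$ from \Cref{cor:pvariation-window};
\item atomicity and \eqref{eq:atomic-pmass} from \Cref{lem:pvariation-atomic};
\item the null-set property from \eqref{eq:null-set-property};
\item the uniform ceiling from concavity of $t\mapsto t^p$ and K\"othe duality.
\end{itemize}
The paper obtains the enumeration by citing \cite[Theorem~4.1]{Weis} on the disjoint windows $(n-1,n]$, whereas you build it yourself.

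Write $J_n(t)$ for the level-$n$ dyadic interval containing $t$. Take
\[
\nu_s=\delta_{t_1}+\delta_{t_2}+\sum_k(-1)^k\epsilon_k\delta_{u_k},
\]
where $t_1<t_2$, $\epsilon_k$ is strictly decreasing with $\sum_k\epsilon_k<1$ and $\sum_k\epsilon_k^p<\infty$, and $u_k\in J_k(t_1)\setminus J_{k+1}(t_1)$ with $u_k\ne t_2$. Then:
\begin{itemize}
\item $\nu_s(J_n(t_1))=1+(-1)^nr_n$ with $r_n>0$;
\item $\nu_s(J_n(t_2))=1$ for all large $n$;
\item every other interval has modulus below $1$.
\end{itemize}
So the maximizing interval alternates between $J_n(t_1)$ and $J_n(t_2)$. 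The intervals are not nested, their endpoints do not converge, and $\sigma(s)$ is undefined. Nothing at this stage excludes such ties on a set of $s$ of positive measure; ruling out extra atoms is exactly what the appendix is for. When the maximal modulus is attained at a single atom, your procedure does work, because the maximizer is eventually $J_n$ of that atom.

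\textbf{A second, smaller slip.} Interleaving enumerations over the nested windows $I_N=(0,N]$ lists every atom of $I_1$ in every window. That contradicts the distinctness you claim. You must enumerate on the disjoint shells $(N-1,N]$, as the paper does.

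\textbf{Repairs.} Both points are easy to fix. For the first:
\begin{itemize}
\item Define $M(s)=\lim_n\max_J|\nu_s(J)|$. This limit is measurable and equals the largest modulus of an atom, since the masses are absolutely summable on each window.
\item Let $\sigma(s)$ be the leftmost atom of modulus $M(s)$. For dyadic $x$,
\[
\{\sigma\le x\}=\{s:\lim_n\max_{J\subseteq(0,x]}|\nu_s(J)|\ge M(s)\},
\]
which is measurable.
\item Your subtraction-and-repeat scheme then goes through unchanged. Atoms are extracted in nonincreasing modulus, so every atom is eventually reached.
\end{itemize}
Alternatively, apply the Lusin--Novikov theorem to the measurable set $\{(s,t):\nu_s(\{t\})\ne0\}$, whose sections are countable (compare \Cref{lem:coincidence}). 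Or simply cite Weis's theorem as the paper does.
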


\begin{proof}
Let $D_n=(n-1,n]$.  By \Cref{cor:pvariation-window}, the random measure
$\nu_s^U|_{D_n}$ has finite $p$-variation for almost every $s$ and is
therefore purely atomic.  The measurable enumeration theorem for
purely atomic random measures \cite[Theorem~4.1]{Weis} gives measurable
atom locations and masses on each $D_n$.  Concatenating these
enumerations over the pairwise disjoint sets $D_n$ gives
\eqref{eq:atomic-kernel}; zero coefficients may be padded by an
arbitrary fixed point.  Formula \eqref{eq:atomic-pmass} follows from
\Cref{lem:pvariation-atomic}.

If $N$ is null, \eqref{eq:null-set-property} gives
$|\nu_s^U|(N)=0$ almost everywhere.  A nonzero term with
$\sigma_j^U(s)\in N$ would contribute a nonzero atom to this
restriction, proving \eqref{eq:atom-null-property}.

Put
\[
   K_{U,p,B}(s)=\|\nu_s^U|_B\|_p.
\]
By \Cref{cor:pvariation-window},
$K_{U,p,B}\le H_{U,p,B}$, so its norm in $Y$ is bounded independently
of $U$.  In the direct case, concavity of $t\mapsto t^p$ and K\"othe
duality give
\[
\begin{aligned}
   \int_AK_{U,p,B}^p\,dm
   &\le m(A)^{1-p}\left(\int_AK_{U,p,B}\,dm\right)^p\\
   &\le m(A)^{1-p}\|\chi_A\|_{X^\times}^p
      \|K_{U,p,B}\|_X^p.
\end{aligned}
\]
In the weak-star case the corresponding estimate is
\[
   \int_AK_{U,p,B}^p\,dm
   \le m(A)^{1-p}\|\chi_A\|_X^p
      \|K_{U,p,B}\|_{X^\times}^p.
\]
The right-hand sides are bounded independently of $U$.  Combining
these estimates with \eqref{eq:atomic-pmass} proves
\eqref{eq:local-pmass-finite} and \eqref{eq:uniform-ceiling}.
\end{proof}

In the weak-star case, all operator identities may be checked on bounded
functions of finite support. This class is
$\sigma(X^\times,X)$-dense in $X^\times$, and the operators used below
are weak-star continuous, so the identities extend to the full space.

For later reference, let $\theta$ be an invertible
measure-preserving transformation and set
\[
   Wg=g\circ\theta,
\]
Then
\[
   \langle Wg,f\rangle
   =
   \langle g,f\circ\theta^{-1}\rangle,
   \qquad
   g\in X^\times,\ f\in X.
\]
Thus $W$ is weak-star continuous, and its preadjoint is
\[
   f\longmapsto f\circ\theta^{-1}.
\]
This is the preadjoint used below.

\section{Composition of representing kernels and a recurrence argument}
\label{app:kernel-composition}
\label{sec:kernel-composition}

We turn to the two remaining ingredients in
\Cref{thm:at-most-one-atom}: composition of local kernels and the
recurrence argument. The composition estimate is most conveniently
written on three labelled copies
$(\Omega_r,m_r)$, $r=0,1,2$, of the same standard atomless measure
space $(\Omega,m)$.  For each $r=0,1,2$, let $Y_r$ denote the
corresponding copy of $X$ in the norm-continuous case, and the
corresponding copy of $X^\times$, equipped with
$\sigma(X^\times,X)$, in the weak-star case.

Let
\[
 R:Y_0\to Y_1,
 \qquad
 Q:Y_1\to Y_2
\]
be surjective isometries belonging to the corresponding class of
\Cref{prop:atomic-kernel-estimates}. Write the representing kernels of
$R$ and $Q$ as
\[
 \nu_y^R
 =
 \sum_{j\ge1}\alpha_j(y)\delta_{\phi_j(y)},
 \qquad
 \nu_z^Q
 =
 \sum_{k\ge1}\beta_k(z)\delta_{\psi_k(z)}.
\]
The functions $\phi_j,\psi_k$ give the atom locations and
$\alpha_j,\beta_k$ their coefficients.  In each kernel the nonzero
atoms are listed without repetition.

Let
\[
 A_1,\dots,A_I\subset\Omega_2,
 \quad
 B_1,\dots,B_N\subset\Omega_1,
 \quad
 C_1,\dots,C_L\subset\Omega_0
\]
be finite families of pairwise disjoint sets of finite positive
measure. Put $a_i=m(A_i)$ and $b_r=m(B_r)$ and define
\begin{align}
 q_{ir}
 &=\int_{A_i}\sum_{k\ge1}|\beta_k(z)|^p
   \one_{B_r}(\psi_k(z))\,dz,
 \label{eq:qir}\\
 r_{r\ell}
 &=\int_{B_r}\sum_{j\ge1}|\alpha_j(y)|^p
   \one_{C_\ell}(\phi_j(y))\,dy.
 \label{eq:rrl}
\end{align}
Define the matrices
\[
 \Mp(Q)_{ir}=\frac{q_{ir}}{a_i},
 \qquad
 \Mp(R)_{r\ell}=\frac{r_{r\ell}}{b_r}.
\]
With this normalization,
\begin{equation}
 [\Mp(Q)\Mp(R)]_{i\ell}
 =\frac1{a_i}\sum_{r=1}^N\frac{q_{ir}r_{r\ell}}{b_r}.
 \label{eq:matrix-product}
\end{equation}
For a measure-preserving automorphism $\tau$ of $\Omega_1$, let
$V_\tau h=h\circ\tau$.

\begin{theorem}
\label{thm:kernel-composition}
Let $0<p<1$. For every $\varepsilon>0$, there is a
measure-preserving automorphism $\tau$ such that
\[
 \tau(B_r)=B_r\quad(1\le r\le N),
\]
$\tau$ is the identity outside $\bigcup_rB_r$, and
\begin{equation}
 \Mp(QV_\tau R)_{i\ell}
 \ge[\Mp(Q)\Mp(R)]_{i\ell}-\varepsilon
 \label{eq:kernel-composition}
\end{equation}
for every $i,\ell$. The same assertion holds in the weak-star
continuous isometry class.
\end{theorem}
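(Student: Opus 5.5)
The plan is to compute the kernel of $QV_\tau R$ explicitly, choose $\tau$ at random from a large finite family of cell-permuting automorphisms of the sets $B_r$, and show that the \emph{expected} normalized $p$-mass is at least the matrix product minus $\varepsilon$. Some $\tau$ in the family then satisfies \eqref{eq:kernel-composition} for all $i,\ell$ simultaneously; since the estimate is one-sided and only finitely many entries are involved, it suffices to control the sum of the deficits.

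\emph{Kernel of the composition.} Since $(V_\tau Rf)(y)=Rf(\tau(y))$, the kernel of $V_\tau R$ at $y$ is $\nu^R_{\tau(y)}$. Composing with \eqref{eq:local-kernel-formula} for $Q$ gives, for almost every $z$,
\[
 \nu_z^{QV_\tau R}=\sum_{k}\sum_{j}\beta_k(z)\,\alpha_j\bigl(\tau(\psi_k(z))\bigr)\,\delta_{\phi_j(\tau(\psi_k(z)))}.
\]
This is first justified on bounded finite-support $f$ by truncating to finitely many $k$, with the $p$-variation bounds of \Cref{prop:atomic-kernel-estimates} controlling the tails. In the weak-star case the same computation is carried out on bounded finite-support functions, which suffices by weak-star density. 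Suppose the atoms indexed by distinct pairs $(k,j)$ are distinct. Then \Cref{lem:pvariation-atomic} gives that $\|\nu_z^{QV_\tau R}|_{C_\ell}\|_p^p$ is at least
\[
 \sum_r\sum_{k:\psi_k(z)\in B_r}|\beta_k(z)|^p\sum_{j}|\alpha_j(\tau\psi_k(z))|^p\one_{C_\ell}(\phi_j(\tau\psi_k(z))),
\]
where we have discarded the terms with $\psi_k(z)\notin\bigcup_rB_r$.

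\emph{Random $\tau$ and averaging.} Partition each $B_r$ into $M$ cells of equal measure. Let $\tau$ be a uniformly random permutation of the cells inside each $B_r$, realized by measure-preserving isomorphisms between cells and the identity elsewhere, so $\tau(B_r)=B_r$. For fixed $y\in B_r$, the point $\tau(y)$ is then close to uniformly distributed on $B_r$. More precisely, the expectation of $g(\tau(y))$ equals an average of $g$ over translates of $y$ into the other cells. Integrating over $z\in A_i$ and using Fubini, the expected value of the displayed lower bound equals
\[
 \frac1{b_r}\,q_{ir}\,r_{r\ell}
\]
up to an error that vanishes as $M\to\infty$. To obtain this, first truncate $\sum_j|\alpha_j|^p\one_{C_\ell}(\phi_j)$ to a bounded function on $B_r$ via \eqref{eq:local-pmass-finite}. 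Then approximate it in $L_1(B_r)$ by functions constant on the cells, and use the standard fact that averaging over cell permutations reproduces the mean on such functions. Dividing by $a_i$ and summing over $r$ yields \eqref{eq:matrix-product} minus an arbitrarily small error. Choosing $M$ large and truncation levels fine gives the $\varepsilon$.

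\emph{Main obstacle: collisions.} The hard step is to show that, for almost every $z$ and with probability tending to one in $\tau$, two different pairs $(k,j)\ne(k',j')$ do not produce the same atom location. Such coincidences could cancel coefficients and destroy the lower bound, because $|a+b|^p$ admits no lower bound by $|a|^p+|b|^p$.

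\begin{itemize}
 \item For $k=k'$, different $j$ give distinct atoms by the listing convention.
 \item For $k\ne k'$, the points $\psi_k(z)$ and $\psi_{k'}(z)$ are distinct. For $M$ large they lie in different cells, so $\tau\psi_k(z)$ and $\tau\psi_{k'}(z)$ are asymptotically independent and nearly uniform. It therefore suffices to show
 \[
  (m\otimes m)\{(u,u'):\nu^R_u,\ \nu^R_{u'}\text{ share a nonzero atom}\}=0.
 \]
\end{itemize}

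I would argue by contradiction using Fubini. If that set had positive measure, some point $t$ and some $\eta>0$ would satisfy $|\nu_u^R(\{t\})|\ge\eta$ for $u$ in a set $G$ of positive measure. Then $|R\chi_{(t-\delta,t+\delta)}|\ge\eta/2$ on a fixed positive-measure subset of $G$ as $\delta\to0$. This contradicts $\|R\chi_{(t-\delta,t+\delta)}\|=\|\chi_{(t-\delta,t+\delta)}\|\to0$ in the direct case, by order continuity. In the weak-star case, since the fundamental function of $X^\times$ need not vanish at $0$, I would instead use the bounded continuity in measure of \Cref{lem:wstar-measure-cont}. Finally, the finitely many truncated terms and the probability estimates are combined, and $M$ is chosen so that all $I\cdot L$ inequalities hold for a single realization of $\tau$.
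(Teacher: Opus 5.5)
Your proposal has two genuine gaps.

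\emph{Collisions.} This step fails for the randomization you chose. With fixed cell isomorphisms, for each $z$ the pair $(\tau\psi_k(z),\tau\psi_{k'}(z))$ takes only $M(M-1)$ values. The image of $m$ under $z\mapsto(\psi_k(z),\psi_{k'}(z))$ can be singular with respect to $m\otimes m$. So knowing that the coincidence set is $m\otimes m$-null tells you nothing about a given $\tau$. (Your order-continuity argument for nullity is fine; it also follows from \eqref{eq:atom-null-property}.) Near-uniformity for averages of $L_1$ functions does not mean null sets are avoided. For instance, take translation cells, $\psi_2=\psi_1+\tfrac12$, and $\phi_j(u)=u-t_j$ over all dyadic $t_j$. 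Then every atom from $k=1$ coincides with one from $k=2$ for \emph{every} $\tau$ in your family, and nothing available at this stage excludes such kernels.

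The paper instead composes the cell permutations with independent Haar rotations inside the cells (\Cref{lem:compact-randomization}). Then $\tau y$ is exactly uniform on $B_r$, and the two-point law for points in different cells is absolutely continuous. Fubini in $(z,\tau)$ gives a full-measure set of admissible $\tau$. Exact uniformity also gives, via Tonelli, absolute convergence of $\sum_k|\beta_k(z)|\,L_{R,C}(\tau\psi_k(z))$ for almost every $\tau$ (\Cref{lem:composition}). Truncating in $k$ does not give this for a fixed $\tau$.

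Even with this randomization, distinctness of \emph{all} atoms $(k,j)$ is out of reach. Infinitely many points $\psi_k(z)$ cannot be placed in distinct cells, and points in one cell move together. The paper therefore separates only the selected terms $k\le K$, $j\le J$, on the set $G_{K,\mathcal D}$. They are separated from each other, from all other terms with $k'\le K$, and from the atoms of the $\tau$-independent kernel $\omega$ of $QP_{B^c}RP_C$ (\Cref{lem:selected-atom-separation}). You cannot simply discard the terms with $\psi_k(z)\notin B$, since they may sit at selected locations. Remaining cancellations are charged to the outer tail $k>K$ via $|c+d|^p\ge|c|^p-|d|^p$, with each tail term charged at most once.

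\emph{Simultaneity.} Per-entry bounds on expectations do not yield one $\tau$ that works for all $(i,\ell)$. The deficits $[\Mp(Q)\Mp(R)]_{i\ell}-\Mp(QV_\tau R)_{i\ell}$ can be very negative for particular $\tau$, since the pairing $\int_{B_r}f(y)h(\tau y)\,dy$ can far exceed $b_r^{-1}\int f\int h$. So a small expected sum of deficits does not bound each deficit. You need small expected positive parts, i.e.\ two-sided control.

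The paper gets this by concentration. It truncates at a level $T$ and replaces the densities by their conditional expectations on the cells. The random pairing then has variance at most $b_r^2T^4/(M-1)$ with $M$ cells (sampling without replacement). Chebyshev with a union bound gives a set of $\tau$ of probability $>7/8$ on which all pairings are near their means. Markov gives a second set of probability $>7/8$ on which the outer-tail contribution is small. Intersecting these with the full-measure separation and composition sets produces the required $\tau$.
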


Put $B=\bigcup_rB_r$ and $C=\bigcup_\ell C_\ell$. We first identify
the restriction to $C$ of the kernel of $QP_BV_\tau R$.

\begin{lemma}
\label{lem:composition}
Let $(\mathcal G,m_{\mathcal G})$ be a probability space carrying a
jointly measurable family of measure-preserving automorphisms of
$\Omega_1$. Assume that every automorphism leaves each $B_r$ invariant,
acts as the identity outside $B=\bigcup_rB_r$, and that
\begin{equation}
 \int_{\mathcal G}h(\tau y)\,dm_{\mathcal G}(\tau)
 =\frac1{b_r}\int_{B_r}h(u)\,du
 \quad\text{for almost every }y\in B_r
 \label{eq:orbit-identity}
\end{equation}
for every $h\in L_1(B_r)$. Put $C=\bigcup_\ell C_\ell$. Then there is a
measurable set $\mathcal G_{\rm comp}\subseteq\mathcal G$ of full measure
such that, for every $\tau\in\mathcal G_{\rm comp}$,
\begin{equation}
 \sum_{r=1}^N\sum_{\psi_k(z)\in B_r}
 \sum_{\phi_j(\tau\psi_k(z))\in C}
 |\beta_k(z)|\,|\alpha_j(\tau\psi_k(z))|<\infty
 \label{eq:absolute-composition}
\end{equation}
for almost every $z\in\Omega_2$, and
\begin{equation}
 \kappa_z^\tau
 :=\sum_{r=1}^N\sum_{\psi_k(z)\in B_r}\sum_{j\ge1}
 \beta_k(z)\alpha_j(\tau\psi_k(z))
 \delta_{\phi_j(\tau\psi_k(z))}|_C
 \label{eq:composition-kernel}
\end{equation}
defines a measurable finite signed kernel on $C$ representing
$QP_BV_\tau RP_C$. More precisely, for each
$\tau\in\mathcal G_{\rm comp}$ there is a null set
$N_\tau\subseteq\Omega_2$, independent of the test function, such that
$\kappa_z^\tau$ agrees on $\Omega_2\setminus N_\tau$ with a fixed local
representing kernel of $QP_BV_\tau RP_C$.
\end{lemma}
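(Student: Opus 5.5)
The plan is to average over $\mathcal G$: first obtain absolute summability for almost every $\tau$ from an integrated $L_1$ bound, then identify the operator by testing on indicator functions. Everything happens on finite windows, using the atomic kernels of \Cref{prop:atomic-kernel-estimates} and the local representation of \Cref{prop:local-random-measure}.

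\emph{Step 1: integrated absolute bound.} Fix a set $A\subset\Omega_2$ of finite measure and set
\[
 S(z,\tau)=\sum_r\sum_{\psi_k(z)\in B_r}\sum_{\phi_j(\tau\psi_k(z))\in C}|\beta_k(z)|\,|\alpha_j(\tau\psi_k(z))|.
\]
By Tonelli and \eqref{eq:orbit-identity}, applied with $h(u)=\sum_{\phi_j(u)\in C}|\alpha_j(u)|=|\nu^R_u|(C)$,
\[
 \int_{\mathcal G}S(z,\tau)\,dm_{\mathcal G}
 =\sum_r\frac1{b_r}\sum_{\psi_k(z)\in B_r}|\beta_k(z)|\int_{B_r}|\nu_u^R|(C)\,du .
\]
The inner integral is finite. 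Indeed, $|\nu_u^R|(C)\le\|\nu_u^R|_C\|_1\le H_{R,1,C}(u)$ by \Cref{cor:pvariation-window} with $p=1$, and $\int_{B}H\,dm\le\|\chi_B\|\,\|H\|$ by K\"othe duality. It remains to bound $\sum_{\psi_k(z)\in B}|\beta_k(z)|=|\nu_z^Q|(B)$ in the same way, this time integrated over $A$. Integrating over $z\in A$ then gives $\int_A\int_{\mathcal G}S<\infty$. Exhausting $\Omega_2$ by countably many windows $A$ and applying Fubini yields a full-measure set $\mathcal G_{\rm comp}$ on which \eqref{eq:absolute-composition} holds for a.e.\ $z$. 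A technical point is that \eqref{eq:orbit-identity} holds only for a.e.\ $y$, while $y=\psi_k(z)$ is a random point. I will handle this with the null-set property \eqref{eq:atom-null-property}: the atoms of $\nu^Q_z$ avoid any fixed null set for a.e.\ $z$. Joint measurability of $(z,\tau)\mapsto S$ follows from the measurability of $\psi_k,\phi_j,\alpha_j,\beta_k$ and of the action.

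\emph{Step 2: identification.} For $\tau\in\mathcal G_{\rm comp}$, absolute summability makes $\kappa^\tau_z$ a finite signed measure on $C$, measurable in $z$. Take a bounded $f$ supported in $C$. Then $RP_Cf(y)=\int f\,d\nu_y^R$, and composition with $\tau$ gives $V_\tau RP_Cf(y)=\int f\,d\nu^R_{\tau y}$; $P_B$ restricts this to $y\in B$. Applying $Q$ via its kernel on the finite window $B$ gives $Q(P_BV_\tau RP_Cf)(z)=\sum_{\psi_k(z)\in B}\beta_k(z)\int f\,d\nu^R_{\tau\psi_k(z)}$. This uses \eqref{eq:local-kernel-formula} for the function $g=P_BV_\tau RP_Cf$, which is not bounded. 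So I would first prove the formula for $Q$ on $g\in Y(B)$ dominated by an integrable control: truncate $g$ to $g_n$, use order continuity (or weak-star continuity together with \Cref{lem:wstar-measure-cont}) on the left, and dominated convergence on the right via Step 1. Exchanging the sums, which is justified by \eqref{eq:absolute-composition}, gives $\int f\,d\kappa_z^\tau$.

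\emph{Step 3: a single null set.} Run Step 2 for the countably many $f=\chi_D$ with $D$ in a countable generating algebra of $C$. This gives one null set $N_\tau$ off which $\kappa_z^\tau(D)=\tilde\kappa_z(D)$ for every such $D$, where $\tilde\kappa$ is the representing kernel from \Cref{prop:local-random-measure}. Two finite signed measures that agree on a generating algebra are equal, so $\kappa^\tau_z=\tilde\kappa_z$ off $N_\tau$.

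The main obstacle is Step 1 together with the extension of the kernel formula for $Q$ to unbounded inputs in Step 2. Both rest on the a.e.-in-$y$ orbit identity being evaluated at random atom locations, which is exactly where the randomization over $\mathcal G$ and the null-set property must be combined carefully.
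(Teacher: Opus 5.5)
Your proposal is correct and follows the paper's proof essentially step for step. The steps are: Tonelli with the orbit identity, made legitimate at the random points $\psi_k(z)$ by \eqref{eq:atom-null-property}; truncation of $P_BV_\tau R\one_D$ with term-by-term dominated convergence to identify the action on indicators; and a countable generating algebra to get a single null set $N_\tau$.

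Two references need adjusting. First, \Cref{prop:local-random-measure} only produces kernels for surjective isometries. The comparison kernel should therefore be taken as $\tfrac12\bigl(\nu_z^{QV_\tau R}|_C-\nu_z^{QS_BV_\tau R}|_C\bigr)$ with $S_B=I-2P_B$, as the paper does. Second, in the weak-star case \Cref{lem:wstar-measure-cont} requires uniformly bounded inputs, which your truncations $g_n$ are not. Instead, get $g_n\to g$ in $\sigma(X^\times,X)$ by dominated convergence and then argue as in the proof of that lemma.
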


\begin{proof}
For $1\le r\le N$, put
\[
 L_{Q,r}(z)=\sum_{\psi_k(z)\in B_r}|\beta_k(z)|,
 \qquad
 L_{R,C}(y)=\sum_{\phi_j(y)\in C}|\alpha_j(y)|.
\]
Since $0<p<1$,
\[
 L_{Q,r}\le
 \left(\sum_{\psi_k(\cdot)\in B_r}|\beta_k|^p\right)^{1/p},
 \qquad
 L_{R,C}\le
 \left(\sum_{\phi_j(\cdot)\in C}|\alpha_j|^p\right)^{1/p}.
\]
By \Cref{prop:atomic-kernel-estimates}, the functions on the right
belong to the corresponding range spaces. Hence, for every set $D$ of
finite measure,
\[
 \int_D L_{Q,r}<\infty,
 \qquad
 \int_{B_r}L_{R,C}<\infty.
\]
Using \eqref{eq:orbit-identity} and Tonelli's theorem, we obtain
\[
\begin{aligned}
 &\int_{\mathcal G}\int_D
 \sum_{r=1}^N\sum_{\psi_k(z)\in B_r}
 |\beta_k(z)|L_{R,C}(\tau\psi_k(z))\,dz\,dm_{\mathcal G}(\tau)\\
 &\qquad=
 \sum_{r=1}^N\frac1{b_r}
 \left(\int_D L_{Q,r}(z)\,dz\right)
 \left(\int_{B_r}L_{R,C}(u)\,du\right)<\infty.
\end{aligned}
\]
Applying this identity to a countable exhaustion of $\Omega_2$ by sets
of finite measure and then using Fubini's theorem gives a set
$\mathcal G_{\rm comp}$ of full measure on which
\eqref{eq:absolute-composition} holds almost everywhere in $z$.
For every $\tau\in\mathcal G_{\rm comp}$, the series in
\eqref{eq:composition-kernel} is therefore absolutely convergent in
total variation for almost every $z$ and defines a measurable finite
signed kernel on $C$.

To make the exceptional set independent of the test function, let
\[
 S_B=I-2P_B.
\]
The operators $QV_\tau R$ and $QS_BV_\tau R$ are surjective isometries
in the same class as $Q$ and $R$. Choose their local representing
kernels on $C$ and put
\begin{equation}
 \lambda_z^\tau
 =\frac12\left(
   \nu_z^{QV_\tau R}|_C-\nu_z^{QS_BV_\tau R}|_C
 \right).
 \label{eq:comparison-kernel}
\end{equation}
Since $P_B=(I-S_B)/2$, the kernel $\lambda^\tau$ represents
$QP_BV_\tau RP_C$.

Fix $\tau\in\mathcal G_{\rm comp}$. After replacing $C$ by a Borel
representative, choose a countable algebra $\mathscr A$ generating its
Borel $\sigma$-algebra. Let $D\in\mathscr A$ and set $f=\one_D$. The
kernel formula for $R$ gives
\begin{equation}
 Rf(y)=\sum_{j\ge1}\alpha_j(y)\one_D(\phi_j(y))
 \label{eq:R-indicator-kernel}
\end{equation}
for almost every $y$, with absolute value bounded by $L_{R,C}(y)$.
The exceptional set in \eqref{eq:R-indicator-kernel} is null. Since
$\tau$ is measure preserving, its inverse image is null, and
\eqref{eq:atom-null-property} shows that, for almost every $z$, no
nonzero term $\psi_k(z)$ belongs to this inverse image.

Put $g=P_BV_\tau Rf$ and
\[
 g_n=P_B\bigl((-n)\vee((V_\tau Rf)\wedge n)\bigr).
\]
In the direct case, $g_n\to g$ in $Y_1$ by order continuity, and
hence $Qg_n\to Qg$ in norm; a subsequence converges almost everywhere.
In the weak-star case, $g_n\to g$ in $\sigma(X^\times,X)$ by dominated
convergence. The weak-star continuity of $Q$ and
\cite[Proposition~2.1]{KR}, applied with a strictly positive element of
$X$, again give an almost-everywhere convergent subsequence. Thus, in
either case,
\[
 Qg_n(z)\longrightarrow Qg(z)
 \quad\text{for almost every }z
\]
along a subsequence.
For every $n$, the local kernel formula for $Q$ gives
\[
 Qg_n(z)=\sum_{r=1}^N\sum_{\psi_k(z)\in B_r}
 \beta_k(z)g_n(\psi_k(z)).
\]
For almost every $z$ the terms on the right converge to
$\beta_k(z)(Rf)(\tau\psi_k(z))$ and are dominated by
\[
 |\beta_k(z)|L_{R,C}(\tau\psi_k(z)).
\]
Absolute convergence in \eqref{eq:absolute-composition} permits
dominated convergence term by term; together with
\eqref{eq:R-indicator-kernel}, this gives
\begin{equation}
 (QP_BV_\tau R\one_D)(z)=\kappa_z^\tau(D)
 \quad\text{for almost every }z.
 \label{eq:composition-on-algebra}
\end{equation}
The comparison kernel in \eqref{eq:comparison-kernel} satisfies
\[
 (QP_BV_\tau R\one_D)(z)=\lambda_z^\tau(D)
 \quad\text{for almost every }z.
\]
Countability of $\mathscr A$ leaves a single null set $N_\tau$ on
which all these identities may fail; outside it,
\[
 \kappa_z^\tau(D)=\lambda_z^\tau(D)
 \qquad(D\in\mathscr A)
\]
whenever $z\notin N_\tau$. Both sides are finite signed Borel measures
on $C$, so the monotone class theorem gives
\[
 \kappa_z^\tau=\lambda_z^\tau
 \qquad(z\notin N_\tau).
\]
The identification is valid in the weak-star case as well.
\end{proof}

\begin{lemma}
\label{lem:common-refinement}
Fix $K\in\NN$. For each $1\le r\le N$, there exists a sequence
$\{\cD_{r,n}\}_{n\ge1}$ of finite measurable partitions of $B_r$
such that:

\begin{enumerate}[label=\textup{(\alph*)}]
\item every member of $\cD_{r,n}$ has the same measure,
      $\cD_{r,n+1}$ refines $\cD_{r,n}$, and
      \[
      |\cD_{r,n}|\longrightarrow\infty;
      \]

\item for almost every $z$, if $1\le k\ne k'\le K$ satisfy
      \[
      \psi_k(z),\psi_{k'}(z)\in B_r
      \quad\text{and}\quad
      \psi_k(z)\ne\psi_{k'}(z),
      \]
      then there exists $n_0=n_0(z,k,k',r)$ such that, for every
      $n\ge n_0$, the points $\psi_k(z)$ and $\psi_{k'}(z)$ belong
      to different members of $\cD_{r,n}$;

\item if $\mathcal F_{r,n}$ denotes the $\sigma$-algebra generated by
      $\cD_{r,n}$, then for every $g\in L_1(B_r)$,
      \[
      \mathbb E(g\mid\mathcal F_{r,n})
      \longrightarrow g
      \qquad\text{in }L_1(B_r).
      \]
\end{enumerate}
\end{lemma}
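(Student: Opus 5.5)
The plan is to reduce to an interval via a measure isomorphism, use dyadic partitions there, and obtain (b) by a measurability argument and (c) by martingale convergence. Since $(\Omega_1,m_1)$ is standard and atomless and $0<b_r<\infty$, the isomorphism theorem for atomless standard spaces \cite[Theorem~9.2.2]{Bogachev} gives a measure-preserving isomorphism $\rho_r:B_r\to(0,b_r)$ modulo null sets. The domain of $\rho_r$ can be shrunk to a Borel set of full measure so that $\rho_r$ is a Borel injection; its range is then Borel. Define $\cD_{r,n}$ as the family of preimages under $\rho_r$ of the dyadic intervals $((k-1)b_r2^{-n},kb_r2^{-n}]$, $1\le k\le 2^n$. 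The exceptional null set is attached to one member. Property (a) is then immediate: the members have measure $b_r2^{-n}$, each partition refines the previous one, and $|\cD_{r,n}|=2^n\to\infty$.

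For (c), transport through $\rho_r$. The algebras $\mathcal F_{r,n}$ become the dyadic $\sigma$-algebras on $(0,b_r)$, and these generate the Borel $\sigma$-algebra modulo null sets. Martingale convergence, or equivalently the Lebesgue differentiation theorem combined with uniform integrability, gives $\mathbb E(g\mid\mathcal F_{r,n})\to g$ in $L_1$. A more elementary route is also available: the statement is clear for indicators of dyadic intervals, and it extends to all of $L_1$ by density of dyadic step functions together with contractivity of conditional expectations on $L_1$.

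For (b), the key point is that only the finitely many indices $k,k'\le K$ matter. The plan is to discard a null set of $z$ on which some $\psi_k(z)$, with $k\le K$ and nonzero coefficient, lands in the exceptional null set $N_r$ of $B_r$ where $\rho_r$ is undefined or non-injective. This step uses \eqref{eq:atom-null-property} applied to $Q$. A zero coefficient is harmless, since nonzero atoms are listed without repetition and padded terms play no role. If one insists on treating all $k\le K$, padded terms can be sent to a fixed point, and the fixed point can be arranged to lie outside $\bigcup_r B_r$ or taken into account directly. For the remaining $z$, the distinct points $\psi_k(z)\ne\psi_{k'}(z)$ in $B_r\setminus N_r$ have distinct images under the injective map $\rho_r$, say $x\ne x'$ in $(0,b_r)$. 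Once $b_r2^{-n}<|x-x'|$, the two images lie in different dyadic intervals, so we can take $n_0=\lceil\log_2(b_r/|x-x'|)\rceil+1$.

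The main obstacle is the bookkeeping in (b) concerning the null set where $\rho_r$ fails to be a bijection. It is handled by the null-set property of atom locations, combined with the fact that there are only finitely many pairs and finitely many $r$, so countably many null sets in $z$ are discarded in total. The role of $K$ is only to keep this union finite; the construction itself does not depend on $K$.
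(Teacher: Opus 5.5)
Your proposal is correct and follows the same route as the paper. Both transport $B_r$ to $(0,b_r)$ by a measure-preserving isomorphism and pull back the dyadic partitions for (a). Both separate distinct points by dyadic intervals for (b), and both use L\'evy's upward (martingale) theorem for (c). The one difference is in (b): you apply \eqref{eq:atom-null-property} for $Q$ to ensure that the nonzero-coefficient locations $\psi_k(z)$, $k\le K$, avoid the exceptional null set of the identification. This makes precise a step the paper states only as ``outside the null sets arising from the measure-space identification.''
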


\begin{proof}
Since $B_r$ has finite measure and the underlying measure space is
atomless and standard, we may identify $B_r$, modulo null sets, with
the interval
$(0,b_r)$, where $b_r=m(B_r)$.

Let $\theta_r:B_r\to(0,b_r)$ be a measure-preserving isomorphism.
For each $n$, define
\[
D_{r,n,j}
=
\theta_r^{-1}
\left(
\left[\frac{(j-1)b_r}{2^n},
      \frac{jb_r}{2^n}\right)
\right),
\qquad 1\le j\le2^n,
\]
and set
\[
\cD_{r,n}
=
\{D_{r,n,j}:1\le j\le2^n\}.
\]

The partition $\cD_{r,n+1}$ refines $\cD_{r,n}$, every member of
$\cD_{r,n}$ has measure $b_r2^{-n}$, and
\[
 |\cD_{r,n}|=2^n\longrightarrow\infty.
\]

Two distinct points of $(0,b_r)$ belong to different members of the
dyadic partition once $n$ is sufficiently large. Hence, outside the
null sets arising from the measure-space identification, property
\textup{(b)} follows for each of the finitely many pairs
$1\le k\ne k'\le K$.

The $\sigma$-algebras $\mathcal F_{r,n}$ increase to the measurable
$\sigma$-algebra of $B_r$ modulo null sets. L\'evy's upward theorem
gives
\[
 \mathbb E(g\mid\mathcal F_{r,n})
 \longrightarrow g
 \qquad\text{in }L_1(B_r)
\]
for every $g\in L_1(B_r)$.
\end{proof}

The averaging device is a finite product of circle groups together with
a finite permutation group.

\begin{lemma}
\label{lem:compact-randomization}
Let $P\ge2$ and let $D_1,\ldots,D_P$ be an equal-measure partition of
an atomless standard finite measure space $(B,m)$, where $m(B)=b$.
There are a compact probability space
$(\mathcal G_{\mathcal D},m_{\mathcal D})$, a jointly measurable
family of measure-preserving automorphisms $\tau$ of $B$ modulo null
sets, and conull Borel sets $D_q^0\subset D_q$ such that, with
$B_0=\bigcup_qD_q^0$:
\begin{enumerate}[label=\textup{(\roman*)}]
\item for every integrable Borel function $h$ and every $y\in B_0$,
\begin{equation}
 \int_{\mathcal G_{\mathcal D}}h(\tau y)\,dm_{\mathcal D}(\tau)
 =\frac1b\int_Bh\,dm;
 \label{eq:cell-orbit-average}
\end{equation}
\item if $y\in D_q^0$ and $y'\in D_{q'}^0$ with $q\ne q'$, then the
      distribution of $(\tau y,\tau y')$ is absolutely continuous
      with respect to $m\times m$; more precisely, for every
      nonnegative Borel function $F$ on $B\times B$,
\begin{equation}
 \int_{\mathcal G_{\mathcal D}}F(\tau y,\tau y')\,dm_{\mathcal D}(\tau)
 =\frac{P}{(P-1)b^2}
   \sum_{a\ne c}\int_{D_a\times D_c}F(u,v)\,dm(u)\,dm(v);
 \label{eq:two-point-orbit-average}
\end{equation}
\item if $0\le f,h\le M$ are constant on the partition sets, then
\begin{align}
 \int_{\mathcal G_{\mathcal D}}\int_Bf(y)h(\tau y)\,dm(y)
       \,dm_{\mathcal D}(\tau)
 &=\frac1b\left(\int_Bf\,dm\right)\left(\int_Bh\,dm\right),
 \label{eq:perm-mean}\\
 \operatorname{Var}_{\tau}
 \left(\int_Bf(y)h(\tau y)\,dm(y)\right)
 &\le\frac{b^2M^4}{P-1}.
 \label{eq:perm-var}
\end{align}
\end{enumerate}
\end{lemma}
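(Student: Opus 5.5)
The plan is to build $\mathcal G_{\mathcal D}$ as a finite permutation group times a torus, and verify \textup{(i)}--\textup{(iii)} by direct computation. Put $c=b/P$. By the isomorphism theorem for atomless standard spaces \cite[Theorem~9.2.2]{Bogachev}, I fix conull Borel sets $D_q^0\subset D_q$ and Borel measure-preserving bijections
\[
 \theta_q:D_q^0\longrightarrow[0,c),\qquad 1\le q\le P.
\]
Let $\mathbb T_c=\mathbb R/c\mathbb Z\cong[0,c)$ carry normalized Haar measure, and let $S_P$ carry normalized counting measure. Set
\[
 \mathcal G_{\mathcal D}=S_P\times\mathbb T_c^{P},
\]
a compact probability space with the product measure $m_{\mathcal D}$. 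For $(\pi,t)\in\mathcal G_{\mathcal D}$ and $y\in D_q^0$, define
\[
 \tau_{\pi,t}(y)=\theta_{\pi(q)}^{-1}\bigl(\theta_q(y)+t_{\pi(q)}\bmod c\bigr).
\]
Thus a point first moves from cell $q$ to cell $\pi(q)$ through the fixed identifications, and is then rotated by the shift attached to the \emph{target} cell. Off $B_0$ I let $\tau$ be the identity. Each $\tau_{\pi,t}$ is a Borel measure-preserving bijection of $B_0$, and $(y,\pi,t)\mapsto\tau_{\pi,t}(y)$ is jointly Borel, since it is given by an explicit formula on each of finitely many pieces.

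For \textup{(i)}, fix $y\in D_q^0$. Conditionally on $\pi$, the shift $t_{\pi(q)}$ is uniform, so $\tau y$ is uniformly distributed on $D_{\pi(q)}$ with density $1/c$. Averaging over $\pi$, the index $\pi(q)$ is uniform on $\{1,\dots,P\}$. This gives
\[
 \frac1P\sum_a\frac1c\int_{D_a}h\,dm=\frac1b\int_Bh\,dm.
\]

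For \textup{(ii)}, take $y\in D_q^0$ and $y'\in D_{q'}^0$ with $q\ne q'$. The pair $(\pi(q),\pi(q'))$ is uniform over the $P(P-1)$ ordered pairs $(a,c)$ with $a\ne c$. Given such a pair, the shifts $t_a$ and $t_c$ are independent, because the targets are distinct. Hence $(\tau y,\tau y')$ is uniform on $D_a\times D_c$ with density $c^{-2}$. Summing,
\[
 \frac{1}{P(P-1)}\cdot\frac{P^2}{b^2}\sum_{a\ne c}\int_{D_a\times D_c}F
 =\frac{P}{(P-1)b^2}\sum_{a\ne c}\int_{D_a\times D_c}F.
\]
This is exactly why the rotation is indexed by the target cell and not the source cell.

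For \textup{(iii)}, write $f=\sum_q f_q\one_{D_q}$ and $h=\sum_a h_a\one_{D_a}$. Since $\tau$ maps $D_q$ onto $D_{\pi(q)}$, the rotations play no role and
\[
 \int_B f(y)h(\tau y)\,dm(y)=c\sum_q f_qh_{\pi(q)}.
\]
The mean identity \eqref{eq:perm-mean} then follows from \textup{(i)} and Fubini's theorem. For the variance I will use the classical formula for a linear permutation statistic. With uniform $\pi$,
\[
 \operatorname{Var}\Bigl(\sum_q f_qh_{\pi(q)}\Bigr)
 =\frac1{P-1}\sum_q(f_q-\bar f)^2\sum_a(h_a-\bar h)^2 .
\]
This comes from $\operatorname{Cov}(\one_{\pi(q)=a},\one_{\pi(q')=a'})$, which equals $-\frac1{P(P-1)}$ or $\frac1{P^2(P-1)}$ off the diagonal, together with the centering of $f$ and $h$. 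Since $0\le f_q,h_a\le M$, each centered sum of squares is at most $PM^2$. Multiplying by $c^2=b^2/P^2$ gives the bound $b^2M^4/(P-1)$ in \eqref{eq:perm-var}.

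The step needing most care is the measure-theoretic bookkeeping: choosing Borel conull domains on which every $\tau_{\pi,t}$ is a genuine bijection, so that \textup{(i)} and \textup{(ii)} hold for every $y\in B_0$ and not merely almost every $y$. The fixed Borel isomorphisms $\theta_q$ and the finiteness of $S_P$ handle this. The variance identity is the only real computation, and it is standard.
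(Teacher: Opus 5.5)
Your proposal is correct and is essentially the paper's proof: the same group $\mathbb T^P\times\mathfrak S_P$ with Haar times normalized counting measure, the same maps sending cell $q$ to cell $\pi(q)$ through fixed measure-preserving identifications plus an independent rotation, and the same sampling-without-replacement variance identity. The paper indexes the rotation by the source cell ($\theta_q$), which works equally well because $\theta_q,\theta_{q'}$ are independent for $q\ne q'$, so target-indexing is not essential.

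Two small corrections. First, when exactly one of $q=q'$, $a=a'$ holds, $\operatorname{Cov}(\one_{\pi(q)=a},\one_{\pi(q')=a'})=-1/P^2$, not $-1/(P(P-1))$; your variance formula is nevertheless the correct classical one. Second, the isomorphism theorem only maps $D_q^0$ onto a conull subset of $[0,c)$. You should either adjust on null sets to get a Borel bijection onto all of $[0,c)$, or, as the paper does, define $\tau$ arbitrarily where the rotated point leaves the core; for each fixed $y$ this affects only a null set of group elements.
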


\begin{proof}
Let $\mathbb T=\mathbb R/\mathbb Z$.  For every $q$, choose
conull Borel sets $D_q^0\subset D_q$ and $T_q^0\subset\mathbb T$ and a
measure-preserving Borel isomorphism
\[
   \iota_q:D_q^0\longrightarrow T_q^0,
\]
where $D_q$ is supplied with normalized measure; see
\cite[Theorem~9.2.2]{Bogachev}.  Extend $\iota_q$ and its inverse
arbitrarily on the null complements. Put
\[
   \mathcal G_{\mathcal D}=\mathbb T^P\times\mathfrak S_P
\]
and equip it with the product of Haar probability on $\mathbb T^P$
and normalized counting measure on the permutation group. For
$\theta=(\theta_1,\ldots,\theta_P)$, $\pi\in\mathfrak S_P$, and
$y\in D_q^0$, define
\[
   \tau_{\theta,\pi}(y)
   =\iota_{\pi(q)}^{-1}\bigl(\iota_q(y)+\theta_q\bigr),
\]
whenever the right-hand side lies in the chosen conull cores, and
define it arbitrarily on the remaining null set.  These maps give a
jointly measurable family of measure-preserving automorphisms modulo
null sets.  The arbitrary definitions do not affect any of the
integrals below.

For fixed $y\in D_q^0$, the index $\pi(q)$ is uniform and
$\theta_q$ is Haar distributed; hence
\eqref{eq:cell-orbit-average} holds. If $y\in D_q^0$ and
$y'\in D_{q'}^0$, where $q\ne q'$, then $\theta_q$ and
$\theta_{q'}$ are independent, while $(\pi(q),\pi(q'))$ is uniformly
distributed over the ordered pairs of distinct indices, which yields
\eqref{eq:two-point-orbit-average} and \textup{(ii)}.

If $f=\sum_qu_q\one_{D_q}$ and
$h=\sum_qv_q\one_{D_q}$, then
\[
   \int_Bf(y)h(\tau_{\theta,\pi}y)\,dm(y)
   =\frac bP\sum_{q=1}^Pu_qv_{\pi(q)}.
\]
Invariance of the uniform permutation measure gives the mean formula.
For the variance, sampling without replacement gives
\[
 \operatorname{Var}\left(\frac1P\sum_qu_qv_{\pi(q)}\right)
 =\frac1{P-1}
  \left(\frac1P\sum_q(u_q-\bar u)^2\right)
  \left(\frac1P\sum_q(v_q-\bar v)^2\right)
\]
and hence \eqref{eq:perm-var}.
\end{proof}

For fixed $K\in\NN$ and finite equal-measure partitions
$\mathcal D_r$ of $B_r$, write $\mathcal D=(\mathcal D_r)_{r=1}^N$ and
let $G_{K,\mathcal D}$ be the set of all $z\in\Omega_2$ such that,
whenever $1\le k\ne k'\le K$ and
\[
 \beta_k(z)\beta_{k'}(z)\ne0,
 \qquad
 \psi_k(z),\psi_{k'}(z)\in B_r,
\]
the two points $\psi_k(z)$ and $\psi_{k'}(z)$ belong to different
members of $\mathcal D_r$.

Two elementary measurability facts about the kernels will be used.

\begin{lemma}
\label{lem:coincidence}
For fixed $j,j'$, the set
\begin{equation}
 \mathcal C_{j,j'}
 =\{(u,v):\alpha_j(u)\alpha_{j'}(v)\ne0,
              \ \phi_j(u)=\phi_{j'}(v)\}
 \label{eq:coincidence-set}
\end{equation}
has product measure zero. If $z\mapsto\omega_z$ is a measurable
finite signed kernel on a standard Borel space $C$, then
\begin{equation}
 \operatorname{At}(\omega)
 =\{(z,x):|\omega_z|(\{x\})>0\}
 \label{eq:measurable-atom-set}
\end{equation}
is measurable, and every section $\operatorname{At}(\omega)_z$ is
countable.
\end{lemma}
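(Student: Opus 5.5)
The statement has two parts, and I will prove them separately. The first part says the coincidence set $\mathcal C_{j,j'}$ is null in $\Omega_1\times\Omega_1$. The second part says the atom set of a measurable kernel is measurable with countable sections.

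\emph{First part.} I will use Fubini and slice in $v$. Fix $v$ with $\alpha_{j'}(v)\ne0$ and put $t=\phi_{j'}(v)$. The $v$-section of $\mathcal C_{j,j'}$ is contained in
\[
 \{u:\alpha_j(u)\ne0,\ \phi_j(u)\in\{t\}\}.
\]
The singleton $\{t\}$ is Lebesgue null. By \eqref{eq:atom-null-property} with $N=\{t\}$, this section is therefore null. Measurability of $\mathcal C_{j,j'}$ needs a check: it is the preimage of the diagonal, a Borel set, under the measurable map $(u,v)\mapsto(\phi_j(u),\phi_{j'}(v))$, intersected with a measurable set. Once measurability holds, Tonelli integrates the null sections over $v$ and gives product measure zero. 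The only subtlety is that \eqref{eq:atom-null-property} is stated for each fixed null $N$, so the exceptional set depends on $t$. That is harmless here, because the statement is applied separately to each single section.

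\emph{Second part.} The set $\operatorname{At}(\omega)$ is the set of pairs $(z,x)$ with $|\omega_z|(\{x\})>0$. I will identify $C$ with a Borel subset of $\R$ and describe atoms through shrinking dyadic intervals. Let $I_n(x)$ denote the dyadic interval of length $2^{-n}$ containing $x$. Continuity from above of $|\omega_z|$ gives
\[
 |\omega_z|(\{x\})=\lim_{n\to\infty}|\omega_z|(I_n(x)).
\]
For a fixed dyadic interval $I$, the function $z\mapsto|\omega_z|(I)$ is measurable. To see this, write $|\omega_z|(I)$ as a supremum of $\sum_i|\omega_z(D_i)|$ over finite partitions of $I$ into sets from a countable generating algebra. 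This countable supremum is justified by approximating the Hahn decomposition by algebra sets in $|\omega_z|$-measure.

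Next, $(z,x)\mapsto|\omega_z|(I_n(x))$ equals $\sum_I|\omega_z|(I)\one_I(x)$, a countable sum of measurable products. Hence it is jointly measurable, and so is its limit. This proves that $\operatorname{At}(\omega)$ is measurable. Each section $\operatorname{At}(\omega)_z$ is countable because a finite measure has at most countably many atoms: for each $k$, at most $k\,|\omega_z|(C)$ points carry mass greater than $1/k$.

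The main obstacle is the measurability of the total variation $z\mapsto|\omega_z|(I)$. I handle it with the countable-algebra supremum described above, which works because $C$ is standard Borel.
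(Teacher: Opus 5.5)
Your proposal is correct and follows the paper's argument. The first part is the same: apply \eqref{eq:atom-null-property} to each singleton section and then use Fubini. The second part uses the same shrinking-neighbourhood and continuity-from-above argument. The only differences are technical. You use dyadic intervals after a Borel embedding into $\R$, which makes joint measurability of $(z,x)\mapsto|\omega_z|(I_n(x))$ immediate as a countable sum of products, where the paper uses metric balls and a monotone-class argument. You also give the countable-algebra justification for measurability of $z\mapsto|\omega_z|(I)$, which the paper only asserts.
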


\begin{proof}
The diagonal of a standard Borel space is Borel, so
$\mathcal C_{j,j'}$ is measurable. For fixed $v$, the set
\[
 \{u:\alpha_j(u)\ne0,\ \phi_j(u)=\phi_{j'}(v)\}
\]
is null by \eqref{eq:atom-null-property}. Fubini's theorem proves
the first assertion.

Realize $C$ as a Borel subset of a Polish space with metric $d$. The
total variations $|\omega_z|$ form a measurable positive kernel. A
monotone-class argument therefore shows that
\[
 (z,x)\longmapsto
 |\omega_z|\{t\in C:d(t,x)<1/n\}
\]
is measurable. Continuity from above gives
\[
 |\omega_z|(\{x\})
 =\lim_{n\to\infty}|\omega_z|\{t\in C:d(t,x)<1/n\},
\]
which proves measurability. A finite measure has at most countably
many atoms.
\end{proof}

\begin{lemma}
\label{lem:selected-atom-separation}
Fix $K,J\in\NN$, finite equal-measure partitions
$\mathcal D_r$ of $B_r$, and the product probability space
$(\mathcal G,m_{\mathcal G})$ formed from the probability spaces in
\Cref{lem:compact-randomization}, acting on each $B_r$ and as the
identity on $B^c$. Let $z\mapsto\omega_z$ be a measurable finite signed
kernel on $C$. Then there is a measurable set
$\mathcal P_{\rm sep}\subseteq\mathcal G$ of full measure such that, for
every $\tau\in\mathcal P_{\rm sep}$, the following assertions hold for
almost every $z\in G_{K,\mathcal D}$.

Suppose that $1\le k\le K$, $1\le j\le J$,
$\psi_k(z)\in B_r$, and
\[
 \beta_k(z)\alpha_j(\tau\psi_k(z))\ne0,
 \qquad
 x=\phi_j(\tau\psi_k(z))\in C.
\]
Then $x\notin\operatorname{At}(\omega)_z$. Moreover, if
$1\le k'\le K$, $j'\ge1$, $\psi_{k'}(z)\in B_s$, and
\[
 \beta_{k'}(z)\alpha_{j'}(\tau\psi_{k'}(z))\ne0,
 \qquad
 \phi_{j'}(\tau\psi_{k'}(z))=x,
\]
then $k'=k$ and $j'=j$. In particular, the locations of all selected
terms with $k\le K$ and $j\le J$ are pairwise distinct.
\end{lemma}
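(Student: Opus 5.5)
The plan is to reduce both assertions to countably many statements, each of which says that a certain measurable subset of $\mathcal G\times\Omega_2$ is null, and then apply Fubini. The full-measure set $\mathcal P_{\rm sep}$ is obtained by intersecting countably many conull sets. Fix $r$, $1\le k\le K$ and $1\le j\le J$. The dependence on $\tau$ is only through the values $\tau\psi_k(z)$, and $\tau$ acts as the identity outside $B$. So the randomness that matters is the law of $\tau$ at one point, or at two points of $\Omega_1$. That law is computed by \eqref{eq:cell-orbit-average} and \eqref{eq:two-point-orbit-average} in \Cref{lem:compact-randomization}, applied factorwise to the product group.

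\emph{Step 1 (avoiding atoms of $\omega$).} Let
\[
 S_{kj}=\{(\tau,z):\beta_k(z)\ne0,\ \psi_k(z)\in B_r,\ \alpha_j(\tau\psi_k(z))\ne0,\ (z,\phi_j(\tau\psi_k(z)))\in\operatorname{At}(\omega)\}.
\]
By \Cref{lem:coincidence} this set is measurable. Fix $z$ with $\psi_k(z)$ in the conull core $B_0$. By \eqref{eq:cell-orbit-average}, the $\mathcal G$-measure of the $z$-section equals
\[
 \frac1{b_r}\,m\{u\in B_r:\alpha_j(u)\ne0,\ \phi_j(u)\in\operatorname{At}(\omega)_z\}.
\]
Since $\operatorname{At}(\omega)_z$ is countable and null, \eqref{eq:atom-null-property} shows this measure is $0$. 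We may discard the $z$ with $\psi_k(z)\notin B_0$ and $\beta_k(z)\ne0$, because they form a null set by \eqref{eq:atom-null-property}. Tonelli then gives $(m_{\mathcal G}\times m)(S_{kj})=0$. Hence for almost every $\tau$ the $\tau$-section is null in $z$.

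\emph{Step 2 (uniqueness of the term producing $x$).} Take pairs $(k,j)\ne(k',j')$ with $k,k'\le K$, $j\le J$ and $j'\ge1$ arbitrary. There are countably many such pairs. There are two cases.

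If $\psi_k(z)=\psi_{k'}(z)$ with $k\ne k'$, the two nonzero atoms of $\nu^Q_z$ would coincide, which the no-repetition convention excludes. So either $k=k'$, or the two points are distinct. If $k=k'$ and $j\ne j'$, the same point $u=\tau\psi_k(z)$ carries $\phi_j(u)=\phi_{j'}(u)$ with both coefficients nonzero, which again contradicts no repetition in $\nu^R_u$.

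If $k\ne k'$ and $z\in G_{K,\mathcal D}$, the points $\psi_k(z)$ and $\psi_{k'}(z)$ lie either in different sets $B_r,B_s$ or in different cells of $\mathcal D_r$. In the first case independence of the factors gives product Lebesgue law for $(\tau\psi_k(z),\tau\psi_{k'}(z))$. In the second case \eqref{eq:two-point-orbit-average} gives absolutely continuous law. When one point lies outside $B$ it is fixed, which is also fine. Therefore the $\tau$-measure of the event
\[
 \alpha_j(\tau\psi_k z)\,\alpha_{j'}(\tau\psi_{k'}z)\ne0,\qquad \phi_j(\tau\psi_k z)=\phi_{j'}(\tau\psi_{k'}z)
\]
is at most a constant times $(m\times m)(\mathcal C_{j,j'})=0$ by \Cref{lem:coincidence}. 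This is the main delicate point. One must check joint measurability of $(\tau,z)\mapsto\tau\psi_k(z)$, which holds because the family is jointly measurable. One must also handle the case where one point is in $B$ and the other is not; then the law is a product of a Lebesgue law with a Dirac mass, and \eqref{eq:atom-null-property} for fixed $v$ covers it. Fubini then gives a conull set of $\tau$ for each pair.

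Intersecting the countably many conull sets from Steps 1 and 2 gives $\mathcal P_{\rm sep}$. The final claim, that the selected locations are pairwise distinct, is the case $j'\le J$ of Step 2.
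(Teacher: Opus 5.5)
Your proposal is correct and follows essentially the same route as the paper: the one-point orbit identity combined with \eqref{eq:atom-null-property} keeps the selected locations off the countable atom sections of $\omega_z$; for distinct outer indices, the two-point identity \eqref{eq:two-point-orbit-average}, or the product structure across different $B_r,B_s$, combined with the null coincidence set of \Cref{lem:coincidence} rules out coincidences; for a common outer index, the no-repetition property of the kernel of $R$ does the same; and Fubini with a countable intersection assembles everything. One point needs tightening: that no-repetition property holds only for almost every $u$, so in the case $k=k'$ you should add, as the paper does, that for each fixed $\tau$ the preimage of the exceptional null set is null and hence, by \eqref{eq:atom-null-property} for $Q$, is missed by $\psi_k(z)$ for almost every $z$ (also, your remark about a point lying outside $B$ is unnecessary, since both points lie in $B_r$ and $B_s$ by hypothesis).
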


\begin{proof}
Let $B_r^0$ be the conull Borel cores in
\Cref{lem:compact-randomization}. By
\eqref{eq:atom-null-property}, after deleting a null set of $z$, every
nonzero point $\psi_k(z)\in B_r$, $k\le K$, belongs to $B_r^0$.

Consider first terms with different outer indices. Fix $k\ne k'\le K$ and $j,j'\ge1$. If
$\psi_k(z)$ and $\psi_{k'}(z)$ belong to the same $B_r$, then for
$z\in G_{K,\mathcal D}$ they lie in different cells of
$\mathcal D_r$. Formula \eqref{eq:two-point-orbit-average} and
\Cref{lem:coincidence} show that
\[
 \phi_j(\tau\psi_k(z))
 =\phi_{j'}(\tau\psi_{k'}(z))
\]
with both coefficients nonzero only on a set of $\tau$-measure zero.
If the two points belong to different sets $B_r$ and $B_s$, the same
conclusion follows from the product structure of $\mathcal G$ and the
first assertion of \Cref{lem:coincidence}. The relevant events are
measurable. Fubini's theorem and a countable union over
$k,k'\le K$ and $j,j'\ge1$ therefore give a set of full measure in
$\mathcal G$ on which no such coincidence occurs for almost every
$z\in G_{K,\mathcal D}$.

For a fixed outer index $k$, distinct nonzero terms already have
distinct locations for almost every point of $B_r$, by
\Cref{prop:atomic-kernel-estimates}. The exceptional subset of $B_r$ is
null; its inverse image under any fixed $\tau$ is null as well.
Equation~\eqref{eq:atom-null-property} for $Q$ then shows that almost no
nonzero $\psi_k(z)$ meets it. Hence no additional coincidence can come
from terms with the same outer index.

The selected locations must also be separated from the atoms of
$\omega_z$. By \Cref{lem:coincidence}, the set
$\operatorname{At}(\omega)$ is measurable and each section
$\operatorname{At}(\omega)_z$ is countable, hence null because the
underlying measure space is atomless. For fixed
$z,k,j,r$ with $\psi_k(z)\in B_r^0$, the orbit identity gives
\[
\begin{aligned}
 &m_{\mathcal G}\{\tau:\alpha_j(\tau\psi_k(z))\ne0,
       \ \phi_j(\tau\psi_k(z))\in\operatorname{At}(\omega)_z\}\\
 &\qquad=\frac1{b_r}m\{u\in B_r:\alpha_j(u)\ne0,
       \ \phi_j(u)\in\operatorname{At}(\omega)_z\}=0,
\end{aligned}
\]
where the last equality follows from
\eqref{eq:atom-null-property} for the kernel of $R$. Fubini's theorem
and a finite union over $k\le K$ and $j\le J$ give a further full-measure
subset of $\mathcal G$. Intersecting the two full-measure subsets proves
the assertion.
\end{proof}

\begin{proof}[Proof of \Cref{thm:kernel-composition}]
Put $A=\bigcup_{i=1}^IA_i$, $B=\bigcup_{r=1}^NB_r$, and
$C=\bigcup_{\ell=1}^LC_\ell$. Let
\[
 S_B=I-2P_B.
\]
Choose local kernels on $C$ for the two surjective isometries $QR$ and
$QS_BR$, and define
\begin{equation}
 \omega_z
 =\frac12\left(\nu_z^{QR}|_C+\nu_z^{QS_BR}|_C\right).
 \label{eq:outside-kernel}
\end{equation}
Since $P_{B^c}=(I+S_B)/2$, the measurable finite signed kernel $\omega$
represents the fixed operator $QP_{B^c}RP_C$. In particular, $\omega$
is independent of all parameters chosen below.

Fix $\varepsilon>0$ and put
\[
 a_*=\min_{1\le i\le I}a_i,
 \qquad
 \Delta=\frac{\varepsilon a_*}{6}.
\]
For $K,J\in\NN$, define
\begin{align*}
 q_{ir}^{(K)}
 &=\int_{A_i}\sum_{k\le K}|\beta_k(z)|^p
   \one_{B_r}(\psi_k(z))\,dz,
 &t_{ir}^{(K)}&=q_{ir}-q_{ir}^{(K)},\\
 r_{r\ell}^{(J)}
 &=\int_{B_r}\sum_{j\le J}|\alpha_j(y)|^p
   \one_{C_\ell}(\phi_j(y))\,dy,
 &s_{r\ell}^{(J)}&=r_{r\ell}-r_{r\ell}^{(J)}.
\end{align*}
Then $t_{ir}^{(K)}\downarrow0$, $s_{r\ell}^{(J)}\downarrow0$, and
\begin{equation}
 q_{ir}r_{r\ell}-q_{ir}^{(K)}r_{r\ell}^{(J)}
 =t_{ir}^{(K)}r_{r\ell}+q_{ir}^{(K)}s_{r\ell}^{(J)}.
 \label{eq:tail-identity}
\end{equation}

Choose $K$ so large that
\begin{equation}
 \sum_{r=1}^N\frac{t_{ir}^{(K)}r_{r\ell}}{b_r}
 <\frac{\Delta}{8IL}
 \qquad(1\le i\le I,\ 1\le\ell\le L).
 \label{eq:choose-K}
\end{equation}
Next choose $J$ so large that
\begin{equation}
 \sum_{r=1}^N\frac{q_{ir}^{(K)}s_{r\ell}^{(J)}}{b_r}
 <\Delta
 \qquad(1\le i\le I,\ 1\le\ell\le L).
 \label{eq:choose-J}
\end{equation}

For a measurable set $D\subseteq B_r$, put
\[
 \lambda_{ir}^{(K)}(D)
 =\int_{A_i}\sum_{k\le K}|\beta_k(z)|^p
   \one_D(\psi_k(z))\,dz.
\]
By \eqref{eq:atom-null-property},
$\lambda_{ir}^{(K)}\ll m|_{B_r}$. Let $f_{ir}^{(K)}$ be its
Radon--Nikodym derivative, so that
\[
 \int_{B_r}f_{ir}^{(K)}\,dm=q_{ir}^{(K)}.
\]
Also put
\[
 h_{r\ell}^{(J)}(y)
 =\sum_{j\le J}|\alpha_j(y)|^p\one_{C_\ell}(\phi_j(y)),
 \qquad
 \int_{B_r}h_{r\ell}^{(J)}\,dm=r_{r\ell}^{(J)}.
\]
For $M>0$, set
\[
 f_{ir}^{(K,M)}=f_{ir}^{(K)}\wedge M,
 \qquad
 h_{r\ell}^{(J,M)}=h_{r\ell}^{(J)}\wedge M,
\]
and
\[
 u_{ir}^{(K,M)}=\int_{B_r}(f_{ir}^{(K)}-M)_+\,dm,
 \qquad
 v_{r\ell}^{(J,M)}=\int_{B_r}(h_{r\ell}^{(J)}-M)_+\,dm.
\]
Choose $M$ so large that
\begin{equation}
 \sum_{r=1}^N\frac{
 u_{ir}^{(K,M)}r_{r\ell}^{(J)}
 +q_{ir}^{(K)}v_{r\ell}^{(J,M)}}{b_r}
 <\Delta
 \qquad(1\le i\le I,\ 1\le\ell\le L).
 \label{eq:choose-M}
\end{equation}
Truncation only decreases the two integrals, so
\begin{equation}
 \begin{aligned}
 \frac1{b_r}\left(\int f_{ir}^{(K,M)}\right)
 \left(\int h_{r\ell}^{(J,M)}\right)
 \ge{}&\frac{q_{ir}^{(K)}r_{r\ell}^{(J)}}{b_r}\\
 &-\frac{u_{ir}^{(K,M)}r_{r\ell}^{(J)}
       +q_{ir}^{(K)}v_{r\ell}^{(J,M)}}{b_r}.
 \end{aligned}
 \label{eq:amplitude-loss}
\end{equation}
Set
\[
 \eta=\frac{\Delta}{3N}.
\]

For each $r$, take the refining sequence of partitions in
\Cref{lem:common-refinement}. At a common level $n$, write
$\mathcal D^{(n)}=(\mathcal D_{r,n})_{r=1}^N$ and define
\[
 d_{ir}^{(K,n)}
 =\int_{A_i\setminus G_{K,\mathcal D^{(n)}}}
   \sum_{k\le K}|\beta_k(z)|^p
   \one_{B_r}(\psi_k(z))\,dz.
\]
For almost every $z$, the nonzero locations $\psi_k(z)$ with $k\le K$
are pairwise distinct by \Cref{prop:atomic-kernel-estimates}. Part
\textup{(b)} of \Cref{lem:common-refinement} then gives
$\one_{\Omega_2\setminus G_{K,\mathcal D^{(n)}}}\downarrow0$ almost
everywhere, and dominated convergence yields
\[
 d_{ir}^{(K,n)}\longrightarrow0.
\]
Part \textup{(c)} of the same lemma gives
\[
 \mathbb E(f_{ir}^{(K,M)}\mid\mathcal F_{r,n})
 \longrightarrow f_{ir}^{(K,M)},
 \qquad
 \mathbb E(h_{r\ell}^{(J,M)}\mid\mathcal F_{r,n})
 \longrightarrow h_{r\ell}^{(J,M)}
\]
in $L_1(B_r)$, while $|\mathcal D_{r,n}|\to\infty$. We may therefore
choose one common level such that, with
$\mathcal D_r=\mathcal D_{r,n}$,
$\mathcal F_r=\mathcal F_{r,n}$, and
$P_r=|\mathcal D_r|$,
\begin{equation}
 M\|f_{ir}^{(K,M)}-
     \mathbb E(f_{ir}^{(K,M)}\mid\mathcal F_r)\|_1<\frac\eta4,
 \qquad
 M\|h_{r\ell}^{(J,M)}-
     \mathbb E(h_{r\ell}^{(J,M)}\mid\mathcal F_r)\|_1<\frac\eta4
 \label{eq:conditional-errors}
\end{equation}
for all $i,r,\ell$,
\begin{equation}
 \frac{b_r^2M^4}{P_r-1}<\frac{\eta^2}{16INL}
 \qquad(1\le r\le N),
 \label{eq:variance-choice}
\end{equation}
and
\begin{equation}
 M\sum_{r=1}^Nd_{ir}^{(K,n)}<\Delta
 \qquad(1\le i\le I).
 \label{eq:choose-partition}
\end{equation}
Fix this level, write $\mathcal D=(\mathcal D_r)_{r=1}^N$,
$G=G_{K,\mathcal D}$, and
$d_{ir}^{(K,\mathcal D)}=d_{ir}^{(K,n)}$.

Restricting the measure defining $\lambda_{ir}^{(K)}$ to $A_i\cap G$
gives a density $f_{ir}^{(K,G)}\le f_{ir}^{(K)}$ satisfying
\begin{equation}
 \int_{B_r}\bigl(f_{ir}^{(K)}-f_{ir}^{(K,G)}\bigr)\,dm
 =d_{ir}^{(K,\mathcal D)}.
 \label{eq:good-loss}
\end{equation}
For each $r$, let
$(\mathcal G_r,m_{\mathcal G_r})$ and $B_r^0$ be supplied by
\Cref{lem:compact-randomization} for the fixed partition
$\mathcal D_r$. The product
\[
 (\mathcal G,m_{\mathcal G})
 =\prod_{r=1}^N(\mathcal G_r,m_{\mathcal G_r})
\]
acts on $\Omega_1$ by the corresponding automorphism on each $B_r$ and
by the identity on $B^c$. In particular,
\begin{equation}
 \int_{\mathcal G}g(\tau y)\,dm_{\mathcal G}(\tau)
 =\frac1{b_r}\int_{B_r}g\,dm
 \quad\text{for almost every }y\in B_r.
 \label{eq:orbit-identity-product}
\end{equation}

Put
\[
 \widetilde f_{ir}
 =\mathbb E(f_{ir}^{(K,M)}\mid\mathcal F_r),
 \qquad
 \widetilde h_{r\ell}
 =\mathbb E(h_{r\ell}^{(J,M)}\mid\mathcal F_r),
\]
and
\[
 Z_{ir\ell}(\tau)
 =\int_{B_r}\widetilde f_{ir}(y)
                  \widetilde h_{r\ell}(\tau y)\,dy.
\]
By \Cref{lem:compact-randomization},
\[
 \mathbb E Z_{ir\ell}
 =\frac1{b_r}\left(\int f_{ir}^{(K,M)}\right)
                    \left(\int h_{r\ell}^{(J,M)}\right),
\]
and \eqref{eq:variance-choice} gives
\[
 \operatorname{Var}Z_{ir\ell}<\frac{\eta^2}{16INL}.
\]
Chebyshev's inequality and a union bound give a measurable set
$\mathcal P_{\rm good}\subseteq\mathcal G$ with
\[
 m_{\mathcal G}(\mathcal P_{\rm good})>\frac78
\]
such that
\[
 Z_{ir\ell}(\tau)\ge\mathbb EZ_{ir\ell}-\eta
\]
for all $i,r,\ell$ and every $\tau\in\mathcal P_{\rm good}$. Moreover,
\eqref{eq:conditional-errors} implies
\[
 \left|
 \int f_{ir}^{(K,M)}(y)h_{r\ell}^{(J,M)}(\tau y)\,dy
 -Z_{ir\ell}(\tau)
 \right|<\frac\eta2.
\]
For every $\tau\in\mathcal P_{\rm good}$ this gives
\begin{equation}
 \begin{aligned}
 \int_{B_r}f_{ir}^{(K,M)}(y)
       h_{r\ell}^{(J,M)}(\tau y)\,dy
 \ge{}&\frac1{b_r}\left(\int f_{ir}^{(K,M)}\right)
                    \left(\int h_{r\ell}^{(J,M)}\right)-3\eta.
 \end{aligned}
 \label{eq:permutation-lower-bound}
\end{equation}

Define
\[
 S_{ir\ell}(\tau)
 =\int_{B_r}f_{ir}^{(K,G)}(y)h_{r\ell}^{(J)}(\tau y)\,dy.
\]
By the definition of $f_{ir}^{(K,G)}$,
\begin{equation}
 \begin{aligned}
 \sum_{r=1}^NS_{ir\ell}(\tau)
 ={}&\int_{A_i\cap G}
 \sum_{r=1}^N\sum_{\substack{k\le K\\\psi_k(z)\in B_r}}
 |\beta_k(z)|^p\\
 &\quad\cdot
 \sum_{j\le J}|\alpha_j(\tau\psi_k(z))|^p
 \one_{C_\ell}(\phi_j(\tau\psi_k(z)))\,dz.
 \end{aligned}
 \label{eq:selected-pmass}
\end{equation}
Furthermore,
\[
 S_{ir\ell}(\tau)
 \ge\int_{B_r}(f_{ir}^{(K,G)}\wedge M)(y)
                 h_{r\ell}^{(J,M)}(\tau y)\,dy.
\]
Since
\[
 \int_{B_r}\bigl((f_{ir}^{(K,M)}
                   -(f_{ir}^{(K,G)}\wedge M))_+\bigr)\,dm
 \le d_{ir}^{(K,\mathcal D)},
\]
relations \eqref{eq:amplitude-loss} and
\eqref{eq:permutation-lower-bound} imply that, for
$\tau\in\mathcal P_{\rm good}$,
\begin{equation}
 \begin{aligned}
 S_{ir\ell}(\tau)\ge{}&
 \frac{q_{ir}^{(K)}r_{r\ell}^{(J)}}{b_r}
 -\frac{u_{ir}^{(K,M)}r_{r\ell}^{(J)}
       +q_{ir}^{(K)}v_{r\ell}^{(J,M)}}{b_r}\\
 &-M d_{ir}^{(K,\mathcal D)}-3\eta.
 \end{aligned}
 \label{eq:S-lower}
\end{equation}

Put
\[
 H_{r\ell}(y)
 =\sum_{j\ge1}|\alpha_j(y)|^p
   \one_{C_\ell}(\phi_j(y))
\]
and
\[
 Y_{i\ell}^{(K)}(\tau)
 =\int_{A_i\cap G}
 \sum_{r=1}^N\sum_{\substack{k>K\\\psi_k(z)\in B_r}}
 |\beta_k(z)|^pH_{r\ell}(\tau\psi_k(z))\,dz.
\]
The orbit identity and Tonelli's theorem give
\begin{equation}
 \int_{\mathcal G}Y_{i\ell}^{(K)}(\tau)\,dm_{\mathcal G}(\tau)
 \le\sum_{r=1}^N\frac{t_{ir}^{(K)}r_{r\ell}}{b_r}.
 \label{eq:outer-tail-average}
\end{equation}
By \eqref{eq:choose-K}, Markov's inequality, and a union bound, the set
\[
 \mathcal P_{\rm tail}
 =\{\tau:Y_{i\ell}^{(K)}(\tau)\le\Delta
       \text{ for all }i,\ell\}
\]
satisfies
\[
 m_{\mathcal G}(\mathcal P_{\rm tail})>\frac78.
\]

Apply \Cref{lem:composition} to the fixed product space
$(\mathcal G,m_{\mathcal G})$. It gives a full-measure set
$\mathcal G_{\rm comp}$ such that $\kappa^\tau$ represents
$QP_BV_\tau RP_C$ for every $\tau\in\mathcal G_{\rm comp}$. Since every
$\tau$ acts as the identity on $B^c$, the kernel
$\kappa^\tau+\omega$ represents $QV_\tau RP_C$. By uniqueness of local
representing kernels, after changing it on a $z$-null set depending on
$\tau$,
\begin{equation}
 \nu_z^{QV_\tau R}|_C=\kappa_z^\tau+\omega_z
 \qquad(\tau\in\mathcal G_{\rm comp}).
 \label{eq:full-composition-kernel}
\end{equation}
The fixed kernel $\omega$, the integers $K,J$, and the partitions
$\mathcal D_r$ now satisfy the hypotheses of
\Cref{lem:selected-atom-separation}. Let
$\mathcal P_{\rm sep}\subseteq\mathcal G$ be the full-measure set given
there.

Fix $\tau\in\mathcal G_{\rm comp}\cap\mathcal P_{\rm sep}$. At every
selected location in \eqref{eq:selected-pmass}, the kernel
\eqref{eq:full-composition-kernel} contains the selected term, no other
term with outer index at most $K$, and no atom of $\omega_z$. Only terms with $k>K$ can therefore contribute to cancellation at
that location. For
$0<p\le1$,
\[
 |c+d|^p\ge|c|^p-|d|^p,
 \qquad
 \left|\sum_n d_n\right|^p\le\sum_n|d_n|^p.
\]
The selected locations are pairwise distinct, so each outer-tail term
is charged at most once. Using \eqref{eq:selected-pmass} and integrating
over $A_i$ gives
\begin{equation}
 a_i\Mp(QV_\tau R)_{i\ell}
 \ge\sum_{r=1}^NS_{ir\ell}(\tau)-Y_{i\ell}^{(K)}(\tau).
 \label{eq:kernel-composition-preerror}
\end{equation}
Combining \eqref{eq:tail-identity}, \eqref{eq:S-lower}, and
\eqref{eq:kernel-composition-preerror} yields, for
$\tau\in\mathcal P_{\rm good}\cap\mathcal G_{\rm comp}
\cap\mathcal P_{\rm sep}$,
\begin{equation}
 a_i\Mp(QV_\tau R)_{i\ell}
 \ge\sum_{r=1}^N\frac{q_{ir}r_{r\ell}}{b_r}
 -\mathcal E_{i\ell}(\tau),
 \label{eq:kernel-composition-error}
\end{equation}
where
\begin{align}
 \mathcal E_{i\ell}(\tau)={}&
 \sum_r\frac{t_{ir}^{(K)}r_{r\ell}}{b_r}
 +\sum_r\frac{q_{ir}^{(K)}s_{r\ell}^{(J)}}{b_r}
 \nonumber\\
 &+\sum_r\frac{u_{ir}^{(K,M)}r_{r\ell}^{(J)}
       +q_{ir}^{(K)}v_{r\ell}^{(J,M)}}{b_r}
 \nonumber\\
 &+M\sum_rd_{ir}^{(K,\mathcal D)}+3N\eta
 +Y_{i\ell}^{(K)}(\tau).
 \label{eq:total-error}
\end{align}

Since $\mathcal G_{\rm comp}$ and $\mathcal P_{\rm sep}$ have full
measure while $\mathcal P_{\rm good}$ and $\mathcal P_{\rm tail}$ each
have measure greater than $7/8$, their intersection is nonempty. Choose
$\tau$ in this intersection. By
\eqref{eq:choose-K}, \eqref{eq:choose-J}, \eqref{eq:choose-M},
\eqref{eq:choose-partition}, the definition of $\eta$, and the definition
of $\mathcal P_{\rm tail}$,
\[
 \mathcal E_{i\ell}(\tau)<6\Delta
 =\varepsilon a_*
 \le\varepsilon a_i
\]
for all $i,\ell$. Dividing \eqref{eq:kernel-composition-error} by $a_i$
and using \eqref{eq:matrix-product} proves
\eqref{eq:kernel-composition} simultaneously for all $i,\ell$.

In the weak-star case, $V_\tau$, $S_B$, and the band projections are
weak-star continuous, while \Cref{lem:composition} already uses the
weak-star local kernels. Every step above is therefore valid in that
setting as written.
\end{proof}

\subsection{A recurrence argument}
\label{sec:return}

Let $Y=X$ in the direct case and $Y=X^\times$ in the weak-star case.
Fix a surjective isometry $T:Y\to Y$ in the corresponding class and
put $S=T^{-1}$.  Regard the domain and range as labelled copies
$\Omega_E$ and $\Omega_F$, and write
\begin{align}
 \nu_s^T&=\sum_{j\ge1}a_j(s)\delta_{\sigma_j(s)},
 &s&\in\Omega_F,
 \label{eq:T-kernel}\\
 \nu_t^S&=\sum_{k\ge1}b_k(t)\delta_{\rho_k(t)},
 &t&\in\Omega_E.
 \label{eq:S-kernel}
\end{align}
Put
\[
 \Omega_T^{(2)}
 =\left\{s\in\Omega_F:
   \left|\{j\ge1:a_j(s)\ne0\}\right|\ge2\right\}.
\]
For $q\ge1$ and indices
\[
 j_1,k_1,j_2,k_2,\ldots,j_q,k_q,
\]
define, on the set where all the indicated coefficients are nonzero,
\[
 x_0(s)=s,\qquad
 x_{2r-1}(s)=\sigma_{j_r}(x_{2r-2}(s)),\qquad
 x_{2r}(s)=\rho_{k_r}(x_{2r-1}(s)).
\]

The role of the finite-measure recurrence in
\cite[Theorem~6.4]{KR} is played here by a local return lemma.
\begin{lemma}
\label{lem:return}
Let $B\subset\Omega_F$ have finite positive measure, let $j\ge1$, and
let $H\subset B$ have positive measure. Suppose that $a_j\ne0$ almost
everywhere on $H$. Then there are $q\ge1$, indices
\[
 j_1=j,k_1,j_2,k_2,\ldots,j_q,k_q,
\]
and a measurable set $H_0\subset H$ of positive measure such that, for
almost every $s\in H_0$,
\[
 a_{j_r}(x_{2r-2}(s))\,b_{k_r}(x_{2r-1}(s))\ne0
 \quad(1\le r\le q),
 \qquad x_{2q}(s)\in B.
\]
\end{lemma}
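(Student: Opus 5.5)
We argue by contradiction and use \Cref{lem:triangular} to turn ``no return to $B$'' into an invariant band that cannot exist. Suppose the conclusion fails. Then for every $q\ge1$ and every admissible index string with $j_1=j$, the set of $s\in H$ for which the path $x_0(s),\dots,x_{2q}(s)$ has all coefficients nonzero and ends in $B$ is null. By countability of index strings, after deleting one null set from $H$, no admissible path starting in $H$ with first index $j$ ever returns to $B$ at an even time.

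\textbf{Step 1 (reachable sets).} Let $\mathcal R_{\rm odd}\subset\Omega_E$ and $\mathcal R_{\rm even}\subset\Omega_F$ be the measure-algebra suprema of the images $x_{2r-1}(H')$ and $x_{2r}(H')$. Here $H'\subset H$ ranges over measurable sets, and the index strings have $j_1=j$ and are otherwise arbitrary. These are countable suprema of images under the measurable maps $\sigma_i,\rho_k$. To make them meaningful, I take measurable hulls. The null-set property \eqref{eq:atom-null-property} is used twice:
\begin{itemize}
\item an image of a positive-measure set under a map carrying a nonzero coefficient has positive measure;
\item conversely, almost every nonzero atom located at a point of $\mathcal R_{\rm odd}$ or $\mathcal R_{\rm even}$ genuinely comes from a path.
\end{itemize}
By construction these sets are closed under the dynamics, modulo null sets:
\begin{itemize}
\item for a.e.\ $t\in\mathcal R_{\rm odd}$, every nonzero atom of $\nu_t^S$ lies in $\mathcal R_{\rm even}$;
\item for a.e.\ $s\in\mathcal R_{\rm even}$, every nonzero atom of $\nu_s^T$ lies in $\mathcal R_{\rm odd}$.
\end{itemize}
The standing assumption gives $m(\mathcal R_{\rm even}\cap B)=0$.

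\textbf{Step 2 (band relations and the triangular lemma).} Put $D=\mathcal R_{\rm even}$ and $A=\mathcal R_{\rm odd}$. Closure of the kernels translates, on bounded finite-support functions and then by density (or weak-star density), into
\[
P_DT=P_DTP_A,\qquad P_AS=P_ASP_D .
\]
\Cref{lem:triangular} then yields $P_{D^c}TP_A=0$. Hence $T$ maps the band $Y(A)$ onto $Y(D)$, with inverse $S$ restricted to $Y(D)$.

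\textbf{Step 3 (the starting set).} The set $H$ lies outside $D$ up to a null set, because $D$ avoids $B$. On the other hand, on $H$ the atom $a_j\delta_{\sigma_j}$ is nonzero and $\sigma_j(H)\subset A$. Hence $P_HTP_A\neq0$: testing on $\chi_{\sigma_j(H'')}$ for small $H''$ gives a nonzero value on a positive-measure part of $H$. Extracting this isolated atom requires a Lusin-type localization that separates $\sigma_j(s)$ from the other atoms of $\nu_s^T$. Since $H\subset D^c$, this contradicts $P_{D^c}TP_A=0$.

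\textbf{Main obstacle.} Step 1 is the delicate part: making the reachable sets well defined and verifying the two-sided closure modulo null sets. Images of measurable maps need not be measurable, so I would work throughout in the measure algebra. I would define $A$ and $D$ as least fixed points of the monotone operators
\[
X\mapsto\bigvee_k\rho_k\bigl(X\cap\{b_k\neq0\}\bigr),\qquad
X\mapsto\bigvee_i\sigma_i\bigl(X\cap\{a_i\neq0\}\bigr),
\]
starting from $\sigma_j(H)$. These least fixed points exist by Dedekind completeness, and each iterate involves only countable unions. One must then check that the sequence stabilizes after countably many steps, using $\sigma$-finiteness.

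A second concern is the finite-measure hypothesis on $B$. In the argument above it is used only to guarantee that $\chi_B$ and the test functions belong to $Y$. If Step 3 fails because cancellation among several atoms hides $a_j$, I would instead argue with the $p$-mass bounds \eqref{eq:uniform-ceiling} restricted to $B$. That would follow the counting scheme of \cite[Theorem~6.4]{KR}, localized to $B$.
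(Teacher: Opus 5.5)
Your proposal is correct and follows the paper's proof. The paper builds your reachable sets as the carriers $A=\car\eta_E$ and $D=\car\eta_F$ of weighted sums of pushforward measures; the weights $2^{-r}\vartheta(|a_r|)$ together with $m(H)<\infty$ keep every measure finite, which resolves your measurability concern. It then derives $P_DT=P_DTP_A$ and $P_AS=P_ASP_D$, applies \Cref{lem:triangular}, and reaches the same contradiction on $H\subset D^c$. Your cancellation worry in Step 3 does not arise, since nonzero atoms sit at pairwise distinct locations a.e.\ by \Cref{prop:atomic-kernel-estimates}. The atom $a_j(s)\delta_{\sigma_j(s)}$ therefore survives in the kernel of $P_{D^c}TP_A$, and uniqueness of representing kernels gives $P_{D^c}TP_A\ne0$ directly, with no Lusin-type localization or $p$-mass fallback needed.
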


\begin{proof}
Assume the contrary. Put $\vartheta(r)=r/(1+r)$. For a finite positive
measure $\lambda\ll m_F$, define
\[
 (\cK_T\lambda)(A)
 =\int_{\Omega_F}\sum_{r\ge1}2^{-r}\vartheta(|a_r(s)|)
   \one_A(\sigma_r(s))\,d\lambda(s).
\]
For a finite positive measure $\mu\ll m_E$, define
\[
 (\cK_S\mu)(D)
 =\int_{\Omega_E}\sum_{r\ge1}2^{-r}\vartheta(|b_r(t)|)
   \one_D(\rho_r(t))\,d\mu(t).
\]
Both maps contract the cone of finite positive measures, and
\eqref{eq:null-set-property} shows that absolute continuity is
preserved.

Define
\[
 \mu_0(A)=\int_H\vartheta(|a_j(s)|)\one_A(\sigma_j(s))\,ds,
\]
and recursively
\[
 \lambda_n=\cK_S\mu_n,
 \qquad
 \mu_{n+1}=\cK_T\lambda_n.
\]
Set
\[
 \eta_E=\sum_{n\ge0}2^{-n-1}\mu_n,
 \qquad
 \eta_F=\sum_{n\ge0}2^{-n-1}\lambda_n.
\]
Let
\[
A=\car\eta_E,\qquad D=\car\eta_F,
\]
where $\car\mu$ denotes the carrier of $\mu$, namely the set on which
its Radon--Nikodym derivative with respect to the underlying measure is
positive, modulo null sets; see
\cite[Theorem~3.8, p.~90]{Folland}. Then
\[
 \eta_E\sim m_E|_A,
 \qquad
 \eta_F\sim m_F|_D.
\]
A positive density is strictly positive almost everywhere on its
carrier; the measure and the restricted Lebesgue measure therefore
have the same null sets.

The identities
\[
 \cK_S\eta_E=\eta_F,
 \qquad
 \cK_T\eta_F=2\eta_E-\mu_0
\]
and the positivity of all summands imply that, outside null sets, every
nonzero term of the kernel of $S$ with initial point in $A$ has its image
in $D$, while every nonzero term of the kernel of $T$ starting in $D$
has its image in $A$. This gives
\begin{equation}
 P_AS=P_ASP_D,
 \qquad
 P_DT=P_DTP_A.
 \label{eq:triangular-carriers}
\end{equation}
In the weak-star class these identities first hold on bounded
functions with support of finite measure and then extend by weak-star
continuity.

If $D\cap B$ had positive measure, then $\eta_F(B)>0$ and hence
$\lambda_n(B)>0$ for some $n$. Expanding $\lambda_n(B)$ gives a
countable sum of nonnegative integrals indexed by finite sequences of
indices occurring in the kernels of $T$ and $S$. One of these nonnegative integrals must be positive, producing the
indices and the set $H_0$ from the lemma. This contradicts the
assumption, so $B\cap D$ is null. After changing representatives on
a null set, we may assume that $B\subset D^c$.

Since $\mu_0(A^c)=0$ and
$\vartheta(|a_j(s)|)>0$ for almost every $s\in H$, the definition of
$\mu_0$ implies
\[
 \sigma_j(s)\in A
 \qquad\text{for almost every }s\in H.
\]
Because $H\subset B\subset D^c$, for almost every $s\in H$ the term
\[
 a_j(s)\delta_{\sigma_j(s)}
\]
occurs in the representing kernel of $P_{D^c}TP_A$. By
\Cref{prop:atomic-kernel-estimates}, the locations corresponding to
nonzero coefficients are pairwise distinct for almost every $s$.
No other term can cancel it. Since $a_j(s)\ne0$ almost everywhere on
$H$ and $m(H)>0$, uniqueness of the representing kernel yields
\[
 P_{D^c}TP_A\ne0.
\]
But \eqref{eq:triangular-carriers} and \Cref{lem:triangular} give
\[
 P_{D^c}TP_A=0,
\]
a contradiction.
\end{proof}

\begin{lemma}
\label{lem:carrier-propagation}
Let $G$ be measurable, let $x_0,\ldots,x_L$ be measurable maps obtained
from a fixed finite sequence of the maps $\sigma_j$ and $\rho_k$, and
write
\[
 x_{h+1}=\varphi_h\circ x_h \qquad(0\le h<L).
\]
Let $c_h$ be the coefficient associated with $\varphi_h$ and suppose
that
\[
 c_h(x_h(s))\ne0
 \quad\text{for almost every }s\in G,\qquad 0\le h<L.
\]
Put
\[
 \mu_h=(x_h)_*(m|_G),
 \qquad W_h=\car\mu_h.
\]
Then $\mu_h\sim m|_{W_h}$ and
\[
 c_h\ne0\quad\text{almost everywhere on }W_h,
 \qquad
 \varphi_h(W_h)\subset W_{h+1}
 \quad\text{modulo null sets}.
\]
\end{lemma}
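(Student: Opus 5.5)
The plan is to prove the three assertions in turn, using only the null-set property \eqref{eq:atom-null-property} of the kernels and the definition of the carrier. Each $\varphi_h$ is one of the maps $\sigma_j$ or $\rho_k$, and $c_h$ is the corresponding coefficient $a_j$ or $b_k$.

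\textbf{Absolute continuity.} We first show $\mu_h\ll m$ for every $h$, by induction on $h$. For $h=0$, $\mu_0=(x_0)_*(m|_G)=m|_G$, since $x_0$ is the identity. Suppose $\mu_h\ll m$, and let $N$ be null. Then
\[
 \mu_{h+1}(N)=m\{s\in G:\varphi_h(x_h(s))\in N\}.
\]
By the hypothesis on $G$, up to a null set of $s$ we have $c_h(x_h(s))\ne0$. So this set is contained in $x_h^{-1}(N')$ plus a null set, where
\[
 N'=\{u:c_h(u)\ne0,\ \varphi_h(u)\in N\}.
\]
By \eqref{eq:atom-null-property}, $N'$ is null. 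The induction hypothesis gives $\mu_h(N')=0$, hence $\mu_{h+1}(N)=0$.

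\textbf{Equivalence with Lebesgue measure on the carrier.} Since $\mu_h\ll m$, it has a Radon--Nikodym density. This density is strictly positive almost everywhere on $W_h=\car\mu_h$ and vanishes off $W_h$. Hence $\mu_h$ and $m|_{W_h}$ have the same null sets, that is, $\mu_h\sim m|_{W_h}$. The measures $\mu_h$ are finite when $m(G)<\infty$. Otherwise we use a $\sigma$-finite density; if needed we replace $m|_G$ by an equivalent finite measure, which does not change carriers.

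\textbf{Coefficient nonvanishing and forward invariance.} Put $Z=\{c_h=0\}$. Then
\[
 \mu_h(Z\cap W_h)=m\{s\in G:c_h(x_h(s))=0\}=0,
\]
so $m(Z\cap W_h)=0$ by the equivalence just proved. For the inclusion $\varphi_h(W_h)\subset W_{h+1}$ modulo null sets, interpret it as
\[
 m\{u\in W_h:c_h(u)\ne0,\ \varphi_h(u)\notin W_{h+1}\}=0.
\]
The $\mu_h$-measure of this set equals the $m|_G$-measure of the set of $s$ with $x_{h+1}(s)\notin W_{h+1}$. That is at most
\[
 \mu_{h+1}(W_{h+1}^c)=0,
\]
because a measure is carried by its carrier. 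The equivalence $\mu_h\sim m|_{W_h}$ then turns $\mu_h$-null into $m$-null.

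\textbf{Main obstacle.} The only delicate point is keeping the null sets straight: the exceptional set where $c_h(x_h(s))=0$, and the meaning of the image $\varphi_h(W_h)$ modulo null sets. I will state the latter precisely as the displayed inclusion above and handle everything through pushforward identities, so no measurability of images is needed.
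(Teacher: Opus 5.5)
Your proposal is correct and follows the paper's proof essentially step by step. You prove $\mu_h\ll m$ by induction from the null-set property of the kernels, obtain $\mu_h\sim m|_{W_h}$ from the Radon--Nikodym density, and derive the inclusion from the pushforward identity $\mu_h\{\varphi_h\notin W_{h+1}\}=\mu_{h+1}(W_{h+1}^c)=0$. Your version is slightly more explicit than the paper's ``repeated use'' remark in two places: it invokes the hypothesis $c_h(x_h(s))\ne0$ in the induction step, which is needed because atom locations are padded arbitrarily where coefficients vanish, and it addresses the case $m(G)=\infty$.
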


\begin{proof}
Repeated use of \eqref{eq:null-set-property} gives $\mu_h\ll m$.
Writing $d\mu_h=q_h\,dm$ and
$W_h=\{q_h>0\}$ gives $\mu_h\sim m|_{W_h}$. By hypothesis,
$c_h(x_h(s))\ne0$ for almost every $s\in G$, hence
$\mu_h(\{c_h=0\})=0$ and therefore $c_h\ne0$ almost everywhere on
$W_h$. Also,
\[
 \mu_h\{x:\varphi_h(x)\notin W_{h+1}\}
 =\mu_{h+1}(W_{h+1}^c)=0,
\]
and equivalence with $m|_{W_h}$ proves the target inclusion.
\end{proof}

\begin{theorem}
\label{thm:finite-matrix-construction}
Suppose that $m(\Omega_T^{(2)})>0$. Then there exist pairwise
disjoint sets of finite measure
\[
 D_1,\dots,D_m\subset\Omega_F
\]
and pairwise disjoint sets of finite measure
\[
 C_1,\dots,C_n\subset\Omega_E,
\]
together with finite collections of selected terms from the kernels
of $T$ and $S$, and nonnegative matrices
\[
 B_p\in M_{m,n}(\R_+),
 \qquad
 C_p\in M_{n,m}(\R_+),
 \qquad 0\le p\le p_0,
\]
such that:
\begin{enumerate}[label=\textup{(\roman*)}]
\item $B_p$ and $C_p$ are entrywise dominated by the matrices obtained
      from all terms in the kernels of $T$ and $S$ on the sets $D_i$
      and $C_j$ by the construction in \eqref{eq:qir}--\eqref{eq:rrl};
\item there exist $w\in(0,\infty)^m$ and $\delta>0$ such that
      \[
        B_0C_0w=(1+\delta)w;
      \]
\item for all sufficiently small $p>0$,
      \[
        B_pC_pw\ge
        \left(1+\frac{\delta}{2}\right)w.
      \]
\end{enumerate}
\end{theorem}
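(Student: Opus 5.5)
We will build the sets and the matrices from a single ``loop'' supplied by \Cref{lem:return}, using two distinct atoms to force expansion. Since $m(\Omega_T^{(2)})>0$, there are indices $j\ne j'$ and a set $H\subset\Omega_T^{(2)}$ of finite positive measure on which $a_j a_{j'}\ne0$ almost everywhere. Apply \Cref{lem:return} with $B=H$ starting from $j$, and then again starting from $j'$ (shrinking $H$ each time), to obtain a common subset $H_0\subset H$ of positive measure and two finite index strings
\[
 j,k_1,\ldots,j_q,k_q\qquad\text{and}\qquad j',k_1',\ldots,j'_{q'},k'_{q'}
\]
whose orbit maps $x_h$ and $x_h'$ have nonzero coefficients along the way and both return to $H$. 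By passing to a further positive-measure subset, we may assume the two returns land in $H$ and that both orbits stay inside a fixed set of finite measure. The key point is that from each $s\in H_0$ there are then at least two distinct routes back to $H$. This is the source of spectral radius strictly bigger than one, since the $p$-mass of a single atom contributes $|a|^p$ and two atoms contribute a sum.

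Next we define the partitions. \Cref{lem:carrier-propagation} is applied to $G=H_0$ along each of the two strings to obtain carriers $W_h$ and $W'_h$ with $\varphi_h(W_h)\subset W_{h+1}$ and nonvanishing coefficients. Let $D_1,\ldots,D_m$ be the atoms of the finite partition of $\Omega_F$ generated by the carriers at even times together with $H$. Let $C_1,\ldots,C_n$ be the atoms of the partition of $\Omega_E$ generated by the carriers at odd times. Both partitions are cut down to finite measure by intersecting with a large finite set, which costs only little mass after truncating the coefficients.

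For each $p$, define $B_p$ by \eqref{eq:rrl}, but keeping only the selected terms $(j_r,\ldots)$ occurring in the strings, and define $C_p$ similarly by \eqref{eq:qir}. Property (i) is then immediate, since we are dropping nonnegative summands. At $p=0$, with the convention $|c|^0=\one_{c\ne0}$, the entries become normalized measures of transition sets. Each visited $D_i$ or $C_\ell$ carries positive mass into its successor, and the matrices are nonnegative.

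We then obtain (ii) from Perron--Frobenius. The product $B_0C_0$ restricted to the visited classes is irreducible, since all transitions go through the loops back to $H$. Its row corresponding to $H$ has total return mass strictly larger than the mass the single-atom case would give, because two disjoint branches return. I would try to make this precise by normalizing: in the one-atom situation the matrix would be (sub)stochastic with spectral radius at most $1$, and the extra branch strictly increases an irreducible matrix. Perron--Frobenius then gives spectral radius $1+\delta>1$ with a positive eigenvector $w$. If $B_0C_0$ is not irreducible on all indices, we restrict to the visited classes, which is harmless for the later use.

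Property (iii) follows from continuity in $p$. As $p\downarrow0$ we have $|c|^p\to\one_{c\ne0}$, and dominated convergence applies because there are finitely many selected terms and finite-measure sets, with $|c|^p\le 1+|c|$ and local integrability from \eqref{eq:local-pmass-finite}. Hence $B_pC_p\to B_0C_0$ entrywise, and since $w>0$ we get $B_pC_pw\ge(1+\delta/2)w$ for small $p$.

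The main obstacle is (ii): proving a strict spectral gap above one from two atoms, rather than merely spectral radius at least one. This needs a careful comparison showing that the return structure is not merely stochastic. I would attempt it by computing $w^{\mathsf T}$-weighted mass balance around the loop using measure preservation of the transition sets.
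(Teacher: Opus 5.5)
There is a genuine gap in (ii). Parts (i) and (iii) are essentially fine, but what is missing is not a Perron--Frobenius computation: the graph you build lacks the structure that forces spectral radius above one. Your two strings are defined only on a small common set $H_0\subset H$, and they return to $H$, not to $H_0$. So the returning carriers $W_{2q},W'_{2q'}\subset H$ land on mass that has no selected outgoing transition.

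If $H$ is kept as one vertex, its normalized row sum in $\cA_0$ is $2m(H_0)/m(H)$. Since \Cref{lem:return} only gives $m(H_0)>0$, this can be far below $1$. If instead $H$ is split by your partition, the atoms of $H\setminus H_0$ that receive returning mass have row sum $0$. Either way the base matrix is only substochastic, and adding a second branch to a substochastic irreducible matrix does not force spectral radius above one. Your own heuristic needs a genuinely stochastic base.

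The proposed ``mass balance using measure preservation'' does not help either. The maps $\sigma_j,\rho_k$ are only nonsingular, and the entries are normalized by the measure of the source set, so what matters is that every row sum is at least $1$. Intersecting the partitions with a large finite set breaks row sums in the same way.

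Irreducibility is not automatic either. The strings from \Cref{lem:return} need not be first-return strings, so intermediate $F$-side carriers may meet $H$ and cut it up. Also, at an atom lying on several carriers, an arbitrary choice of next map can cycle without ever reaching $H$.

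The missing idea is a covering argument. For each of $j_1\ne j_2$, the paper takes all alternating sequences that begin with the $j_i$-th term of $T$ and return to $B$ for the first time at their last step. Applying \Cref{lem:return} to the complement of their domains shows that these domains cover $B$ up to a null set. One then takes finite subfamilies with $\sum_r m(G_{i,r})>(1-\varepsilon)m(B)$ for $i=1,2$, so the row of $B$ has sum $\Gamma>2(1-\varepsilon)>1$.

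By the first-return property, the intermediate $F$-side carriers are disjoint from $B$, so $B$ stays a single vertex. Every other vertex is an atom $U$ of the partition generated by the intermediate carriers. At $U$ one uses the next map of a sequence attaining the minimal remaining length $d(U)$. By \Cref{lem:carrier-propagation} this gives row sum exactly $1$, and it makes $d$ strictly decrease along transitions, so every vertex has a path back to $B$.

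After discarding vertices unreachable from $B$, the graph is strongly connected, and $\cA_0\one\ge\one$ with strict inequality at $B$. A left Perron vector then gives $\rho(\cA_0)>1$, and hence $\rho(B_0C_0)=\rho(\cA_0)^2>1$.
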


\begin{proof}
Since $m(\Omega_T^{(2)})>0$ and
\[
 \left\{
 s:
 \left|\{j\ge1:a_j(s)\ne0\}\right|\ge2
 \right\}
 =
 \bigcup_{j_1<j_2}
 \{s:a_{j_1}(s)a_{j_2}(s)\ne0\},
\]
there exist fixed indices $j_1\ne j_2$ for which the set on the
right has positive measure. By semifiniteness, we may choose a
set of finite measure $B\subset\Omega_F$ of positive measure such that
\begin{equation}
 a_{j_1}(s)a_{j_2}(s)\ne0
 \qquad\text{for almost every }s\in B.
 \label{eq:two-nonzero-terms}
\end{equation}
By \Cref{prop:atomic-kernel-estimates}, the corresponding points
$\sigma_{j_1}(s)$ and $\sigma_{j_2}(s)$ are distinct almost
everywhere on $B$.

For $i=1,2$, consider all finite alternating sequences of maps arising
from the kernels of $T$ and $S$ which begin with the $j_i$-th term of
the kernel of $T$ and whose first return to the $F$-side set $B$
occurs at the final step.  These sequences form a countable family.
Their domains cover $B$ up to a null set: otherwise their complement
would have positive measure, and \Cref{lem:return}, applied to that
complement, would produce one further first-return sequence, a
contradiction.

Fix exhaustions by sets of finite measure of $\Omega_E$ and $\Omega_F$.
Subdivide the domain of each sequence according to exhaustion sets
containing its finitely many intermediate points, and make the
resulting domains disjoint within each family. For $i=1,2$ this produces
pairwise disjoint measurable sets $G_{i,r}$,
$r\ge1$, each associated with one fixed finite sequence whose first return
occurs at its final step,
such that
\[
 B=\bigsqcup_{r\ge1}G_{i,r}
 \qquad\text{up to a null set}.
\]
Choose $0<\varepsilon<1/2$. Since $m(B)<\infty$, finite subfamilies
may be chosen so that
\begin{equation}
 \sum_{r=1}^{N_i}m(G_{i,r})
 >(1-\varepsilon)m(B),
 \qquad i=1,2.
 \label{eq:finite-cover}
\end{equation}
The sets selected for $i=1$ and $i=2$ need not be disjoint from one
another, since they correspond to the distinct initial indices
$j_1$ and $j_2$.

Index the finitely many selected sequences by $\alpha$. Let
$L_\alpha$ denote the length of the $\alpha$-th sequence,
$G_\alpha$ its initial domain, and let
\[
 x_{\alpha,h}:G_\alpha\longrightarrow\Omega_E
 \quad\text{or}\quad
 x_{\alpha,h}:G_\alpha\longrightarrow\Omega_F
\]
be the measurable map giving the point reached after $h$ steps. Write
$\varphi_{\alpha,h}$ for the map used at the $h$th step and
$c_{\alpha,h}$ for its coefficient. For $1\le h<L_\alpha$, define
\[
 \mu_{\alpha,h}
 =(x_{\alpha,h})_*(m|_{G_\alpha}),
 \qquad
 W_{\alpha,h}
 =\car\mu_{\alpha,h}.
\]
Each set $x_{\alpha,h}(G_\alpha)$ lies in one of the chosen exhaustion
sets, so every $W_{\alpha,h}$ has finite measure.
Lemma~\ref{lem:carrier-propagation} supplies the propagation from one
carrier to the next. Since the selected sequences return to
$B$ for the first time only at their final step, every intermediate
$F$-side carrier is disjoint from $B$ up to a null set.

Consider the finite measurable partition of the union of the selected
$E$-side carriers generated by these carriers, and denote its
non-null members by
\[
 C_1,\dots,C_n.
\]
Similarly, partition the union of the intermediate $F$-side carriers
and denote its non-null members by
\[
 D_2,\dots,D_m.
\]
Set
\[
 D_1=B.
\]
The sets $C_1,\dots,C_n$ are therefore pairwise disjoint, as are
$D_1,\dots,D_m$, and each has finite measure.

For a partition set $U\ne B$ occurring in one of the selected
carriers, define
\[
 d(U)
 =
 \min\{L_\alpha-h:
       U\subset W_{\alpha,h}\text{ up to a null set}\}.
\]
Choose an occurrence for which this minimum is attained, and use the
next map in the corresponding sequence. If $V$ ranges over the
partition sets on the opposite side, put
\[
 G_{U,V}
 =
 \{x\in U:\varphi_{\alpha,h}(x)\in V\}.
\]
By \Cref{lem:carrier-propagation}, these sets are pairwise disjoint
and cover $U$ up to a null set. Consequently,
\[
 \sum_V\frac{m(G_{U,V})}{m(U)}=1.
\]

Suppose that $m(G_{U,V})>0$ and $V\ne B$. Then $V$ meets the next
carrier in positive measure. Because $V$ belongs to the partition generated by the selected
carriers, it lies in that carrier up to a null set. Hence
\begin{equation}
 d(V)\le d(U)-1.
 \label{eq:distance-decrease}
\end{equation}
The strict decrease of $d$ brings every selected partition set other
than $B$ back to $B$ in finitely many steps.

Define the transitions leaving $B$ as follows. For every selected set
$G_{i,r}$, partition $G_{i,r}$ according to which $C_q$ contains
$\sigma_{j_i}(s)$. The total normalized weight of these transitions
is
\begin{equation}
 \Gamma
 =
 \frac1{m(B)}
 \left(
 \sum_{r=1}^{N_1}m(G_{1,r})
 +
 \sum_{r=1}^{N_2}m(G_{2,r})
 \right)
 >2(1-\varepsilon)>1.
 \label{eq:row-sum-at-B}
\end{equation}

Retain only those partition sets which are reachable from $B$ by the
selected transitions. If a retained set has a transition of positive
measure to another set, then the latter is also reachable from $B$;
hence no positive transition is lost. By
\eqref{eq:distance-decrease}, every retained set can in turn reach
$B$. Therefore the resulting finite bipartite directed graph is
strongly connected.

For every directed edge $e:U\to V$ with domain $G_e\subset U$, define
\[
 \gamma_e(0)=\frac{m(G_e)}{m(U)}.
\]
Let $B_0$ be the matrix obtained by summing these weights over the
selected $T$-transitions, and let $C_0$ be defined analogously from
the selected $S$-transitions. The corresponding bipartite adjacency
matrix is
\[
 \cA_0
 =
 \begin{pmatrix}
 0&B_0\\
 C_0&0
 \end{pmatrix}.
\]
Every row other than the row corresponding to $B$ has sum one,
whereas the row corresponding to $B$ has sum $\Gamma>1$. Therefore
\[
 \cA_0\one\ge\one,
\]
with strict inequality in the coordinate corresponding to $B$.

Strong connectivity makes $\cA_0$ irreducible. Let $\xi>0$ be a left Perron vector of $\cA_0$. Then
\[
 \rho(\cA_0)\,\xi^\top\one
 =
 \xi^\top\cA_0\one
 >
 \xi^\top\one,
\]
and hence
\[
 \rho(\cA_0)>1.
\]
Moreover,
\[
 \cA_0^2
 =
 \begin{pmatrix}
 B_0C_0&0\\
 0&C_0B_0
 \end{pmatrix}.
\]
The two-step directed graph on the $F$-side vertices is strongly
connected, so $B_0C_0$ is irreducible and
\[
 \rho(B_0C_0)
 =
 \rho(\cA_0)^2
 >1.
\]
Let $w>0$ be a Perron vector of $B_0C_0$ and set
\[
 \delta=\rho(B_0C_0)-1.
\]
Then
\[
 B_0C_0w=(1+\delta)w.
\]

Continuity at $p=0$ completes the construction. For an edge
$e:U\to V$, let $c_e$ denote the coefficient of the selected kernel
term defining that edge, and set
\[
 \gamma_e(p)
 =
 \frac1{m(U)}
 \int_{G_e}|c_e(x)|^p\,dx.
\]
Choose $p_0\in(0,1)$ as in
\Cref{prop:atomic-kernel-estimates}. Since only
finitely many source sets of finite measure and selected kernel terms
are involved, \Cref{prop:atomic-kernel-estimates} gives
\[
 |c_e|^{p_0}\in L_1(G_e)
\]
for every selected edge $e$. For $0<p\le p_0$,
\[
 |c_e|^p\le1+|c_e|^{p_0},
 \qquad
 |c_e|^p\longrightarrow1
 \quad\text{almost everywhere on }G_e.
\]
Dominated convergence gives
\[
 \gamma_e(p)\longrightarrow\gamma_e(0)
 \qquad (p\downarrow0).
\]
There are only finitely many edges, so
\[
 B_p\longrightarrow B_0,
 \qquad
 C_p\longrightarrow C_0
\]
entrywise. Hence
\[
 B_pC_pw
 \longrightarrow
 B_0C_0w
 =
 (1+\delta)w.
\]
Because $w>0$, for all sufficiently small $p>0$ we therefore have
\[
 B_pC_pw
 \ge
 \left(1+\frac{\delta}{2}\right)w.
\]

At a source set different from $B$ only one kernel term is retained,
and its domain is partitioned according to the target sets. At $B$
the domains belonging to each fixed initial index $j_i$ are pairwise
disjoint, and $j_1\ne j_2$. Hence no contribution is counted twice,
so $B_p$ and $C_p$ are entrywise dominated by the matrices formed from
all kernel terms, giving \textup{(i)}.
\end{proof}

\begin{proof}[Proof of \Cref{thm:at-most-one-atom}]
Suppose, to the contrary, that
\[
 m\left\{
 s:
 \left|\{j\ge1:a_j^T(s)\ne0\}\right|\ge2
 \right\}>0.
\]
By \Cref{thm:finite-matrix-construction}, we may choose $p>0$,
$w\in(0,\infty)^m$, and $\delta_0>0$ such that
\begin{equation}
 B_pC_pw\ge(1+\delta_0)w.
 \label{eq:matrix-expansion}
\end{equation}
Let $\widehat B_p$ and $\widehat C_p$ denote the matrices obtained
from all terms in the kernels of $T$ and $S=T^{-1}$, respectively, on
the sets of finite measure furnished by
\Cref{thm:finite-matrix-construction}. By part
\textup{(i)} of that theorem,
\[
 \widehat B_p\ge B_p,
 \qquad
 \widehat C_p\ge C_p
\]
entrywise. Set
\[
 D_p=B_pC_p.
\]

Choose
\[
 1<\lambda<1+\delta_0,
\qquad
 w_*=\min_{1\le i\le m}w_i,
\qquad
 W=\sum_{i=1}^m w_i,
\]
and then choose
\[
 0<\varepsilon_0
 <
 \frac{(1+\delta_0-\lambda)w_*}{W}.
\]
Apply \Cref{thm:kernel-composition} with $Q=T$ and $R=S$. There exists a
measure-preserving transformation $\tau_0$ such that
\[
 U:=TV_{\tau_0}S
\]
is a surjective isometry and the matrix $M$ associated with $U$ and
the sets $D_1,\dots,D_m$ satisfies
\[
 M
 \ge
 \widehat B_p\widehat C_p-\varepsilon_0\mathbf J
 \ge
 D_p-\varepsilon_0\mathbf J,
\]
where $\mathbf J$ is the $m\times m$ all-ones matrix. In view of
\eqref{eq:matrix-expansion},
\[
 \begin{aligned}
 Mw
 &\ge D_pw-\varepsilon_0\mathbf Jw\\
 &\ge (1+\delta_0)w-\varepsilon_0W\one\\
 &\ge \lambda w.
 \end{aligned}
\]
We have obtained
\begin{equation}
 Mw\ge\lambda w.
 \label{eq:subeigenvector}
\end{equation}

Fix $1<\mu<\lambda$ and choose
\[
 0<\varepsilon_1
 <
 \frac{(\lambda-\mu)w_*}{W}.
\]
Construct recursively surjective isometries $U_n$ whose local
transition matrices $M_n$ satisfy
\begin{equation}
 M_nw\ge\mu^n w.
 \label{eq:exponential-growth}
\end{equation}
For $n=1$, take $U_1=U$ and $M_1=M$. Since $\lambda>\mu$,
\eqref{eq:subeigenvector} gives
\[
 M_1w\ge\mu w.
\]

Assume that $U_n$ and $M_n$ have been constructed. Apply
\Cref{thm:kernel-composition} to $Q=U$ and $R=U_n$ on the same sets of
finite measure.
For the resulting measure-preserving transformation $\tau_n$, set
\[
 U_{n+1}=UV_{\tau_n}U_n.
\]
The corresponding matrix satisfies
\[
 M_{n+1}\ge MM_n-\varepsilon_1\mathbf J.
\]
Since all matrices involved are nonnegative, the induction hypothesis
and \eqref{eq:subeigenvector} yield
\[
 \begin{aligned}
 M_{n+1}w
 &\ge MM_nw-\varepsilon_1W\one\\
 &\ge \mu^nMw-\varepsilon_1W\one\\
 &\ge \lambda\mu^n w-\varepsilon_1W\one.
 \end{aligned}
\]
By the choice of $\varepsilon_1$,
\[
 \varepsilon_1W\one
 \le(\lambda-\mu)w
 \le(\lambda-\mu)\mu^n w,
\]
and hence
\[
 M_{n+1}w
 \ge
 \lambda\mu^n w-(\lambda-\mu)\mu^n w
 =
 \mu^{n+1}w.
\]
This completes the induction and proves \eqref{eq:exponential-growth}
for every $n\ge1$.

The uniform estimate \eqref{eq:uniform-ceiling}, however, provides a
constant
$K_p<\infty$, independent of $n$, such that
\[
 (M_n)_{ij}\le K_p
 \qquad
 \text{for all }n,i,j.
\]
For each fixed $i$ it follows that
\[
 \mu^n w_i
 \le
 (M_nw)_i
 =
 \sum_j(M_n)_{ij}w_j
 \le
 K_p\sum_jw_j
 =
 K_pW.
\]
Since $\mu>1$ and $w_i>0$, the left-hand side tends to infinity as
$n\to\infty$, whereas the right-hand side is independent of $n$.
This contradiction proves
\[
 m\left\{
 s:
 \left|\{j\ge1:a_j^T(s)\ne0\}\right|\ge2
 \right\}=0.
\]
\end{proof}

\end{document}